\begin{document}
\begin{CJK}{UTF8}{gbsn}

\theoremstyle{plain}
\newtheorem{theorem}{Theorem}[section] \newtheorem*{theorem*}{Theorem}
\newtheorem{proposition}[theorem]{Proposition} \newtheorem*{proposition*}{Proposition}
\newtheorem{lemma}[theorem]{Lemma} \newtheorem*{lemma*}{Lemma}
\newtheorem{corollary}[theorem]{Corollary} \newtheorem*{corollary*}{Corollary}

\theoremstyle{definition}
\newtheorem{definition}[theorem]{Definition} \newtheorem*{definition*}{Definition}
\newtheorem{example}[theorem]{Example} \newtheorem*{example*}{Example}
\newtheorem{remark}[theorem]{Remark} \newtheorem*{remark*}{Remark}
\newtheorem{problem}[theorem]{Problem} \newtheorem*{problem*}{Problem}
\newtheorem{hypotheses}[theorem]{Hypotheses} \newtheorem{assumption}[theorem]{Assumption}
\newtheorem{notation}[theorem]{Notation} \newtheorem*{question}{Question}

\newcommand{\ds}{\displaystyle} \newcommand{\nl}{\newline}
\newcommand{\eps}{\varepsilon}
\newcommand{\bE}{\mathbb{E}}
\newcommand{\cB}{\mathcal{B}}
\newcommand{\cF}{\mathcal{F}}
\newcommand{\cA}{\mathcal{A}}
\newcommand{\cM}{\mathcal{M}}
\newcommand{\cD}{\mathcal{D}}
\newcommand{\cH}{\mathcal{H}}
\newcommand{\cN}{\mathcal{N}}
\newcommand{\cL}{\mathcal{L}}
\newcommand{\cLN}{\mathcal{LN}}
\newcommand{\bP}{\mathbb{P}}
\newcommand{\bQ}{\mathbb{Q}}
\newcommand{\bN}{\mathbb{N}}
\newcommand{\bR}{\mathbb{R}}
\newcommand{\barsigma}{\overline{\sigma}}
\newcommand{\VIX}{\mbox{VIX}}
\newcommand{\erf}{\mbox{erf}}
\newcommand{\LMMR}{\mbox{LMMR}}
\newcommand{\cLcir}{\mathcal{L}_{\!_{C\!I\!R}}}
\newcommand{\rhor}{\raisebox{1.5pt}{$\rho$}}
\newcommand{\varphir}{\raisebox{1.5pt}{$\varphi$}}
\newcommand{\taur}{\raisebox{1pt}{$\tau$}}
\newcommand{\spx}{S\&P 500 }

\title{Robust optimal investment and risk control for an insurer with general insider information}

\author{Chao Yu \and Yuhan Cheng \and Yilun Song}

\maketitle

\begin{abstract}
 In this paper, we study the robust optimal investment and risk control problem for an insurer who owns the insider information about the financial market and the insurance market under model uncertainty. Both financial risky asset process and insurance risk process are assumed to be very general jump diffusion processes. The insider information is of the most general form rather than the initial enlargement type. We use the theory of forward integrals to give the first half characterization of the robust optimal strategy and transform the anticipating stochastic differential game problem into the nonanticipative stochastic differential game problem. Then we adopt the stochastic maximum principle to obtain the total characterization of the robust strategy. We discuss the two typical situations when the insurer is `small' and `large' by Malliavin calculus. For the `small' insurer, we obtain the closed-form solution in the continuous case and the half closed-form solution in the case with jumps. For the `large' insurer, we reduce the problem to the quadratic backward stochastic differential equation (BSDE) and obtain the closed-form solution in the continuous case without model uncertainty. We discuss some impacts of the model uncertainty, insider information and the `large' insurer on the optimal strategy.
\end{abstract}

\section{Introduction}
The optimal investment and risk control problem for insurers is a classical topic in actuarial research. An effective method for insurers to manage risk is reinsurance. Many works have been carried out concerning this topic including maximizing the expected utility from terminal wealth (see \cite{Liu09,Zheng16}), the mean-variance criterion (see \cite{Zeng16}), and minimizing the probability of ruin (see \cite{Bai08}).
 
 \cite{Zou14} considered the risk control problem in another way. In their model, the insurer manage risk by selecting the number of insurance policies instead of taking reinsurance business. They used the the jump-diffusion process to model the risk process of the insurer, which is negatively correlated with the risky asset process in the financial market. The optimal strategy is based on the criterion of expected utility maximization. \cite{Zou17} extended the the risk control model in \cite{Zou14} by introducing an extra jump-diffusion process, which is negatively correlated with the capital returns. \cite{Zhou17} also considered the above risk control problem based on the mean-variance criterion.

 Recently, there is a growing emphasis on the robust optimal strategy for the investment and risk control problem. As pointed out by \cite{Chen02}, the risk-based models that constitute the paradigm have well documented empirical failures. A robust optimal strategy is the optimal strategy under the worst-case probability assuming that the ambiguity aversion is considered. In other words, when the insurer is ambiguity averse, she might not believe the model is accurate by empirical statistics, which forces her to choose the different (robust) optimal strategy compared to the ambiguity-neutral insurer. Usually, this means giving up a certain degree of utility in case of following a seriously misleading strategy. \cite{Lin12} studied the optimal investment problem for an ambiguity-averse insurer in a jump-diffusion risk process. \cite{Yi13} studied the robust optimal investment-reinsurance problem under Heston's stochastic volatility (SV) model, and \cite{Pun15} extended it to the multi-dimensional SV model. Under the mean-variance criterion, \cite{Yi15} studied the optimal proportional reinsurance-investment strategy; \cite{Zeng16} studied the equilibrium strategy of a robust optimal reinsurance-investment strategy for an insure. \cite{Gu20} considered the reinsurance and investment game problem between two insurance companies under model uncertainty.

Besides model uncertainty, the insider information is another important topic of the investment and risk control problem, which has been taken into account by many scholars in recent years. Most of the existing works in the related literature including the above articles suppose the insurer develops strategies based on the public information flow, which is generated by the market noises. However, in the real world, many insurers or insurance companies owns the extra private information flow about the financial risky asset and the insurance polices, which is also called the insider information. They could gain certain extra profit by reconsidering their strategies based on their insider information. The optimal investment strategy problem was first studied by \cite{Pikovsky96} using the technique of enlargement of filtration. The insider information is modeled by a random variable, i.e., the initial  enlargement type, in \cite{Pikovsky96}. Although many researchers have studied the insider trading problems in finance (see \cite{Biagini05,Kohatsu06,DiNunno06,DiNunno09,Danilova10,Draouil15,Draouil16,Baltas19}), there are few articles concerning the optimal investment and risk control problem for an insurer with insider information. \cite{Baltas12} studied the optimal investment-reinsurance problem under insider information. Both financial asset process and the insurance risk process in \cite{Baltas12} are modeled by the Brownian motion. Then main method is based on the technique of enlargement of filtration proposed in \cite{Pikovsky96}. \cite{Xiong14} generalized the model in \cite{Baltas12} by introducing jumps in the risk process. \cite{Yan17} studied the investment-reinsurance game between two insurance companies under some extra insider information. \cite{Peng16} used another method based on the theory of forward integrals proposed in \cite{Biagini05} to study the optimal investment and risk control problem under insider information, which extended the insider information to the most general form (not necessarily the initial  enlargement type). They also introduced jumps in both financial asset process and insurance risk process.
 
 When combing model uncertainty with insider information, the optimal investment and risk control problem turns to an anticipating stochastic differential game problem, which is very difficult to solve directly. The reason is that one can not apply the forward integral method (\cite{Biagini05}) directly since there is no useful result with variation to the other controlling process. \cite{An13} developed a general stochastic maximum principle for the anticipating stochastic differential game problem by using Malliavin calculus. However, the result could only be applied to controlled It\^{o}-L\'{e}vy processes due to the complexity of the Malliavin derivative. 
 
 To the best our knowledge, only \cite{Peng21} combined the model uncertainty with insider information in the optimal investment-reinsurance problem. Inspired by \cite{Draouil15}, they used the Donsker $\delta$ functional technique to transform the problem into a nonanticipative (i.e., adapted) stochastic differential game problem, and then use the stochastic maximum principle to solve the problem. However, they only considered the special case when the insider information is of the initial enlargement type. Moreover, no closed-form solution or general-form solution is obtained when both model uncertainty and insider information are considered. The reason is that the subjective probability measure ${\cal Q}^v$ constructed in their paper is not exactly a probability measure when $v(y)=v(Y)$, which leads to the result that the final equation is a nested linear BSDE.
 
In this paper, we study the optimal investment and risk control problem under both model uncertainty and insider information (based on the criterion of expected utility maximization). Unlike \cite{Peng21}, the insider information in our model is of the most general form instead of the initial enlargement type. Thus the Donsker $\delta$ functional technique in \cite{Peng21} is invalid. Inspired by \cite{Biagini05,DiNunno06}, we use the theory of forward integrals to get the half equations about the robust optimal strategy, and then transform the problem into a nonaticipative stochastic differential game problem to obtain the other half equations by the stochastic maximum principle. We discuss the two typical cases where the insider is `small' (i.e., her strategy has no influence on the market) and `large' (i.e., her strategy has certain influence on the market), and obtain the corresponding solutions. The main contributions of this paper are as follows: 

 \begin{itemize}
\item A new optimal investment and risk control problem is established dealing with the model uncertainty, the general insider information and jumps. The definition of the set of prior probability measures is first introduced by the semimartingale decomposition theorem in the It\^{o} theory; 
 \end{itemize}
 
 \begin{itemize}
\item The insider information is of the most general form, which extends the models for insider information considered in \cite{Peng21}. Thus a new approach by combining the theory of forward integrals with stochastic maximum principles is adopted to solve the anticipating stochastic differential game problem, which is different from the Donsker $\delta$ technique used in \cite{Peng21}. In the theory of forward integrals, to obtain the similar semimartingale property of noise processes under the insider information filtration in \cite{DiNunno06}, we first introduce the predictable covariation process and the predictable version of the Girsanov theorem, which is different from the original method in \cite{DiNunno06}. With the help of the above method, we derive the equation of the robust optimal strategy (see equations (\ref{halfequ_1}), (\ref{halfequ_2}), (\ref{half_equ3_0}) and (\ref{half_equ3}));
  \end{itemize}
  
   \begin{itemize}
\item  For the `small' insurer, we reduce the problem to the non-nested linear BSDE instead of the nested linear BSDE in \cite{Peng21} by introducing the fixed point function. Thus we obtain the formula for the robust optimal strategy. When the utility function is of the logarithmic form and the insurer owns the future information about the financial market or the insurance market, we obtain the closed-form solution in the continuous case and half closed-form solution in the case with jumps by using the Donsker $\delta$ functional technique. This extends the results in \cite{Peng21}, where the closed-form solution was obtained only when there is no model uncertainty;
  \end{itemize}
  
  \begin{itemize}
\item  The case of `large' insurer is also considered in this paper, which can also be viewed as an extension of \cite{Peng21}. When the utility function is of the logarithmic form and the insurer owns the future information about the financial market or the insurance market, we reduce the problem to the quadratic BSDE in the continuous case by using the Donsker $\delta$ functional technique. We also obtain the closed-form solution when there is no model uncertainty, which is also an extension of \cite{Peng16};
  \end{itemize}
  
    \begin{itemize}
\item  For both cases of the `small' insurer and the `large' insurer, we obtain the analytic expression for the value function in the continuous case. By comparing value functions in different situation, we find that both insider information and `large' insurer have a positive impact on the utility and ambiguity-aversion has a negative impact on the utility. Moreover, we obtain the critical future time for a `small' insurer, which implies what kind of insider information is needed for an ambiguity-averse insurer to gain the same utility of an ambiguity-neutral insurer with no insider information.
  \end{itemize}
  
  This paper is organized as follows. In Section~\ref{forwardin}, we introduce the basic theory of forward integrals. In Section~\ref{Sec:model}, we formulate the optimal investment and risk control problem for an insurer under both model uncertainty and insider information. We give the first characterization and the total characterization of the robust optimal strategy in Section~\ref{half_section} and Section~\ref{tt_section}, respectively. We give further characterizations (BSDEs) of the robust optimal strategy when considering the situation of small insurer and large insurer in Section~\ref{sec_example2} and Section~\ref{sec_example1}, respectively, and derive closed-form solutions or half closed-form solutions in some particular cases. Finally, we summarize our conclusions in Section~\ref{sec:conclusion}.

   \section{The forward integral}\label{forwardin}

The forward integral was introduced by \cite{Berger82} and defined by \cite{Russo93}. This type of integral has been studied before and applied to insider trading in financial mathematics (see \cite{Russo00,Biagini05,DiNunno06}). In this section, we follow \cite{DiNunno09} and give a short review of the forward integral with respect to the Brownian motion and the Poisson random measure.
 
Let $(\Omega,{\cal F},\{{\cal F}_t\}_{t\ge 0},{\mathbb P})$ be a filtered probability space. $W^1$ and $W^2$ are two independent Brownian motions. $\tilde N^1(\mathrm{d}t,\mathrm{d}z)=N^1(\mathrm{d}t,\mathrm{d}z)-G^1(\mathrm{d}z)\mathrm{d}t$ and $\tilde N^2(\mathrm{d}t,\mathrm{d}z)=N^2(\mathrm{d}t,\mathrm{d}z)-G^2(\mathrm{d}z)\mathrm{d}t$ are two independent compensated Poisson random measures with L\'{e}vy measures $G^1(\mathrm{d}z)$ and $G^2(\mathrm{d}z)$, respectively. Then $\eta_t^i:=\int_0^t\int_{\mathbb{R}_0}z\tilde N^i(\mathrm{d}s,\mathrm{d}z)$ is a pure jump L\'{e}vy process, $i=1,2$. We assume that the filtration $\{{\cal F}_t\}_{t\ge 0}$ is the $\mathbb{P}$-augmentation of the filtration generated by random variables $W^1_s,W^2_s,\tilde N^1(s,z),\tilde N^2(s,z)$, $z\in \mathbb{R}_0:=\mathbb{R}\backslash\{0\}$, $0\le s\le t$, which satisfies the usual condition. We refer to \cite{Karatzas91,Cinlar11, Sato99} for the basic relative definitions.

Fix a constant $T>0$ as the time horizon. We denote by ${\cal C}_{\text{ucp}}$ and ${\cal D}_{\text{ucp}}$ the Fr\'{e}chet spaces (i.e., a complete metrizable locally convex topological vector space, see \cite{Huang00}) of continuous processes and c\`{a}dl\`{a}g processes (not necessarily adapted to the filitration $\{{\cal F}_t\}_{0\le t\le T}$) equipped with the metrizable topology of the uniform convergence in probability (ucp) on $[0,T]$ (see \cite{Protter05, Russo00, Billingsley99}), respectively.
  
    \begin{definition}\label{forw}
Let $\varphi  $ be a measurable process such that $\int_0^T|\varphi_t|\mathrm{d}t<\infty$. The forward integral of $\varphi$ with respect to $W^i$ is defined by
\begin{equation} 
\begin{aligned}
\int_0^t \varphi_s\mathrm{d}^-W^i_s:=\lim_{\varepsilon\rightarrow 0^+}\varepsilon^{-1}\int_0^t\varphi_s\left(W^i_{(s+\varepsilon)\wedge t}-W^i_s\right)\mathrm{d}s ,
\end{aligned}
\end{equation}
if the limit exists in ${\cal C}_{\text{ucp}}$, $i=1,2$. In this case $\varphi$ is called forward integrable with respect to $W^i$, $i=1,2$. If the limit exists in probability for every $t\in [0,T]$, we say $\varphi$ is forward integrable in the weak sense.
\end{definition}
     
   The following multiplication formula for the forward integral is an immediate consequence of Definition \ref{forw}.
     
          \begin{proposition}\label{pro1}
Suppose $\varphi$ is forward integrable with respect to $W^i$, $i=1,2$, and $F$ is a random variable. Then $F\varphi$ is forward integrable and
 \begin{equation} 
\begin{aligned}
\int_0^T F\varphi_t \mathrm{d}^-W^i_t= F\int_0^T \varphi_t \mathrm{d}^-W^i_t,\quad i=1,2.
\end{aligned}
\end{equation}
   \end{proposition}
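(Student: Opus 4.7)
The plan is to prove this directly from Definition~\ref{forw} by commuting $F$ through the Riemann-type approximation defining the forward integral and then verifying that the resulting $\text{ucp}$ limit is preserved under multiplication by a random variable. For brevity write
$$I^\varepsilon_t(\psi):=\varepsilon^{-1}\int_0^t\psi_s\bigl(W^i_{(s+\varepsilon)\wedge t}-W^i_s\bigr)\,\mathrm{d}s,$$
so that, by hypothesis, $I^\varepsilon(\varphi)\to\int_0^{\cdot}\varphi_s\,\mathrm{d}^-W^i_s$ in ${\cal C}_{\text{ucp}}$ as $\varepsilon\to 0^+$. Since $F$ does not depend on the integration variable $s$, it factors out of the $\mathrm{d}s$-integral pointwise in $\omega$, giving the algebraic identity $I^\varepsilon_t(F\varphi)=F\cdot I^\varepsilon_t(\varphi)$ for every $\varepsilon>0$ and every $t\in[0,T]$.

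The second step is to check that multiplication by the random variable $F$ is continuous for the $\text{ucp}$ topology on ${\cal C}_{\text{ucp}}$. Given $\delta,\eta>0$, since $F$ is a.s.\ finite one may choose $K>0$ with $\mathbb{P}(|F|>K)<\eta/2$. On the event $\{|F|\le K\}$,
$$\sup_{t\in[0,T]}\bigl|F\,I^\varepsilon_t(\varphi)-F\,I_t(\varphi)\bigr|\le K\sup_{t\in[0,T]}\bigl|I^\varepsilon_t(\varphi)-I_t(\varphi)\bigr|,$$
and the $\text{ucp}$ convergence of $I^\varepsilon(\varphi)$ ensures that the right-hand side is smaller than $\delta$ with probability at least $1-\eta/2$ for all sufficiently small $\varepsilon$. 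Combining the two pieces shows that $F\,I^\varepsilon(\varphi)\to F\int_0^{\cdot}\varphi_s\,\mathrm{d}^-W^i_s$ in ${\cal C}_{\text{ucp}}$, which together with the algebraic identity above yields both the forward integrability of $F\varphi$ and the stated equality at $t=T$.

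I do not anticipate a real obstacle here: the content of the proposition lies precisely in the fact that the forward integral is defined as a genuine pathwise/pointwise $\text{ucp}$ limit of ordinary Lebesgue integrals rather than as an $L^2$-It\^o-type limit requiring adaptedness, so a general (possibly anticipating) random variable $F$ commutes with the limit. The only care needed is the localization argument in $K$ to upgrade the trivial pointwise identity to a $\text{ucp}$ statement.
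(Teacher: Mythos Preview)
Your proof is correct and follows exactly the approach the paper has in mind: the paper simply states that the proposition is ``an immediate consequence of Definition~\ref{forw}'' without further argument, and your write-up spells out precisely why---the random variable $F$ commutes with the approximating Lebesgue integral pointwise, and multiplication by an a.s.\ finite random variable is continuous for the ucp topology. Your localization-in-$K$ step is the standard way to make the latter rigorous, so there is nothing to add.
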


   The next result shows that the forward integral is also an extension of the It\^{o} integral with respect to a semimartingale.  
   
          \begin{proposition} \label{fgen1}
Let $\{{\cal E}_t\}_{0\le t\le T}$ (${\cal F}_t\subset {\cal E}_t$, $0\le t\le T$) be another filtration which satisfies the usual condition. Suppose that $W^i$ is an ${\cal E}_t$-semimartingale\footnote{In fact, by the L\'{e}vy theorem (see \cite[Theorem 3.3.16]{Karatzas91}), the continuous ${\cal E}_t$-martingale part of $W$ is an ${\cal E}_t$-Brownian motion.}, and $\varphi$ is an ${\cal E}_t$-progressively measurable process which is It\^{o} integrable with respect to $W^i$, $i=1,2$. Then $\varphi$ is forward integrable and
 \begin{equation} 
\begin{aligned}
\int_0^T  \varphi_t \mathrm{d}^-W^i_t=  \int_0^T \varphi_t \mathrm{d}W^i_t,\quad i=1,2.
\end{aligned}
\end{equation}
   \end{proposition}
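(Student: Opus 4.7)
The plan is to convert the forward Riemann-type approximation into an It\^o integral against a smoothed version of $\varphi$ via stochastic Fubini, and then pass to the limit $\varepsilon \to 0^+$ using Lebesgue differentiation together with a dominated convergence theorem for stochastic integrals. The setup is simplified by noting that $W^i$, being continuous, has the $\mathcal{E}_t$-semimartingale decomposition $W^i = M^i + A^i$ with $M^i$ a continuous local martingale (hence, by the footnote, an $\mathcal{E}_t$-Brownian motion) and $A^i$ a continuous finite-variation process, so no jump terms arise.

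First I would write the increment as a stochastic integral: for $s \le t$,
$$W^i_{(s+\varepsilon)\wedge t} - W^i_s \;=\; \int_0^t \mathbf{1}_{[s,(s+\varepsilon)\wedge t]}(u)\, \mathrm{d}W^i_u,$$
substitute this into the forward approximation, and apply a stochastic Fubini theorem (justified by the $\mathcal{E}_t$-progressive measurability of $\varphi$ together with the assumption $\int_0^T |\varphi_t|\,\mathrm{d}t < \infty$) to swap the Lebesgue integral in $s$ with the stochastic integral in $u$. This produces
$$\varepsilon^{-1}\int_0^t \varphi_s \bigl(W^i_{(s+\varepsilon)\wedge t} - W^i_s\bigr)\, \mathrm{d}s \;=\; \int_0^t \varphi^\varepsilon_u\, \mathrm{d}W^i_u, \qquad \varphi^\varepsilon_u := \varepsilon^{-1}\int_{(u-\varepsilon)\vee 0}^{u}\varphi_s\, \mathrm{d}s,$$
so that the forward approximation has been rewritten as an honest It\^o integral against an averaged version of the integrand.

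Next, by Lebesgue's differentiation theorem applied path-by-path, $\varphi^\varepsilon_u \to \varphi_u$ for Lebesgue-a.e.\ $u \in [0,T]$, $\mathbb{P}$-a.s., while the Hardy--Littlewood maximal inequality controls $\sup_\varepsilon |\varphi^\varepsilon_u|$ by the maximal function of $|\varphi_\cdot(\omega)|$. After a standard localization reducing $M^i$ to a bounded martingale and $A^i$ to a process of bounded total variation, the dominated convergence theorem for stochastic integrals with respect to semimartingales yields $\int_0^\cdot \varphi^\varepsilon_u\, \mathrm{d}W^i_u \to \int_0^\cdot \varphi_u\, \mathrm{d}W^i_u$ in ucp, which is exactly the identity claimed in $\mathcal{C}_{\text{ucp}}$.

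The main obstacle is ensuring the hypotheses of the stochastic Fubini theorem and the stochastic dominated convergence theorem under the minimal integrability assumption $\int_0^T |\varphi_t|\, \mathrm{d}t < \infty$, since the standard statements typically ask for a predictable dominating process that is integrable against the total variation of $A^i$ and square-integrable against $\langle M^i\rangle$. A careful localization combined with the maximal-function bound reduces everything to the case where $M^i$ and $A^i$ are bounded, but this bookkeeping (and the interplay with the ucp topology rather than $L^p$ convergence) is the delicate point one must not skip.
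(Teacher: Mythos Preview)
Your proposal is correct and follows essentially the same route as the paper: rewrite the forward Riemann approximation via stochastic Fubini as $\int_0^\cdot \varphi^\varepsilon_u\,\mathrm{d}W^i_u$ with $\varphi^\varepsilon_u=\varepsilon^{-1}\int_{(u-\varepsilon)\vee 0}^{u}\varphi_s\,\mathrm{d}s$, and then pass to the limit using a convergence theorem for It\^o integrals. The paper's proof is a terse two-line version of exactly this argument, citing the convergence property for It\^o integrals with respect to continuous local martingales (Karatzas--Shreve, Proposition~3.2.26) in place of your more explicit Lebesgue-differentiation/maximal-function/localization bookkeeping.
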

   
   \begin{proof}
   Since $\varepsilon^{-1}\int_0^T\varphi_t\left(W^i_{(t+\varepsilon)\wedge T}-W^i_t\right)\mathrm{d}t=  \int_0^T  \varepsilon^{-1}\int_{(s-\varepsilon)\vee 0}^{s} u_t\mathrm{d}t\mathrm{d}W^i_s$ by Fubini therorem, the conclusion follows from the convergence property for the It\^{o} integral with respect to continuous local martingales (see \cite[Proposition 3.2.26]{Karatzas91}).
   
   \end{proof}
   
   Let us now give the corresponding definition of the forward integral with respect to the compensated Poisson random measure, and some related properties, which are slightly different from the original versions in \cite{DiNunno09}.
      
\begin{definition}\label{forn}
Let $\zeta =\{\zeta(t,z)\}_{(t,z)\in [0,T]\times\mathbb{R}_0}$ be a measurable random field such that\\ $\int_0^T\int_{\mathbb{R}_0}|\zeta(t,z)|1_{U_n}G^i(\mathrm{d}z)\mathrm{d}t<\infty$ for all $n\in\mathbb{N}_+$, $i=1,2$. The forward integral of $\zeta$ with respect to $\tilde N^i$ is defined by
\begin{equation} \label{forwardN}
\begin{aligned}
\int_0^t\int_{\mathbb{R}_0} \zeta(s,z)\tilde N^i(\mathrm{d}^-s,\mathrm{d}z)&:=\lim_{n\rightarrow \infty}\int_0^t\int_{\mathbb{R}_0} \zeta(s,z)1_{U_n}\tilde N^i(\mathrm{d}s,\mathrm{d}z)\\
&:=\lim_{n\rightarrow \infty}\left(\int_0^t\int_{\mathbb{R}_0} \zeta(s,z)1_{U_n} N^i(\mathrm{d}s,\mathrm{d}z)- \int_0^t\int_{\mathbb{R}_0} \zeta(s,z)1_{U_n}  G^i(\mathrm{d}z)\mathrm{d}s\right),
\end{aligned}
\end{equation}
if the limit exists in ${\cal D}_{\text{ucp}}$, $i=1,2$. In this case $\zeta$ is called forward integrable with respect to $\tilde N^i$, $i=1,2$. If the limit exists in probability for every $t\in [0,T]$, we say $\zeta$ is forward integrable in the weak sense.
Here, $\{U_n\}_{n=1}^\infty$ is an increasing sequence of compact sets $U_n\subset \mathbb{R}_0$ with $G^i(U_n)<\infty$ such that $\lim_{n\rightarrow \infty}U_n=\mathbb{R}_0$, $i=1,2$. 
  \end{definition}

Similar to the case of the Brownian motion, we also have the following properties about the forward integral with respect to the compensated Poisson random measure.

          \begin{proposition}\label{pro2}
Suppose $\zeta$ is forward integrable with respect to $\tilde N^i$, $i=1,2$, and $F$ is a random variable. Then $F\zeta$ is forward integrable and
 \begin{equation} 
\begin{aligned}
\int_0^T\int_{\mathbb{R}_0}F \zeta(t,z)\tilde N^i(\mathrm{d}^-t,\mathrm{d}z)=F\int_0^T\int_{\mathbb{R}_0} \zeta(t,z)\tilde N^i(\mathrm{d}^-t,\mathrm{d}z),\quad i=1,2.
\end{aligned}
\end{equation}
   \end{proposition}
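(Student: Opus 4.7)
My plan is to unwind the definition of the forward integral and reduce the claim to a trivial pull-out at the level of the truncating approximations, then pass to the limit using a simple stability property of convergence in $\mathcal{D}_{\text{ucp}}$.

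First, I would observe that for each fixed $n$, the truncated integrand $\zeta(s,z)1_{U_n}$ is supported on a set where $G^i(U_n)<\infty$, so the integral
\begin{equation*}
X^{(n)}_t := \int_0^t\int_{\mathbb{R}_0} \zeta(s,z)1_{U_n}\,\tilde N^i(\mathrm{d}s,\mathrm{d}z)
= \int_0^t\!\!\int_{U_n}\zeta(s,z)\,N^i(\mathrm{d}s,\mathrm{d}z) - \int_0^t\!\!\int_{U_n}\zeta(s,z)\,G^i(\mathrm{d}z)\mathrm{d}s
\end{equation*}
splits into two $\omega$-wise integrals, since $N^i(\cdot\cap([0,T]\times U_n))$ is a finite random measure and the compensator integral is finite a.s. by the integrability hypothesis on $\zeta$. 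For such pathwise integrals, pulling a random variable $F$ (constant in $(s,z)$ for fixed $\omega$) outside is immediate, so $F X^{(n)}_t$ coincides pathwise with the analogous truncated integral of $F\zeta$.

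Next I would argue that multiplication by $F$ preserves convergence in $\mathcal{D}_{\text{ucp}}$. Since $\zeta$ is forward integrable with respect to $\tilde N^i$, the sequence $X^{(n)}$ converges in $\mathcal{D}_{\text{ucp}}$ to some limit $X$ (the forward integral). For any $\delta>0$ and any $M>0$,
\begin{equation*}
\mathbb{P}\!\left(\sup_{t\in[0,T]}|FX^{(n)}_t-FX_t|>\delta\right)
\le \mathbb{P}(|F|>M) + \mathbb{P}\!\left(\sup_{t\in[0,T]}|X^{(n)}_t-X_t|>\delta/M\right),
\end{equation*}
and sending first $n\to\infty$ and then $M\to\infty$ (using that $|F|<\infty$ a.s.) shows $FX^{(n)}\to FX$ in $\mathcal{D}_{\text{ucp}}$. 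Combined with the pathwise identity from the previous step, this tells us that the approximating sequence for $F\zeta$ converges in $\mathcal{D}_{\text{ucp}}$, so $F\zeta$ is forward integrable, and its forward integral equals $FX=F\int_0^T\!\!\int_{\mathbb{R}_0}\zeta(t,z)\tilde N^i(\mathrm{d}^-t,\mathrm{d}z)$.

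There is no real obstacle; the only subtlety worth being careful about is that the forward integral's approximations need not be martingales (indeed $F\zeta$ is not required to be adapted), so one must rely on the pointwise decomposition of $\tilde N^i$ into $N^i$ minus its compensator rather than on any isometry or BDG-type inequality. Once the pathwise picture is in place, the ucp stability lemma handles the rest.
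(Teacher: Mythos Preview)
Your proof is correct and is essentially the argument the paper has in mind: the paper states that Proposition~\ref{pro2} is the analogue of Proposition~\ref{pro1} and gives no separate proof, both being regarded as immediate consequences of the definition of the forward integral. You have simply spelled out those ``immediate'' details --- the pathwise pull-out of $F$ at each truncation level and the stability of $\mathcal{D}_{\text{ucp}}$-convergence under multiplication by a finite random variable --- which is exactly the intended reasoning.
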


     \begin{proposition} \label{fgen2}
Let $ \{{\cal E}_t\}_{0\le t\le T}$(${\cal F}_t\subset {\cal E}_t$, $0\le t\le T$)  be another filtration which satisfies the usual condition. Suppose that the ${\cal E}_t$-compensator\footnote{Since $N^i(\mathrm{d}t,\mathrm{d}z)$ is also the jump measure of the ${\cal E}_t$-adapted c\`{a}dl\`{a}g process $\eta^i_t$, $N^i(\mathrm{d}t,\mathrm{d}z)$ is also an ${\cal E}_t$-integer-valued random measure, which has a unique ${\cal E}_t$-compensator (the L\'{e}vy system) $\hat N_{{\cal E}}^i(\mathrm{d}t,\mathrm{d}z)$ with respect to $\{{\cal E}_t\}_{0\le t\le T}$, $i=1,2$ (see Definition \ref{integer_rm} and Remark \ref{r_integer_rm}).} $\hat N^i_{\cal {E}}(\mathrm{d}t,\mathrm{d}z)$ of  $N^i(\mathrm{d}t,\mathrm{d}z)$ is absolutely continuous in time, i.e., $\hat N^i_{\cal {E}}(\mathrm{d}t,\mathrm{d}z)=G_{\cal E}^i(t,\mathrm{d}z)\mathrm{d}t$, and $\zeta$ is an ${\cal E}_t$-predictable random field which is It\^{o} integrable with respect to $\tilde N^i$\footnote{When $\hat N^i_{\cal {E}} \neq \hat N^i $, $i=1,2$, we require the existence of the  It\^{o} integral $\int_0^T\int_{\mathbb{R}_0}\zeta(t,z)\tilde N^i(\mathrm{d}t,\mathrm{d}z):=\int_0^T\int_{\mathbb{R}_0}\zeta(t,z)\tilde N_{\cal {E}}^i(\mathrm{d}t,\mathrm{d}z)+\int_0^T\int_{\mathbb{R}_0}\zeta(t,z)\hat N_{\cal {E}}^i(\mathrm{d}t,\mathrm{d}z)-\int_0^T\int_{\mathbb{R}_0}\zeta(t,z)G^i(\mathrm{d}z)\mathrm{d}t$.}, $i=1,2$. Then $\zeta$ is forward integrable and
 \begin{equation} 
\begin{aligned}
\int_0^T\int_{\mathbb{R}_0} \zeta(t,z)\tilde N^i(\mathrm{d}^-t,\mathrm{d}z)= \int_0^T\int_{\mathbb{R}_0} \zeta(t,z) \tilde N^i(\mathrm{d}t,\mathrm{d}z),\quad i=1,2.
\end{aligned}
\end{equation}
   \end{proposition}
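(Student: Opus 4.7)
The plan is to mirror the strategy of Proposition \ref{fgen1}: identify the defining truncating sum of Definition \ref{forn} with a well-defined stochastic integral under the enlarged filtration $\{\mathcal{E}_t\}$ and then pass to the limit as $U_n\uparrow\mathbb{R}_0$. The new ingredient compared with the continuous case is that $\tilde N^i$ is no longer a martingale under $\{\mathcal{E}_t\}$, so the first move has to be switching to the $\mathcal{E}_t$-compensated measure $\tilde N^i_{\mathcal{E}}:=N^i-\hat N^i_{\mathcal{E}}$. Combined with the absolute-continuity assumption $\hat N^i_{\mathcal{E}}(\mathrm{d}t,\mathrm{d}z)=G^i_{\mathcal{E}}(t,\mathrm{d}z)\mathrm{d}t$, this yields the decomposition
\begin{equation*}
\tilde N^i(\mathrm{d}s,\mathrm{d}z)=\tilde N^i_{\mathcal{E}}(\mathrm{d}s,\mathrm{d}z)+\bigl(G^i_{\mathcal{E}}(s,\mathrm{d}z)-G^i(\mathrm{d}z)\bigr)\mathrm{d}s,
\end{equation*}
which is exactly how the footnote interprets $\int\zeta\,\mathrm{d}\tilde N^i$ on the right-hand side of the claim.

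Substituting into Definition \ref{forn}, I would split, for each $n$,
\begin{equation*}
\int_0^t\!\!\int_{\mathbb{R}_0}\zeta(s,z)1_{U_n}\tilde N^i(\mathrm{d}s,\mathrm{d}z)=\int_0^t\!\!\int_{\mathbb{R}_0}\zeta(s,z)1_{U_n}\tilde N^i_{\mathcal{E}}(\mathrm{d}s,\mathrm{d}z)+\int_0^t\!\!\int_{\mathbb{R}_0}\zeta(s,z)1_{U_n}\bigl(G^i_{\mathcal{E}}(s,\mathrm{d}z)-G^i(\mathrm{d}z)\bigr)\mathrm{d}s.
\end{equation*}
Because $\zeta$ is $\mathcal{E}_t$-predictable, the first summand is a genuine $\mathcal{E}_t$-martingale integral, and I would use Doob's maximal inequality together with the $L^2$ isometry for stochastic integrals against a compensated Poisson random measure---applied after a standard localization $\tau_k\uparrow T$ if the footnote's It\^{o} integrability is not already square-integrability---to obtain ucp convergence on $[0,T]$ to $\int_0^\cdot\!\!\int_{\mathbb{R}_0}\zeta(s,z)\tilde N^i_{\mathcal{E}}(\mathrm{d}s,\mathrm{d}z)$ as $U_n\uparrow\mathbb{R}_0$. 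The second summand is a pathwise absolutely convergent Lebesgue-type integral whose integrand is dominated by an integrable function thanks to the footnote's It\^{o} integrability hypothesis, so dominated convergence gives ucp convergence to $\int_0^\cdot\!\!\int_{\mathbb{R}_0}\zeta(s,z)(G^i_{\mathcal{E}}(s,\mathrm{d}z)-G^i(\mathrm{d}z))\mathrm{d}s$. Adding the two limits and comparing with the footnote's definition of $\int_0^T\!\!\int_{\mathbb{R}_0}\zeta\,\mathrm{d}\tilde N^i$ delivers both forward integrability and the claimed identity.

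The main obstacle I anticipate is controlling the martingale piece in ucp under the bare It\^{o} integrability assumption, since the footnote couples integrability against $\tilde N^i_{\mathcal{E}}$ with integrability against the drift correction rather than stating it for each piece. The natural remedy is to read the footnote as already delivering both pieces separately, so that It\^{o} integrability of $\zeta$ against $\tilde N^i$ automatically entails It\^{o} integrability against $\tilde N^i_{\mathcal{E}}$; a localizing sequence then reduces to the square-integrable case where Doob's inequality and the $L^2$ isometry give the required ucp convergence. The absolute-continuity hypothesis on $\hat N^i_{\mathcal{E}}$ is essential at precisely this step: without it the drift correction could not be written as an ordinary Lebesgue integral and dominated convergence would not apply to the second summand.
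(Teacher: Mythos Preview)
Your proposal is correct and follows the same underlying strategy as the paper: pass from $\tilde N^i$ to the $\mathcal{E}_t$-compensated measure $\tilde N^i_{\mathcal{E}}$, identify the truncated forward-integral approximants with genuine It\^{o} integrals plus a drift correction, and let $U_n\uparrow\mathbb{R}_0$. The difference is purely one of packaging. The paper compresses the entire convergence step into a citation of the convergence property for It\^{o} integrals with respect to random measures (Proposition~3.39 in Eberlein--Kallsen) and, in the case $\hat N^i_{\mathcal{E}}=\hat N^i$, invokes Theorem~11.22 in He--Wang--Yan to justify that $\zeta 1_{U_n}$ is $\hat N^i$-integrable so that the two terms in Definition~\ref{forn} are well defined. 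You instead unpack those citations by hand: localization plus Doob's inequality and the $L^2$ isometry for the martingale piece, dominated convergence for the drift piece. Your version is more self-contained; the paper's is terser but relies on the reader having the cited results at hand.
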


     \begin{proof}
     When the ${\cal E}_t$-compensator $\hat N^i_{\cal {E}} \neq \hat N^i $, $i=1,2$, it is an immediate consequence of the convergence property for the It\^{o} integral with respect to random measures (see \cite[Proposition 3.39]{Eberlein19}). Thus, we only consider the case when $\hat N^i_{\cal {E}} =\hat N^i $, $i=1,2$. By Theorem 11.22 in \cite{He92},  $\zeta 1_{U_n}$ is $\hat N^i$-integrable (hence $\tilde N^i$-integrable), $i=1,2$, which ensures the validity of the two terms in the right hand of (\ref{forwardN}). The conclusion follows from Proposition 3.39 in \cite{Eberlein19}.
        
     \end{proof}

   The It\^{o} formula for the forward integral with respect to the Brownian motion and the compensated Poisson random measure was first proved in \cite{Russo00} and \cite{DiNunno05}, respectively. Here we give the It\^{o} formula for the forward integral with respect to the L\'{e}vy process.

     \begin{theorem}\label{itofor}
  Consider a process of the form 
  \begin{equation}
  Y_t=Y_0+\int_0^t\alpha_s\mathrm{d}s+\sum_{i=1}^2\int_0^t\varphi_i(s)\mathrm{d}^-W^i_s+\sum_{i=1}^2\int_0^t\int_{\mathbb{R}_0}\zeta_i(s,z)\tilde N^i(\mathrm{d}^-s,\mathrm{d}z),
  \end{equation}
   where $Y_0$ is a constant, $\alpha$ is a c\`{a}gl\`{a}d process, $\varphi_i$ is c\`{a}gl\`{a}d and forward integrable with respect to $W^i$, and $\zeta_i(t,z)$ is c\`{a}gl\`{a}d in $t$ for fixed $z$ and continuous in $z$ around zero for a.a. $(t,\omega)$ and forward integrable with respect to $\tilde N^i$, $i=1,2$. Moreover, suppose that 
     \begin{equation*}
     \int_0^T |\alpha_t| \mathrm{d}t+  \sum_{i=1}^2   \int_0^T|\varphi_i(t)|^2 \mathrm{d}t+\sum_{i=1}^2\int_0^T\int_{\mathbb{R}_0}|\zeta_i(t,z)|^2 G^i(\mathrm{d}z)\mathrm{d}t<\infty.
       \end{equation*}
    Then for any function $f\in C^2(\mathbb{R})$ we have (in the weak sense)
   \begin{equation}\label{equitos}
\begin{aligned}
f(Y_t)=&f(Y_0)+\int_0^tf'(Y_s)\alpha_s\mathrm{d}s+\sum_{i=1}^2\int_0^tf'(Y_s)\varphi_i(s)\mathrm{d}^-W^i_s+\frac{1}{2}\sum_{i=1}^2\int_0^tf''(Y_s)\varphi_i^2(s)\mathrm{d}s\\
&+\sum_{i=1}^2\int_0^t\int_{\mathbb{R}_0}\left[f(Y_{s-}+\zeta_i(s,z))-f(Y_{s-})\right]\tilde N^i(\mathrm{d}^-s,\mathrm{d}z)\\
&+\sum_{i=1}^2\int_0^t\int_{\mathbb{R}_0}\left[f(Y_{s-}+\zeta_i(s,z))-f(Y_{s-})-f'(Y_{s-})\zeta_i(s,z)\right]G^i(\mathrm{d}z)\mathrm{d}s.
\end{aligned}
\end{equation}
 \end{theorem}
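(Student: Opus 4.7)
The plan is to combine two forward-Itô formulas already in the literature --- the Russo--Vallois formula for the Brownian forward integral \cite{Russo00} and the Di Nunno formula for the forward integral with respect to the compensated Poisson random measure \cite{DiNunno05} --- via a truncation/limit scheme. Since both formulas are statements about ucp-convergence and the Brownian and Poisson components have disjoint jumps, the two formulas should decouple cleanly in the limit.

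As a first step I would truncate the small jumps. For $n\in\mathbb{N}_+$ set $\zeta_i^n(s,z):=\zeta_i(s,z)\,1_{U_n}(z)$ and define
\[
Y^n_t:=Y_0+\int_0^t\alpha_s\,\mathrm{d}s+\sum_{i=1}^2\int_0^t\varphi_i(s)\,\mathrm{d}^-W^i_s+\sum_{i=1}^2\int_0^t\int_{\mathbb{R}_0}\zeta_i^n(s,z)\,\tilde N^i(\mathrm{d}s,\mathrm{d}z).
\]
Because $G^i(U_n)<\infty$, the truncated Poisson part has only finitely many jumps on $[0,T]$ almost surely, so between consecutive jumps $Y^n$ reduces to a drift plus Brownian forward integrals. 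On each maximal jump-free subinterval Russo--Vallois gives the $\mathrm{d}s$-term, the $\mathrm{d}^-W^i$-terms and the $\tfrac12 f''(Y^n)\varphi_i^2\,\mathrm{d}s$ correction; at each jump time $s$ one writes $f(Y^n_s)-f(Y^n_{s-})=f(Y^n_{s-}+\Delta Y^n_s)-f(Y^n_{s-})$ and regroups the finitely many discrete jumps against $\tilde N^i\,1_{U_n}$ plus the absolutely continuous compensator $G^i\,1_{U_n}\mathrm{d}s$ to obtain (\ref{equitos}) with $\zeta_i$ replaced by $\zeta_i^n$.

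As a second step I would let $n\to\infty$. By Definition~\ref{forn} the truncated jump processes converge in $\mathcal{D}_{\text{ucp}}$, hence $Y^n\to Y$ in $\mathcal{D}_{\text{ucp}}$. The Lebesgue-type terms $\int_0^t f'(Y^n_s)\alpha_s\,\mathrm{d}s$ and $\int_0^t f''(Y^n_s)\varphi_i^2(s)\,\mathrm{d}s$, as well as the compensator integral against $G^i(\mathrm{d}z)\mathrm{d}s$, are controlled by dominated convergence, using the local boundedness of $f,f',f''$ on compacts, the estimate $|f(y+\zeta)-f(y)-f'(y)\zeta|\le C\zeta^2$ on bounded sets (this is where continuity of $\zeta_i$ in $z$ around $0$ enters), and the integrability hypotheses assumed in the statement.

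The main obstacle is the ucp convergence of the two \emph{forward} stochastic integrals whose integrands themselves depend on $n$, namely $\int_0^t f'(Y^n_s)\varphi_i(s)\,\mathrm{d}^-W^i_s$ and the analogous $\tilde N^i(\mathrm{d}^-s,\mathrm{d}z)$ integral, since the forward integral is not continuous in its integrand with respect to any obvious norm. The cleanest route I see is a diagonal argument that first unwraps the $\varepsilon$-smoothing of Definition~\ref{forw}: express each Brownian forward integral as the ucp-limit of $\varepsilon^{-1}\int_0^t(\cdot)(W^i_{(s+\varepsilon)\wedge t}-W^i_s)\,\mathrm{d}s$, apply the classical Itô formula to the resulting smooth/finite-activity approximation $Y^{n,\varepsilon}$, then send $\varepsilon\to 0^+$ (which produces the hidden $\tfrac12 f''\varphi_i^2\,\mathrm{d}s$ correction à la Russo--Vallois) before letting $n\to\infty$. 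The conceptual combination is standard but verifying that a diagonal sequence $(\varepsilon_k,n_k)$ exists along which every term converges in ucp is where the real bookkeeping lies.
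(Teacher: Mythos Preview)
Your proposal is far more detailed than what the paper actually does: the paper's ``proof'' consists entirely of the sentence ``We refer to \cite{Russo00} for the proof in the Brownian motion case and to \cite{DiNunno05} for the compensated Poisson random measure case.'' So you are not deviating from the paper so much as reconstructing the argument that the cited references carry out; the truncation-then-limit scheme you describe is indeed the standard route taken in \cite{DiNunno05} (and in the Di Nunno--{\O}ksendal--Proske monograph).

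One remark on what you flag as the ``main obstacle.'' You worry about proving ucp convergence of $\int_0^t f'(Y^n_s)\varphi_i(s)\,\mathrm{d}^-W^i_s$ as $n\to\infty$, since the forward integral has no obvious continuity in the integrand. But note the qualifier ``in the weak sense'' in the theorem statement (and recall Definitions~\ref{forw} and~\ref{forn} distinguish ucp convergence from pointwise-in-$t$ convergence in probability). In the standard treatment the forward integrals on the right-hand side are not shown to converge directly; rather, one proves that $f(Y^n_t)\to f(Y_t)$ and that every \emph{other} term on the right-hand side converges (these are Lebesgue-type integrals controlled by dominated convergence, exactly as you outline), and then the forward-integral terms are obtained as the residual. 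In other words, forward integrability of $f'(Y_{s})\varphi_i(s)$ and of $f(Y_{s-}+\zeta_i)-f(Y_{s-})$ is part of the \emph{conclusion}, not a hypothesis to be verified separately. This dissolves the diagonal-sequence difficulty you anticipate: no direct stability estimate for the forward integral is needed.
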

        \begin{proof}
    We refer to \cite{Russo00} for the proof in the Brownian motion case and to \cite{DiNunno05} for the compensated Poisson random measure case.
        
     \end{proof}

\section{Model formulation}
 \label{Sec:model}
We assume that all uncertainties come from the fixed filtered probability space $(\Omega,{\cal F},\{{\cal F}_t\}_{t\ge 0},{\mathbb P})$ defined in Section \ref{forwardin} in our model. Fix a terminal time $T>0$. Suppose all filtrations given in this section satisfy the usual condition. 

Consider an insurer who can invest in the financial market. Two assets are available for investment, one risk-free asset (bond) $B$ and one risky asset (stock) $S$, whose dynamics are governed by the following anticipating stochastic differential equations (SDEs)  
 \begin{eqnarray} 
  \left\{ \begin{aligned}&\mathrm{d}B_t=r_tB_t\mathrm{d}t,\quad 0\le t\le T,
  \\&   \mathrm{d}S_t=S_{t-}\left(  \mu(t,\pi_t)\mathrm{d}t+\sigma_t\mathrm{d}^- W^1_t+\int_{\mathbb{R}_0}\gamma_1(t,z)\tilde N^1(\mathrm{d}^-t,\mathrm{d}z)  \right), \quad 0\le t\le T,\end{aligned} \right.   
  \label{fin-m}
 \end{eqnarray}
with constant initial values $1$ and $S_0>0$, respectively. Suppose that there are large investors in the market and they have access to insider information which can be characterized by a larger filtration $\{{\cal G}^1_t\}_{0\le t\le T}$ (i.e., ${\cal F}_t\subset {\cal G}^1_t$, $0\le t\le T$). Assume the insurer is also a large investor and owns insider information  characterized by another filtration $\{{\cal H}_t\}_{0\le t\le T}$ such that
\begin{equation}
{\cal F}_t\subset{\cal G}^1_t\subset {\cal H}_t, \quad0\le t\le T.
\end{equation}
 The investment strategies they take influence the coefficients of the financial asset processes. Thus, we suppose that $r_t, {\mu}(t,x), \sigma_t$ and $\gamma_1(t,z)$ , $t\in [0,T]$, $x\in \mathbb{R}$, $z\in{\mathbb R}_0$ are all c\`{a}gl\`{a}d stochastic coefficients adapted to $\{{\cal G}^1_t\}$ for fixed $x$ and $z$, and $\mu(t,\cdot)$ is $C^1$ for every $t\in[0,T]$. The mean rate of return $\mu$ on the risky asset partly depends on the investment strategy $\pi$ of the insurer (see \cite{Kohatsu06, DiNunno09}). Here,  the investment strategy $\pi_t$ is defined as an ${\cal H}_t$-adapted c\`{a}gl\`{a}d process, which represents the proportion of the insurer's total wealth $X_t$ invested in the risky asset $S_t$ at time $t$.

The insurer's risk (per policy) is given by 
 \begin{eqnarray} 
 \mathrm{d}R_t=a_t\mathrm{d}t+b_t\mathrm{d}^-\bar W_t+\int_{\mathbb{R}_0}\gamma_2(t,z)\tilde N^2(\mathrm{d}^-t,\mathrm{d}z),\quad 0\le t\le T,
  \label{ins-m}
 \end{eqnarray}
with zero initial value. Here, we choose $\bar W_t:=\rho W^1_t+\sqrt{1-\rho^2}W^2_t$ with $-1< \rho\le0$ to describe the correlation between the insurer's liabilities and her capital income from financial investment (see \cite{Zou14}). This means that claims are the required payments to the insured holders due to either defaults of the obligors or for collateral calls when the prices of the insured risky asset declines. Similarly, the coefficients might be influenced by the uncertain economic environment characterized by a larger filtration $\{{\cal G}^2_t\}_{0\le t\le T}$ such that
\begin{equation}
{\cal F}_t\subset {\cal G}^2_t\subset {\cal H}_t, \quad 0\le t\le T.
\end{equation}
 Thus, we suppose that $a_t$, $b_t$ and $\gamma_2(t,z)$, $t\in[0,T]$, $z\in\mathbb{R}_0$ are all c\`{a}gl\`{a}d stochastic coefficients adapted to $\{{\cal G}^2_t\}$ for fixed $z$ (see \cite{Peng18}). 
 
Note that $W^i$ and $\eta^i$, $i=1,2$, may not be semimartingales with respect to $\{{\cal G}^1_t\}$ or $\{{\cal G}^2_t\}$, and the forward integrals defined in Section \ref{forwardin} are the natural generalization of semimartingale It\^{o} integrals (see Propositions \ref{fgen1} and \ref{fgen2}). Thus, the integrals with respect to $W^1$, $W^2$, $\tilde N^1$ and $\tilde N^2$ in (\ref{fin-m})  and (\ref{ins-m}) are viewed as the forward integrals instead of the It\^{o} integrals. 

Denote by $\hat N_{\cal H}^i(\mathrm{d}t,\mathrm{d}z)$ the ${\cal H}_t$-compensator of $N^i(\mathrm{d}t,\mathrm{d}z)$, $i=1,2$. We make some assumptions on the coefficients as follows:
 
\begin{itemize}
 \item $\sigma\ge \epsilon>0$ (for some positive constant $\epsilon $) is forward integrable with respect to $W^1$, $b $ is forward integrable with respect to $W^1$ and $W^2$, $\ln(1+\gamma_1(t,z))$ and $\gamma_2(t,z)\ge\epsilon>0$ (for some positive constant $\epsilon $) are continuous in $z$ around zero for a.a. $(t,\omega)$ and forward integrable with respect to $\tilde N^1$ and $\tilde N^2$, respectively;
\end{itemize}
\begin{itemize}
 \item  For each investment strategy $\pi$ given above,
 \begin{equation*}\begin{aligned}
 &\int_0^T \left(|r_t|+|\mu(t,\pi_t)| +|a_t| \right)\mathrm{d}t+\int_0^T\left(\sigma_t^2+b_t^2\right)\mathrm{d}t+\int_0^T\int_{\mathbb{R}_0}\left(\left|\ln(1+\gamma_1(t,z))\right|^2+|\gamma_1(t,z)|^k\right)G^1(\mathrm{d}z)\mathrm{d}t\\
 &+\int_0^T\int_{\mathbb{R}_0}|\gamma_2(t,z)|^kG^2(\mathrm{d}z)\mathrm{d}t+\int_0^T\int_{\mathbb{R}_0}\left|\ln(1+\gamma_1(t,z))-\gamma_1(t,z)\right|G^1(\mathrm{d}z)\mathrm{d}t\\
 &+\int_0^T\int_{\mathbb{R}_0}|\gamma_1(t,z)|^k\hat N_{\cal H}^1(\mathrm{d}t,\mathrm{d}z) +\int_0^T\int_{\mathbb{R}_0}|\gamma_2(t,z)|^k\hat N_{\cal H}^2(\mathrm{d}t,\mathrm{d}z) <\infty, \quad k=1,2.
 \end{aligned} \end{equation*}
\end{itemize}

Under the above conditions, we can solve the anticipating SDEs (\ref{fin-m}) and (\ref{ins-m}) by using the It\^{o} formula for forward integrals (Theorem \ref{itofor}). Moreover, the process of the risky asset $S$ is given by
 \begin{equation}
 \begin{aligned}
S_t=S_0\exp \Bigg\{   &   \int_0^t  \left(  \mu(s,\pi_s)-\frac{1}{2} \sigma^2_s \right) \mathrm{d}s+\int_0^t \sigma_s \mathrm{d}^-W^1_s +\int_0^t\int_{\mathbb{R}_0}\ln(1+\gamma_1(s,z))\tilde N^1( \mathrm{d}^-s, \mathrm{d}z)\\
&+\int_0^t\int_{\mathbb{R}_0}\left[     \ln(1+\gamma_1(s,z))-\gamma_1(s,z)\right]G^1( \mathrm{d}z) \mathrm{d}s \Bigg\}, \quad 0\le t\le T.
\end{aligned}
 \end{equation}
 
Let $\kappa$ be the liability ratio process of the insurer (see \cite{Zou14}). Then $\kappa_tX_t$ is the total number of insurance policies the insurer choose at time $t$. Since the insider information of the insurer also contains the insurance claim, the insurance strategy $\kappa $ is defined as an ${\cal H}_t$-adapted c\`{a}gl\`{a}d process as well. 

Note that both the investment strategy $\pi$ and the insurance strategy $\kappa$ can take negative values, which is to be interpreted as short-selling the risky asset and buying insurance policies from other insurers, respectively. Denote by $u=(\pi,\kappa)$ the total strategy for the insurer. Then her wealth process $X^u$ corresponding to $u$ is governed by the following anticipating SDE (see \cite[page 372]{Karatzas91} for the deduction):
 \begin{equation}\label{wealthsde}
 \begin{aligned}
\frac{\mathrm{d}X^u_t}{X^u_{t-}}=&\left[r_t+(\mu(t,\pi_t)-r_t)\pi_t+(\lambda_t-a_t)\kappa_t\right]\mathrm{d}t+\left( \sigma_t\pi_t-\rho b_t\kappa_t   \right)\mathrm{d}^-W^1_t-\sqrt{1-\rho^2}b_t\kappa_t\mathrm{d}^-W^2_t\\
&+ \int_{\mathbb{R}_0}\pi_t\gamma_1(t,z)\tilde N^1(\mathrm{d}^-t,\mathrm{d}z)- \int_{\mathbb{R}_0}\kappa_t\gamma_2(t,z)\tilde N^2(\mathrm{d}^-t,\mathrm{d}z), \quad 0\le t\le T,
\end{aligned}
 \end{equation}
with constant initial value $X_0>0$. Here, $\lambda_t$ is defined as a ${\cal G}^2_t$-adapted c\`{a}gl\`{a}d process such that $\int_0^T|\lambda_t|\mathrm{d}t<\infty$ and $\lambda_t>a_t>0$ (in the sense of $\mathbb{P}$-a.s.) for every $t\in[0,T]$. It represents the premium per policy for the insurer at time $t$. 

We can apply the It\^{o} formula for forward integrals to solve the anticipating SDE (\ref{wealthsde}). Before that, we impose the following admissible conditions on $u$.

\begin{definition}\label{admiss}
We define ${\cal A}_1$ as the set of all above strategies $u=(\pi,\kappa)$ satisfying the following conditions:
\begin{itemize}
 \item[(i)] $\sigma\pi$ is forward integrable with respect to $W^1$, $b\kappa$ is forward integrable with respect to $W^1 $ and $W^2$, $\ln(1+\pi_t\gamma_1(t,z))$ ($\pi_t\gamma_1>-1+\epsilon_\pi$ for a.e. $(t,z)$ with respect to both $\mathrm{d}t\times G^1(\mathrm{d}z)$ and $ \hat N_{\cal H}^1(\mathrm{d}t,\mathrm{d}z)$ for some $\epsilon_\pi\in(0,1)$ depending on $\pi$) and $\ln(1-\kappa_t\gamma_2(t,z))$ ($\kappa\gamma_2<1-\epsilon_\kappa$ for a.e. $(t,z)$ with respect to both $\mathrm{d}t\times G^2(\mathrm{d}z)$ and $  \hat N_{\cal H}^2(\mathrm{d}t,\mathrm{d}z)$ for some $\epsilon_\kappa\in(0,1)$ depending on $\kappa$) are continuous in $z$ around zero for a.a. $(t,\omega)$ and forward integrable with respect to $\tilde N_1$ and $\tilde N_2$, respectively;
 \end{itemize}
 \begin{itemize}
\item[(ii)]  Assume the following integrability:
 \begin{equation*}\begin{aligned}
 &\int_0^T \left[ |\mu(t,\pi_t)-r_t||\pi_t|+|\lambda_t-a_t||\kappa_t|+|\sigma_tb_t\pi_t\kappa_t|\right]\mathrm{d}t+\int_0^T\left(\sigma^2_t\pi_t^2+b_t^2\kappa_t^2\right) \mathrm{d}t\\
 &+\int_0^T\int_{\mathbb{R}_0}\left(  \left|\ln(1+\pi_t\gamma_1(t,z))\right|^k |\pi_t\gamma_1(t,z)|^k\right)G^1(\mathrm{d}z)\mathrm{d}t+\int_0^T\int_{\mathbb{R}_0}\left(\left|\ln(1-\kappa_t\gamma_2(t,z))\right|^k+|\kappa_t\gamma_2(t,z)|^k   \right)G^2(\mathrm{d}z)\mathrm{d}t\\
 & +\int_0^T\int_{\mathbb{R}_0}\left(  \left|\ln(1+\pi_t\gamma_1(t,z))\right|^k  + |\pi_t\gamma_1(t,z)|^k\right)\hat N_{\cal H}^1(\mathrm{d}t,\mathrm{d}z)+\int_0^T\int_{\mathbb{R}_0}\left(\left|\ln(1-\kappa_t\gamma_2(t,z))\right|^k+|\kappa_t\gamma_2(t,z)|^k   \right)\hat N_{\cal H}^2(\mathrm{d}t,\mathrm{d}z)  \\
 &<\infty,\quad k=1,2.
 \end{aligned} \end{equation*}
\end{itemize}
\end{definition}

Let $u\in{\cal A}_1$. By Theorem \ref{itofor}, the solution of (\ref{wealthsde}) is given by
 \begin{equation}\label{equofxt}
 \begin{aligned}
X^u_t=X_0\exp \Bigg\{   &   \int_0^t  \left[ r_s+(\mu(s,\pi_s)-r_s)\pi_s+(\lambda_s-a_s)\kappa_s-\frac{1}{2}\sigma_s^2 \pi_s^2+\rho\sigma_sb_s\pi_s\kappa_s-\frac{1}{2}b_s^2\kappa_s^2 \right] \mathrm{d}s\\
&+\int_0^t (\sigma_s\pi_s-\rho b_s\kappa_s) \mathrm{d}^-W^1_s -\int_0^t\sqrt{1-\rho^2}b_s\kappa_s\mathrm{d}^-W_s^2+\int_0^t\int_{\mathbb{R}_0}\ln (1+\pi_s\gamma_1(s,z))\tilde N^1(\mathrm{d}^-s,\mathrm{d}z)\\
&+\int_0^t\int_{\mathbb{R}_0}\ln(1-\kappa_s\gamma_2(s,z))\tilde N^2( \mathrm{d}^-s, \mathrm{d}z)+\int_0^t\int_{\mathbb{R}_0}\left[     \ln(1+\pi_s\gamma_1(s,z))-\pi_s\gamma_1(s,z)\right]G^1( \mathrm{d}z) \mathrm{d}s\\
&+\int_0^t\int_{\mathbb{R}_0}\left[     \ln(1-\kappa_s\gamma_2(s,z))+\kappa_s\gamma_2(s,z)\right]G^2( \mathrm{d}z) \mathrm{d}s \Bigg\}, \quad 0\le t\le T.
\end{aligned}
 \end{equation}

Consider a model uncertainty setup. Assume that the insurer is ambiguity averse, implying that she is concerned about the accuracy of statistical estimation, and possible misspecification errors. Thus, a family of parametrized subjective probability measures $\{{{\cal{Q}}}^v\}$ equivalent to the original probability measure $\mathbb{P}$ is assumed to be exist in the real world (see \cite{Gu20}). However, since the insurer has insider information filtration $\{{\cal H}_t\}$ under which $W^i$ and $\eta^i$ might not be semimartingales, $i=1,2$, a generalization for the construction of $\{{{\cal{Q}}}^v\}$ need to be considered by means of the forward integral. 

\begin{definition}\label{admiss_1}
We define ${\cal A}_2$ as the set of all ${\cal H}_t$-adapted c\`{a}gl\`{a}d processes $v_t=(\theta_1(t),\theta_2(t),\theta_3(t),\theta_4(t))$, $t\in [0,T]$, satisfying the following conditions:
\begin{itemize}
 \item[(i)] $\theta_1(t)$, $\theta_2(t)$, $\theta_3(t)$, $\theta_4(t)$ are all forward integrable with respect to $W^1$, $W^2$, $\tilde N^1$, $\tilde N^2$, respectively;
 \end{itemize}
 \begin{itemize}
 \item[(ii)]  Assume the following integrability:
  \begin{equation*}
 \begin{aligned}
\mathbb{E}\Bigg\{ & \int_0^T\left[\theta_1^2(s)+\theta_2^2(s)\right]\mathrm{d}s+ \int_0^T\int_{\mathbb{R}_0}\left[ |\ln(1+\theta_3(s))| ^k+|\theta_3(s)|^k\right]G^1(\mathrm{d}z)\mathrm{d}s \\
&+ \int_0^T\int_{\mathbb{R}_0}\left[ |\ln(1+\theta_4(s))| ^k+|\theta_4(s)|^k\right]G^2(\mathrm{d}z)\mathrm{d}s+\int_0^T\int_{\mathbb{R}_0}\left[ |\ln(1+\theta_3(s))| ^k+|\theta_3(s)|^k\right]\hat N_{{\cal H}}^1(\mathrm{d}s,\mathrm{d}z)\\
 & +\int_0^T\int_{\mathbb{R}_0}\left[ |\ln(1+\theta_4(s))|^k +|\theta_4(s)|^k \right]\hat N_{{\cal H}}^2(\mathrm{d}s,\mathrm{d}z)  \Bigg\}<\infty, \quad k=1,2;
    \end{aligned}
 \end{equation*}
 \end{itemize}
 \begin{itemize}
\item[(iii)]   $\int_0^t\theta_1(s)\mathrm{d}^-W^1_s+\int_0^t\theta_2(s)\mathrm{d}^-W^2_s+\int_0^t\int_{\mathbb{R}_0}\theta_3(s)\tilde N^1(\mathrm{d}^-s,\mathrm{d}z)+\int_0^t\int_{\mathbb{R}_0}\theta_4(s)\tilde N^2(\mathrm{d}^-s,\mathrm{d}z)$ is an ${\cal H}_t$-special semimartingale (see Appendix \ref{appendix_A} for the definition), the local martingale part (in the canonical decomposition) of which satisfies the Novikov condition (see Theorem \ref{novikov}).
\end{itemize}
\end{definition}

For $v\in {\cal A}_2$, the Dol\'{e}ans-Dade exponential $\varepsilon^v_t$ is the unique ${\cal H}_t$-martingale governed by (see Theorem \ref{novikov})
 \begin{equation}\label{theta_sde}
 \begin{aligned}
 \varepsilon^v_t=1+\varepsilon^v_{t-}\Bigg(  &\int_0^t\theta_1(s)\mathrm{d}^-W^1_s+\int_0^t\theta_2(s)\mathrm{d}^-W^2_s+\int_0^t\int_{\mathbb{R}_0}\theta_3(s)\tilde N^1(\mathrm{d}^-s,\mathrm{d}z)+\int_0^t\int_{\mathbb{R}_0}\theta_4(s)\tilde N^2(\mathrm{d}^-s,\mathrm{d}z) \Bigg)^M, \quad 0\le t\le T,
\end{aligned}
 \end{equation}
where $(\cdot)^M$ denotes the local martingale part of a special semimartingale. Thus, we have $\varepsilon^v_T>0$ and $\int_\Omega\varepsilon^v_T\mathrm{d}\mathbb{P}=1$, which induces a probability ${{\cal{Q}}}^v$ equivalent to $\mathbb{P}$ such that $ \frac{\mathrm{d}{\cal{Q}}^v}{\mathrm{d}\mathbb{P}} =\varepsilon^v_T$. Then all such ${\cal{Q}}^v$ form a set of subjective probability measures $\{{\cal{Q}}^v\}_{v\in{\cal A}_2}$.

Taking into account the extra insider information and model uncertainty, the optimization problem for the insurer can be formulated as a (zero-sum) anticipating stochastic differential game. In other words, we need to solve the following problem.

\begin{problem}\label{sdg}
 Select a pair $(u^*,v^*)\in {\cal A}_1'\times {\cal A}_2'$ (see Definition \ref{admiss12} below) such that
\begin{equation}\label{max_u}
V:=J(u^*,v^*)= \sup_{u\in {\cal A}_1'} \inf_{v\in{\cal A}_2'} J(u,v)=  \inf_{v\in{\cal A}_2'}\sup_{u\in {\cal A}_1'}  J(u,v),
\end{equation}
where the performance functional is given by
\begin{equation}\begin{aligned}
J(u,v):=\mathbb{E}_{{\cal{Q}}^v}\left[   U(X_T^u) +\int_0^Tg(s,v_s)\mathrm{d}s\right]=\mathbb{E}\left[  \varepsilon_T^v U(X_T^u) +\int_0^T \varepsilon_s^vg(s,v_s)\mathrm{d}s\right],
\end{aligned}\end{equation}
 the utility function $U:(0,\infty)\rightarrow \mathbb{R}$ is a strictly increasing and concave function with a strictly decreasing derivative, the penalty function $g: [0,T]\times\mathbb{R}^4\times\Omega\rightarrow \mathbb{R}$ is a measurable function and Fr\'{e}chet differentiable in $v$. The term $\mathbb{E}_{{\cal{Q}}^v}\left[\int_0^T g(s,v_s)\mathrm{d}s \right]$ is viewed as a step adopted to penalize the difference between ${\cal{Q}}^v$ and $\mathbb{P}$.  We call $V$ the value (or the optimal expected utility under the worst-case probability) of Problem \ref{sdg}.
\end{problem}

\begin{definition}\label{admiss12}
Define ${\cal A}_1'$ as some subset of $ {\cal A}_1$ with $\mathbb{E}\left[|U(X_T^u)|^2+|U'(X_T^u)X_T^u|^2\right]<\infty$ for all $u\in{\cal A}_1'$. Define ${\cal A}_2'$ as some subset of $  {\cal A}_2$ with $\mathbb{E}\left[ |{\varepsilon_T^v}|^2  + \int_0^T|g(s,v_s)|^2\mathrm{d}s \right]<\infty$ for all $v\in {\cal A}_2'$.
\end{definition}
 
 \begin{remark}\label{aremarkofl2m}
For $v\in{\cal A}_2'$, $\left|\varepsilon_t^v\right|^2 $ is an ${\cal H}_t$-submartingale and $\mathbb{E} |\varepsilon_t^v|^2\le \mathbb{E} |\varepsilon_T^v |^2<\infty$ for all $t\in[0,T]$ by the Jensen inequality. 
 \end{remark}
 
 \begin{remark}
Here, we suppose ${\cal A}_1'$ and ${\cal A}_2'$ are some subsets of ${\cal A}_1$ and ${\cal A}_2$ with the above integrability conditions, respectively, since we need more assumptions in Sections \ref{half_section}-\ref{sec_example2} to characterize the optimal pair $(u^*,v^*)\in {\cal A}_1'\times{\cal A}_2'$. If $u^*\in {\cal A}_1'$ or $v^*\in{\cal A}_2'$ dose not satisfy those assumptions, respectively, we can impose those assumptions to ${\cal A}_1'$ or ${\cal A}_2'$ to narrow the two sets such that $(u^*,v^*)\in {\cal A}_1'\times{\cal A}_2'$ fits those assumptions (see Remark \ref{remark_themain1}).
 \end{remark}

\section{A half characterization of the robust optimal strategy}
 \label{half_section}

 \begin{assumption}\label{assump1}
If $(u^*,v^*)\in {\cal A}_1'\times {\cal A}_2'$ is robust optima for Problem \ref{sdg}, then for all bounded $\alpha \in {\cal A}_1' $, there exists some $\delta>0$ such that $ u^*+y\alpha\in {\cal A}_1' $ for all $|y|<\delta$. Moreover, the following family of random variables 
  \begin{equation*}
 \begin{aligned}
\left \{  \varepsilon^{v^* }_T U'(X_T^{u^*+y\alpha}) \frac{\mathrm{d}}{\mathrm{d}y}X_T^{u^*+y\alpha}   \right\}_{y\in(-\delta,\delta)}
 \end{aligned}
 \end{equation*}
is $\mathbb{P}$-uniformly integrable, where $\frac{\mathrm{d}}{\mathrm{d}y}$ means that $\frac{\mathrm{d}}{\mathrm{d}y}X_T^{u^*+y\alpha}$ exists and the interchange of differentiation and integral with respect to $\ln X_T^{u^*+y\alpha}$ in (\ref{equofxt}) is justified.
 \end{assumption}
 
  \begin{assumption}\label{assump2}
  Let $u_s=\left(\vartheta_1 1_{(t,t+h]}(s),\vartheta_21_{(t,t+h]}(s)\right)$, $0\le s\le T$, for fixed $0\le t<t+h\le T$, where the random variable $\vartheta_i$ is of the form $1_{A_t}$ for any ${\cal H}_t$-measurable set $A_t$, $i=1,2$. Then $u\in{\cal A}_1'$.
  \end{assumption}
  
  \begin{theorem}\label{mainth1}
Suppose $(u^*,v^*)\in {\cal A}_1'\times {\cal A}_2'$ is optimal for Problem \ref{sdg} under Assumptions \ref{assump1} and \ref{assump2}. Then the following stochastic processes
    \begin{equation}\label{equ1ofth1}
 \begin{aligned}
m_1^{u^*}(t):=&\int_0^t   \left(\mu(s,\pi_s^*)-r_s+\frac{\partial}{\partial x}\mu(s,\pi^*_s)\pi^*_s-\sigma^2_s\pi^*_s+\rho\sigma_sb_s\kappa_s^*\right)\mathrm{d}s+\int_0^t \sigma_s\mathrm{d}^-W_s^1 \\
&+\int_0^t\int_{\mathbb{R}_0}\frac{ \gamma_1(s,z)}{1+\pi_s^*\gamma_1(s,z)}\tilde N^1(\mathrm{d}^-s,\mathrm{d}z) -\int_0^t\int_{\mathbb{R}_0}\frac{ \pi_s^*\gamma^2_1(s,z)}{1+\pi_s^*\gamma_1(s,z)} G^1(\mathrm{d} z)\mathrm{d}s,\quad 0\le t\le T,
\end{aligned}
 \end{equation}
 and
    \begin{equation}\label{equ2ofth1}
 \begin{aligned}
m_2^{u^*}(t):=&\int_0^t   \left(\lambda_s-a_s+\rho\sigma_sb_s\pi^*_s-b^2_s\kappa_s^*\right)\mathrm{d}s-\int_0^t \rho b_s\mathrm{d}^-W_s^1-\int_0^t \sqrt{1-\rho^2}b_s\mathrm{d}^-W_s^2 \\
&-\int_0^t\int_{\mathbb{R}_0}\frac{ \gamma_2(s,z)}{1-\kappa_s^*\gamma_2(s,z)}\tilde N^2(\mathrm{d}^-s,\mathrm{d}z) 
-\int_0^t\int_{\mathbb{R}_0}\frac{ \kappa_s^*\gamma^2_2(s,z)}{1-\kappa_s^*\gamma_2(s,z)} G^2(\mathrm{d} z)\mathrm{d}s,\quad 0\le t\le T,
\end{aligned}
 \end{equation}
 are $({\cal H}_t,Q_{u^*,v^*})$-martingales, where $F_{u^*,v^*}(s):=\mathbb{E}\Bigg[ \frac{\varepsilon_T^{v^*}U'(X_T^{u^*}) X_T^{u^*}}{\mathbb{E}\left(\varepsilon_T^{v^*}U'(X_T^{u^*}) X_T^{u^*}\right)}  \big| {\cal H}_s \Bigg]$, $s\in[0, T]$, and $Q_{u^*,v^*}$ is an equivalent probability measure of $\mathbb{P}$ on $(\Omega, {\cal F})$ defined by $Q_{u^*,v^*}(\mathrm{d}\omega):=F_{u^*,v^*}(T)\mathbb{P}(\mathrm{d}\omega)$.
  \end{theorem}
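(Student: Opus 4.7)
The plan is to extract (\ref{equ1ofth1}) and (\ref{equ2ofth1}) as the first-order necessary conditions at $u^*$ for the inner supremum $u \mapsto J(u, v^*)$ of Problem~\ref{sdg}. For any bounded perturbation direction $\alpha = (\alpha_1, \alpha_2)$, Assumption~\ref{assump1} guarantees that $u^* + y\alpha \in {\cal A}_1'$ for $|y| < \delta$ and, together with the stated $\mathbb{P}$-uniform integrability, permits differentiating under the expectation to give the stationarity identity
\begin{equation*}
0 \;=\; \frac{\mathrm{d}}{\mathrm{d}y}\bigg|_{y=0} J(u^* + y\alpha, v^*) \;=\; \mathbb{E}\Big[\varepsilon_T^{v^*}\, U'(X_T^{u^*})\, \tfrac{\mathrm{d}}{\mathrm{d}y}\big|_{y=0} X_T^{u^* + y\alpha}\Big].
\end{equation*}

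I then compute the G\^{a}teaux derivative of $X_T^u$ from the explicit exponential formula (\ref{equofxt}). Writing $X_T^{u^*+y\alpha} = X_T^{u^*}\exp\{\ln X_T^{u^*+y\alpha} - \ln X_T^{u^*}\}$ and differentiating the logarithm term by term at $y=0$, each drift term contributes its ordinary $\partial_\pi$/$\partial_\kappa$ derivative (for example $\partial_\pi[(\mu(s,\pi)-r_s)\pi] = \mu(s,\pi)-r_s + \partial_x\mu \cdot \pi$) while each forward stochastic integral contributes the pathwise derivative of its integrand, e.g.\ $\partial_\pi \ln(1+\pi\gamma_1) = \gamma_1/(1+\pi\gamma_1)$ and $\partial_\kappa \ln(1-\kappa\gamma_2) = -\gamma_2/(1-\kappa\gamma_2)$, with the $G^i$-compensator corrections giving $-\pi^*\gamma_1^2/(1+\pi^*\gamma_1)$ and $-\kappa^*\gamma_2^2/(1-\kappa^*\gamma_2)$. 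Direct comparison with (\ref{equ1ofth1}) and (\ref{equ2ofth1}) shows that the $\alpha_1$-terms reconstruct exactly the differential defining $m_1^{u^*}$ and the $\alpha_2$-terms that of $m_2^{u^*}$.

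To turn stationarity into the martingale property I localize using Assumption~\ref{assump2}. Choose $\alpha = (\vartheta_1\mathbf{1}_{(t,t+h]}, 0)$ with $\vartheta_1 = \mathbf{1}_{A_t}$ for arbitrary $A_t \in {\cal H}_t$; by the multiplication formulas (Propositions~\ref{pro1} and~\ref{pro2}) the ${\cal H}_t$-measurable factor $\vartheta_1$ pulls outside each forward integral on $(t, t+h]$, and the stationarity identity collapses to
\begin{equation*}
0 \;=\; \mathbb{E}\Big[\varepsilon_T^{v^*}\, U'(X_T^{u^*})\, X_T^{u^*}\, \mathbf{1}_{A_t}\,\big(m_1^{u^*}(t+h) - m_1^{u^*}(t)\big)\Big].
\end{equation*}
Since $U' > 0$ and $X_T^{u^*} > 0$, dividing by the positive normalizer $\mathbb{E}[\varepsilon_T^{v^*} U'(X_T^{u^*}) X_T^{u^*}]$ and rewriting through the density $F_{u^*,v^*}(T)$ yields $\mathbb{E}_{Q_{u^*,v^*}}[\mathbf{1}_{A_t}(m_1^{u^*}(t+h) - m_1^{u^*}(t))] = 0$ for every $A_t \in {\cal H}_t$, which is the $({\cal H}_t, Q_{u^*, v^*})$-martingale property of $m_1^{u^*}$. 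Taking $\alpha = (0, \vartheta_2\mathbf{1}_{(t,t+h]})$ gives the parallel conclusion for $m_2^{u^*}$.

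\textbf{Main obstacle.} The delicate step is the term-by-term differentiation of the forward integrals inside $\ln X_T^{u^*+y\alpha}$, because forward integrals are defined as ucp-limits rather than ordinary integrals and do not automatically admit dominated-convergence exchanges. What rescues the argument is that each integrand is smooth in $y$: linear in the Brownian part, and log-analytic in the jump part thanks to the lower bounds $\pi^*\gamma_1 > -1+\epsilon_\pi$ and $\kappa^*\gamma_2 < 1-\epsilon_\kappa$ from Definition~\ref{admiss}, so pathwise differentiation commutes with the ucp-limit inside each forward integral; Assumption~\ref{assump1} then covers the exchange of $\mathrm{d}/\mathrm{d}y$ with $\mathbb{E}$.
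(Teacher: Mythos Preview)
Your proof is correct and follows essentially the same route as the paper: perturb $u^*$ in each coordinate, differentiate $J$ at $y=0$ via the exponential formula~(\ref{equofxt}), localize with $\alpha = (\vartheta_1 \mathbf{1}_{(t,t+h]}, 0)$ from Assumption~\ref{assump2}, pull $\vartheta_1$ through the forward integrals via Propositions~\ref{pro1} and~\ref{pro2}, and read off the martingale property under $Q_{u^*,v^*}$. One small remark: your ``main obstacle'' paragraph tries to argue the interchange of $\mathrm{d}/\mathrm{d}y$ with the forward integrals from smoothness, but in fact Assumption~\ref{assump1} already \emph{postulates} this interchange (its last clause says ``$\tfrac{\mathrm{d}}{\mathrm{d}y}X_T^{u^*+y\alpha}$ exists and the interchange of differentiation and integral with respect to $\ln X_T^{u^*+y\alpha}$ in~(\ref{equofxt}) is justified''), so that step needs no independent analysis.
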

  
 \begin{proof}
 Suppose that the pair $(u^*,v^*)\in {\cal A}_1'\times{\cal A}_2'$ is optimal. Then for any bounded $(\alpha_1,0) \in {\cal A}_1' $ and $|y|<\delta$, we have $J(u^*+y(\alpha_1,0),v^*)\le J(u^*,v^*) $, which implies that $y=0$ is a maximum point of the function $y\mapsto J(u^*+y(\alpha_1,0),v^*)$. Thus, we have $\frac{\mathrm{d}}{\mathrm{d}y}J(u^*+y(\alpha_1,0),v^*){|}_{y=0}=0$ once the differentiability is established. Thanks to Assumption \ref{assump1}, we can deduce by (\ref{equofxt}) that 
  \begin{equation}\label{euq_bianfen_1}
 \begin{aligned}
&\frac{\mathrm{d}}{\mathrm{d}y}J(u^*+y(\alpha_1,0),v^*){|}_{y=0}
\\=&\mathbb{E}\Bigg\{  \varepsilon_T^{v^*}U'(X_T^{u^*}) X_T^{u^*}
\Bigg[\int_0^T  \alpha_1(s)\Big(\mu(s,\pi_s^*)-r_s+\frac{\partial}{\partial x}\mu(s,\pi^*_s)\pi^*_s-\sigma^2_s\pi^*_s+\rho\sigma_sb_s\kappa_s^*\Big)\mathrm{d}s\\
&+\int_0^T\alpha_1(s)\sigma_s\mathrm{d}^-W_s^1 +\int_0^T\int_{\mathbb{R}_0}\frac{\alpha_1(s)\gamma_1(s,z)}{1+\pi_s^*\gamma_1(s,z)}\tilde N^1(\mathrm{d}^-s,\mathrm{d}z)-\int_0^T\int_{\mathbb{R}_0}\frac{\alpha_1(s)\pi_s^*\gamma^2_1(s,z)}{1+\pi_s^*\gamma_1(s,z)} G^1(\mathrm{d} z)\mathrm{d}s
\Bigg]
\Bigg\}\\
=&0.
 \end{aligned}
 \end{equation}
 Now let us fix $0\le t<t+h\le T$. By Assumption \ref{assump2}, we can choose $(\alpha_1,0) \in{\cal A}_1'$ of the form
   \begin{equation*}
 \begin{aligned}
 \alpha_1(s)=\vartheta_1 1_{(t,t+h]}(s),\quad 0\le s\le T,
   \end{aligned}
 \end{equation*}
 where $\vartheta_1=1_{A_t}$ for an arbitrary ${\cal H}_t$-measurable set $A_t$. Then (\ref{euq_bianfen_1}) gives
    \begin{equation*}
 \begin{aligned}
0=\mathbb{E}_{Q_{u^*,v^*}}\Bigg\{  \vartheta_1
\Bigg[&\int_t^{t+h}   \left(\mu(s,\pi_s^*)-r_s+\frac{\partial}{\partial x}\mu(s,\pi^*_s)\pi^*_s-\sigma^2_s\pi^*_s+\rho\sigma_sb_s\kappa_s^*\right)\mathrm{d}s+\int_t^{t+h}\sigma_s\mathrm{d}^-W_s^1\\
& +\int_t^{t+h}\int_{\mathbb{R}_0}\frac{ \gamma_1(s,z)}{1+\pi_s^*\gamma_1(s,z)}\tilde N^1(\mathrm{d}^-s,\mathrm{d}z) -\int_t^{t+h}\int_{\mathbb{R}_0}\frac{ \pi_s^*\gamma^2_1(s,z)}{1+\pi_s^*\gamma_1(s,z)} G^1(\mathrm{d} z)\mathrm{d}s
\Bigg]
\Bigg\}
   \end{aligned}
 \end{equation*}
 by Propositions \ref{pro1} and \ref{pro2}.
 Since this holds for all such $\vartheta_1$,  we can conclude that
     \begin{equation*}
 \begin{aligned}
0=\mathbb{E}_{Q_{u^*,v^*}}\left[ m_1^{u^*}(t+h)-m_1^{u^*}(t)\big{|}{\cal H}_t\right].
   \end{aligned}
 \end{equation*}
 Hence, $m_1^{u^*}(t)$, $t\in[0,T]$, is an ${\cal H}_t$-martingale under the probability measure $Q_{u^*,v^*}$. By similar arguments, we can deduce that $m_2^{u^*}(t)$, $t\in[0,T]$, is an $ {\cal H}_t $-martingale under $Q_{u^*,v^*}$ as well.  

 \end{proof}

Moreover, we obtain the following result under the original probability measure $\mathbb P$. Unless otherwise stated, all statements are back to $\mathbb{P}$ from now on.

  \begin{theorem}\label{mainth2}
 Suppose $(u^*,v^*)\in {\cal A}_1'\times {\cal A}_2'$ is optimal for Problem \ref{sdg} under Assumptions \ref{assump1} and \ref{assump2}. Then the following stochastic processes
  \begin{equation*}
 \begin{aligned}
\hat m_1^{u^*,v^*}(t):=m_1^{u^*}(t)-\int_0^t F_{u^*,v^*}(s-)\mathrm{d}\big\langle(F_{u^*,v^*})^{-1}, m_1^{u^*}\big\rangle^{Q_{u^*,v^*}}_s,\quad 0\le t\le T,
\end{aligned}
 \end{equation*}
 and
    \begin{equation*}
 \begin{aligned}
\hat m_2^{u^*,v^*}(t):= m_2^{u^*}(t)-\int_0^t F_{u^*,v^*}(s-)\mathrm{d}\big\langle(F_{u^*,v^*})^{-1}, m_2^{u^*}\big\rangle^{Q_{u^*,v^*}}_s,\quad 0\le t\le T,
\end{aligned}
 \end{equation*}
 are ${\cal H}_t$-local martingales, provided that the $({\cal H}_t,{Q_{u^*,v^*}})$-predictable covariation process $\langle(F_{u^*,v^*})^{-1}, m_i^{u^*}\big\rangle^{Q_{u^*,v^*}}_t$, $t\in[0,T]$, exists (see Appendix \ref{appendix_A} for the definition of $\langle\cdot,\cdot\rangle_t$) and is absolutely continuous, $i=1,2$. Here, $m_1^{u^*}(t)$ and $m_2^{u^*}(t)$ are given in Theorem \ref{mainth1}.
  \end{theorem}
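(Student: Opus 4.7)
The plan is to view this as a change-of-measure statement and invoke a version of the Girsanov--Meyer theorem in the ``reverse'' direction: $m_i^{u^*}$ is a local martingale under the subjective measure $Q_{u^*,v^*}$, and we want to pull it back to a local martingale under $\mathbb{P}$. The compensator required for that pull-back is exactly the predictable bracket against the density process of $\mathbb{P}$ with respect to $Q_{u^*,v^*}$, scaled by its inverse.

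First I would check that $F_{u^*,v^*}$ is itself a positive $(\mathcal{H}_t,\mathbb{P})$-martingale. This is immediate from its definition as the conditional expectation of the positive $\mathcal{F}_T$-measurable random variable $\varepsilon_T^{v^*}U'(X_T^{u^*})X_T^{u^*}/\mathbb{E}[\varepsilon_T^{v^*}U'(X_T^{u^*})X_T^{u^*}]$, whose integrability is guaranteed by Definition \ref{admiss12} via Cauchy--Schwarz. Hence $Q_{u^*,v^*}\sim\mathbb{P}$ with density process $F_{u^*,v^*}$, and by Bayes's rule the reciprocal process $(F_{u^*,v^*})^{-1}$ is the density process of $\mathbb{P}$ with respect to $Q_{u^*,v^*}$ and is therefore a positive $(\mathcal{H}_t,Q_{u^*,v^*})$-martingale.

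Next I would apply the Girsanov--Meyer transformation (as stated, for example, in He--Wang--Yan or Protter): if $Z$ is a positive density process of one measure with respect to another and $N$ is a local martingale under the second measure, then $N-\int_0^{\cdot}Z_{s-}^{-1}\,\mathrm{d}\langle Z,N\rangle_s$ is a local martingale under the first, provided the predictable covariation $\langle Z,N\rangle$ (under the second measure) exists. Taking the second measure to be $Q_{u^*,v^*}$, $Z=(F_{u^*,v^*})^{-1}$ (so that $Z^{-1}=F_{u^*,v^*}$), and $N=m_i^{u^*}$ (a $Q_{u^*,v^*}$-martingale by Theorem \ref{mainth1}), the compensator becomes exactly $\int_0^{\cdot}F_{u^*,v^*}(s-)\,\mathrm{d}\langle (F_{u^*,v^*})^{-1},m_i^{u^*}\rangle^{Q_{u^*,v^*}}_s$, which yields $\hat m_i^{u^*,v^*}$ as a $(\mathcal{H}_t,\mathbb{P})$-local martingale for $i=1,2$.

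The main technical obstacle is ensuring that the predictable covariation $\langle(F_{u^*,v^*})^{-1},m_i^{u^*}\rangle^{Q_{u^*,v^*}}$ is well defined, which is precisely the hypothesis built into the theorem statement (``provided... exists and is absolutely continuous''). Once this is granted, the Girsanov--Meyer theorem applies verbatim, so the proof reduces to identifying the density process correctly, verifying positivity and integrability of $F_{u^*,v^*}$, and citing the appendix definitions of the predictable covariation and special semimartingale decomposition. No jump-specific subtleties arise beyond those already absorbed into the predictable bracket under $Q_{u^*,v^*}$, since we work with the predictable (not optional) covariation and $m_i^{u^*}$ is a genuine $Q_{u^*,v^*}$-martingale rather than merely a semimartingale.
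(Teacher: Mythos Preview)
Your proposal is correct and follows exactly the same route as the paper: the authors simply note that $m_i^{u^*}$ is an $(\mathcal{H}_t,Q_{u^*,v^*})$-martingale by Theorem~\ref{mainth1} and then invoke the Girsanov theorem (Theorem~\ref{girsanov} in the appendix) in the direction $Q_{u^*,v^*}\to\mathbb{P}$, with density process $(F_{u^*,v^*})^{-1}$. Your write-up is in fact more explicit than the paper's one-line proof, spelling out the positivity and integrability of $F_{u^*,v^*}$ and the Bayes-rule identification of the reverse density.
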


\begin{proof}
If $(u^*,v^*)\in{\cal A}_1'\times {\cal A}_2'$ is optimal, then by Theorem \ref{mainth1} we know that $m_i^{u^*}(t)$, $t\in [0,T]$, is an $({\cal H}_t,Q_{u^*,v^*})$-martingale, $i=1,2$. The conclusion is an immediate result from the Girsanov theorem (see Theorem \ref{girsanov}).

 \end{proof}

\begin{remark}\label{remark_themain1}
Here we use the predictable version of the Girsanov theorem, which is different from the original method in \cite{DiNunno09}. The covariation process $[\cdot,\cdot]_t$ is replaced by its compensator $\langle\cdot,\cdot\rangle_t$, which is called the predictable covariation process, in Theorem \ref{mainth2}. One reason is that $\langle\cdot,\cdot\rangle_t$ is predictable and usually pathwise absolutely continuous (see (\ref{reasonfor1})) while $[\cdot,\cdot]_t$ is not (see Remark \ref{whycomp}). In fact, the conditions of the existence and the absolute continuity of $\langle \cdot,\cdot\rangle_t$ can be relaxed when the logarithmic utility is considered (see (\ref{etominus1}), \cite[Theorem 2]{Qian88} and \cite[Theorem 4.26]{Eberlein19}). Or we can just impose these conditions to the definitions of ${\cal A}_1'$ and ${\cal A}_2'$. Another reason is that it could be easier to calculate the It\^{o} integral with respect to $\langle\cdot,\cdot\rangle_t$ than $[\cdot,\cdot]_t$ when $[\cdot,\cdot]_t\neq0$ in practice (see Corollary \ref{mainth6}, Remark \ref{whycomp} and Theorem \ref{girsanov2}). Note that only the case of $[\cdot,\cdot]_t=0$ is considered in \cite{DiNunno09}.
\end{remark}

Further, since $\hat m_1^{u^*,v^*}(t)$, $t\in [0,T]$, is an ${\cal H}_t$-local martingale, we have the orthogonal decomposition of $\hat m_1^{u^*,v^*}(t)$ into a continuous part $\hat m_{1,c}^{u^*,v^*}(t)$ and a purely discontinuous part $\hat m_{1,d}^{u^*,v^*}(t)$, $t\in[0,T]$, which is given by (see \cite{He92,DiNunno09})
    \begin{equation}\label{decomp_1}
 \begin{aligned}
&\hat m_{1,c}^{u^*,v^*}(t):=  \int_0^t \sigma_s\mathrm{d}^-W_s^1 +A^{u^*,v^*}_{1,c}(t),  \quad 0\le t\le T,\\
&\hat m_{1,d}^{u^*,v^*}(t):=   \int_0^t\int_{\mathbb{R}_0}\frac{ \gamma_1(s,z)}{1+\pi_s^*\gamma_1(s,z)}\tilde N^1(\mathrm{d}^-s,\mathrm{d}z)+A^{u^*,v^*}_{1,d}(t)  , \quad 0\le t\le T,
\end{aligned}
 \end{equation}
 where $A^{u^*,v^*}_{1,c}(t)$ and $A^{u^*,v^*}_{1,d}(t)$ are the unique ${\cal H}_t$-adapted (absolutely continuous) bounded variation processes such that
    \begin{equation}\label{decomp_sum}
 \begin{aligned}
A_{1,c}^{u^*,v^*}(t)+A_{1,d}^{u^*,v^*}(t)=&\int_0^t   \left(\mu(s,\pi_s^*)-r_s+\frac{\partial}{\partial x}\mu(s,\pi^*_s)\pi^*_s-\sigma^2_s\pi^*_s+\rho\sigma_sb_s\kappa_s^*\right)\mathrm{d}s\\
&-\int_0^t\int_{\mathbb{R}_0}\frac{ \pi_s^*\gamma^2_1(s,z)}{1+\pi_s^*\gamma_1(s,z)} G^1(\mathrm{d} z)\mathrm{d}s -\int_0^t F_{u^*,v^*}(s-)\mathrm{d}\big\langle(F_{u^*,v^*})^{-1}, m_1^{u^*}\big\rangle^{Q_{u^*,v^*}}_s, \quad 0\le t\le T.
\end{aligned}
 \end{equation}
 
 We can see from (\ref{decomp_1}) that $ \int_0^t \sigma_s\mathrm{d}^-W_s^1$ is a continuous ${\cal H}_t$-semimartingale. Using the fact that $\int_0^t \sigma_s^{-1}\mathrm{d}\hat m_{1,c}^{u^*,v^*}(s)=W^1_t+\int_0^t \sigma_s^{-1}\mathrm{d}A_{1,c}^{u^*,v^*}(s)$ and $\langle  \hat m_{1,c}^{u^*,v^*}\rangle_t= \left\langle  \int_0^\cdot \sigma_s\mathrm{d}^-W_s^1\right\rangle_t=\int_0^t\sigma_s^2\mathrm{d}s$, we have $\langle W^1\rangle_t=t$. Thus,  by the L\'{e}vy theorem (see \cite{Karatzas91}), the canonical decomposition of the continuous ${\cal H}_t$-semimartingale $W^1_t$ can be given as $W^1_t=W^1_{{\cal H}}(t)+\int_0^t\phi_1(s)\mathrm{d}s$, where $W^1_{{\cal H}}(t)$ is an ${\cal H}_t$-Brownian motion and $\phi_1(t)$ is an ${\cal H}_t$-progressively measurable process. Moreover, applying Proposition \ref{fgen1} we have $\int_0^t\sigma_s\mathrm{d}^-W_s^1=\int_0^t\sigma_s\mathrm{d}W^1_{{\cal H}}(s)+\int_0^t\sigma_s\phi_1(s)\mathrm{d}s$.
 
 On the other hand, the process $ \int_0^t\int_{\mathbb{R}_0}\frac{ \gamma_1(s,z)}{1+\pi_s^*\gamma_1(s,z)}\tilde N^1(\mathrm{d}^-s,\mathrm{d}z)$ is an ${\cal H}_t$-special semimartingale by (\ref{decomp_1}). Moreover, the ${\cal H}_t$-predictable random field $\frac{ \gamma_1(s,z)}{1+\pi_s^*\gamma_1(s,z)}$ is It\^{o} integrable with respect to $\tilde N^1_{{\cal H}}:=N^1-\hat N^1_{{\cal H}}$ by assumptions in Definition \ref{admiss}. Thus, we have $ \int_0^t\int_{\mathbb{R}_0}\frac{ \gamma_1(s,z)}{1+\pi_s^*\gamma_1(s,z)}\tilde N^1(\mathrm{d}^-s,\mathrm{d}z)=\int_0^t\int_{\mathbb{R}_0}\frac{ \gamma_1(s,z)}{1+\pi_s^*\gamma_1(s,z)}\tilde N_{{\cal H}}^1(\mathrm{d}s,\mathrm{d}z)+\int_0^t\int_{\mathbb{R}_0}\frac{ \gamma_1(s,z)}{1+\pi_s^*\gamma_1(s,z)}(\hat N_{{\cal H}}^1-\hat N^1)(\mathrm{d}s,\mathrm{d}z)$. Since the ${\cal H}_t$-predictable bounded variation process $\int_0^t\int_{\mathbb{R}_0}\frac{ \gamma_1(s,z)}{1+\pi_s^*\gamma_1(s,z)}\hat N_{{\cal H}}^1 (\mathrm{d}s,\mathrm{d}z)$ exists (due to Definition \ref{admiss}), we deduce that $\hat N^1_{{\cal H}}$ is absolutely continuous in time by (\ref{decomp_1}) and the uniqueness of the canonical decomposition of a special semimartingale.
 
 Similar conclusion can be given for the ${\cal H}_t$-local martingale $\hat m_1^{u^*,v^*}(t)$, $t\in [0,T]$. In summary, we have the following theorem.

    \begin{theorem}\label{mainth3}
  Suppose $(u^*,v^*)\in {\cal A}_1'\times{\cal A}_2'$ is optimal for Problem \ref{sdg} under the conditions of Theorem \ref{mainth2}. Then we have the following decompositions 
  \begin{equation} \label{semi_decomp}
 \begin{aligned}
&W^i_t=W^i_{\cal H}(t)+\int_0^t\phi_i(s)\mathrm{d}s,  \quad 0\le t\le T,\\
& \hat N^i_{\cal H}(\mathrm{d}t,\mathrm{d}z)=G^i_{\cal H}(t,\mathrm{d}z)\mathrm{d}t,  \quad 0\le t\le T, \quad z\in\mathbb{R}_0,
\end{aligned}
 \end{equation}
 where $W^i_{\cal H}(t)$ is an ${\cal H}_t$-Brownian motion, $\phi_i(t)$ is an ${\cal H}_t$-progressively measurable process satisfying $\int_0^T|\phi_i(t)|\mathrm{d}t<\infty$, and $G^i_{\cal H}(t,\mathrm{d}z)$ is some random transition measure, $i=1,2$. Moreover, by the uniqueness of the canonical decomposition of a special semimartingale, $u^*$ solves the following equations  
   \begin{equation}\label{halfequ_1}
 \begin{aligned}
0
= &\int_0^t   \left(\mu(s,\pi_s^*)-r_s+\frac{\partial}{\partial x}\mu(s,\pi^*_s)\pi^*_s-\sigma^2_s\pi^*_s+\rho\sigma_sb_s\kappa_s^*\right)\mathrm{d}s  +\int_0^t\sigma_s\phi_1(s)\mathrm{d}s -\int_0^t\int_{\mathbb{R}_0}\frac{ \pi_s^*\gamma^2_1(s,z)}{1+\pi_s^*\gamma_1(s,z)} G^1(\mathrm{d} z)\mathrm{d}s \\
&    +\int_0^t\int_{\mathbb{R}_0}\frac{ \gamma_1(s,z)}{1+\pi_s^*\gamma_1(s,z)}\Big[G^1_{\cal H}(s,\mathrm{d}z)-G^1(\mathrm{d}z)\Big]\mathrm{d}s-\int_0^t F_{u^*,v^*}(s-)\mathrm{d}\big\langle(F_{u^*,v^*})^{-1}, m_1^{u^*}\big\rangle^{Q_{u^*,v^*}}_s,\quad 0\le t\le T,
\end{aligned}
 \end{equation}
 and
    \begin{equation}\label{halfequ_2}
 \begin{aligned}
0=&\int_0^t   \left(\lambda_s-a_s+\rho\sigma_sb_s\pi^*_s-b^2_s\kappa_s^*\right)\mathrm{d}s -\int_0^t\rho b_s\phi_1(s)\mathrm{d}s-\int_0^t \sqrt{1-\rho^2}b_s\phi_2(s)\mathrm{d}s -\int_0^t\int_{\mathbb{R}_0}\frac{ \kappa_s^*\gamma^2_2(s,z)}{1-\kappa_s^*\gamma_2(s,z)} G^2(\mathrm{d} z)\mathrm{d}s \\
&-\int_0^t\int_{\mathbb{R}_0}\frac{ \gamma_2(s,z)}{1-\kappa_s^*\gamma_2(s,z)}\left[G^2_{\cal H}(s,\mathrm{d}z)-G^2(\mathrm{d}z)\right]\mathrm{d}s-\int_0^t F_{u^*,v^*}(s-)\mathrm{d}\big\langle(F_{u^*,v^*})^{-1}, m_2^{u^*}\big\rangle^{Q_{u^*,v^*}}_s,\quad 0\le t\le T.
\end{aligned}
 \end{equation}
    \end{theorem}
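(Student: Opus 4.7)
The plan is to extract the three conclusions of the theorem from the orthogonal decomposition of the $\mathcal{H}_t$-local martingales $\hat m_i^{u^*,v^*}$ from Theorem \ref{mainth2}, together with the uniqueness of the canonical decomposition of a special semimartingale. Most of the scaffolding is already laid out in the paragraphs between Theorem \ref{mainth2} and the statement of Theorem \ref{mainth3}; what remains is to turn that narrative into a clean chain of deductions.

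First, I would start from the fact that $\hat m_1^{u^*,v^*}$ is an $\mathcal{H}_t$-local martingale and invoke the classical orthogonal decomposition into a continuous part $\hat m_{1,c}^{u^*,v^*}$ and a purely discontinuous part $\hat m_{1,d}^{u^*,v^*}$, which are forced to take the form (\ref{decomp_1}) because the integrals $\int_0^t\sigma_s\,\mathrm{d}^-W^1_s$ and $\int_0^t\!\int_{\mathbb{R}_0}\frac{\gamma_1(s,z)}{1+\pi_s^*\gamma_1(s,z)}\tilde N^1(\mathrm{d}^-s,\mathrm{d}z)$ give, up to a bounded variation correction, the continuous and jump parts of $m_1^{u^*}$. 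For the continuous piece, I would then compute $\langle \hat m_{1,c}^{u^*,v^*}\rangle_t = \int_0^t\sigma_s^2\,\mathrm{d}s$ (since the $\sigma_s^{-1}$-integral against $\hat m_{1,c}^{u^*,v^*}$ differs from $W^1$ only by an absolutely continuous drift), obtain $\langle W^1\rangle_t=t$, and apply the Lévy characterization (Theorem 3.3.16 of \cite{Karatzas91}) to get the first decomposition in (\ref{semi_decomp}); Proposition \ref{fgen1} then rewrites $\int_0^t\sigma_s\,\mathrm{d}^-W^1_s$ in terms of $\mathrm{d}W^1_{\mathcal H}$ and the drift $\phi_1$.

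Second, for the jump piece I would argue that (\ref{decomp_1}) exhibits $\int_0^t\!\int_{\mathbb{R}_0}\frac{\gamma_1(s,z)}{1+\pi_s^*\gamma_1(s,z)}\tilde N^1(\mathrm{d}^-s,\mathrm{d}z)$ as an $\mathcal H_t$-special semimartingale. Because $\frac{\gamma_1(s,z)}{1+\pi_s^*\gamma_1(s,z)}$ is $\mathcal H_t$-predictable and, thanks to the admissibility assumptions in Definition \ref{admiss}, Itô integrable with respect to $\tilde N^1_{\mathcal H}=N^1-\hat N^1_{\mathcal H}$, I can split the forward integral as in the discussion preceding the theorem. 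The predictable bounded-variation part against $\hat N^1_{\mathcal H}$ already exists, so matching it against the purely discontinuous bounded variation drift $A_{1,d}^{u^*,v^*}$ and using the uniqueness of the canonical decomposition of a special semimartingale forces $\hat N^1_{\mathcal H}$ to be absolutely continuous in time, yielding the second line of (\ref{semi_decomp}). The analogous argument for $\hat m_2^{u^*,v^*}$ handles $W^2$ and $\tilde N^2$, using the additive structure $\bar W=\rho W^1+\sqrt{1-\rho^2}W^2$ to pull $\phi_2$ out of the $W^2$-drift.

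Finally, to derive (\ref{halfequ_1}) and (\ref{halfequ_2}), I would rewrite $\hat m_1^{u^*,v^*}$ using (\ref{equ1ofth1}) and the just-established decompositions. The expression in (\ref{equ1ofth1}) now splits as a stochastic integral against $\mathrm{d}W^1_{\mathcal H}$ and $\tilde N^1_{\mathcal H}$ (the local martingale part) plus a bounded variation process made up of the drift $\int\!(\cdots)\mathrm{d}s$, the forward-integral drift $\int_0^t\sigma_s\phi_1(s)\mathrm{d}s$, the compensator correction $\int_0^t\!\int\frac{\gamma_1}{1+\pi^*\gamma_1}[G^1_{\mathcal H}-G^1]\mathrm{d}z\,\mathrm{d}s$, and the Girsanov correction $\int_0^tF_{u^*,v^*}(s-)\,\mathrm{d}\langle(F_{u^*,v^*})^{-1},m_1^{u^*}\rangle^{Q_{u^*,v^*}}_s$. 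Because $\hat m_1^{u^*,v^*}$ is a local martingale and its canonical decomposition is unique, the whole bounded variation part must vanish identically, which is exactly (\ref{halfequ_1}); (\ref{halfequ_2}) follows in the same way from $\hat m_2^{u^*,v^*}$. The main obstacle I expect is the bookkeeping in this last step — in particular, justifying that the correction $G^1_{\mathcal H}-G^1$ appears with the coefficient $\frac{\gamma_1}{1+\pi^*\gamma_1}$ rather than the raw $\gamma_1$, which requires carefully converting the forward integral in (\ref{equ1ofth1}) to an Itô integral against $\tilde N^1_{\mathcal H}$ via Proposition \ref{fgen2} and separating out the compensator difference in a predictable way.
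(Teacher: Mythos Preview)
Your proposal is correct and follows essentially the same approach as the paper: the decomposition argument in the paragraphs preceding the theorem (orthogonal splitting of $\hat m_i^{u^*,v^*}$, L\'evy characterization for $W^i$, absolute continuity of $\hat N^i_{\mathcal H}$ via uniqueness of the canonical decomposition, and then reading off (\ref{halfequ_1})--(\ref{halfequ_2}) as the vanishing of the predictable finite-variation part) is exactly the paper's route. Your anticipated bookkeeping issue with the coefficient $\frac{\gamma_1}{1+\pi^*\gamma_1}$ versus $\gamma_1$ in the $G^1_{\mathcal H}-G^1$ correction is handled precisely as you describe, via Proposition \ref{fgen2}.
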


\begin{remark}
Theorem \ref{mainth3} generalizes the main result (Theorem 4.2) in \cite{Peng16}. However, this is not a trivial generalization of \cite{Peng16}. Without further introducing the new probability measure $Q_{u^*,v^*}$ in Theorem \ref{mainth1} and the predictable version of the Girsanov theorem in Theorem \ref{mainth2}, the model uncertainty and more general utility functions could not be considered here, nor could an extra but important decomposition theorem be obtained\footnote{ This provides an opportunity to turn the abstract expression of the other half controlled process $\varepsilon^v_t$ (see (\ref{theta_sde})) into a  nonanticipative stochastic differential equation (see (\ref{half_equ2})). Thus, we can use the stochastic maximal  principle to solve the whole anticipating stochastic differential game problem (Problem \ref{sdg}) in the next section.} in Theorem \ref{mainth3}. Moreover, all coefficients here are all anticipating processes (i.e., the anticipating environments of the financial market and the insurance market are considered, or namely, the classical SDEs of the risky asset process and the insurer's risk process in \cite{Peng16} are replaced by the anticipating SDEs (\ref{fin-m}) and (\ref{ins-m}), respectively), and a large insurer is considered here (i.e., the mean rate of return $\mu$ on the risky asset is influenced by her investment strategy $\pi$).
\end{remark}

 Further, by Theorem \ref{mainth3} and Propositions \ref{fgen1} and \ref{fgen2}, the dynamic of the ${\cal H}_t$-martingale $\varepsilon^v_t$ (see (\ref{theta_sde})) can be rewritten as  
  \begin{equation} \label{half_equ2}
 \begin{aligned}
 \varepsilon^v_t=1+\varepsilon^v_{t-}\Bigg( & \int_0^t\theta_1(s)\mathrm{d}W_{\cal H}^1(s)+\int_0^t\theta_2(s)\mathrm{d}W_{\cal H}^2(s)\\
 &+\int_0^t\int_{\mathbb{R}_0}\theta_3(s)\tilde N_{\cal H}^1(\mathrm{d}s,\mathrm{d}z)+\int_0^t\int_{\mathbb{R}_0}\theta_4(s)\tilde N^2_{\cal H}(\mathrm{d}s,\mathrm{d}z) \Bigg) ,\quad 0\le t\le T,
\end{aligned}
 \end{equation}
 for $v=(\theta_1,\theta_2,\theta_3,\theta_4)\in{\cal A}_2'$. By the It\^{o} formula for It\^{o} processes (see Theorem \ref{itoforito}), we have
  \begin{equation} \label{etominus1}
 \begin{aligned}
 (\varepsilon_t^{v})^{-1}=&1+\int_0^t(\varepsilon_{s-}^{v})^{-1}(\theta_1(s)^2+\theta_2(s)^2) \mathrm{d}s  -\int_0^t (\varepsilon_{s-}^{v})^{-1}\theta_1(s)\mathrm{d}W^1_{\cal H}(s)-\int_0^t (\varepsilon_{s-}^{v})^{-1}\theta_2(s)\mathrm{d}W^2_{\cal H}(s)\\
 &  -\int_0^t\int_{\mathbb{R}_0}(\varepsilon_{s-}^{v})^{-1}\frac{\theta_3(s)}{1+\theta_3(s)}\tilde N^1_{\cal H}(\mathrm{d}s ,\mathrm{d}z) -\int_0^t\int_{\mathbb{R}_0}(\varepsilon_{s-}^{v})^{-1}\frac{\theta_4(s)}{1+\theta_4(s)}\tilde N^2_{\cal H}(\mathrm{d}s ,\mathrm{d}z)   \\
 & +\int_0^t\int_{\mathbb{R}_0}(\varepsilon_{s-}^{v})^{-1}\frac{\theta_3(s)^2}{1+\theta_3(s)}G^1_{\cal H}(s,\mathrm{d}z)\mathrm{d}s+\int_0^t\int_{\mathbb{R}_0}(\varepsilon_{s-}^{v})^{-1}\frac{\theta_4(s)^2}{1+\theta_4(s)}G^2_{\cal H}(s,\mathrm{d}z)\mathrm{d}s.
 \end{aligned}
 \end{equation}
Moreover, by the Girsanov theorem (see Theorem \ref{girsanov2}), the following terms
   \begin{equation} \label{girsanovinlog}
 \begin{aligned}
 &W^i_{{\cal H},{\cal{Q}}^v}(t)=W^i_{{\cal H}}(t)-\int_0^t\theta_{i}(s)\mathrm{d}s, \quad 0\le t\le T,\\
& \hat N_{{\cal H},{\cal{Q}}^v}^i(\mathrm{d}t,\mathrm{d}z)= (1+\theta_{i+2}(t)  )G_{{\cal H}}^i( t,\mathrm{d}z)\mathrm{d}t,\quad 0\le t\le T,\quad z\in{\mathbb{R}_0},
  \end{aligned}
 \end{equation}
 are $({\cal H}_t,{\cal{Q}}^v)$-Brownian motion and $({\cal H}_t,{\cal{Q}}^v)$-compensator of $N^i(\mathrm{d}t,\mathrm{d}z)$, respectively, $i=1,2$. 
 
 For the optimal pair $(u^*,v^*)$, it is often difficult to obtain a concrete expression of the $({\cal H}_t,{Q_{u^*,v^*}})$-predictable covariation process $\langle (F_{u^*,v^*})^{-1},m_i^{u^*}\rangle^{Q_{u^*,v^*}}_t$, $i=1,2$, in Theorem \ref{mainth3}. However, when the utility function of the logarithmic form, i.e., $U(x)=\ln(x)$, we have $F_{u^*,v^*}(t)=\varepsilon^{v^*}_t$ and $Q_{u^*,v^*}={\cal{Q}}^{v^*}$. By (\ref{equ1ofth1}), (\ref{equ2ofth1}), (\ref{etominus1}) and (\ref{girsanovinlog}), we can rewrite the ${\cal H}_t$-adapted processes $m_1^{u^*}(t)$, $m_2^{u^*}(t)$ and $(\varepsilon^{v^*}_t)^{-1}$ with respect to $W^i_{{\cal H},{\cal{Q}}^{v^*}}(t)$ and $ \tilde N_{{\cal H},{\cal{Q}}^{v^*}}^i(\mathrm{d}t,\mathrm{d}z)$ under the probability measure ${\cal{Q}}^{v^*}$, $i=1,2$. Then the $({\cal H}_t,{{\cal{Q}}^{v^*}})$-predictable covariation process $\langle (\varepsilon^{v^*})^{-1},m_1^{u^*} \rangle^{{\cal{Q}}^{v^*}}_t $ and $\langle (\varepsilon^{v^*})^{-1},m_2^{u^*} \rangle^{{\cal{Q}}^{v^*}}_t $ can be calculated as follows
   \begin{equation} \label{reasonfor1}
 \begin{aligned}
&\big\langle (\varepsilon^{v^*})^{-1},m_1^{u^*} \big\rangle^{{\cal{Q}}^{v^*}}_t
=-\int_{0}^t (\varepsilon_{s-}^{v^*})^{-1}\theta_1^*(s)\sigma_s\mathrm{d}s-\int_0^t\int_{\mathbb{R}_0} \frac{(\varepsilon_{s-}^{v^*})^{-1}\theta^*_3(s) \gamma_1(s,z)}{1+\pi^*_s\gamma_1(s,z)}G_{\cal H}^1(s,\mathrm{d}z)\mathrm{d}s   ,\\
&\big\langle (\varepsilon^{v^*})^{-1},m_2^{u^*} \big\rangle^{{\cal{Q}}^{v^*}}_t
=\int_{0}^t (\varepsilon_{s-}^{v^*})^{-1}\theta_1^*(s)\rho b_s\mathrm{d}s
+\int_{0}^t (\varepsilon_{s-}^{v^*})^{-1}\theta_2^*(s)\sqrt{1-\rho^2} b_s\mathrm{d}s
+\int_0^t\int_{\mathbb{R}_0} \frac{(\varepsilon_{s-}^{v^*})^{-1}\theta^*_4(s) \gamma_2(s,z)}{1-\kappa^*_s\gamma_2(s,z)}G_{\cal H}^2(s,\mathrm{d}z)\mathrm{d}s .
 \end{aligned}
 \end{equation}
We obtain the following corollary by substituting (\ref{reasonfor1}) into (\ref{halfequ_1}) and (\ref{halfequ_2}) in Theorem \ref{mainth3}, respectively.
  
 \begin{corollary}\label{mainth6}
 Assume that $U(x)=\ln(x)$. Suppose $(u^*,v^*)\in {\cal A}_1'\times{\cal A}_2'$ is optimal for Problem \ref{sdg} under the conditions of Theorem \ref{mainth2}. Then $u^*$ solves the following equations
    \begin{equation}\label{halfequ_21}
 \begin{aligned}
0
= & \mu(t,\pi_t^*)-r_t+\frac{\partial}{\partial x}\mu(t,\pi^*_t)\pi^*_t-\sigma^2_t\pi^*_t+\rho\sigma_t b_t\kappa_t^*   +\sigma_t\phi_1(t)+\sigma_t\theta_1^*(t) 
- \int_{\mathbb{R}_0}\frac{ \pi_t^*\gamma^2_1(t,z)}{1+\pi_t^*\gamma_1(t,z)} G^1(\mathrm{d} z)  \\
&    + \int_{\mathbb{R}_0}\frac{ \gamma_1(t,z)}{1+\pi_t^*\gamma_1(t,z)} (1+\theta_3^*(t))G_{{\cal H}}^1(t,\mathrm{d}z) 
-   \int_{\mathbb{R}_0}\frac{ \gamma_1(t,z)}{1+\pi_t^*\gamma_1(t,z)}G^1(\mathrm{d}z) ,\quad 0\le t\le T,
\end{aligned}
 \end{equation}
 and
    \begin{equation}\label{halfequ_22}
 \begin{aligned}
0=& \lambda_t-a_t+\rho\sigma_t b_t\pi^*_t-b^2_t\kappa_t^*
 - \rho b_t\phi_1(t)- \rho b_t\theta^*_1(t)  -  \sqrt{1-\rho^2}b_t\phi_2(t)-  \sqrt{1-\rho^2}b_t\theta_2^*(t)
   \\&- \int_{\mathbb{R}_0}\frac{ \kappa_s^*\gamma^2_2(s,z)}{1-\kappa_s^*\gamma_2(s,z)} G^2(\mathrm{d} z)  - \int_{\mathbb{R}_0}\frac{ \gamma_2(t,z)}{1-\kappa_t^*\gamma_2(t,z)} (1+\theta^*_4(t)) G_{\cal H}^2 (t,\mathrm{d}z) + \int_{\mathbb{R}_0}\frac{ \gamma_2(t,z)}{1-\kappa_t^*\gamma_2(t,z)}G^2( \mathrm{d}z) 
 ,\quad 0\le t\le T.
\end{aligned}
 \end{equation}
 \end{corollary}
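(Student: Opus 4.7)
The plan is to specialize Theorem \ref{mainth3} to the logarithmic-utility case. Here the auxiliary measure $Q_{u^*,v^*}$ collapses to the subjective measure $\mathcal{Q}^{v^*}$, and the abstract predictable covariation terms in (\ref{halfequ_1})--(\ref{halfequ_2}) become explicitly computable, yielding the pointwise identities (\ref{halfequ_21})--(\ref{halfequ_22}).

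First, I would show the identification $F_{u^*,v^*}(t)=\varepsilon^{v^*}_t$. Since $U(x)=\ln x$ gives $U'(x)x\equiv 1$, the numerator of $F_{u^*,v^*}$ is just $\varepsilon^{v^*}_T$, and the normalizing constant $\mathbb{E}[\varepsilon^{v^*}_T]$ equals $1$ because $\varepsilon^{v^*}$ is an $\mathcal{H}_t$-martingale (Definition \ref{admiss_1}(iii)). Conditioning on $\mathcal{H}_t$ therefore gives $F_{u^*,v^*}(t)=\varepsilon^{v^*}_t$, and consequently $Q_{u^*,v^*}=\mathcal{Q}^{v^*}$.

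Second, I would rewrite everything on the insider filtration under $\mathcal{Q}^{v^*}$. Using the semimartingale decompositions (\ref{semi_decomp}) from Theorem \ref{mainth3} together with Propositions \ref{fgen1} and \ref{fgen2}, the forward integrals in $m_1^{u^*}$ and $m_2^{u^*}$ become genuine It\^o integrals against $W^i_{\mathcal{H}}$ and $\tilde N^i_{\mathcal{H}}$. Applying the Girsanov change (\ref{girsanovinlog}) then expresses their martingale parts in terms of $W^i_{\mathcal{H},\mathcal{Q}^{v^*}}$ and $\tilde N^i_{\mathcal{H},\mathcal{Q}^{v^*}}$. For $(\varepsilon^{v^*})^{-1}$, the identity (\ref{etominus1}) already exhibits its $\mathcal{H}_t$-canonical decomposition, and Girsanov again converts the local-martingale part to the $\mathcal{Q}^{v^*}$-picture.

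Third, I would compute $\langle(\varepsilon^{v^*})^{-1},m_i^{u^*}\rangle^{\mathcal{Q}^{v^*}}_t$ for $i=1,2$. Only the martingale-against-martingale pieces contribute, so matching the Brownian integrands pairwise (weighted by $\sigma_s$, and by $-\rho b_s$ plus $-\sqrt{1-\rho^2}\,b_s$, respectively) and the jump integrands pairwise (integrated against the $\mathcal{Q}^{v^*}$-compensator $(1+\theta^*_{i+2})G^i_{\mathcal{H}}\,\mathrm{d}s$, with the extra factors $\gamma_i/(1\pm\pi^*\gamma_1\text{ or }\kappa^*\gamma_2)$) reproduces formula (\ref{reasonfor1}). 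This is the bookkeeping step that the paper already isolates; I would simply verify signs and that the jump-part covariation uses the $\mathcal{Q}^{v^*}$-compensator rather than $G^i_{\mathcal{H}}$ itself, the $\theta^*_{i+2}$ contribution entering precisely through Girsanov.

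Finally, I would substitute (\ref{reasonfor1}) into the integral equations (\ref{halfequ_1}) and (\ref{halfequ_2}). All terms on the right are absolutely continuous in $t$ (in particular $\hat N^i_{\mathcal{H}}=G^i_{\mathcal{H}}(s,\mathrm{d}z)\mathrm{d}s$ by Theorem \ref{mainth3}), so the identities hold as Lebesgue-integrals of their integrands. Differentiating in $t$ produces the pointwise equations (\ref{halfequ_21}) and (\ref{halfequ_22}). The main obstacle will be the careful bookkeeping of the covariation calculation under the measure change: getting the jump compensator $(1+\theta^*_{i+2}(t))G^i_{\mathcal{H}}(t,\mathrm{d}z)$ rather than $G^i_{\mathcal{H}}(t,\mathrm{d}z)$, tracking the sign flip arising from the $1-\kappa^*_s\gamma_2$ denominator inside $m_2^{u^*}$, and confirming that the absolute continuity of $\langle(\varepsilon^{v^*})^{-1},m_i^{u^*}\rangle^{\mathcal{Q}^{v^*}}$ (needed in Theorem \ref{mainth2}) is automatic here since $\varepsilon^{v^*}$ has an explicit It\^o-L\'evy representation, as flagged in Remark \ref{remark_themain1}.
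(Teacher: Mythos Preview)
Your proposal is correct and follows essentially the same route as the paper: identify $F_{u^*,v^*}=\varepsilon^{v^*}$ and $Q_{u^*,v^*}=\mathcal{Q}^{v^*}$ via $U'(x)x\equiv 1$, compute the predictable covariations (\ref{reasonfor1}) under $\mathcal{Q}^{v^*}$ using the It\^o--L\'evy representations (\ref{etominus1}) and (\ref{girsanovinlog}), and substitute into (\ref{halfequ_1})--(\ref{halfequ_2}) to read off the pointwise identities. The paper presents exactly this argument in the paragraph preceding the corollary.
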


 \begin{remark}\label{whycomp}
 If we follow the method in \cite{DiNunno09} and use the ordinary version of the Girsanov theorem (see Theorem \ref{girsanov}), the last term of $\hat m_i^{u^*,v^*}(t)$ in Theorem \ref{mainth2} reduces to $\int_0^t\varepsilon_s^{v^*} \mathrm{d}\big[ (\varepsilon^{v^*})^{-1} ,m_i^{u^*} \big]_s$. The jump term of $\big[ (\varepsilon^{v^*})^{-1} , m_i^{u^*} \big]_t$ is of the form $\int_0^t\int_{\mathbb{R}_0}   (\varepsilon^{v^*}_{s-})^{-1} \varphi^{u^*,v^*}_{i,1}(s,z) N^1_{\cal H}(\mathrm{d}s,\mathrm{d}z)+\int_0^t\int_{\mathbb{R}_0}  (\varepsilon^{v^*}_{s-})^{-1} \varphi^{u^*,v^*}_{i,2}(s,z) N^2_{\cal H}(\mathrm{d}s,\mathrm{d}z)$ for some ${\cal H}_t$-predictable random field  $\varphi^{u^*,v^*}_{i,j}(s,z)$, $j=1,2$, $i=1,2$, which is usually not continuous and contradicted with the absolute continuity assumption in Theorem \ref{mainth2}. Thus, it leads to the triviality of ${\cal A}_2'$. Moreover, the integral $\int_0^t\varepsilon_s^{v^*} \mathrm{d}\big[ (\varepsilon^{v^*})^{-1} ,m_i^{u^*} \big]_s$ looks confusing and is usually not continuous in time as well, $i=1,2$, which could not lead to the decompositions in Theorem \ref{mainth3}.
 \end{remark}

\section{A total characterization of the robust optimal strategy}
\label{tt_section}

 In the previous section, we give the characterization of $u^*$ for the optimal pair $(u^*,v^*)$ by using the maximality of $J(u^*,v^*)$ with respect to $u$. Thus, we obtain the relationship between $u^*$ and $v^*$ (see equations (\ref{halfequ_1}) and (\ref{halfequ_2})). However, we have not used the minimality of $J(u^*,v^*)$ with respect to $v$. Thus, we need the other half characterization of $v^*$.
 
It is very difficult to give a characterization of $v^*$ directly due to the complexity of the other half controlled process $\varepsilon^v_t$ (see the equation (\ref{theta_sde})). Fortunately, under the conditions of Theorem \ref{mainth2}, we get the decompositions $W^i_t=W^i_{\cal H}(t)+\int_0^t\phi_i(s)\mathrm{d}s$ and $ \hat N^i_{\cal H}(\mathrm{d}t,\mathrm{d}z)=G^i_{\cal H}(t,\mathrm{d}z)\mathrm{d}t$, $i=1,2$, with respect to the filtration $\{{\cal H}_t\}$ by Theorem \ref{mainth3}. Thus, we have a better expression of $\varepsilon^v_t$ for $v \in{\cal A}_2'$ (see (\ref{half_equ2})). Moreover, we can also rewrite the dynamic of the wealth process $X^u_t$ (see (\ref{wealthsde})) as follows:
 \begin{equation}\label{wealthsde2}
 \begin{aligned}
\frac{\mathrm{d}X^u_t}{X^u_{t-}}=&\Bigg[r_t+(\mu(t,\pi_t)-r_t)\pi_t+(\lambda_t-a_t)\kappa_t  +(\sigma_t\pi_t-\rho b_t\kappa_t  )\phi_1(t) -\sqrt{1-\rho^2}b_t\kappa_t\phi_2(t)\\
&+  \int_{\mathbb{R}_0}\pi_t\gamma_1(t,z) \left(G_{\cal H}^1(t,\mathrm{d}z)-G^1(\mathrm{d}z)\right)-  \int_{\mathbb{R}_0}\kappa_t\gamma_2(t,z) \left(G_{\cal H}^2(t,\mathrm{d}z)-G^2(\mathrm{d}z)\right) \Bigg]\mathrm{d}t\\
&+\left( \sigma_t\pi_t-\rho b_t\kappa_t   \right)\mathrm{d}W^1_{\cal H}(t)-\sqrt{1-\rho^2}b_t\kappa_t\mathrm{d}W^2_{\cal H}(t)+ \int_{\mathbb{R}_0}\pi_t\gamma_1(t,z)\tilde N^1_{\cal H}(\mathrm{d}t,\mathrm{d}z)- \int_{\mathbb{R}_0}\kappa_t\gamma_2(t,z)\tilde N^2_{\cal H}(\mathrm{d}t,\mathrm{d}z).
\end{aligned}
 \end{equation}

 Since (\ref{half_equ2}) and (\ref{wealthsde2}) can be viewed as the usual SDEs with respect to the ${\cal H}_t$-Brownian motion $W^i_{\cal H}(t)$ and the ${\cal H}_t$-compensated random measure $\tilde N^i_{\cal H}(\mathrm{d}t,\mathrm{d}z)$, $i=1,2$, Problem \ref{sdg} turns to a nonanticipative stochastic differential game problem with respect to the filtration $\{{\cal H}_t\}_{0\le t\le T}$. Thus, we can use the stochastic maximum principle to solve our problem. Notice that since all coefficients of (\ref{half_equ2}) and (\ref{wealthsde2}) are path-dependent stochastic processes, $\varepsilon_t^v$ and $X^u_t$ is not necessary a Markov process. Thus, we can not derive the corresponding HJB equation by the dynamic programming principle.
  
We make the following assumptions before our procedure.
 
 \begin{assumption}\label{assump3}
If $(u^*,v^*)\in {\cal A}_1'\times {\cal A}_2'$ is optimal for Problem \ref{sdg}, then for all bounded $  \beta \in {\cal A}_2'$, there exists some $\delta>0$ such that $ v^*+y\beta \in   {\cal A}_2'$ for all $|y|<\delta$. Moreover, the following family of random variables 
  \begin{equation*}
 \begin{aligned}
\left \{  \frac{\mathrm{d}}{\mathrm{d}y}\varepsilon_T^{v^*+y\beta}U(X_T^{u^* }) \right\}_{y\in(-\delta,\delta)}
 \end{aligned}
 \end{equation*}
is $\mathbb{P}$-uniformly integrable, and the following family of random fields
  \begin{equation*}
 \begin{aligned}
\left \{  \frac{\mathrm{d}}{\mathrm{d}y}\varepsilon^{v^*+y\beta}_tg(s,v^*+y\beta) +\varepsilon_t^{v^*+y\beta}\nabla_v g(s,v^*+y\beta)\beta^{\tau}\right\}_{y\in(-\delta,\delta)}
 \end{aligned}
 \end{equation*}
is $\mathrm{m} \times \mathbb{P}$-uniformly integrable, where $\mathrm{m}$ is the Borel-Lebesgue measure on $[0,T]$, $(\cdot)^\tau$ denotes the transpose of a vector, and $\frac{\mathrm{d}}{\mathrm{d}y}$ means that $\frac{\mathrm{d}}{\mathrm{d}y}\varepsilon_t^{v^*+y\beta}$ exists. 
 \end{assumption}
 
  \begin{assumption}\label{assump3_2}
If $(u^*,v^*)\in {\cal A}_1'\times {\cal A}_2'$ is optimal for Problem \ref{sdg} under the conditions of Theorem \ref{mainth2} and Assumption \ref{assump3}, then for all bounded $ (\alpha , \beta) \in  {\cal A}_1'\times{\cal A}_2'$, we can define $\tilde\psi^{u^*}_1(t):=\frac{\mathrm{d}}{\mathrm{d}y}X_t^{\pi^*+y\alpha_1}{|}_{y=0}$, $\tilde\psi^{u^*}_2(t):=\frac{\mathrm{d}}{\mathrm{d}y}X_t^{\kappa^*+y\alpha_2}{|}_{y=0}$ and $\psi^{v^*}_i(t):=\frac{\mathrm{d}}{\mathrm{d}y}\varepsilon_t^{\theta_i^*+y\beta_i}{|}_{y=0}$ by Assumptions \ref{assump1} and \ref{assump3}, $i=1,2,3,4$. 
 \end{assumption}
 
  \begin{assumption}\label{assump4}
  Let $ v_s=\left(\xi_1 1_{(t,t+h]}(s),\xi_21_{(t,t+h]}(s),\xi_3 1_{(t,t+h]}(s),\xi_41_{(t,t+h]}(s)\right)$, $0\le s\le T$, for fixed $0\le t<t+h\le T$, where the random variable $\xi_i$ is of the form $1_{A_t}$ for any ${\cal H}_t$-measurable set $A_t$, $i=1,2,3,4$. Then $v\in {\cal A}_2'$.
  \end{assumption}
  
Now we define the Hamiltonian $H:[0,T]\times\mathbb{R}\times\mathbb{R}\times\mathbb{R}^2  \times \mathbb{R}^4   \times\mathbb{R}^2\times\mathbb{A}\times\Omega\rightarrow\mathbb{R} $ by (see \cite{Yong99,Oksendal19} for the construction)
  \begin{equation*}
 \begin{aligned}
H(t,x,\varepsilon,u,v,p,q,\omega):=&g(s,v)\varepsilon+x\Bigg[r_t+(\mu(t,\pi)-r_t)\pi+(\lambda_t-a_t)\kappa+(\sigma_t\pi-\rho b_t\kappa)\phi_1(t)-\sqrt{1-\rho^2}b_t\kappa\phi_2(t) \\
&+\int_{\mathbb{R}_0}\pi\gamma_1(t,z)\left(   G^1_{\cal H} (t,\mathrm{d}z)-G^1(\mathrm{d}z)\right)-\int_{\mathbb{R}_0}\kappa\gamma_2(t,z)\left(   G^2_{\cal H} (t,\mathrm{d}z)-G^2(\mathrm{d}z)\right)\Bigg]p_1\\
&+x(\sigma_t\pi-\rho b_t\kappa)q_{11}- x\sqrt{1-\rho^2}b_t\kappa q_{12}+\int_{\mathbb{R}_0}x\pi\gamma_1(t,z)q_{13}(z)G^1_{\cal H}( t,\mathrm{d}z)\\
& -\int_{\mathbb{R}_0}x\kappa\gamma_2(t,z)q_{14}(z)G^2_{\cal H}( t,\mathrm{d}z)   +
\varepsilon\theta_1q_{21}+\varepsilon\theta_2q_{22}+\int_{\mathbb{R}_0}\varepsilon\theta_3q_{23}(z)G^1_{\cal H}(t,\mathrm{d}z)\\
&+\int_{\mathbb{R}_0}\varepsilon\theta_4q_{24}(z)G^2_{\cal H}(t,\mathrm{d}z),
 \end{aligned}
 \end{equation*}
 where $u=(\pi,\kappa)$, $v=(\theta_1,\theta_2,\theta_3,\theta_4) $, $p=\begin{pmatrix}p_{1}\\p_{2}\end{pmatrix}$, $q=\begin{pmatrix}q_{11}&q_{12}&q_{13}&q_{14}\\q_{21}&q_{22}&q_{23}&q_{24}\end{pmatrix}$, and $\mathbb{A}$ represents the set of all the matrices $(q_{ij})_{2\times 4}$ with the elements in the first two columns being real numbers and the elements in the last two columns being functions from $\mathbb{R}_0$ to $\mathbb{R}$ such that $\int_{\mathbb{R}_0} ( |q_{13}(z)|^2+|q_{23}(z)|^2)G_{\cal H}^1(t,\mathrm{d}z)<\infty$ and $\int_{\mathbb{R}_0}(| q_{14}(z)|^2+|q_{24}(z)|^2)G_{\cal H}^2(t,\mathrm{d}z)<\infty$. It is obvious that $H$ is differentiable with respect to $x$ and $\varepsilon$, and Fr\'{e}chet differentiable with respect to $u$ and $v$. The associated BSDE system for the adjoint pair $(p_t,q(t,z))$ is given by (see \cite{Yong99,Oksendal19})
  \begin{eqnarray}\label{adjoint0}
  \left\{ \begin{aligned}\mathrm{d}p_1(t)=&-\frac{\partial H}{\partial x}(t)\mathrm{d}t+q_{11}(t)\mathrm{d}W^1_{\cal H}(t)+q_{12}(t)\mathrm{d}W^2_{\cal H}(t)\\
  &  +\int_{\mathbb{R}_0}q_{13}(t,z)\tilde N_{\cal H}^1(\mathrm{d}t,\mathrm{d}z)+\int_{\mathbb{R}_0}q_{14}(t,z)\tilde N_{\cal H}^2(\mathrm{d}t,\mathrm{d}z), \quad 0\le t\le T,
  \\p_1(T)=&\varepsilon^{v}_TU'(X^{u}_T) ,\end{aligned} \right.   
 \end{eqnarray}
 and
 \begin{eqnarray}\label{adjoint}
  \left\{ \begin{aligned}\mathrm{d}p_2(t)=&-\frac{\partial H}{\partial \varepsilon}(t)\mathrm{d}t+q_{21}(t)\mathrm{d}W^1_{\cal H}(t)+q_{22}(t)\mathrm{d}W^2_{\cal H}(t)\\
  &  +\int_{\mathbb{R}_0}q_{23}(t,z)\tilde N_{\cal H}^1(\mathrm{d}t,\mathrm{d}z)+\int_{\mathbb{R}_0}q_{24}(t,z)\tilde N_{\cal H}^2(\mathrm{d}t,\mathrm{d}z), \quad 0\le t\le T,
  \\p_2(T)=&U(X^{u}_T) ,\end{aligned} \right.   
 \end{eqnarray}
 where $p_{i}(t)$ is an ${\cal H}_t$-special semimartingale, and $q_{ij}(t)$ is an ${\cal H}_t$-predictable process with the following integrability
  \begin{eqnarray*}
&\int_0^T  \left[\big|\frac{\partial H}{\partial x}(t)\big|+\big|\frac{\partial H}{\partial \varepsilon}(t)\big|+|q_{i1}(t)|^2+|q_{i2}(t)|^2\right]\mathrm{d}t+\int_0^T\int_{\mathbb{R}_0}  \left[|q_{i3}(t,z)|^2G^1_{\cal H}(t,\mathrm{d}z)+|q_{i4}(t,z)|^2 G^2_{\cal H}(t,\mathrm{d}z)\right] \mathrm{d}t <\infty,
  \end{eqnarray*}
$j=1,2,3,4$, $ i=1,2$. Here and in the following the abbreviated notation $  H(t):= H(t,X_t^u,\varepsilon^v_t,u_t,v_t,p_t,q_t,\omega) $, etc., are taken.

We give a necessary maximum principle to characterize the optimal pair $(u^*,v^*)\in {\cal A}_1'\times{\cal A}_2'$.

   \begin{theorem}\label{mainth4}
Suppose $(u^*,v^*)\in {\cal A}_1'\times{\cal A}_2'$ is optimal for Problem \ref{sdg} under the conditions of Theorem \ref{mainth2} and Assumptions \ref{assump3}-\ref{assump4}, and $(p^*,q^*)$ is the associated adjoint pair satisfying BSDEs (\ref{adjoint0}) and (\ref{adjoint}). Then $(u^*,v^*)$ solves the following equations (the Hamiltonian system)
   \begin{equation} \label{half_equ3_0}
 \begin{aligned}
\nabla _u H^*(t)&=\left(\frac{\partial H^*}{\partial \pi}(t), \frac{\partial H^*}{\partial \kappa}(t)\right)=(0,0),\quad 0\le t\le T,
\end{aligned}
 \end{equation}
and
  \begin{equation} \label{half_equ3}
 \begin{aligned}
\nabla _v H^*(t)&=\left(\frac{\partial H^*}{\partial \theta_1}(t), \frac{\partial H^*}{\partial \theta_2}(t),\frac{\partial H^*}{\partial \theta_3}(t),\frac{\partial H^*}{\partial \theta_4}(t)\right)=(0,0,0,0),\quad 0\le t\le T,
\end{aligned}
 \end{equation}
 given the following integrability conditions 
   \begin{equation*} 
 \begin{aligned}
\mathbb{E}\Bigg\{&\int_0^T \left(\tilde\psi^{u^*}_1(t-)^2+\tilde\psi^{u^*}_2(t-)^2\right)\mathrm{d}\big[p^*_{1}\big]_t +\int_0^T p^*_1(t-)^2\mathrm{d}\big[\tilde\psi^{u^*}_1\big]_t +\int_0^T p^*_1(t-)^2\mathrm{d}\big[\tilde\psi^{u^*}_2\big]_t \\
&+\int_0^T\int_{\mathbb{R}_0}\big|  \left(X^{u^*}_{t-} \gamma_1(t,z)\alpha_1(t)+\tilde\psi_1^{u^*}(t-)\pi^*_t\gamma_1(t,z)\right)q_{13}^*(t,z) \big |^2G^{1}_{\cal H}(t,\mathrm{d}z)\mathrm{d}t \\
&  +\int_0^T\int_{\mathbb{R}_0}| \tilde\psi_1^{u^*}(t-)\kappa^*_t\gamma_2(t,z)  q^*_{14}(t,z)|^2G^{2}_{\cal H}(t,\mathrm{d}z)\mathrm{d}t     +\int_0^T\int_{\mathbb{R}_0}| \tilde\psi_2^{u^*}(t-)\pi^*_t\gamma_1(t,z)  q^*_{13}(t,z)|^2G^{1}_{\cal H}(t,\mathrm{d}z)\mathrm{d}t\\
&+\int_0^T\int_{\mathbb{R}_0}\big|  \left(X^{u^*}_{t-} \gamma_2(t,z)\alpha_2(t)+\tilde\psi_2^{u^*}(t-)\kappa^*_t\gamma_2(t,z)\right)q_{14}^*(t,z) \big |^2G^{2}_{\cal H}(t,\mathrm{d}z)\mathrm{d}t \Bigg\} <\infty, 
\end{aligned}
 \end{equation*}
 and
  \begin{equation*} 
 \begin{aligned}
\mathbb{E}\Bigg\{&\int_0^T \psi^{v^*}_i(t-)^2\mathrm{d}\big[p^*_2\big]_t +\int_0^T p^*_2(t-)^2\mathrm{d}\big[\psi^{v^*}_i\big]_t +\int_0^T\int_{\mathbb{R}_0}|\psi_i^{v^*}(t-)  \theta^*_j(t) q^*_{2j}(t,z)|^2G^{j-2}_{\cal H}(t,\mathrm{d}z)\mathrm{d}t   \\&+\int_0^T\int_{\mathbb{R}_0}| \varepsilon^{v^*}_{t-}\beta_j(t)  q^*_{2j}(t,z)|^2G^{j-2}_{\cal H}(t,\mathrm{d}z)\mathrm{d}t \Bigg\} <\infty, 
\end{aligned}
 \end{equation*}
  for all bounded $(\alpha,\beta)\in{\cal A}_1'\times{\cal A}_2'$, $i=1,2,3,4$, and $j=3,4$. Here, $H^*(t):=H(t,X^{u^*}_t,\varepsilon^{v^*}_t,u^*_t,v^*_t,p^*_t,q^*_t,\omega)$, etc.
\end{theorem}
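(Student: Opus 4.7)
The plan is to exploit the saddle-point property of $(u^*,v^*)$: since $u^*$ maximizes $u\mapsto J(u,v^*)$ on ${\cal A}_1'$ and $v^*$ minimizes $v\mapsto J(u^*,v)$ on ${\cal A}_2'$, Assumptions \ref{assump1} and \ref{assump3} make both maps differentiable at the optimum along every bounded admissible direction, and the derivatives must vanish. The idea is to convert these two weak first-order conditions into the pointwise Hamiltonian identities (\ref{half_equ3_0}) and (\ref{half_equ3}) by invoking the adjoint BSDEs (\ref{adjoint0}) and (\ref{adjoint})---whose terminal values $\varepsilon^{v^*}_T U'(X^{u^*}_T)$ and $U(X^{u^*}_T)$ are exactly the terminal data produced when differentiating the performance functional---together with the It\^{o} product rule and a localization step based on Assumptions \ref{assump2} and \ref{assump4}. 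Because Theorem \ref{mainth3} recasts the dynamics of $X^u$ and $\varepsilon^v$ as genuine It\^{o} integrals against the ${\cal H}_t$-Brownian motions $W^i_{\cal H}$ and the ${\cal H}_t$-compensated random measures $\tilde N^i_{\cal H}$, the whole argument lives inside the nonanticipative semimartingale calculus with respect to $\{{\cal H}_t\}$, and is essentially the classical necessary stochastic maximum principle (cf.\ the setup in \cite{Oksendal19}) transplanted to this filtration.

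For the $\pi$-component, I would perturb $\pi^*$ to $\pi^*+y\alpha_1$ with $\kappa$ held at $\kappa^*$ and introduce the tangent process $\tilde\psi^{u^*}_1$ of Assumption \ref{assump3_2}, which satisfies the linear SDE obtained by differentiating (\ref{wealthsde2}) in $y$. Since $g(s,v^*)$ does not depend on $u$, the vanishing derivative reduces, via the terminal condition of (\ref{adjoint0}), to $\mathbb{E}\bigl[p^*_1(T)\tilde\psi^{u^*}_1(T)\bigr]=0$. I would then apply the It\^{o} product rule to $t\mapsto p^*_1(t)\tilde\psi^{u^*}_1(t)$ on $[0,T]$; the integrability hypotheses stated in the theorem are precisely what ensures that every stochastic integral against $\mathrm{d}W^i_{\cal H}$ and $\tilde N^i_{\cal H}(\mathrm{d}t,\mathrm{d}z)$ in the expansion is a true $\mathbb{P}$-martingale with zero mean, so only the finite-variation part survives. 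A direct inspection then shows that $-\tilde\psi^{u^*}_1(t-)\frac{\partial H^*}{\partial x}(t)\mathrm{d}t$ (from the drift of $p^*_1$) together with the drift of $\tilde\psi^{u^*}_1$ and the predictable covariations carried by $q^*_{11},q^*_{12},q^*_{13},q^*_{14}$ assemble exactly into $\alpha_1(t)\frac{\partial H^*}{\partial\pi}(t)\mathrm{d}t$, so that
\begin{equation*}
\mathbb{E}\!\left[\int_0^T \frac{\partial H^*}{\partial\pi}(t)\alpha_1(t)\,\mathrm{d}t\right]=0
\end{equation*}
for every bounded ${\cal H}_t$-adapted c\`{a}gl\`{a}d $\alpha_1$. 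Taking $\alpha_1(s)=1_{A_t}1_{(t,t+h]}(s)$ as permitted by Assumption \ref{assump2}, dividing by $h$, letting $h\downarrow 0$ and using the arbitrariness of $A_t\in{\cal H}_t$ gives $\frac{\partial H^*}{\partial\pi}(t)=0$; the case of $\frac{\partial H^*}{\partial\kappa}(t)=0$ is identical with $\tilde\psi^{u^*}_2$ replacing $\tilde\psi^{u^*}_1$, which together produces (\ref{half_equ3_0}).

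The $v$-component is treated symmetrically. Perturbing $\theta_i^*$ to $\theta_i^*+y\beta_i$ and using $\psi^{v^*}_i$ of Assumption \ref{assump3_2}, the first-order minimality condition reads
\begin{equation*}
\mathbb{E}\!\left[U(X^{u^*}_T)\psi^{v^*}_i(T)+\int_0^T\!\Bigl(g(s,v^*_s)\psi^{v^*}_i(s)+\varepsilon^{v^*}_s\frac{\partial g}{\partial\theta_i}(s,v^*_s)\beta_i(s)\Bigr)\mathrm{d}s\right]=0.
\end{equation*}
Writing $U(X^{u^*}_T)=p^*_2(T)$ via (\ref{adjoint}) and applying It\^{o}'s product rule to $p^*_2\psi^{v^*}_i$, the adjoint drift $-\frac{\partial H^*}{\partial\varepsilon}$ together with the predictable cross covariations against $q^*_{21},q^*_{22},q^*_{23},q^*_{24}$ collapse the identity into $\mathbb{E}[\int_0^T\frac{\partial H^*}{\partial\theta_i}(t)\beta_i(t)\mathrm{d}t]=0$, where again the integrability hypotheses on the four families involving $p^*_2,\psi^{v^*}_i$ and $q^*_{2j}$ kill the martingale contributions. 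Localization via Assumption \ref{assump4} then gives (\ref{half_equ3}). The main obstacle is the careful bookkeeping of the jump contributions in the It\^{o} product formula for two special semimartingales with mixed Brownian and compensated-Poisson components: one has to verify that the predictable covariation between the jumps of $p^*_i$ and those of $\tilde\psi^{u^*}_j$ or $\psi^{v^*}_k$ integrates precisely the $q^*_{13}\gamma_1,q^*_{14}\gamma_2,q^*_{23},q^*_{24}$ terms that sit inside $H$, and this matching is exactly why the technical integrability conditions in the statement are imposed.
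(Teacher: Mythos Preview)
Your proposal is correct and follows essentially the same route as the paper: perturb, differentiate the performance functional using the variational processes $\tilde\psi^{u^*}_j$ and $\psi^{v^*}_i$, identify the terminal terms with $p^*_1(T)$ and $p^*_2(T)$ via the adjoint BSDEs, expand by the It\^{o} product rule, use the stated integrability to kill the martingale pieces, and then localize with the indicator perturbations of Assumptions \ref{assump2} and \ref{assump4}. The paper presents the $v$-component in detail and only asserts the $u$-component by analogy, and its localization is phrased as in Theorem~\ref{mainth1} (conditional-expectation/martingale form) rather than your ``divide by $h$ and let $h\downarrow 0$'' variant, but these are cosmetic differences.
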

\begin{proof}
 Suppose that the pair $(u^*,v^*)\in {\cal A}_1'\times{\cal A}_2'$ is optimal. Then for any bounded $(\beta_1,0) \in {\cal A}_2' $ and $|y|<\delta$, we have $J(u^*,v^*+y(\beta_1,0,0,0))\ge J(u^*,v^*) $, which implies that $y=0$ is a minimum point of the function $y\mapsto J(u^*,v^*+y(\beta_1,0,0,0))$. By Assumptions \ref{assump3} and \ref{assump3_2}, It\^{o} formula for It\^{o} integrals (see \cite{He92}), and Remark \ref{integral_ito_rm1}, we have 
  \begin{equation} 
 \begin{aligned}
\frac{\mathrm{d}}{\mathrm{d}y}J(u^*,v^*+y(\beta_1,0,0,0)){|}_{y=0}&=\mathbb{E}\Bigg[ \psi^{v^*}_1(T)U(X_T^{u^*})+\int_0^T \psi^{v^*}_1(s)g(s,v^*_s)\mathrm{d}s+\int_0^T\varepsilon^{v^*}_s\frac{\partial}{\partial \theta_1}g(s,v^*_s)\beta_1(s)\mathrm{d}s
\Bigg]\\
&=\mathbb{E}\Bigg[ \psi^{v^*}_1(T)p^*_{2}(T)+\int_0^T \psi^{v^*}_1(s)g(s,v^*_s)\mathrm{d}s+\int_0^T\varepsilon^{v^*}_s\frac{\partial}{\partial \theta_1}g(s,v^*_s)\beta_1(s)\mathrm{d}s
\Bigg]\\
&=\mathbb{E}\Bigg[ \int_0^T\psi^{v^*}_1(s-)\mathrm{d}p^*_2(s)+\int_0^Tp^*_2(s-)\mathrm{d}\psi^{v^*}_1(s)+\big[p^*_2,\psi^{v^*}_1\big]_T\\
&\hspace{2em}\  +\int_0^T \psi^{v^*}_1(s)g(s,v^*_s)\mathrm{d}s+\int_0^T\varepsilon^{v^*}_s\frac{\partial}{\partial \theta_1}g(s,v^*_s)\beta_1(s)\mathrm{d}s
\Bigg]
 \\
&=\mathbb{E}\Bigg[  \int_0^Tq^*_{21}(s)\varepsilon^{v^*}_{s}\beta_1(s)\mathrm{d}s+\int_0^T\varepsilon^{v^*}_s\frac{\partial}{\partial \theta_1}g(s,v^*_s)\beta_1(s)\mathrm{d}s\Bigg]\\
&=\mathbb{E}\Bigg[  \int_0^T \frac{\partial H^*}{\partial \theta_1}(s) \beta_1(s)\mathrm{d}s\Bigg]\\
&=0.
 \end{aligned}
 \end{equation}
By Assumption \ref{assump4} and the same procedure in Theorem \ref{mainth1}, we can deduce that $ \frac{\partial H^*}{\partial \theta_1}(t) =0$, $t\in[0,T]$. By similar arguments, we can conclude that $ \frac{\partial H^*}{\partial \theta_i}(t) =0$, $t\in[0,T]$, $i=2,3,4$, and also $\nabla_u H^*(t)=(0,0)$, $t\in[0,T]$.
 
\end{proof}

\begin{remark}
The integrability in Theorem \ref{mainth4} can be weakened using the localization technique, i.e., choosing a sequence of ${\cal H}_t$-stopping times such that It\^{o} integrals of the stopped processes are ${\cal H}_t$-square-integrable martingales. We refer to \cite{Oksendal19} for more details.
\end{remark}

Combining Theorem \ref{mainth4} with the conclusion in Section \ref{half_section}, we give the total characterization of the optimal pair $(u^*,v^*)$ as the following theorem.

   \begin{theorem}\label{mainth5}
Suppose $(u^*,v^*)\in {\cal A}_1'\times{\cal A}_2'$ is optimal for Problem \ref{sdg} (with the associated pair $(p^*,q^*)$  satisfying BSDEs (\ref{adjoint0}) and (\ref{adjoint})) under the conditions of Theorem \ref{mainth4}. Then $(u^*,v^*)$ solves equations (\ref{halfequ_1}), (\ref{halfequ_2}), (\ref{half_equ3_0}) and (\ref{half_equ3}). 
 \end{theorem}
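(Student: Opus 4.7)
The plan is essentially to observe that Theorem~\ref{mainth5} is a direct consolidation of the two complementary strands developed earlier: the ``upper half'' of the Hamiltonian system (equations (\ref{halfequ_1}) and (\ref{halfequ_2})), which arises from perturbing the insurer's strategy $u$ while freezing the adversary $v^*$, and the ``lower half'' (equations (\ref{half_equ3_0}) and (\ref{half_equ3})), which arises from the BSDE/Hamiltonian maximum principle applied to both players simultaneously after the anticipating SDE has been reduced to a nonanticipative one.

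I would first check that the hypothesis of Theorem~\ref{mainth5} actually subsumes the hypotheses of Theorem~\ref{mainth3}: the statement invokes the conditions of Theorem~\ref{mainth4}, which in turn invoke the conditions of Theorem~\ref{mainth2} (existence and absolute continuity of the predictable covariations $\langle(F_{u^*,v^*})^{-1},m_i^{u^*}\rangle^{Q_{u^*,v^*}}$) together with Assumptions~\ref{assump1}--\ref{assump4}. With these in hand, Theorem~\ref{mainth3} applies verbatim and yields the semimartingale decompositions in (\ref{semi_decomp}) as well as the two identities (\ref{halfequ_1}) and (\ref{halfequ_2}), so half of the conclusion is immediate.

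For the remaining half, the key observation is that once (\ref{semi_decomp}) is available, the forward integrals appearing in the wealth dynamics (\ref{wealthsde}) and in the Dol\'eans--Dade exponential (\ref{theta_sde}) can legitimately be rewritten as It\^o integrals against the $\{{\cal H}_t\}$-Brownian motions $W^i_{\cal H}$ and the $\{{\cal H}_t\}$-compensated measures $\tilde N^i_{\cal H}$, giving the nonanticipative systems (\ref{wealthsde2}) and (\ref{half_equ2}). This is precisely the setting in which the necessary maximum principle of Theorem~\ref{mainth4} was proved, so applying it to the optimal pair $(u^*,v^*)$ together with the adjoint pair $(p^*,q^*)$ supplied by (\ref{adjoint0})--(\ref{adjoint}) delivers the Hamiltonian first-order conditions (\ref{half_equ3_0}) and (\ref{half_equ3}).

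I expect no genuine technical obstacle here, since Theorem~\ref{mainth5} is really a bookkeeping statement that stitches Theorems~\ref{mainth3} and \ref{mainth4} together. The only point deserving explicit mention in the writeup is that the two perturbation arguments are compatible: varying $u$ with $v^*$ held fixed (used in Section~\ref{half_section}) produces (\ref{halfequ_1})--(\ref{halfequ_2}) irrespective of the representation of $\varepsilon^{v^*}$, whereas varying $v$ with $u^*$ held fixed (used in Theorem~\ref{mainth4}) relies on the nonanticipative reformulation, which is itself a consequence of the first variation. Thus the two sets of equations are derived in sequence rather than in parallel, and the proof reduces to a one-line invocation: ``Apply Theorem~\ref{mainth3} to obtain (\ref{halfequ_1}) and (\ref{halfequ_2}); then apply Theorem~\ref{mainth4} to obtain (\ref{half_equ3_0}) and (\ref{half_equ3}).''
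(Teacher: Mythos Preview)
Your proposal is correct and matches the paper's approach exactly. The paper does not even write out a separate proof for Theorem~\ref{mainth5}; it simply introduces the statement with the sentence ``Combining Theorem~\ref{mainth4} with the conclusion in Section~\ref{half_section}, we give the total characterization of the optimal pair $(u^*,v^*)$ as the following theorem,'' which is precisely the one-line invocation you arrived at.
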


\begin{remark}
In fact, equations (\ref{halfequ_1}), (\ref{halfequ_2}) and (\ref{half_equ3}) are enough to obtain the optimal pair $(u^*,v^*)$. This combined method (rather than only using the Hamiltonian system  (\ref{half_equ3_0})-(\ref{half_equ3})) can always give a better characterization of $(u^*,v^*)$. Moreover, when the mean rate of return $\mu$ is dependent on $\pi^*$, i.e., a large insurer is consider, it is very hard to obtain the solution $(u^*,v^*)$ by just using (\ref{half_equ3_0})-(\ref{half_equ3}) since the dynamic of $X^u$ is not homogeneous in this situation (see Remark \ref{explain_log_half} in Section \ref{sec_example2}), while it is not by combining the Hamiltonian system with (\ref{halfequ_1}) and (\ref{halfequ_2}). We will give examples in Section \ref{sec_example1} to illustrate this when the logarithmic utility is considered. However,  since (\ref{halfequ_1}) and (\ref{halfequ_2}) are complicated for general utility, we turn to the Hamiltonian system (\ref{half_equ3_0})-(\ref{half_equ3}) for the optimal pair $(u^*,v^*)$ in general cases, which will be illustrated in Section \ref{sec_example2}.
\end{remark}

\section{The small insurer case: maximum principle}
 \label{sec_example2}

  \subsection{Without jumps}
  \label{sec_exampleg_sub1}
  
Suppose that $G^1(\mathrm{d}z)=G^2(\mathrm{d}z)=0$, i.e., there are no jumps in the risky asset process and the insurer's risk process. Assume that the mean rate of return $\mu(t,x)=\mu_0(t)+\varrho_tx$ for some ${\cal G}^1_t$-adapted c\`{a}gl\`{a}d processes $\mu_0(t)$ and $\varrho_t$ with $0\le \varrho_t<\frac{1}{2}\sigma_t^2$, and $b\ge \epsilon>0$ for some positive constant $\epsilon$. Put $\iota_t=\frac{\mu_0(t)-r_t}{\sigma_t}$, $\tilde\sigma_t=\sigma_t-\frac{2\varrho_t}{\sigma_t}$, $\tilde \phi_1(t)=\iota_t+\phi_1(t)$, and $\tilde \phi_2(t)=\frac{\lambda_t-a_t+\rho b_t\iota_t}{\sqrt{1-\rho^2}b_t}-\phi_2(t)$. Assume further the penalty function $g$ is of the quadratic form, i.e., $g(s,v)=g(v)=\frac{1}{2}(\theta_1^2+\theta_2^2)$. Then we have by the Girsanov theorem (see Theorem \ref{girsanov2}) that
     \begin{equation*} 
 \begin{aligned}
\mathbb{E}\left[  \int_0^T\varepsilon_s^{v^*}  g(v^*_s)\mathrm{d}s \right]
= \mathbb{E}_{{\cal{Q}}^{v^*}}\left[\int_0^Tg(v^*_s)\mathrm{d}s   \right]
&=\mathbb{E}_{{\cal{Q}}^{v^*}}\left[  \int_0^T\theta_1^*(s)\mathrm{d}W_{\cal H}^1(s)+   \int_0^T\theta_2^*(s)\mathrm{d}W_{\cal H}^2(s)  -\ln \varepsilon^{v^*}_T  \right]\\
&=\mathbb{E}_{{\cal{Q}}^{v^*}}\left[  \int_0^T\left(\theta_1^*(s)^2+\theta_2^*(s)^2\right)\mathrm{d}s -\ln \varepsilon^{v^*}_T  \right]\\
&=2\mathbb{E}_{{\cal{Q}}^{v^*}}\left[\int_0^Tg(v^*_s)\mathrm{d}s   \right]-\mathbb{E}_{{\cal{Q}}^{v^*}}\left[  \ln \varepsilon^{v^*}_T  \right],
\end{aligned}
 \end{equation*}
 which implies that 
 \begin{equation}\label{part1ofJ}
 \mathbb{E}\left[  \int_0^T\varepsilon_s^{v^*}  g(v^*_s)\mathrm{d}s \right]
= \mathbb{E}_{{\cal{Q}}^{v^*}}\left[  \ln \varepsilon^{v^*}_T  \right]= \mathbb{E}\left[\varepsilon^{v^*}_T  \ln \varepsilon^{v^*}_T  \right].
\end{equation} 

We make the following assumption before our procedure.

\begin{assumption}\label{extraa1}
Suppose the coefficients satisfy the following integrability
\begin{equation}
\int_0^T\left(|\tilde\phi_1(t)|^2+|\tilde\phi_2(t)|^2\right)\mathrm{d}t<\infty.
\end{equation}
\end{assumption}

By the Hamiltonian system (\ref{half_equ3}) in Theorem \ref{mainth4}, we have
 \begin{eqnarray} 
\nabla_vH^*(t)=\left(\left(\theta_1^*(t)+q_{21}^*(t)\right)\varepsilon^{v^*}_t,\left(\theta_2^*(t)+q_{22}^*(t)\right)\varepsilon^{v^*}_t   \right)=(0,0),
 \end{eqnarray}
which implies that
  \begin{equation}\label{logconti_v}
  \begin{aligned}
  &\theta_1^*(t)+q_{21}^*(t)=0 ,
  \\&\theta_2^*(t)+q_{22}^*(t)=0. 
  \end{aligned}
 \end{equation}
 Substituting (\ref{logconti_v}) into the adjoint BSDE (\ref{adjoint}) with respect to $p_2^*(t)$ we have
 \begin{eqnarray}\label{equeg1_39}
  \left\{ \begin{aligned}& \mathrm{d}p_2^*(t)=\frac{\theta_1^*(t)^2+\theta_2^*(t)^2}{2} \mathrm{d}t -\theta_1^*(t)\mathrm{d}W^1_{\cal H}(t)-\theta_2^*(t)\mathrm{d}W^2_{\cal H}(t), \quad 0\le t\le T,
  \\&p_2^*(T)=U( X^{u^*}_T).\end{aligned} \right.   
 \end{eqnarray}
The SDE (\ref{half_equ2}) of $\varepsilon^{v^*}_t$ combined with Theorem \ref{novikov} implies that
    \begin{equation} \label{equeg1_40}
  \begin{aligned}
  \mathrm{d} \ln\varepsilon^{v^*}_t =-\frac{\theta_1^*(t)^2+\theta_2^*(t)^2}{2}\mathrm{d}t+  \theta_1^*(t)\mathrm{d}W_{\cal H}^1(t)+\theta_2^*(t)\mathrm{d}W_{\cal H}^2(t).
   \end{aligned}
 \end{equation}
 By comparing (\ref{equeg1_39}) with (\ref{equeg1_40}), the solution of the BSDE (\ref{equeg1_39}) can be expressed as
     \begin{equation}  \label{equeg1_41}
  \begin{aligned}
 p_2^*(t)=p^*_2(0)-\ln \varepsilon^{v^*}_t.
 \end{aligned}
 \end{equation}
Denote the ${\cal H}_0$-measurable random variable $p^*_2(0)$ by $c^*_2$. Substituting the terminal condition in (\ref{equeg1_39}), i.e., $p_2^*(T)=U(X^{u^*}_T)$, into (\ref{equeg1_41}) with $t=T$, we have
     \begin{equation}  \label{equeg1_42fb}
  \begin{aligned}
 \ln \varepsilon^{v^*}_T+U(X_T^{u^*} )=c^*_2.
 \end{aligned}
 \end{equation} 

Since $\varepsilon^{v^*}_t$ is an ${\cal H}_t$-martingale, we have $\varepsilon^{v^*}_t=\mathbb{E}\left[ \varepsilon^{v^*}_T  |{\cal H}_t\right]=\mathbb{E}\left[ e^{c^*_2-U(X^{u^*}_T)}   \big{|}{\cal H}_t\right]$ by (\ref{equeg1_42fb}). Using that $\varepsilon^{v^*}_0=1$ we obtain
 \begin{equation}  \label{anequaboutc2}
  \begin{aligned}
  e^{c^*_2}= \left(\mathbb{E}\left[  e^{-U(X^{u^*}_T)}  \big|{\cal H}_0\right]\right)^{-1}.
   \end{aligned}
 \end{equation} 
Thus, we can give the expression of $\varepsilon^{v^*}_t$ as follows
 \begin{equation}  \label{anequaboutc22}
  \begin{aligned}
  \varepsilon^{v^*}_t=\mathbb{E}\left[ \left(\mathbb{E}\left[  e^{-U(X^{u^*}_T)}\big |{\cal H}_0\right] e^{U(X^{u^*}_T)} \right)^{-1} \big{|}{\cal H}_t\right].
   \end{aligned}
 \end{equation} 
Moreover, substituting (\ref{anequaboutc2}) into (\ref{equeg1_42fb}) we obtain
\begin{equation}  \label{anequaboutc222}
  \begin{aligned}
  \varepsilon^{v^*}_T=\left(\mathbb{E}\left[  e^{-U(X^{u^*}_T)}  \big|{\cal H}_0\right] e^{U(X^{u^*}_T)}\right)^{-1}.
   \end{aligned}
 \end{equation}

On the other hand, by the Hamiltonian system (\ref{half_equ3_0}) in Theorem \ref{mainth4}, we have
 \begin{equation}\begin{aligned} 
\nabla_uH^*(t)=\Bigg( & X^{u^*}_t\left(\frac{\partial \mu}{\partial x}(t,\pi^*_t)+(\mu(t,\pi^*_t)-r_t)+\sigma_t\phi^*_1(t)  \right )p_1^*(t) +X^{u^*}_t\sigma_tq^*_{11}(t)  , \\
&   X^{u^*}_t\left(\lambda_t-a_t-\rho b_t\phi_1(t)-\sqrt{1-\rho^2}b_t\phi_2(t)  \right )p_1^*(t) -X^{u^*}_t\rho b_tq_{11}^*(t) -X^{u^*}_t\sqrt{1-\rho^2}b_tq_{12}^*(t) \Bigg)=(0,0),
\end{aligned}  \end{equation} 
which implies that
  \begin{equation}\label{generalconti_v}
  \begin{aligned}
  &  \left(\frac{\partial \mu}{\partial x}(t,\pi^*_t)+(\mu(t,\pi^*_t)-r_t)+\sigma_t\phi^*_1(t)  \right )p_1^*(t) + \sigma_tq^*_{11}(t) =0 ,
  \\&  \left(\lambda_t-a_t-\rho b_t\phi_1(t)-\sqrt{1-\rho^2}b_t\phi_2(t)  \right )p_1^*(t) - \rho b_tq_{11}^*(t) - \sqrt{1-\rho^2}b_tq_{12}^*(t) =0. 
  \end{aligned}
 \end{equation}
 Substituting (\ref{generalconti_v}) into the adjoint BSDE (\ref{adjoint0}) with respect to $p_1^*(t)$ yields
 \begin{eqnarray}\label{idontknow0}
   \left\{ \begin{aligned}&   \mathrm{d}p_1^*(t)=-\left[r_t-\frac{(\sigma_t-\tilde\sigma_t)\sigma_t\pi^*_t}{2} \right]p_1^*(t)\mathrm{d}t-\left[\frac{\sigma_t-\tilde\sigma_t}{2}(1+\pi^*_t)+\tilde\phi_1(t)\right]p_1^*(t)\mathrm{d}W^1_{\cal H}(t)\\
   &\hspace{3em} \ \ +\left[  \tilde\phi_2(t)+\frac{\rho (\sigma_t-\tilde\sigma_t)  (1+\pi^*_t)}{2\sqrt{1-\rho^2} }  \right ]p_1^*(t)\mathrm{d}W^2_{\cal H}(t), \quad 0\le t\le T,
  \\&  p_1^*(T)=\varepsilon^{v^*}_TU'(X_T^{u^*}).\end{aligned} \right.  
 \end{eqnarray}
Suppose $\varrho_t\equiv 0$, i.e., $\sigma_t-\tilde\sigma_t\equiv0$, then (\ref{idontknow0}) degenerates to
  \begin{eqnarray}\label{idontknow}
   \left\{ \begin{aligned}&   \mathrm{d}p_1^*(t)=- r_t p_1^*(t)\mathrm{d}t- \tilde\phi_1(t) p_1^*(t)\mathrm{d}W^1_{\cal H}(t)+  \tilde\phi_2(t) p_1^*(t)\mathrm{d}W^2_{\cal H}(t), \quad 0\le t\le T,
  \\&  p_1^*(T)=\varepsilon^{v^*}_TU'(X_T^{u^*}),\end{aligned} \right.  
 \end{eqnarray}
which implies that all coefficients in (\ref{idontknow}) are independent of $\pi^*$. Then by Theorem \ref{novikov}, the unique solution of (\ref{idontknow}) is given by
   \begin{equation}\label{ireallydontknow} 
  \begin{aligned}
 p_1^*(t)=c^*_1 \Pi^*(0,t),
  \end{aligned}
 \end{equation}
 where $c_1^*:=p^*_1(0)$ is an ${\cal H}_0$-measurable random variable, and $\Pi^*(0,t)$, $t\in[0,T]$, is defined as
    \begin{equation} \label{definitionofPi}
  \begin{aligned}
\Pi^*(0,t):= \exp\Bigg\{ & -\int_{0}^{t} r_s  \mathrm{d}s
  -\int_{0}^{t} \tilde\phi_1(s)  \mathrm{d}W^1_{\cal H}(s)
  +  \int_{0}^{t}   \tilde\phi_2(s)\mathrm{d}W^2_{\cal H}(s)  -\frac{1}{2}\int_{0}^{t}  \left(  \tilde\phi_1(s) ^2+   \tilde\phi_2(s) ^2   \right)\mathrm{d}s \Bigg\}.
  \end{aligned}
 \end{equation}
 Since $U(x)$ has a strictly decreasing derivative $U'(x)$, we can denote the inverse function of $U'(x)$ by $I(x)$. Substituting (\ref{ireallydontknow}) into (\ref{idontknow}) with $t=T$ we have 
    \begin{equation} \label{dffunctionofu}
  \begin{aligned}
 X^{u^*}_T=I\left(\frac{c_1^*\Pi^*(0,T)}{\varepsilon^{v^*}_T}\right).
  \end{aligned}
 \end{equation}
 Combining (\ref{dffunctionofu}) with (\ref{anequaboutc222}) we have
  \begin{equation} \label{dffunctionofu222}
  \begin{aligned}
 X^{u^*}_T=I\left(  c_3^*\Pi^*(0,T) e^{U(X^{u^*}_T)} \right),
  \end{aligned}
 \end{equation}
 where $c_3^*:=c^*_1\mathbb{E}\left[ e^{-U(X^{u^*}_T)}  \big|{\cal H}_0\right]$ is also an ${\cal H}_0$-measurable random variable. Since $I$ is also strictly decreasing, there is a unique fixed point $x^*=\tilde I(y)$ of the equation $x=I(ye^{U(x)})$ for every $y>0$. We also call $\tilde I(y)$, $y>0$, the fixed point function of $I$ (see \cite{Mejia05}). Thus, the solution of (\ref{dffunctionofu222}) is given by
   \begin{equation} \label{dffunctionofu2222}
  \begin{aligned}
 X^{u^*}_T=\tilde I\left(  c_3^*\Pi^*(0,T)   \right).
  \end{aligned}
 \end{equation}

 \begin{remark}
 Here, we introduce the new ${\cal H}_0$-measurable random variable $c^*_3$ and use the technique of the fixed point function to separate $\varepsilon^{v^*}_T$ from the terminal condition of $X_T^{u^*}$, which leads to the non-nested linear BSDE (\ref{final_gex_1}) of $X_t^{u^*}$ below (while the traditional method might lead to a nested linear BSDE, see \cite{Peng21,Oksendal19}). Thus, we could solve the BSDE by traditional methods and obtain the formula for $X_t^{u^*}$ under mild conditions (see (\ref{linearbsde_bdg_2}) below).
 \end{remark}

Put $  z^*_t= (  z^*_1(t),  z^*_2(t))=\left(  (\sigma_t\pi^*_t-\rho b_t\kappa^*_t)X^{u^*}_t, -\sqrt{1-\rho^2}b_t\kappa^*_t X^{u^*}_t \right)$. Then we have
  \begin{equation}\label{bsde_barzt1}
   \begin{aligned}
 &\pi_t^*=\frac{  z_1^*(t)}{\sigma_tX^{u^*}_t}-\frac{\rho  z^*_2(t)}{\sqrt{1-\rho^2}\sigma_tX^{u^*}_t},
  \\&\kappa_t^*=-\frac{  z^*_2(t)}{\sqrt{1-\rho^2}b_tX^{u^*}_t}. 
\end{aligned} 
 \end{equation}
The SDE (\ref{wealthsde2}) of $X_t^{u^*}$ combined with (\ref{bsde_barzt1}) leads to the following linear BSDE
  \begin{eqnarray}\label{final_gex_1}
  \left\{ \begin{aligned}&\mathrm{d}X^{u^*}_t=-f_{\text{L}}(t,X^{u^*}_t,z^*_t,\omega)\mathrm{d}t+z^*_t\mathrm{d}W_{\cal H}(t), \quad 0\le t\le T,
  \\& X^{u^*}_T=\tilde I\left(  c_3^*\Pi^*(0,T)   \right),\end{aligned} \right.   
 \end{eqnarray}
where $W_{\cal H}(t)=\left(W_{\cal H}^1(t), W_{\cal H}^2(t)\right)^\tau$, and the generator (or the driver) $f_{\text{L}}:[0,T]\times \mathbb{R}\times \mathbb{R}^2\times\Omega\rightarrow \mathbb{R}$  is given by
 \begin{equation*} 
\begin{aligned}
f_{\text{L}}(t,x,z,\omega)=-r_tx-\tilde\phi_1(t)z_1  +\tilde\phi_2(t)z_2.
\end{aligned} 
 \end{equation*}
 
 \begin{remark} \label{explain_log_half}
If $\varrho_t\neq 0$, the terminal condition in BSDE (\ref{final_gex_1}) will depend on $z^*_t$  by (\ref{idontknow0}), which makes the BSDE (\ref{final_gex_1}) irregular and very hard to solve. The reason is that the SDE (\ref{wealthsde2}) of $X^{u^*}$ is not homogeneous in this situation. However, once the relationship (equations (\ref{halfequ_1}) and (\ref{halfequ_2}) in Theorem \ref{mainth3}) of $u^* $ and $v^*$ is solved, we can then overcome this situation by a combined method in Section \ref{sec_example1}.
 \end{remark}
 
By the It\^{o} formula for It\^{o} integrals, we have
 \begin{equation}\label{solutionoflbsde}
 \begin{aligned}
\mathrm{d}\left(\Pi^*(0,t)X^{u^*}_t\right)&=\Pi^*(0,t)\mathrm{d}X^{u^*}_t+X^{u^*}_{t}\mathrm{d}\Pi^*(0,t)+\mathrm{d}\big\langle X^{u^*},\Pi^*(0,\cdot)\big\rangle_t\\
&=\Pi^*(0,t) \left( z^*_1(t)-\tilde\phi_1(t)X^{u^*}_t  \right)\mathrm{d}W^1_{\cal H}(t) \\&+\Pi^*(0,t) \left( z^*_2(t)+\tilde\phi_2(t)X^{u^*}_t  \right)\mathrm{d}W^2_{\cal H}(t) .
\end{aligned}
\end{equation}
Suppose the following integrability condition holds
 \begin{equation}\label{linearbsde_bdg}
 \begin{aligned}
\mathbb{E}\Bigg[\left(\int_0^T  \Pi^*(0,t)^2 \left(z^*_1(t)-\tilde\phi_1(t)X^{u^*}_t\right)^2\mathrm{d}t \right)^{\frac{1}{2}} + \left(\int_0^T  \Pi^*(0,t)^2 \left(z^*_2(t)+\tilde\phi_2(t)X^{u^*}_t\right)^2\mathrm{d}t \right)^{\frac{1}{2}}+ (X^{u^*}_T)^2 \Bigg]<\infty.
\end{aligned}
\end{equation}
Then by the Burkholder-Davis-Gundy inequality (see \cite{Karatzas91}), $\Pi^*(0,t)X^{u^*}_t$, $t\in[0,T]$, is an ${\cal H}_t$-martingale. Thus we have
 \begin{equation}\label{linearbsde_bdg_2}
 \begin{aligned}
 X^{u^*}_t= \mathbb{E} \left[   \Pi^*(t,T)\tilde I\left(  c_3^*\Pi^*(0,T)   \right) | {\cal H}_t\right],
\end{aligned}
\end{equation}
where $\Pi^*(t,T):=\Pi^*(0,T)/\Pi^*(0,t)$. By (\ref{linearbsde_bdg_2}) and the initial value condition $X^{u^*}_0=X_0$, the ${\cal H}_0$-random variable $c^*_3$ can be (implicitly) determined by
 \begin{equation} \label{c3determined}
 \begin{aligned}
X_0=\mathbb{E} \left[   \Pi^*(0,T)\tilde I\left(  c_3^*\Pi^*(0,T)   \right) | {\cal H}_0\right].
\end{aligned}
\end{equation}

\begin{remark}
Here we could obtain the formula for $X^{u^*}_t$ without the traditional assumptions for the existence and the uniqueness of the solution to the linear BSDE (\ref{final_gex_1}) (see \cite{Pham09,Peng21}). The reason is that there is already a priori hypothesis for the existence of $X^{u^*}_t$ in the necessary maximum principle (see Theorem \ref{mainth4}), and the formula for $X^{u^*}_t$ is only based on the It\^{o} formula and the integrability condition (\ref{linearbsde_bdg}). Moreover, the existence and the uniqueness of the solution to the BSDE (\ref{final_gex_1}) need more assumptions (such as the assumption for the filtration $\{{\cal H}_t\}_{0\le t\le T}$) below to obtain the existence and the uniqueness of the solution to the BSDE (\ref{final_gex_1}). While those assumptions are very strong and can conversely ensures the condition (\ref{linearbsde_bdg}) ( see \cite{Pham09}).
\end{remark}

Moreover, substituting (\ref{dffunctionofu2222}) into (\ref{anequaboutc22}) and (\ref{anequaboutc222}) we obtain
 \begin{equation}  \label{anequaboutc22_final}
  \begin{aligned}
  \varepsilon^{v^*}_t=\mathbb{E}\left[ \left(\mathbb{E}\left[   e^{ -U\left(\tilde I\left(  c_3^*\Pi^*(0,T)   \right) \right )   }\big |{\cal H}_0\right]   e^{ U\left(\tilde I\left(  c_3^*\Pi^*(0,T)   \right) \right )   }  \right)^{-1} \big{|}{\cal H}_t\right],
   \end{aligned}
 \end{equation} 
and
\begin{equation}  \label{anequaboutc222_final}
  \begin{aligned}
  \varepsilon^{v^*}_T=\left(\mathbb{E}\left[   e^{ -U\left(\tilde I\left(  c_3^*\Pi^*(0,T)   \right) \right )   }\big |{\cal H}_0\right]   e^{ U\left(\tilde I\left(  c_3^*\Pi^*(0,T)   \right) \right )   }  \right)^{-1}.
   \end{aligned}
 \end{equation}

Further, if the filtration $\{{\cal H}_t\}_{0\le t\le T}$ is the augmentation of the natural filtration of $W^1_{\cal H}(t)$ and $W^2_{\cal H}(t)$, which was also assumed in \cite{Biagini05} in the continuous case, then $c_3^*$ is a constant since ${\cal H}_0$ is generated by the trivial $\sigma$-algebra and all $\mathbb{P}$-negligible sets. Suppose that $ \mathbb{E} \tilde I(c_3^*(\Pi^*(0,T)))^2 <\infty$, and $r$, $\tilde \phi_1$ and $\tilde\phi_2$ are bounded. Then by \cite[Theorem 4.8]{Oksendal19}, the linear BSDE (\ref{final_gex_1}) has a unique strong solution $(X^{u^*},z^*_1,z^*_2)$, i.e., $X^{u^*}_t$ is a continuous ${\cal H}_t$-adapted process with $\mathbb{E}\left[\sup_{0\le t\le T}|X^{u^*}_t|^2 \right]<\infty$, $z^*_i(t)$ is an ${\cal H}_t$-progressively measurable process with $\mathbb{E}\left[ \int_0^T|z^*_i(t)|^2\mathrm{d}t \right]<\infty$, $i=1,2$, and $(X^{u^*},z^*_1,z^*_2)$ satisfies the BSDE (\ref{final_gex_1}). 

\begin{remark}
Under mild conditions, we can obtain the formulae for $  z^*_1(t) $ and $ z^*_2(t))$ as follows (see \cite[Proposition 3.5.1]{Delong13})
 \begin{eqnarray}\label{final_ex_bmoofz1}
  \begin{aligned}& z^*_1(t)=D^1_t X_t^{u^*},
  \\&  z^*_2(t)= D^2_t X_t^{u^*},
  \end{aligned}  
 \end{eqnarray}
 where $D_t=(D^1_t,D^2_t)$ is the Malliavin derivative operator from the Sobolev space $D^{1,2}(\Omega)$ to $L^2(\Omega; L^2([0,T]))$. We refer to \cite{Nualart06,Huang00} for the theory of Malliavin calculus and relevant definitions.
\end{remark}

Moreover, $\varepsilon^{v^*}_t$ is an ${\cal H}_t$-square-integrable martingale by virtue of Remark \ref{aremarkofl2m}. Thus, by (\ref{anequaboutc222_final}) and the generalized Clark-Ocone representation theorem in Malliavin calculus (see \cite{Bermin02}), we have
\begin{equation}\label{c-oth1}
\begin{aligned}
\varepsilon^{v^*}_T=&1+\int_0^T\mathbb{E}\left[   D^1_t \left(\mathbb{E}\left[   e^{ -U\left(\tilde I\left(  c_3^*\Pi^*(0,T)   \right) \right )   }\big |{\cal H}_0\right]   e^{ U\left(\tilde I\left(  c_3^*\Pi^*(0,T)   \right) \right )   }  \right)^{-1} \big{|} {\cal H}_t  \right] \mathrm{d}
W^1_{\cal H}(t)\\
&+\int_0^T\mathbb{E}\left[   D^2_t \left(\mathbb{E}\left[   e^{ -U\left(\tilde I\left(  c_3^*\Pi^*(0,T)   \right) \right )   }\big |{\cal H}_0\right]   e^{ U\left(\tilde I\left(  c_3^*\Pi^*(0,T)   \right) \right )   }  \right)^{-1} \big{|} {\cal H}_t  \right] \mathrm{d}
W^2_{\cal H}(t),
\end{aligned}
\end{equation}
where $D_t=(D^1_t,D^2_t)$ is the extended Malliavin derivative operator from the generalized Sobolev space $D^{0,2}(\Omega)=L^2(\Omega)$ to $D^{-1,2}(\Omega;L^2([0,T]))$, and $\mathbb{E}[\cdot|{\cal H}_t]$ is interpreted as the generalized conditional expectation. We refer to \cite{Nualart06,Huang00} for more details. Comparing (\ref{c-oth1}) with (\ref{half_equ2}) we obtain
\begin{equation}\label{c-oth1-theta}
\begin{aligned}
&\theta_1^*(t)= \frac{\mathbb{E}\left[   D^1_t \left(\mathbb{E}\left[   e^{ -U\left(\tilde I\left(  c_3^*\Pi^*(0,T)   \right) \right )   }\big |{\cal H}_0\right]   e^{ U\left(\tilde I\left(  c_3^*\Pi^*(0,T)   \right) \right )   }  \right)^{-1}\big{|} {\cal H}_t  \right]}
{\mathbb{E}\left[ \left(\mathbb{E}\left[   e^{ -U\left(\tilde I\left(  c_3^*\Pi^*(0,T)   \right) \right )   }\big |{\cal H}_0\right]   e^{ U\left(\tilde I\left(  c_3^*\Pi^*(0,T)   \right) \right )   }  \right)^{-1}\big{|}{\cal H}_t\right]} \\
&\theta_2^*(t)=\frac{\mathbb{E}\left[   D^2_t \left(\mathbb{E}\left[   e^{ -U\left(\tilde I\left(  c_3^*\Pi^*(0,T)   \right) \right )   }\big |{\cal H}_0\right]   e^{ U\left(\tilde I\left(  c_3^*\Pi^*(0,T)   \right) \right )   }  \right)^{-1} \big{|} {\cal H}_t  \right]}
{\mathbb{E}\left[ \left(\mathbb{E}\left[   e^{ -U\left(\tilde I\left(  c_3^*\Pi^*(0,T)   \right) \right )   }\big |{\cal H}_0\right]   e^{ U\left(\tilde I\left(  c_3^*\Pi^*(0,T)   \right) \right )   }  \right)^{-1}\big{|}{\cal H}_t\right]}.
\end{aligned}
\end{equation}

To sum up, we give the following theorem.
 
    \begin{theorem} \label{ex_mainth_idontknow}
Assume that $G^1(\mathrm{d}z)=G^2(\mathrm{d}z)=0$, $\mu(t,x)=\mu_0(t) $ for some ${\cal G}^1_t$-adapted c\`{a}gl\`{a}d process $\mu_0(t)$, $b\ge \epsilon>0$ for some positive constant $\epsilon$, and $g(s,v)=\frac{1}{2} (\theta_1^2+\theta_2^2)$. Suppose $(u^*,v^*)\in {\cal A}_1'\times{\cal A}_2'$ is optimal for Problem \ref{sdg} under the conditions of Theorem \ref{mainth4}, and Assumption \ref{extraa1} and the integrability condition (\ref{linearbsde_bdg}) hold. Then $u^*$, $v^*$, $X^{u^*}_t$ and $\varepsilon^{v^*}_t$ are given by (\ref{bsde_barzt1}), (\ref{halfequ_1})-(\ref{halfequ_2}), (\ref{linearbsde_bdg_2}) and (\ref{anequaboutc22_final}), respectively, where $\Pi^*$ is given by (\ref{definitionofPi}), $c^*_3$ is determined by (\ref{c3determined}), and $(X^{u^*},  z^*_1, z^*_2)$ solves the linear BSDE (\ref{final_gex_1}). Furthermore, if $\{{\cal H}_t\}_{0\le t\le T}$ is the augmentation of the natural filtration of $W^1_{\cal H}(t)$ and $W^2_{\cal H}(t)$, $\mathbb{E} \tilde I(c_3^*(\Pi^*(0,T)))^2 <\infty$, and $r$, $\tilde \phi_1$ and $\tilde\phi_2$ are bounded, then the linear BSDE (\ref{final_gex_1}) has a unique strong solution, and $v^*$ is given by (\ref{c-oth1-theta}).
 \end{theorem}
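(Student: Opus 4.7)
The plan is to assemble the theorem from the pieces already derived in the section, using the total characterization of Theorem~\ref{mainth5} together with the specific simplifications coming from the assumptions $G^1=G^2=0$, $\mu(t,x)=\mu_0(t)$ (so $\varrho_t\equiv0$), and the quadratic penalty $g(v)=\tfrac12(\theta_1^2+\theta_2^2)$. The rough division is: treat the $v$-side by comparing the adjoint BSDE for $p_2^*$ with the SDE of $\ln\varepsilon^{v^*}$, and treat the $u$-side by exploiting $\varrho_t\equiv 0$ to decouple $p_1^*$ from $\pi^*$ and then invert via a fixed-point argument.

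For the $v$-side, I would first compute $\nabla_v H^*$ explicitly. Since there are no jumps, only $\theta_1^*$ and $\theta_2^*$ appear, and $\partial H^*/\partial\theta_i = \theta_i^*\varepsilon^{v^*} + q^*_{2i}\varepsilon^{v^*}$, giving $q^*_{2i}=-\theta_i^*$. Substituting into the adjoint BSDE (\ref{adjoint}) and matching term-by-term with (\ref{half_equ2}) for $\ln\varepsilon^{v^*}$, one gets $p_2^*(t)=c_2^*-\ln\varepsilon^{v^*}_t$ for some ${\cal H}_0$-measurable $c_2^*$. The terminal condition $p_2^*(T)=U(X_T^{u^*})$ gives (\ref{equeg1_42fb}); taking ${\cal H}_0$-conditional expectation using $\varepsilon^{v^*}_0=1$ and the martingale property of $\varepsilon^{v^*}$ determines $e^{c_2^*}$ and yields (\ref{anequaboutc22})--(\ref{anequaboutc222}).

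For the $u$-side, computing $\nabla_u H^*=(0,0)$ and plugging into (\ref{adjoint0}) gives the BSDE (\ref{idontknow0}); under $\varrho_t\equiv 0$ it collapses to the linear BSDE (\ref{idontknow}), whose coefficients are independent of $\pi^*$, so its solution is the exponential $p_1^*(t)=c_1^*\Pi^*(0,t)$ via Theorem~\ref{novikov}. From the terminal condition $p_1^*(T)=\varepsilon^{v^*}_T U'(X_T^{u^*})$ combined with (\ref{anequaboutc222}), one inverts using $I=(U')^{-1}$ to get the scalar fixed-point equation $X_T^{u^*}=I(c_3^*\Pi^*(0,T)e^{U(X_T^{u^*})})$; strict monotonicity of $I$ gives the unique solution $\tilde I(c_3^*\Pi^*(0,T))$. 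Next, I introduce $z^*=(z_1^*,z_2^*)$ via (\ref{bsde_barzt1}), rewrite (\ref{wealthsde2}) as the linear BSDE (\ref{final_gex_1}) with this terminal value, and apply the Itô formula to $\Pi^*(0,t)X_t^{u^*}$; under the integrability (\ref{linearbsde_bdg}) the Burkholder--Davis--Gundy inequality promotes the local martingale $\Pi^*(0,\cdot)X^{u^*}$ to a true ${\cal H}_t$-martingale, and taking conditional expectation yields (\ref{linearbsde_bdg_2}), with $c_3^*$ pinned down by the initial condition (\ref{c3determined}). Substituting this into (\ref{anequaboutc22})--(\ref{anequaboutc222}) produces (\ref{anequaboutc22_final})--(\ref{anequaboutc222_final}).

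For the second part, boundedness of $r$, $\tilde\phi_1$, $\tilde\phi_2$ and square-integrability of $\tilde I(c_3^*\Pi^*(0,T))$ put the linear BSDE (\ref{final_gex_1}) in the standard Lipschitz framework, so \cite[Theorem~4.8]{Oksendal19} delivers the unique strong solution. Under the Brownian filtration assumption, $\varepsilon^{v^*}_T$ is ${\cal H}_T$-measurable and square-integrable (Remark~\ref{aremarkofl2m}), so the generalized Clark--Ocone representation of \cite{Bermin02} applies to the explicit expression (\ref{anequaboutc222_final}); equating its stochastic integrands with those in the canonical form (\ref{half_equ2}) of $\varepsilon^{v^*}$ under the ${\cal H}_t$-Brownian motions $W^i_{\cal H}$ (and using $\varepsilon^{v^*}_{t-}=\varepsilon^{v^*}_{t}$ by continuity) yields (\ref{c-oth1-theta}). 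The main obstacle I expect is justifying the martingale promotion and the Clark--Ocone step with only the stated integrability; these rely respectively on (\ref{linearbsde_bdg}) and on interpreting $D_t$ and $\mathbb{E}[\cdot|{\cal H}_t]$ in the extended (distributional) sense on $D^{-1,2}$, so the argument must remain on the level of the generalized representation rather than the classical one.
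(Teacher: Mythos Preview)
Your proposal is correct and follows essentially the same approach as the paper: the paper's proof of Theorem~\ref{ex_mainth_idontknow} is precisely the derivation laid out in Section~\ref{sec_exampleg_sub1} preceding the theorem statement, and you have reproduced its structure step by step---the $v$-side via matching the $p_2^*$ adjoint BSDE with $\ln\varepsilon^{v^*}$, the $u$-side via the decoupled linear BSDE (\ref{idontknow}) for $p_1^*$ under $\varrho_t\equiv0$, the fixed-point inversion to obtain (\ref{dffunctionofu2222}), the It\^o-plus-BDG argument for (\ref{linearbsde_bdg_2}), and finally the standard Lipschitz BSDE result and the generalized Clark--Ocone representation for the second part. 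Your caveat about interpreting $D_t$ and the conditional expectation in the extended $D^{-1,2}$ sense is exactly the point the paper makes when invoking \cite{Bermin02}.
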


\begin{remark}
If the filtration $\{{\cal H}_t\}_{0\le t\le T}$ in Theorem \ref{ex_mainth_idontknow} is not  the augmentation of the natural filtration of $W^1_{\cal H}(t)$ and $W^2_{\cal H}(t)$, or the coefficients of the generator $f_{\text{L}}$ is not necessarily bounded, we refer to \cite{Eyraoud05,Li06,Lu13,Wang07} for further results. In those cases, the existence and uniqueness of the solution to the BSDE (\ref{final_gex_1}) still hold under mild conditions when a general martingale representation property was assumed, or a transposition solution was considered, or a stochastic Lipschitzs condition was considered. 
\end{remark}

When the utility function is of the logarithmic form, i.e., $U(x)=\ln x $, we can easily calculate the fixed point function $\tilde I(y)$ as follows
\begin{equation}\label{fixedpln}
\tilde I(y)=\frac{1}{\sqrt{y}},\quad y>0.
\end{equation}

Combining (\ref{linearbsde_bdg_2}) with (\ref{c3determined}), we have
\begin{equation} \begin{aligned}\label{linearbsde_bdg_2_iit}
 X^{u^*}_t=\frac{X_0\mathbb{E} \left[   \sqrt{  \Pi^*(t,T)}  | {\cal H}_t\right]}{\mathbb{E} \left[  \sqrt{  \Pi^*(0,T)} | {\cal H}_0\right]\sqrt{  \Pi^*(0,t)}},
\end{aligned}\end{equation}
and
 \begin{equation}\label{linearbsde_bdg_2_iiT}
 \begin{aligned}
 X^{\pi^*}_T=\frac{X_0 }{\mathbb{E} \big[  \sqrt{  \Pi^*(0,T)} | {\cal H}_0\big]\sqrt{  \Pi^*(0,T)}}.
\end{aligned}
\end{equation}
Thus the BSDE (\ref{final_gex_1}) can be rewritten as
 \begin{eqnarray}\label{final_gex_1f}
  \left\{ \begin{aligned}&\mathrm{d}X^{u^*}_t=-f_{\text{L}}(t,X^{u^*}_t,z^*_t,\omega)\mathrm{d}t+z^*_t\mathrm{d}W_{\cal H}(t), \quad 0\le t\le T,
  \\& X^{u^*}_T=\frac{X_0 }{\mathbb{E} \big[  \sqrt{  \Pi^*(0,T)} | {\cal H}_0\big]\sqrt{  \Pi^*(0,T)}}.\end{aligned} \right.   
 \end{eqnarray}
where $f_{\text{L}} $  is given by
 \begin{equation*} 
\begin{aligned}
f_{\text{L}}(t,x,z,\omega)=-r_tx-\tilde\phi_1(t)z_1  +\tilde\phi_2(t)z_2.
\end{aligned} 
 \end{equation*}

By (\ref{part1ofJ}), (\ref{equeg1_42fb}), (\ref{anequaboutc2}) and (\ref{linearbsde_bdg_2_iiT}), we can calculate the value of Problem \ref{sdg} as follows
\begin{equation}  \label{dontknowv}
  \begin{aligned}
 V&=\mathbb{E}\left[\varepsilon_T^{v^*}\ln \left( \varepsilon^{v^*}_TX^{u^*}_T  \right)    \right]  
 =\mathbb{E}\left[\varepsilon_T^{v^*} c^*_2 \right] =\mathbb{E}\left[-\varepsilon_T^{v^*}\ln \mathbb{E} [  (X_T^{u^*})^{-1}|{\cal H}_0  ] \right]\\
 &=-\mathbb{E}\left[ \mathbb{E}\left[\varepsilon_T^{v^*}\ln \mathbb{E} [  (X_T^{u^*})^{-1}|{\cal H}_0  ]\big{|} {\cal H}_0 \right]  \right]=-\mathbb{E}\left[ \ln \mathbb{E} [  (X_T^{u^*})^{-1}|{\cal H}_0  ]\mathbb{E}\left[\varepsilon_T^{v^*} \big{|} {\cal H}_0 \right]  \right]\\
 &=-\mathbb{E}\left[ \ln \mathbb{E} [  (X_T^{u^*})^{-1}|{\cal H}_0  ]  \right]=\ln X_0-2\mathbb{E}\left[ \ln \mathbb{E}\big[ \sqrt{\Pi^*(0,T)}  |{\cal H}_0\big] \right].
   \end{aligned}
 \end{equation}
 
    \begin{corollary} \label{corofvalue}
Assume that $U(x)=\ln x$, $G^1(\mathrm{d}z)=G^2(\mathrm{d}z)=0$, $\mu(t,x)=\mu_0(t) $ for some ${\cal G}^1_t$-adapted c\`{a}gl\`{a}d process $\mu_0(t)$, $b\ge \epsilon>0$ for some positive constant $\epsilon$, and $g(s,v)=\frac{1}{2} (\theta_1^2+\theta_2^2)$. Suppose $(u^*,v^*)\in {\cal A}_1'\times{\cal A}_2'$ is optimal for Problem \ref{sdg} under the conditions of Theorem \ref{mainth4}, and  Assumption \ref{extraa1} and the integrability condition (\ref{linearbsde_bdg}) hold. Then $(u^*,v^*)$ and $V$ are given by (\ref{bsde_barzt1}), (\ref{halfequ_21})-(\ref{halfequ_22}) and (\ref{dontknowv}), respectively, where $\Pi^*$ is given by (\ref{definitionofPi}), and $(X^{u^*},  z^*_1, z^*_2)$ solves the linear BSDE (\ref{final_gex_1f}). 
 \end{corollary}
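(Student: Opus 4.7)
The plan is to deduce Corollary \ref{corofvalue} by specializing Theorem \ref{ex_mainth_idontknow} to $U(x)=\ln x$, computing the fixed-point function explicitly, and then evaluating the performance functional in closed form.

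First I would invoke Theorem \ref{ex_mainth_idontknow}, whose hypotheses are exactly those assumed in the corollary (no jumps, affine mean rate with $\varrho\equiv 0$, quadratic penalty). This immediately supplies the formulas (\ref{bsde_barzt1})--(\ref{linearbsde_bdg_2}) and tells us that $X^{u^*}$ together with $z^*_1,z^*_2$ solves the linear BSDE (\ref{final_gex_1}). It remains to (i) specialize the terminal condition, (ii) obtain the characterization (\ref{halfequ_21})--(\ref{halfequ_22}) of $u^*$, and (iii) compute $V$.

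For (i), since $U(x)=\ln x$ gives $U'(x)=1/x$, the inverse map is $I(y)=1/y$, and the fixed-point equation $x=I(y e^{U(x)})=1/(yx)$ yields the fixed-point function (\ref{fixedpln}), namely $\tilde I(y)=y^{-1/2}$. Substituting into the implicit determination (\ref{c3determined}) of $c_3^*$ gives
\begin{equation*}
X_0=\mathbb{E}\!\left[\Pi^*(0,T)\,(c_3^*\Pi^*(0,T))^{-1/2}\,\big|\,\mathcal{H}_0\right]=\frac{1}{\sqrt{c_3^*}}\,\mathbb{E}\!\left[\sqrt{\Pi^*(0,T)}\,\big|\,\mathcal{H}_0\right],
\end{equation*}
so that $\sqrt{c_3^*}=X_0^{-1}\mathbb{E}[\sqrt{\Pi^*(0,T)}|\mathcal{H}_0]$. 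Plugging this back into (\ref{linearbsde_bdg_2}) and (\ref{dffunctionofu2222}) reproduces (\ref{linearbsde_bdg_2_iit}) and (\ref{linearbsde_bdg_2_iiT}); in particular the terminal data of BSDE (\ref{final_gex_1}) takes the explicit form displayed in (\ref{final_gex_1f}). For (ii), since we are now in the logarithmic/no-jump regime, I apply Corollary \ref{mainth6} (whose hypotheses are subsumed by those of Theorem \ref{mainth4}) and drop the integrals against $G^1,G^2$ and $G^1_{\mathcal{H}},G^2_{\mathcal{H}}$, which leaves exactly the (continuous) specializations of (\ref{halfequ_21})--(\ref{halfequ_22}).

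For (iii), the main computation, I use the decomposition $J(u^*,v^*)=\mathbb{E}[\varepsilon_T^{v^*}U(X_T^{u^*})]+\mathbb{E}[\int_0^T\varepsilon_s^{v^*}g(s,v_s^*)\mathrm{d}s]$ together with (\ref{part1ofJ}) to rewrite
\begin{equation*}
V=\mathbb{E}\!\left[\varepsilon_T^{v^*}\bigl(\ln X_T^{u^*}+\ln\varepsilon_T^{v^*}\bigr)\right].
\end{equation*}
By the terminal identity (\ref{equeg1_42fb}) this equals $\mathbb{E}[\varepsilon_T^{v^*}c_2^*]$, and (\ref{anequaboutc2}) with $U=\ln$ identifies $c_2^*=-\ln\mathbb{E}[(X_T^{u^*})^{-1}|\mathcal{H}_0]$. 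Since $c_2^*$ is $\mathcal{H}_0$-measurable and $\mathbb{E}[\varepsilon_T^{v^*}|\mathcal{H}_0]=1$ (by the martingale property $\varepsilon_0^{v^*}=1$), the tower property strips off $\varepsilon_T^{v^*}$. Finally, inserting (\ref{linearbsde_bdg_2_iiT}) gives $(X_T^{u^*})^{-1}=X_0^{-1}\mathbb{E}[\sqrt{\Pi^*(0,T)}|\mathcal{H}_0]\sqrt{\Pi^*(0,T)}$, so
\begin{equation*}
\mathbb{E}[(X_T^{u^*})^{-1}|\mathcal{H}_0]=X_0^{-1}\bigl(\mathbb{E}[\sqrt{\Pi^*(0,T)}|\mathcal{H}_0]\bigr)^{2},
\end{equation*}
and taking the logarithm and outer expectation yields (\ref{dontknowv}).

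I expect the step where the martingale property of $\varepsilon^{v^*}$ under $\mathbb{P}$ is combined with the $\mathcal{H}_0$-measurability of $c_2^*$ to be the only non-routine piece; the rest is essentially algebraic bookkeeping of the specializations already embedded in Theorem \ref{ex_mainth_idontknow} and Corollary \ref{mainth6}.
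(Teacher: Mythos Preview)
Your proposal is correct and follows essentially the same route as the paper: specialize Theorem \ref{ex_mainth_idontknow} via the explicit fixed-point function $\tilde I(y)=y^{-1/2}$, invoke Corollary \ref{mainth6} for the simplified characterization (\ref{halfequ_21})--(\ref{halfequ_22}), and compute $V$ by combining (\ref{part1ofJ}), (\ref{equeg1_42fb}), (\ref{anequaboutc2}) with the tower property and $\mathbb{E}[\varepsilon_T^{v^*}\mid\mathcal{H}_0]=1$. The chain of equalities you write for $V$ matches the paper's derivation in (\ref{dontknowv}) line by line.
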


When the insurer has no insider information, i.e., ${\cal H}_t={\cal F}_t$, we have $\phi_1=\phi_2=0$. Assume further that all the parameter processes are deterministic functions. By (\ref{linearbsde_bdg_2_iit}) we have
\begin{equation} \begin{aligned}
 X^{u^*}_t=\frac{X_0  \mathbb{E} \left[  \sqrt{  \Pi^*(t,T)} | {\cal F}_t\right]}{\mathbb{E}  \sqrt{  \Pi^*(0,T)}  \sqrt{  \Pi^*(0,t)}}.
\end{aligned}\end{equation}
By (\ref{final_ex_bmoofz1}) and Malliavin calculus, we can easily obtain
   \begin{equation}  
 \begin{aligned}
z_1^*(t)=D^1_tX^{u^*}_t&=\frac{X_0\mathbb{E}\left[ D^1_t\sqrt{\Pi^*(t,T)  } |{\cal F}_t   \right]}{  \mathbb{E}\sqrt{\Pi^*(0,T) }\sqrt{\Pi^*(0,t)}}-\frac{1}{2} \frac{X_0\mathbb{E}\left[ \sqrt{\Pi^*(t,T)  } |{\cal F}_t   \right]}{  \mathbb{E}\sqrt{\Pi^*(0,T) }\Pi^*(0,t)^{\frac{3}{2}}}D^1_t\Pi^*(0,t)\\
&= \frac{1}{2} \frac{X_0\mathbb{E}\left[ \sqrt{\Pi^*(t,T)   } |{\cal F}_t   \right]}{  \mathbb{E}\sqrt{\Pi^*(0,T) }\sqrt{\Pi^*(0,t)}}\iota_t,
\end{aligned}
\end{equation}
 and
    \begin{equation}  
 \begin{aligned}
z_2^*(t)=D^2_tX^{u^*}_t=&\frac{X_0\mathbb{E}\left[ D^2_t\sqrt{\Pi^*(t,T)  } |{\cal F}_t   \right]}{  \mathbb{E}\sqrt{\Pi^*(0,T) }\sqrt{\Pi^*(0,t)}}-\frac{1}{2} \frac{X_0\mathbb{E}\left[ \sqrt{\Pi^*(t,T)   } |{\cal F}_t   \right]}{  \mathbb{E}\sqrt{\Pi^*(0,T) }\Pi^*(0,t)^{\frac{3}{2}}}D^2_t\Pi^*(0,t)\\
=&-\frac{1}{2} \frac{X_0\mathbb{E}\left[ \sqrt{\Pi^*(t,T)   } |{\cal F}_t   \right]}{  \mathbb{E}\sqrt{\Pi^*(0,T) }\sqrt{\Pi^*(0,t)}}\frac{\lambda_t-a_t+\rho b_t\iota_t}{\sqrt{1-\rho^2}b_t}.
\end{aligned}
\end{equation}

 We can obtain the robust optimal strategy by (\ref{bsde_barzt1}) as follows
  \begin{equation}  \label{conti_noinsider}
 \begin{aligned}
 \pi^*_t&=  \frac{\iota_t}{2\sigma_t}+\frac{\rho(\lambda_t-a_t+\rho b_t\iota_t)}{2(1-\rho^2)\sigma_t b_t} ,
 \\ \kappa^*_t&= \frac{\lambda_t-a_t+\rho b_t\iota_t}{2(1-\rho^2)b^2_t}  .
\end{aligned}
\end{equation}

By the equations (\ref{halfequ_21}) and (\ref{halfequ_22}) in Corollary \ref{mainth6}, we can also obtain
 \begin{equation}\label{bsde_barzt1_donsktheta_0}
   \begin{aligned}
 &\theta_1^*(t)=-\frac{ \iota_t}{2} ,   
  \\&\theta_2^*(t)= \frac{\lambda_t-a_t+\rho b_t\iota_t}{2\sqrt{1-\rho^2}b_t}. 
\end{aligned} 
 \end{equation}
 
 The value of Problem \ref{sdg} can be obtained by (\ref{dontknowv}) as follows
\begin{equation}  \label{value1}
  \begin{aligned}
 V&=\ln X_0-\ln  \left(\mathbb{E}\sqrt{\Pi^*(0,T)} \right)^2 \\
 &=\ln X_0+\int_0^Tr_t\mathrm{d}t+\frac{1}{4}\int_0^T\left(\iota_t^2+\frac{(\lambda_t-a_t+\rho b_t\iota_t)^2}{(1-\rho^2)b_t^2} \right)\mathrm{d}t.
   \end{aligned}
 \end{equation} 

Thus we obtain the following corollary.

    \begin{corollary} \label{corex_mainth_idontknow_donsker}
Assume that $U(x)=\ln x$, $G^1(\mathrm{d}z)=G^2(\mathrm{d}z)=0$, $\mu(t,x)=\mu_0(t) $ for some c\`{a}gl\`{a}d function $\mu_0(t)$, $b\ge \epsilon>0$ for some positive constant $\epsilon$, and $g(s,v)=\frac{1}{2} (\theta_1^2+\theta_2^2)$. Assume further that ${\cal H}_t={\cal F}_t$ and all parameter processes are deterministic functions. Suppose $(u^*,v^*)\in {\cal A}_1'\times{\cal A}_2'$ is optimal for Problem \ref{sdg} under the conditions of Theorem \ref{mainth4}. Then $(u^*,v^*)$ is given by (\ref{conti_noinsider}) and (\ref{bsde_barzt1_donsktheta_0}), and the value $V$ is given by (\ref{value1}).
 \end{corollary}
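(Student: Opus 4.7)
The plan is to specialize Corollary \ref{corofvalue} to the current setting and then carry out the (now tractable) Malliavin-type computation that the general corollary left implicit. First, since ${\cal H}_t={\cal F}_t$, the semimartingale decompositions in (\ref{semi_decomp}) hold with $\phi_1\equiv\phi_2\equiv 0$ and $W^i_{\cal H}=W^i$. Hence the auxiliary drifts defined before Corollary \ref{corofvalue} simplify to $\tilde\phi_1(t)=\iota_t$ and $\tilde\phi_2(t)=(\lambda_t-a_t+\rho b_t\iota_t)/(\sqrt{1-\rho^2}b_t)$, and the process $\Pi^*(0,t)$ becomes the exponential of a deterministic-coefficient Gaussian integral. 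All hypotheses in Corollary \ref{corofvalue} (Assumption \ref{extraa1} and the integrability (\ref{linearbsde_bdg})) are then inherited from the boundedness/integrability assumptions on the parameter functions, so (\ref{linearbsde_bdg_2_iit})--(\ref{linearbsde_bdg_2_iiT}) and (\ref{final_ex_bmoofz1}) apply.

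Next I would compute $z^{*}_1(t)=D^1_tX^{u^*}_t$ and $z^{*}_2(t)=D^2_tX^{u^*}_t$ by plugging (\ref{linearbsde_bdg_2_iit}) into (\ref{final_ex_bmoofz1}) and applying the Malliavin chain rule. The key observation is that, because the coefficients $r,\tilde\phi_1,\tilde\phi_2$ are deterministic, the standard rule $D^i_s\!\int_0^T\!u(r)\mathrm{d}W^j(r)=\delta_{ij}u(s)+\int_0^T\!D^i_s u(r)\mathrm{d}W^j(r)$ yields $D^1_t\ln\Pi^*(0,t)=-\iota_t$, $D^2_t\ln\Pi^*(0,t)=\tilde\phi_2(t)$, and the crucial cancellation $D^i_t\ln\Pi^*(t,T)=D^i_t\ln\Pi^*(0,T)-D^i_t\ln\Pi^*(0,t)=0$ at the boundary $s=t$. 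Consequently the terms involving $\mathbb{E}[D^i_t\sqrt{\Pi^*(t,T)}|{\cal F}_t]$ vanish, leaving exactly the two displayed formulas for $z^*_1,z^*_2$ above (\ref{conti_noinsider}). Substituting these into (\ref{bsde_barzt1}) and simplifying gives $\pi^*_t$ and $\kappa^*_t$ as in (\ref{conti_noinsider}).

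For the robust control $v^*$ I would use the logarithmic half-characterization already derived as Corollary \ref{mainth6}. Because $G^1=G^2=0$ the jump terms in (\ref{halfequ_21})-(\ref{halfequ_22}) drop out, $\partial_x\mu\equiv 0$, and $\phi_1=\phi_2=0$, so the system reduces to a linear $2\times 2$ algebraic system in $(\theta^*_1(t),\theta^*_2(t))$. Inserting the values of $\pi^*_t,\kappa^*_t$ from the previous step, equation (\ref{halfequ_21}) collapses to $\sigma_t\iota_t/2+\sigma_t\theta^*_1(t)=0$, giving $\theta^*_1(t)=-\iota_t/2$; then (\ref{halfequ_22}) becomes $(\lambda_t-a_t)/2+\rho b_t\iota_t/2-\sqrt{1-\rho^2}b_t\theta^*_2(t)=0$, yielding $\theta^*_2(t)=(\lambda_t-a_t+\rho b_t\iota_t)/(2\sqrt{1-\rho^2}b_t)$, which is (\ref{bsde_barzt1_donsktheta_0}).

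Finally, for the value $V$ I start from (\ref{dontknowv}). The ${\cal H}_0$-conditioning is trivial because the filtration is generated by $W^1,W^2$, so $V=\ln X_0-2\ln \mathbb{E}\sqrt{\Pi^*(0,T)}$. Since the parameters are deterministic, $\sqrt{\Pi^*(0,T)}$ is the exponential of a Gaussian integral with mean $-\tfrac{1}{2}\!\int_0^T r_s\mathrm{d}s-\tfrac{1}{4}\!\int_0^T(\tilde\phi_1^2+\tilde\phi_2^2)\mathrm{d}s$ and variance $\tfrac{1}{4}\!\int_0^T(\tilde\phi_1^2+\tilde\phi_2^2)\mathrm{d}s$; the Gaussian moment generating function then gives $\mathbb{E}\sqrt{\Pi^*(0,T)}=\exp\{-\tfrac{1}{2}\!\int_0^T r_s\mathrm{d}s-\tfrac{1}{8}\!\int_0^T(\tilde\phi_1^2+\tilde\phi_2^2)\mathrm{d}s\}$. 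Plugging the explicit forms of $\tilde\phi_1,\tilde\phi_2$ into $-2\ln$ of this delivers (\ref{value1}). The only place where real care is needed is the boundary evaluation of the Malliavin derivatives at $s=t$; the remaining work is substitution and linear algebra, with the closed-form expression for $V$ essentially falling out of Girsanov/Itô once everything is deterministic.
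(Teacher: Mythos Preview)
Your proposal is correct and follows essentially the same route as the paper: specialize Corollary \ref{corofvalue} with $\phi_1=\phi_2=0$, compute $z^*_i=D^i_tX^{u^*}_t$ from (\ref{linearbsde_bdg_2_iit}) via the Malliavin chain rule (where the $D^i_t\sqrt{\Pi^*(t,T)}$ contribution vanishes because the coefficients are deterministic), recover $(\pi^*,\kappa^*)$ from (\ref{bsde_barzt1}), read off $(\theta_1^*,\theta_2^*)$ from the algebraic system (\ref{halfequ_21})--(\ref{halfequ_22}), and evaluate $V$ from (\ref{dontknowv}) using the Gaussian moment generating function. Your write-up is in fact slightly more explicit than the paper's about why the first Malliavin term drops out and about the lognormal computation of $\mathbb{E}\sqrt{\Pi^*(0,T)}$, but the argument is the same.
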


Next, we concentrate on a special situation without model uncertainty. The stochastic differential game problem (\ref{sdg}) degenerates to the following anticipating stochastic control problem.
\begin{problem}\label{sdg2fb}
 Select $u^*\in {\cal A}_1' $ such that
\begin{equation}  
\tilde V:=\tilde J(u^*)= \sup_{u\in {\cal A}_1'}   \tilde J(u),
\end{equation}  
where $\tilde J(u):=\mathbb{E}\left[  U( X_T^u )\right]$. We call $\tilde V$ the value (or the optimal expected utility) of Problem \ref{sdg2fb}.
\end{problem}

We still assume that $U(x)=\ln x$, ${\cal H}_t={\cal F}_t$ and all parameter processes are deterministic functions. Then the terminal value condition of $X^{u^*}_t$ in (\ref{final_gex_1f}) is replaced by
\begin{equation} 
X^{u^*}_T=\frac{X_0}{\Pi^*(0,T)}.
\end{equation}  
By the similar procedure, we obtain the following result without insider information.

    \begin{proposition}  \label{co2rex_mainth_idontknow_donsker}
Assume that $U(x)=\ln x$, $G^1(\mathrm{d}z)=G^2(\mathrm{d}z)=0$, $\mu(t,x)=\mu_0(t) $ for some c\`{a}gl\`{a}d function $\mu_0(t)$, $b\ge \epsilon>0$ for some positive constant $\epsilon$, and no model certainty is considered. Assume further that ${\cal H}_t={\cal F}_t$ and all parameter processes are deterministic functions. Suppose $u^*\in {\cal A}_1'$ is optimal for Problem \ref{sdg2fb} under the conditions of Theorem \ref{mainth4} (with ${\cal A}_2'=\{(0,0)\}$). Then $u^* $ and $\tilde V$ are given by
\begin{equation} \begin{aligned}
&\pi^*_t =  \frac{\iota_t}{\sigma_t}+\frac{\rho(\lambda_t-a_t+\rho b_t\iota_t)}{(1-\rho^2)\sigma_t b_t} ,\\
&\kappa^*_t =\frac{\lambda_t-a_t+\rho b_t\iota_t}{(1-\rho^2)b^2_t}   ,\\
&\tilde V =\ln X_0+\int_0^Tr_t\mathrm{d}t+\frac{1}{2}\int_0^T \left(\iota_t^2 +\frac{(\lambda_t-a_t+\rho b_t\iota_t)^2}{(1-\rho^2)b^2_t}  \right)\mathrm{d}t.
\end{aligned}\end{equation} 
 \end{proposition}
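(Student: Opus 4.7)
The plan is to exploit the massive simplification provided by the absence of model uncertainty, insider information, and state-dependence of $\mu$. Since ${\cal A}_2' = \{(0,0)\}$ forces $\varepsilon^{v}_t \equiv 1$ and ${\cal Q}^v = \mathbb P$, and ${\cal H}_t = {\cal F}_t$ yields $\phi_1 = \phi_2 \equiv 0$ via Theorem~\ref{mainth3}, Proposition~\ref{fgen1} lets every forward integral in (\ref{equofxt}) be replaced by an ordinary It\^o integral against a $\mathbb P$-Brownian motion. Combined with $\varrho_t \equiv 0$, this turns $\ln X_T^u$ into a sum of drift terms and two diffusive terms, the latter being $L^2$-martingales under Definitions~\ref{admiss} and~\ref{admiss12}. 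Taking expectations therefore gives
\begin{equation*}
\tilde J(u) = \mathbb{E}[\ln X_T^u] = \ln X_0 + \mathbb{E}\int_0^T F(t,\pi_t,\kappa_t)\,\mathrm{d}t,
\end{equation*}
where $F(t,\pi,\kappa) := r_t + (\mu_0(t)-r_t)\pi + (\lambda_t-a_t)\kappa - \tfrac12\sigma_t^2\pi^2 + \rho\sigma_t b_t\pi\kappa - \tfrac12 b_t^2\kappa^2$.

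Because $F(t,\cdot,\cdot)$ is a strictly concave quadratic (its Hessian has determinant $\sigma_t^2 b_t^2(1-\rho^2) > 0$ and negative trace, since $-1 < \rho \le 0$) with deterministic coefficients, I would maximize pointwise: the supremum is attained by the deterministic process $(\pi^*_t,\kappa^*_t)$ solving the $2\times 2$ first-order system
\begin{equation*}
\sigma_t^2\pi^*_t - \rho\sigma_t b_t\kappa^*_t = \mu_0(t) - r_t, \qquad -\rho\sigma_t b_t\pi^*_t + b_t^2\kappa^*_t = \lambda_t - a_t.
\end{equation*}
Inverting this matrix (determinant $\sigma_t^2 b_t^2(1-\rho^2)$) produces exactly the claimed formulas for $\pi^*_t$ and $\kappa^*_t$, which, as expected, are twice the robust strategies in~(\ref{conti_noinsider})---reflecting the fact that ambiguity aversion halves the exposures.

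For the value, I would use two identities valid at the optimum: first, $\sigma_t\pi^*_t - \rho b_t\kappa^*_t = \iota_t$ by direct substitution, and second, $(\mu_0(t)-r_t)\pi^*_t + (\lambda_t-a_t)\kappa^*_t = \sigma_t^2(\pi^*_t)^2 - 2\rho\sigma_t b_t\pi^*_t\kappa^*_t + b_t^2(\kappa^*_t)^2$ obtained by pairing $(\pi^*_t,\kappa^*_t)$ with the two FOC equations. These collapse $F(t,\pi^*_t,\kappa^*_t)$ into
\begin{equation*}
r_t + \tfrac12\bigl[(\sigma_t\pi^*_t - \rho b_t\kappa^*_t)^2 + (1-\rho^2)b_t^2(\kappa^*_t)^2\bigr] = r_t + \tfrac12\iota_t^2 + \frac{(\lambda_t - a_t + \rho b_t\iota_t)^2}{2(1-\rho^2)b_t^2},
\end{equation*}
and integrating yields the announced $\tilde V$. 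The main ``obstacle''---really more a routine verification---is checking that the pointwise maximizer $(\pi^*,\kappa^*)$ belongs to ${\cal A}_1'$, which holds because it is deterministic and locally bounded whenever $r,\mu_0,\sigma,\lambda,a$, and $b \ge \epsilon$ are bounded c\`agl\`ad functions, so every integrability condition in Definitions~\ref{admiss} and~\ref{admiss12} is trivially satisfied. An alternative, more uniform proof would mimic Corollary~\ref{corex_mainth_idontknow_donsker}: since $\varepsilon^{v^*}\equiv 1$, the terminal condition degenerates to $X_T^{u^*} = X_0/\Pi^*(0,T)$, hence $X_t^{u^*} = X_0/\Pi^*(0,t)$, and Malliavin differentiation of the latter recovers $(z^*_1,z^*_2)$ and then $(\pi^*,\kappa^*)$ through~(\ref{bsde_barzt1}).
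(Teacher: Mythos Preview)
Your proof is correct, but your primary route is genuinely different from the paper's. The paper derives this proposition ``by the similar procedure'' as Corollary~\ref{corex_mainth_idontknow_donsker}: it carries the full maximum-principle/BSDE machinery, replaces the terminal condition by $X_T^{u^*}=X_0/\Pi^*(0,T)$ (since $\varepsilon^{v^*}\equiv 1$ and $I(y)=1/y$ for the log), solves the linear BSDE to get $X_t^{u^*}=X_0/\Pi^*(0,t)$, and recovers $(\pi^*,\kappa^*)$ through Malliavin differentiation and (\ref{bsde_barzt1}). You instead exploit the fact that, once ${\cal H}_t={\cal F}_t$, $G^i=0$, $\varrho\equiv 0$ and $v^*\equiv 0$, the objective collapses to $\ln X_0+\int_0^T \mathbb{E}[F(t,\pi_t,\kappa_t)]\,\mathrm{d}t$ with $F$ a deterministic strictly concave quadratic, so pointwise maximization is both rigorous and immediate. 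This is more elementary and self-contained---it bypasses the adjoint BSDE, the fixed-point function, and Malliavin calculus entirely---whereas the paper's approach has the virtue of being a specialization of the general framework, so the result is seen as a degenerate instance of Theorem~\ref{ex_mainth_idontknow} rather than an ad hoc computation. You correctly identify the paper's route as your ``alternative, more uniform proof'' at the end, so you are aware of both.
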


\begin{remark}\label{compare1}
By comparing Corollary \ref{corex_mainth_idontknow_donsker} with Proposition \ref{co2rex_mainth_idontknow_donsker}, the insurer should cut both investment and insurance in half once she is ambiguity averse. The reason is that ambiguity aversion makes her strategy more conservative. Moreover, the difference between the optimal expected utility under the worst-case probability and that under the original reference probability delivers aversion to such ambiguity, and can be characterized by $V-\tilde V=-\frac{1}{4}\int_0^T \left(\iota_t^2 +\frac{(\lambda_t-a_t+\rho b_t\iota_t)^2}{(1-\rho^2)b^2_t}  \right)\mathrm{d}t$, which is nonpositive. 
\end{remark}

\subsubsection{A particular case}
\label{subsubconti1}
Next, we give a particular case to derive the closed form of the robust optimal strategy. Assume that $U(x)=\ln x $, the insider information is related to the future value of risk in the insurance market, i.e.,  
\begin{equation}\label{donsker_filtration}
{\cal H}_t=\bigcap_{s>t}({\cal F}_s\vee Y_0):=\bigcap_{s>t}\left({\cal F}_s\vee \int_0^{T_0}\varphi({s'})\mathrm{d}\bar W_{s'}\right),\quad 0\le t\le T,
\end{equation}
for some $T_0>T$, and all the parameter processes are assumed to be deterministic functions. Here, $\varphi_t $ is some deterministic function satisfying $\Vert \varphi\Vert^2_{[s,t]}:=\int_s^{t}\varphi^2(s')    \mathrm{d}s'<\infty$ for all $0\le s\le t\le T_0$, and $\Vert \varphi\Vert^2_{[T,T_0]}>0$. 

In this situation, each ${\cal H}_t$-adapted process $x_t$, $t\in[0,T]$, has the form $x_t=x_1(t,Y_0,\omega)$ for some function $x_1: [0,T]\times\mathbb{R}\times\Omega\rightarrow \mathbb{R}$ such that $x_1(t,y)$ is ${\cal F}_t$-adapted for every $y\in\mathbb{R}$.  For simplicity, we write $x$ instead of $x_1$ in the sequel. To solve the anticipating linear BSDE (\ref{final_gex_1}), we need to introduce some white noise techniques in Malliavin calculus (see \cite{Draouil15,Huang00,DiNunno09}).

\begin{definition}[Donsker $\delta$ functional]\label{definitionofdonsker}
Let $Y:\Omega\rightarrow \mathbb{R}$ be a random variable which belongs to the distribution space $({\cal S})^{-1}$ (see \cite{Huang00} for the definition). Then a  continuous linear operator $\delta_{\cdot}(Y):\mathbb{R}\rightarrow ({\cal S})^{-1}$ is called a Donsker $\delta$ functional of $Y$ if it has the property that
\begin{equation*}
 \int_\mathbb{R}f(y)\delta_y(Y)\mathrm{d}y=f(Y)
\end{equation*}
for all Borel measurable functions $f:\mathbb{R}\rightarrow\mathbb{R}$ such that the integral converges in $({\cal S})^{-1}$.
\end{definition}

The following lemma gives a sufficient condition for the existence of the Donsker $\delta$ functional. The proof can be found in \cite{DiNunno09}.

\begin{lemma} \label{lemmaofdonsker}
Let $Y: \Omega\rightarrow \mathbb{R}$ be a Gaussian random variable with mean $\bar \mu$ and variance $\bar\sigma^2>0$. Then its Donsker $\delta$ functional $\delta_{y}(Y)$ exists and is uniquely given by
\begin{equation*}
\delta_y(Y)=\frac{1}{\sqrt{2\pi\bar\sigma^2}}\exp^{\diamond}\left\{-\frac{(y-Y)^{\diamond 2}}{2\bar\sigma^2}  \right\}\in ({\cal S})'\subset ({\cal S})^{-1},
\end{equation*}
\end{lemma}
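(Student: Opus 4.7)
The plan is to establish both parts using the $S$-transform of white noise analysis, which is an injective linear operator on $(\mathcal{S})^{-1}$ converting Wick products into ordinary pointwise products; this turns the distributional identity into a scalar calculation involving the characteristic function of $Y$.

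For uniqueness, suppose $\delta^{(1)}_\cdot(Y)$ and $\delta^{(2)}_\cdot(Y)$ both satisfy Definition \ref{definitionofdonsker}. Apply them to the family $f(y)=e^{\lambda y}$, $\lambda\in\mathbb{R}$, to get two $(\mathcal{S})^{-1}$-valued analytic functions of $\lambda$ whose values agree with $e^{\lambda Y}$; taking $S$-transforms and invoking injectivity of $S$ forces $\delta^{(1)}_y(Y)=\delta^{(2)}_y(Y)$ for a.e.\ $y$, and continuity in $y$ upgrades this to equality everywhere. For existence, I would write $Y=\bar\mu+Z$ with $Z$ centered Gaussian of variance $\bar\sigma^2$, realized as $Z=\int_0^\infty h(s)\,dB_s$ for a deterministic kernel $h$ with $\|h\|_{L^2}^2=\bar\sigma^2$, and denote the candidate by $\Phi_y:=\tfrac{1}{\sqrt{2\pi\bar\sigma^2}}\exp^{\diamond}\!\bigl\{-(y-Y)^{\diamond 2}/(2\bar\sigma^2)\bigr\}$. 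Using $S[W_1\diamond W_2]=S[W_1]S[W_2]$, $S[\exp^{\diamond}W]=\exp(S[W])$, and $S[Z](\xi)=\langle h,\xi\rangle$, one computes
\[
S[\Phi_y](\xi)\;=\;\frac{1}{\sqrt{2\pi\bar\sigma^2}}\exp\!\Bigl(-\tfrac{(y-\bar\mu-\langle h,\xi\rangle)^2}{2\bar\sigma^2}\Bigr),
\]
an ordinary Gaussian density centered at $\bar\mu+\langle h,\xi\rangle$. Consequently, for admissible $f$,
\[
S\!\Bigl[\int_{\mathbb{R}} f(y)\Phi_y\,dy\Bigr](\xi)\;=\;\int_{\mathbb{R}} f(y)\,\frac{1}{\sqrt{2\pi\bar\sigma^2}}\,e^{-(y-\bar\mu-\langle h,\xi\rangle)^2/(2\bar\sigma^2)}\,dy\;=\;S[f(Y)](\xi),
\]
first verified when $f$ is a polynomial (where $S[f(Y)](\xi)=f(\bar\mu+\langle h,\xi\rangle)$) and then extended to general Borel $f$ with the required integrability by density. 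Injectivity of $S$ yields the defining identity $\int_{\mathbb{R}} f(y)\Phi_y\,dy=f(Y)$, giving existence and the explicit formula.

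The hard part will be the analytic bookkeeping: one must verify that $\Phi_y$ actually lies in $(\mathcal{S})'$ (stronger than $(\mathcal{S})^{-1}$) and that $y\mapsto\Phi_y$ is continuous, which amounts to checking that the $S$-transform computed above satisfies the Kondratiev--Potthoff--Streit characterization bounds uniformly on compact sets in $y$. The exponential--quadratic decay in $\langle h,\xi\rangle$ provided by $\bar\sigma^2>0$ supplies exactly the growth control needed, and the Fubini exchange between the Lebesgue integral in $y$ and the $S$-transform is then legitimized via this characterization. Once these technical estimates are in place (the argument follows the template of \cite{DiNunno09}), the candidate $\Phi_y$ is a bona fide element of $(\mathcal{S})'$, uniqueness pins it down as the Donsker $\delta$ functional, and the formula in the statement follows.
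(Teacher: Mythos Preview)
The paper does not actually prove this lemma; it simply cites \cite{DiNunno09}. Your $S$-transform approach is precisely the standard argument one finds in that reference, so in spirit you are reproducing the intended proof rather than offering an alternative.

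There is, however, one genuine slip in your justification. You claim that for a polynomial $f$ one has $S[f(Y)](\xi)=f(\bar\mu+\langle h,\xi\rangle)$. This is false: the $S$-transform turns \emph{Wick} powers into ordinary powers, not ordinary powers into ordinary powers. For example $S[Y^2](\xi)=(\bar\mu+\langle h,\xi\rangle)^2+\bar\sigma^2$, not $(\bar\mu+\langle h,\xi\rangle)^2$. The correct argument for the identity
\[
\int_{\mathbb{R}} f(y)\,\frac{1}{\sqrt{2\pi\bar\sigma^2}}\,e^{-(y-\bar\mu-\langle h,\xi\rangle)^2/(2\bar\sigma^2)}\,dy \;=\; S[f(Y)](\xi)
\]
is direct and does not need a polynomial reduction: by definition $S[f(Y)](\xi)=\mathbb{E}\bigl[f(Y)\,e^{W(\xi)-\|\xi\|^2/2}\bigr]$, and Cameron--Martin shows that under the tilted measure $Y$ is Gaussian with mean $\bar\mu+\langle h,\xi\rangle$ and variance $\bar\sigma^2$, which gives exactly the left-hand side. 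Once you replace the polynomial step with this observation, the rest of your outline (the Potthoff--Streit characterization for membership in $(\mathcal{S})'$, continuity in $y$, and the Fubini justification) is correct and matches the literature.
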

where $({\cal S})'$ is the Hida distribution space, and $\diamond$ denotes the Wick product. We refer to \cite{Huang00} for relevant definitions.

Moreover, we can obtain the explicit expressions of $\phi_1$ and $\phi_2$ as the following lemma, which was first proved in \cite[Theorem 2.2]{Draouil16}.

\begin{lemma}[Enlargement of filtration] \label{lemmaofdonskersemi}
Suppose $Y$ is an ${\cal F}_{T_0}$-measurable random variable for some $T_0> T$ and belongs to $({\cal S})'$. The Donsker $\delta$ functional of $Y$ exists and satisfies $\mathbb{E}[ \delta_\cdot(Y) |{\cal F}_t]\in L^2(\mathrm{m}\times\mathbb{P})$ and $\mathbb{E}[ D^i_t\delta_\cdot(Y) |{\cal F}_t]\in L^2(\mathrm{m}\times\mathbb{P})$, where $D^i_t$ is the (extended) Hida-Malliavin derivative (see \cite{DiNunno09}), $i=1, 2$. Assume further that ${\cal H}_t=\bigcap_{s>t} ({\cal F}_s\vee Y)$, and $W^i$ is an ${\cal H}_t$-semimartingale with the decomposition (\ref{semi_decomp}), $i=1, 2$. Then we have
\begin{equation*}\begin{aligned}
\phi_1(t)&=\frac{\mathbb{E}[D^1_t\delta_{y}(Y)|{\cal F}_t]|_{y=Y}}{\mathbb{E}[\delta_y(Y)|{\cal F}_t]|_{y=Y}},\\
\phi_2(t)&=\frac{\mathbb{E}[D^2_t\delta_{y}(Y)|{\cal F}_t]|_{y=Y}}{\mathbb{E}[\delta_y(Y)|{\cal F}_t]|_{y=Y}}.
\end{aligned}\end{equation*}
\end{lemma}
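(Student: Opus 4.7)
The plan is to prove this by identifying $\phi_i$ through Jacod-type enlargement-of-filtration formulas, with the Donsker $\delta$ functional playing the role of the conditional density of $Y$ given $\mathcal{F}_t$. More precisely, define the $({\cal S})^{-1}$-valued process $M(t,y) := \mathbb{E}[\delta_y(Y)\,|\,\mathcal{F}_t]$. Heuristically, $M(t,y)\,\mathrm{d}y$ plays the role of the regular conditional law of $Y$ given $\mathcal{F}_t$, so that for any sufficiently integrable functional $f$,
\begin{equation*}
\mathbb{E}[f(Y)\,|\,\mathcal{F}_t\vee Y] = \frac{\mathbb{E}[f(y)\,\delta_y(Y)\,|\,\mathcal{F}_t]}{\mathbb{E}[\delta_y(Y)\,|\,\mathcal{F}_t]}\bigg|_{y=Y},
\end{equation*}
which is the key Bayes-type identity behind the target formula.

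First I would apply the Clark--Ocone formula (in its generalized, distribution-valued form) to the $({\cal S})'$-element $\delta_y(Y)$, treating $y$ as a parameter. Since $Y$ is $\mathcal{F}_{T_0}$-measurable and $W^1,W^2$ generate the underlying filtration in the continuous components, this yields
\begin{equation*}
\delta_y(Y) = \mathbb{E}[\delta_y(Y)] + \sum_{i=1}^{2}\int_0^{T_0}\mathbb{E}[D^i_s\delta_y(Y)\,|\,\mathcal{F}_s]\,\mathrm{d}W^i_s,
\end{equation*}
so that $M(t,y)$ is an $\mathcal{F}_t$-martingale with stochastic-integral representation whose integrand against $W^i$ is exactly $\mathbb{E}[D^i_t\delta_y(Y)\,|\,\mathcal{F}_t]$. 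Next I would invoke the predictable representation plus Jacod's theorem for initial enlargements: under the assumption that $M(t,y)|_{y=Y}>0$, any $\mathcal{F}_t$-local martingale $N$ admits the ${\cal H}_t$-semimartingale decomposition $N_t = \widetilde N_t + \int_0^t \frac{\mathrm{d}\langle N,M(\cdot,y)\rangle_s}{M(s-,y)}\big|_{y=Y}$. Specializing to $N = W^i$ and using $\mathrm{d}\langle W^i,W^j\rangle_s = \delta_{ij}\,\mathrm{d}s$ together with the Clark--Ocone integrand computed above gives
\begin{equation*}
\mathrm{d}\langle W^i, M(\cdot,y)\rangle_s = \mathbb{E}[D^i_s\delta_y(Y)\,|\,\mathcal{F}_s]\,\mathrm{d}s,
\end{equation*}
and substituting into Jacod's decomposition yields exactly the claimed expression for $\phi_i(t)$. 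Uniqueness of the canonical decomposition of the continuous special semimartingale $W^i$ under $\{{\cal H}_t\}$ then forces this identification.

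The main obstacle, and the reason the hypotheses $\mathbb{E}[\delta_\cdot(Y)\,|\,\mathcal{F}_t], \mathbb{E}[D^i_t\delta_\cdot(Y)\,|\,\mathcal{F}_t]\in L^2(\mathrm{m}\times\mathbb{P})$ are imposed, is to upgrade these formal Hida-distribution identities to honest $L^2$ identities so that the Clark--Ocone integrals converge in the usual sense, the substitution $y=Y$ is well-defined $\mathbb{P}$-a.s., and the conditional expectations $\mathbb{E}[\,\cdot\,|\,\mathcal{F}_t\vee Y]$ commute with the $y\mapsto Y$ evaluation. Concretely, I would first establish the Bayes formula as an equality in $L^2$ (approximating $\delta_y$ by bounded smooth weights and passing to the limit using the assumed $L^2$-integrability of $\mathbb{E}[\delta_\cdot(Y)\,|\,\mathcal{F}_t]$), then separately verify that the Clark--Ocone integrands inherit enough regularity in $y$ to allow the composition $y=Y$ inside the stochastic integral. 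These technical justifications are standard in the white-noise literature and can be pulled from DiNunno--Øksendal--Proske and Draouil--Øksendal, so the non-routine content of the proof is really the identification step via Jacod's formula; the rest is a careful bookkeeping of spaces.
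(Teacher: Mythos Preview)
Your proposal is correct and follows exactly the approach of the cited source: the paper does not supply its own proof of this lemma but attributes it to \cite[Theorem 2.2]{Draouil16}, whose argument is precisely the Clark--Ocone representation of $\mathbb{E}[\delta_y(Y)\mid\mathcal{F}_t]$ combined with a Jacod-type enlargement formula, as you outline. Your identification of the $L^2(\mathrm{m}\times\mathbb{P})$ hypotheses as the mechanism for upgrading the distribution-valued identities to classical ones, and for justifying the substitution $y=Y$, is also on target.
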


By Lemma \ref{lemmaofdonsker} and the L\'{e}vy theorem, the Donsker $\delta$ functional of $Y_0$ in (\ref{donsker_filtration}) is given by
\begin{equation}\begin{aligned}
\delta_y(Y_0)=\frac{1}{\sqrt{2\pi\Vert \varphi\Vert_{[0,T_0]}^2}}\exp^{\diamond}\Bigg\{-\frac{(y-Y_0)^{\diamond 2}}{2\Vert\varphi \Vert_{[0,T_0]}^2}  \Bigg\},
\end{aligned}\end{equation}
and we have
\begin{equation}\label{defofg}
\mathbb{E}[\delta_y(Y_0)|{\cal F}_t]=\frac{1}{\sqrt{2\pi\Vert \varphi\Vert_{[t,T_0]}^2}}\exp\Bigg\{-\frac{(y-\int_0^t\varphi_s\mathrm{d}\bar W_s)^{ 2}}{2\Vert\varphi \Vert_{[t,T_0]}^2}  \Bigg\},\quad 0\le t\le T.
\end{equation}
 Moreover, when the filtration $\{{\cal H}_t\}_{0\le t\le T}$ is of the form (\ref{donsker_filtration}), we have by Lemma \ref{lemmaofdonskersemi} that
\begin{equation}\label{weknowphi1}
\begin{aligned}
&\phi_1(t)=\phi_1(t,Y_0)=\frac{Y_{0}-\int_0^t\varphi_s\mathrm{d}\bar W_s}{\Vert \varphi\Vert^2_{[t,T_0]}}\rho \varphi_t,
\\&  \phi_2(t)=\phi_2(t,Y_0)=\frac{Y_{0}-\int_0^t\varphi_s\mathrm{d}\bar W_s}{\Vert \varphi\Vert^2_{[t,T_0]}}\sqrt{1-\rho^2}\varphi_t .
  \end{aligned}\end{equation}
Substituting (\ref{weknowphi1}) into (\ref{definitionofPi}) and using the It\^{o} formula we can rewrite the expression of $\Pi^*(0,t)$ as follows
    \begin{equation} \label{redefinitionofPi}
  \begin{aligned}
\Pi^*(0,t)&=\Pi^*(0,t,y)|_{y=Y_0}\\
&= \exp\left\{-\int_0^t\phi_1(s,Y_0)\mathrm{d}W^1_s-\int_0^t\phi_2(s,Y_0)\mathrm{d}W^2_s+\frac{1}{2}\int_0^t \left(\phi_1^2(s,Y_0)+\phi^2_2(s,Y_0)\right)\mathrm{d}y   \right\}\Pi^*_a(0,t) \\
&=\mathbb{E}[\delta_y(Y_0)|{\cal F}_0]\frac{\Pi^*_a(0,t)}{\mathbb{E}[\delta_y(Y_0)|{\cal F}_t]}\big|_{y=Y_0},
  \end{aligned}
 \end{equation}
where
    \begin{equation}  \label{pi_adef}
  \begin{aligned}
\Pi^*_a(0,t):= \exp\Bigg\{  -\int_{0}^{t} r_s  \mathrm{d}s
  -\int_{0}^{t} \iota_s  \mathrm{d}W^1_s
  +  \int_{0}^{t}  \frac{\lambda_s-a_s+\rho b_s\iota_s}{\sqrt{1-\rho^2}b_s}\mathrm{d}W^2_s  -\frac{1}{2}\int_{0}^{t}  \left(  \iota_s ^2+  \frac{(\lambda_s-a_s+\rho b_s\iota_s)^2}{ (1-\rho^2)b^2_s}   \right)\mathrm{d}s \Bigg\}
  \end{aligned}
 \end{equation}
 is an ${\cal F}_t$-adapted process.
 
In the case of the logarithmic utility, the fixed point function $\tilde I(y)$ is given by (\ref{fixedpln}). Thus, the terminal value condition of $X^{u^*}_T$ in (\ref{final_gex_1f}) leads to
\begin{equation}\label{c3determined_donsker_iii}\begin{aligned}
\\
X^{u^*}_T &=X^{u^*}_T(y)|_{y=Y_0}=\frac{X_0}{\mathbb{E}\big[\sqrt{\Pi^*(0,T)} | {\cal H}_0\big](y)}\sqrt{\frac{ \mathbb{E}[\delta_y(Y_0)|{\cal F}_T]}{   \mathbb{E}[\delta_y(Y_0)|{\cal F}_0]   \Pi^*_a(0,T)}} \Bigg|_{y=Y_0}
=\tilde c^*_3(y)  \sqrt{\frac{ \mathbb{E}[\delta_y(Y_0)|{\cal F}_T]}{    \Pi^*_a(0,T)}}\Bigg|_{y=Y_0},\end{aligned}
\end{equation}
where $\tilde c^*_3(y):=\frac{X_0}{\mathbb{E}\big[\sqrt{\Pi^*(0,T)} | {\cal H}_0\big](y) \sqrt{\mathbb{E}[\delta_y(Y_0)|{\cal F}_0]}  }$ is a Borel measurable function with respect to $y$.

By Definition \ref{definitionofdonsker}, the BSDE (\ref{final_gex_1f}) can be rewritten as
  \begin{equation}\label{final_gex_1_donsker}\begin{aligned}
\int_{\mathbb{R}} X^{u^*}_t(y)\delta_y(Y_0)\mathrm{d}y=& \int_{\mathbb{R}}  X_T^{u^*}(y)\delta_y(Y_0)\mathrm{d}y+ \int_{\mathbb{R}} \int_t^T\left[-r_sX_s^{u^*}(y) -\iota_s z^*_1(s,y)+\frac{\lambda_s-a_s+\rho b_s\iota_s}{\sqrt{1-\rho^2}b_s}z^*_2(s,y) \right] \mathrm{d}s \delta_y(Y_0) \mathrm{d}y\\
&-\int_{\mathbb{R}}\int_t^Tz^*_s(y)\mathrm{d}^-W_s\delta_y(Y_0) \mathrm{d}y, \quad 0\le t\le T,
\end{aligned} \end{equation}
where $W_t=(W_1(t),W_2(t))^\tau$. Since $z^*_t(y)$ is ${\cal F}_t$-adapted for each $y$, the forward integral in (\ref{final_gex_1_donsker}) is just the It\^{o} integral. Then $(\ref{final_gex_1f})$ holds if and only if we choose $(X^{u^*}_t(y),z^*_t(y))$ for each $y$ as the solution of the following classical linear BSDE with respect to the filtration $\{{\cal F}_t\}_{0\le t\le T}$
  \begin{eqnarray}\label{final_gex_1_donsker2}
  \left\{ \begin{aligned}&\mathrm{d}X^{u^*}_t(y)=-\bar f_{\text{L}}(t,X^{u^*}_t(y),z^*_t(y))\mathrm{d}t+z^*_t(y)\mathrm{d}W_t, \quad 0\le t\le T,
  \\& X^{u^*}_T(y)=\tilde c^*_3(y) \sqrt{\frac{\mathbb{E}[\delta_y(Y_0)|{\cal F}_T]}{     \Pi^*_a(0,T)}} ,\end{aligned} \right.   
 \end{eqnarray}
 where the generator $\bar f_{\text{L}}:[0,T]\times \mathbb{R}\times \mathbb{R}^2 \rightarrow \mathbb{R}$  is given by
  \begin{equation*} 
\begin{aligned}
\bar f_{\text{L}}(t,x,z )=-r_tx-\iota_tz_1  +\frac{\lambda_t-a_t+\rho b_t\iota_t}{\sqrt{1-\rho^2}b_t}z_2.
\end{aligned} 
 \end{equation*}
 
By \cite[Theorem 4.8]{Oksendal19}, the unique strong solution of (\ref{final_gex_1_donsker2}) is given by
  \begin{equation}\label{linearbsde_bdg_2_donsker}
 \begin{aligned}
 X^{u^*}_t(y)= \mathbb{E} \left[   \Pi_a^*(t,T) \tilde c^*_3(y)\sqrt{\frac{ \mathbb{E}[\delta_y(Y_0)|{\cal F}_T]}{     \Pi^*_a(0,T)}} \bigg | {\cal F}_t\right],
\end{aligned}
\end{equation}
where $\Pi_a^*[t,T]:=\Pi_a^*[0,T]/\Pi_a^*[0,t]$. By (\ref{linearbsde_bdg_2_donsker}) and the initial value condition $X^{u^*}_0(y)=X_0$, the Borel measurable function $\tilde c^*_3(y)$ is given by
 \begin{equation} \label{c3determined_donsker}
 \begin{aligned}
\tilde c^*_3(y) =\frac{X_0}{\mathbb{E}     \sqrt{  \Pi_a^*(0,T) \mathbb{E}[\delta_y(Y_0)|{\cal F}_T]} }=\frac{X_0}{\mathbb{E}\big[\sqrt{\Pi^*(0,T)} | {\cal H}_0\big](y) \sqrt{\mathbb{E}[\delta_y(Y_0)|{\cal F}_0]}  }.
\end{aligned}
\end{equation}
 The last equation in (\ref{c3determined_donsker}) is by the definition of $\tilde c_3^*(y)$. Substituting (\ref{c3determined_donsker}) into (\ref{linearbsde_bdg_2_donsker}) we obtain
   \begin{equation}\label{linearbsde_bdg_2_donsker_2}
 \begin{aligned}
 X^{u^*}_t(y)= \frac{X_0\mathbb{E}\left[ \sqrt{\Pi^*_a(t,T)\mathbb{E}[\delta_y(Y_0)|{\cal F}_T]  } |{\cal F}_t   \right]}{  \mathbb{E}\sqrt{\Pi^*_a(0,T)\mathbb{E}[\delta_y(Y_0)|{\cal F}_T]}\sqrt{\Pi^*_a(0,t)}}.
\end{aligned}
\end{equation}
By \cite[Proposition 3.5.1]{Delong13} and Malliavin calculus, we have
   \begin{equation} \label{iknow1}
 \begin{aligned}
z_1^*(t,y)&=D^1_tX^{u^*}_t(y)\\
&=\frac{X_0\mathbb{E}\left[ D^1_t\sqrt{\Pi^*_a(t,T)\mathbb{E}[\delta_y(Y_0)|{\cal F}_T]  } |{\cal F}_t   \right]}{  \mathbb{E}\sqrt{\Pi^*_a(0,T)\mathbb{E}[\delta_y(Y_0)|{\cal F}_T]}\sqrt{\Pi^*_a(0,t)}}-\frac{1}{2} \frac{X_0\mathbb{E}\left[ \sqrt{\Pi^*_a(t,T)\mathbb{E}[\delta_y(Y_0)|{\cal F}_T]  } |{\cal F}_t   \right]}{  \mathbb{E}\sqrt{\Pi^*_a(0,T)\mathbb{E}[\delta_y(Y_0)|{\cal F}_T]}\Pi^*_a(0,t)^{\frac{3}{2}}}D^1_t\Pi^*_a(0,t)\\
&=\frac{1}{2}\frac{X_0\mathbb{E}\left[ \sqrt{ \Pi^*_a(t,T)\mathbb{E}[\delta_y(Y_0)|{\cal F}_T]   } \left(y-\int_0^T\varphi_s\mathrm{d}\bar W_s\right)   \big|{\cal F}_t   \right]}{ \Vert \varphi\Vert^2_{[T,T_0]} \mathbb{E}\sqrt{\Pi^*_a(0,T)\mathbb{E}[\delta_y(Y_0)|{\cal F}_T]}\sqrt{\Pi^*_a(0,t)}}\rho\varphi_t+\frac{1}{2} \frac{X_0\mathbb{E}\left[ \sqrt{\Pi^*_a(t,T)\mathbb{E}[\delta_y(Y_0)|{\cal F}_T]  } |{\cal F}_t   \right]}{  \mathbb{E}\sqrt{\Pi^*_a(0,T)\mathbb{E}[\delta_y(Y_0)|{\cal F}_T]}\sqrt{\Pi^*_a(0,t)}}\iota_t,
\end{aligned}
\end{equation}
 and
    \begin{equation} \label{iknow2}
 \begin{aligned}
z_2^*(t,y)&=D^2_tX^{u^*}_t(y)\\
&=\frac{X_0\mathbb{E}\left[ D^2_t\sqrt{\Pi^*_a(t,T)\mathbb{E}[\delta_y(Y_0)|{\cal F}_T]  } |{\cal F}_t   \right]}{  \mathbb{E}\sqrt{\Pi^*_a(0,T)\mathbb{E}[\delta_y(Y_0)|{\cal F}_T]}\sqrt{\Pi^*_a(0,t)}}-\frac{1}{2} \frac{X_0\mathbb{E}\left[ \sqrt{\Pi^*_a(t,T)\mathbb{E}[\delta_y(Y_0)|{\cal F}_T]  } |{\cal F}_t   \right]}{  \mathbb{E}\sqrt{\Pi^*_a(0,T)\mathbb{E}[\delta_y(Y_0)|{\cal F}_T]}\Pi^*_a(0,t)^{\frac{3}{2}}}D^2_t\Pi^*_a(0,t)\\
&=\frac{1}{2}\frac{X_0\mathbb{E}\left[ \sqrt{ \Pi^*_a(t,T)\mathbb{E}[\delta_y(Y_0)|{\cal F}_T]   } \left(y-\int_0^T\varphi_s\mathrm{d}\bar W_s\right)   \big|{\cal F}_t   \right]}{ \Vert \varphi\Vert^2_{[T,T_0]} \mathbb{E}\sqrt{\Pi^*_a(0,T)\mathbb{E}[\delta_y(Y_0)|{\cal F}_T]}\sqrt{\Pi^*_a(0,t)}}\sqrt{1-\rho^2}\varphi_t\\
&\quad -\frac{1}{2} \frac{X_0\mathbb{E}\left[ \sqrt{\Pi^*_a(t,T)\mathbb{E}[\delta_y(Y_0)|{\cal F}_T]  } |{\cal F}_t   \right]}{  \mathbb{E}\sqrt{\Pi^*_a(0,T)\mathbb{E}[\delta_y(Y_0)|{\cal F}_T]}\sqrt{\Pi^*_a(0,t)}}\frac{\lambda_t-a_t+\rho b_t\iota_t}{\sqrt{1-\rho^2}b_t}.
\end{aligned}
\end{equation}

Substituting (\ref{iknow1})  into (\ref{bsde_barzt1}) we obtain the robust optimal investment strategy
 \begin{equation}\label{bsde_barzt1_donsk}
   \begin{aligned}
 \pi_t^*&= \frac{\iota_t}{2\sigma_t} +  \frac{\rho(\lambda_t-a_t+\rho b_t\iota_t)}{2(1-\rho^2)\sigma_tb_t} .
 \end{aligned} 
 \end{equation}
 Combining (\ref{bsde_barzt1}) with (\ref{iknow2}) , the robust optimal insurance strategy can also be given as
  \begin{equation}\label{bsde_barzt1_donsk02}
   \begin{aligned}
  \kappa_t^*&= \kappa_t^*(y)|_{y=Y_0} \\
  &=   \frac{\lambda_t-a_t+\rho b_t\iota_t}{2(1-\rho^2)b_t^2}- \frac{\mathbb{E}\left[ \tilde\Pi^*_a(t,T)\sqrt{ \mathbb{E}[\delta_y(Y_0)|{\cal F}_T]   } \left(y-\int_0^T\varphi_s\mathrm{d}\bar W_s\right)   \big|{\cal F}_t   \right]}{2 b_t\Vert \varphi\Vert^2_{[T,T_0]}  \mathbb{E}\left[\tilde\Pi^*_a(t,T) \sqrt{\mathbb{E}[\delta_y(Y_0)|{\cal F}_T]  } |{\cal F}_t   \right]   } \varphi_t  \Bigg{|}_{y=Y_0}
  ,
\end{aligned} 
 \end{equation}
  where
  \begin{equation}  
  \begin{aligned}
\tilde\Pi^*_a(0,t):= \exp\Bigg\{  -\int_{0}^{t} \frac{\iota_s}{2}  \mathrm{d}W^1_s
  +  \int_{0}^{t}  \frac{\lambda_s-a_s+\rho b_s\iota_s}{2\sqrt{1-\rho^2}b_s}\mathrm{d}W^2_s  -\frac{1}{8}\int_{0}^{t}  \left(  \iota_s ^2+  \frac{(\lambda_s-a_s+\rho b_s\iota_s)^2}{ (1-\rho^2)b^2_s}   \right)\mathrm{d}s \Bigg\}.
  \end{aligned}
 \end{equation}
Then by the Girsanov theorem (see Theorem \ref{girsanov2}),  $W^1_{\mathbb Q}(t):=W^1_t+\int_0^t\frac{\iota_s}{2}\mathrm{d}s$ and $W^2_{\mathbb Q}(t):=W^2_t- \int_{0}^{t}  \frac{\lambda_s-a_s+\rho b_s\iota_s}{2\sqrt{1-\rho^2}b_s}\mathrm{d}s$ are two independent ${\cal F}_t$-Brownian motions under the new equivalent probability measure $\mathbb Q$ defined by $\mathrm{d}{\mathbb  Q}=\tilde \Pi^*_a(0,T)\mathrm{d}\mathbb{P}$. Thus, by the Bayes rule (see \cite{Karatzas91}), we can rewritten the robust optimal insurance strategy as follows
 \begin{equation} \label{markov_pi}
   \begin{aligned}
 \kappa_t^* =& \frac{\lambda_t-a_t+\rho b_t\iota_t}{2(1-\rho^2)b_t^2} - \frac{\mathbb{E}_{\mathbb Q}\left[ \sqrt{ \mathbb{E}[\delta_y(Y_0)|{\cal F}_T]   } \left(y-\int_0^T\varphi_s\mathrm{d}\bar W_s\right)   \big|{\cal F}_t   \right]}{2 b_t\Vert \varphi\Vert^2_{[T,T_0]}  \mathbb{E}_{\mathbb Q}\left[  \sqrt{\mathbb{E}[\delta_y(Y_0)|{\cal F}_T]  } |{\cal F}_t   \right]   } \varphi_t  \Bigg{|}_{y=Y_0}
  \\
  =&  \frac{\lambda_t-a_t+\rho b_t\iota_t}{2(1-\rho^2)b_t^2}  - \frac{\mathbb{E}_{\mathbb Q}\Big[ \exp\Big\{-\frac{(y+s_T-\int_0^T\varphi_s\mathrm{d}\bar W_{\mathbb Q}(s) )^{ 2}}{4\Vert\varphi \Vert_{[T,T_0]}^2}  \Big\}\Big( y+s_T-\int_0^T\varphi_s\mathrm{d}\bar W_{\mathbb Q}(s)   \Big)  \big|{\cal F}_t   \Big]}{2b_t \Vert \varphi \Vert^2_{[T,T_0]}\mathbb{E}_{\mathbb Q}\Big[ \exp\Big\{-\frac{(y+s_T-\int_0^T\varphi_s\mathrm{d}\bar W_{\mathbb Q}(s) )^{ 2}}{4\Vert\varphi \Vert_{[T,T_0]}^2}  \Big\} \big|{\cal F}_t  \Big]}    
  \varphi_t  \Bigg{|}_{y=Y_0},
\end{aligned} 
 \end{equation}
where $s_t:=-\frac{1}{2}\int_0^t\varphi_s    \frac{\lambda_s-a_s }{ b_s} \mathrm{d}s$, and $\bar W_{\mathbb Q}(t):=\rho W^1_{\mathbb Q}(t)+\sqrt{1-\rho^2} W^2_{\mathbb Q}(t)$, $t\in[0,T]$.
On the other hand, the conditional $\mathbb Q$ law of $\int_0^T\varphi_s\mathrm{d}\bar W_{\mathbb Q}(s)$, given ${\cal F}_t$, is normal with mean $\int_0^t\varphi_s\mathrm{d}\bar W_{\mathbb Q}(s)$ and variance $\Vert\varphi\Vert^2_{[t,T]}$ due to the Markov property of It\^{o} diffusion processes (see \cite{Karatzas91}). Thus (\ref{markov_pi}) leads to 
 \begin{equation}\label{bsde_barzt1_donsk01}  
   \begin{aligned}
 \kappa_t^* 
  =& \frac{\lambda_t-a_t+\rho b_t\iota_t}{2(1-\rho^2)b_t^2}-  \frac{\int_{\mathbb{R}}   (y+s_T-x) \exp\bigg\{ -\frac{(y+s_T-x)^2}{4\Vert\varphi \Vert_{[T,T_0]}^2} -\frac{(x-z)^2}{2\Vert\varphi \Vert_{[t,T]}^2} \bigg\}  \mathrm{d}x }
  {2b_t \Vert \varphi \Vert^2_{[T,T_0]}  \int_{\mathbb{R}}     \exp\bigg\{ -\frac{(y+s_T-x)^2}{4\Vert\varphi \Vert_{[T,T_0]}^2} -\frac{(x-z)^2}{2\Vert\varphi \Vert_{[t,T]}^2} \bigg\}  \mathrm{d}x   }    
  \varphi_t \Bigg|_{\substack{z=\int_0^t\varphi_s\mathrm{d}\bar W_{\mathbb Q}(s) \\ y=Y_0}}    \\
=& \frac{\lambda_t-a_t+\rho b_t\iota_t}{2(1-\rho^2)b_t^2} -  \frac{Y_0-\int_0^t\varphi_s\mathrm{d}\bar W_s-\frac{1}{2}\int_t^T \varphi_s  \frac{\lambda_s-a_s }{ b_s} \mathrm{d}s }{b_t\left(\Vert\varphi \Vert_{[t,T_0]}^2+\Vert\varphi \Vert_{[T,T_0]}^2\right)}\varphi_t  . \\
\end{aligned} 
 \end{equation}

By the equations (\ref{halfequ_21}) and (\ref{halfequ_22}) in Corollary \ref{mainth6}, we can also obtain
 \begin{equation}\label{bsde_barzt1_donsktheta}
   \begin{aligned}
 &\theta_1^*(t)=-\frac{\iota_t}{2}+\Bigg[\frac{Y_0-\int_0^t\varphi_s\mathrm{d}\bar W_s-\frac{1}{2}\int_t^T \varphi_s  \frac{\lambda_s-a_s }{ b_s} \mathrm{d}s }{ \Vert\varphi \Vert_{[t,T_0]}^2+\Vert\varphi \Vert_{[T,T_0]}^2} -\frac{Y_{0}-\int_0^t\varphi_s\mathrm{d}\bar W_s}{\Vert \varphi\Vert^2_{[t,T_0]}}\Bigg]\rho \varphi_t,
  \\&\theta_2^*(t)= \frac{\lambda_t-a_t+\rho b_t\iota_t}{2\sqrt{1-\rho^2}b_t}+\Bigg[ \frac{Y_0-\int_0^t\varphi_s\mathrm{d}\bar W_s-\frac{1}{2}\int_t^T \varphi_s  \frac{\lambda_s-a_s }{ b_s} \mathrm{d}s }{ \Vert\varphi \Vert_{[t,T_0]}^2+\Vert\varphi \Vert_{[T,T_0]}^2}   - 
 \frac{Y_{0}-\int_0^t\varphi_s\mathrm{d}\bar W_s}{\Vert \varphi\Vert^2_{[t,T_0]}}   \Bigg] \sqrt{1-\rho^2} \varphi_t . 
\end{aligned} 
 \end{equation}

To sum up, we give the following theorem.

    \begin{theorem} \label{ex_mainth_idontknow_donsker}
Assume that $U(x)=\ln x$, $G^1(\mathrm{d}z)=G^2(\mathrm{d}z)=0$, $\mu(t,x)=\mu_0(t) $ for some c\`{a}gl\`{a}d function $\mu_0(t)$, $b\ge \epsilon>0$ for some positive constant $\epsilon$, and $g(s,v)=\frac{1}{2} (\theta_1^2+\theta_2^2)$. Assume further that $\{{\cal H}_t\}_{0\le t\le T}$ is given by (\ref{donsker_filtration}) and all parameter processes are deterministic functions. Suppose $(u^*,v^*)\in {\cal A}_1'\times{\cal A}_2'$ is optimal for Problem \ref{sdg} under the conditions of Theorem \ref{mainth4}. Then $(u^*,v^*)$ is given by (\ref{bsde_barzt1_donsk}), (\ref{bsde_barzt1_donsk01}) and (\ref{bsde_barzt1_donsktheta}).
 \end{theorem}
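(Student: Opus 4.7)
The plan is to specialize Corollary \ref{corex_mainth_idontknow_donsker} to the filtration (\ref{donsker_filtration}) and to read off the closed forms from the displayed pre-theorem calculations. Because the reference filtration $\{{\cal H}_t\}$ is generated by $\{{\cal F}_t\}$ together with the Gaussian random variable $Y_0=\int_0^{T_0}\varphi(s)\,\mathrm{d}\bar W_s$, every ${\cal H}_t$-adapted object admits a parametrization $x_t=x_t(y)|_{y=Y_0}$, so the anticipating linear BSDE (\ref{final_gex_1f}) reduces to a family of classical $\{{\cal F}_t\}$-BSDEs indexed by $y\in\mathbb{R}$, which is the heart of the argument.

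First I would verify that the standing hypotheses of Corollary \ref{corex_mainth_idontknow_donsker} are met, so that $u^*,v^*,X^{u^*}$ are characterized via the BSDE (\ref{final_gex_1f}) together with (\ref{bsde_barzt1}) and (\ref{halfequ_21})--(\ref{halfequ_22}). Next, I would apply Lemma \ref{lemmaofdonsker} to obtain the Donsker $\delta$ functional of the Gaussian variable $Y_0$, then Lemma \ref{lemmaofdonskersemi} to derive the semimartingale coefficients $\phi_1,\phi_2$ in (\ref{weknowphi1}). Substituting these into the definition (\ref{definitionofPi}) of $\Pi^*(0,t)$ and applying the It\^o formula to the exponent yields the factorization (\ref{redefinitionofPi}) separating the ${\cal F}_t$-adapted piece $\Pi_a^*$ from the $\delta$ functional piece.

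The central step is the Donsker-$\delta$ disintegration of the BSDE. By Definition \ref{definitionofdonsker}, (\ref{final_gex_1f}) holds iff for every $y$ the ${\cal F}_t$-adapted pair $(X^{u^*}_t(y),z^*_t(y))$ solves the classical linear BSDE (\ref{final_gex_1_donsker2}), whose unique strong solution is (\ref{linearbsde_bdg_2_donsker}); the normalization constant $\tilde c_3^*(y)$ is pinned down by $X^{u^*}_0(y)=X_0$ as in (\ref{c3determined_donsker}). From here the control processes $z_1^*(t,y),z_2^*(t,y)$ are obtained by applying the Hida-Malliavin derivatives $D^1_t,D^2_t$ to (\ref{linearbsde_bdg_2_donsker_2}); the derivative falls both on $\sqrt{\Pi_a^*(t,T)}$ (giving the $\iota_t$ and $(\lambda_t-a_t+\rho b_t\iota_t)/(\sqrt{1-\rho^2}b_t)$ terms) and on $\sqrt{\mathbb{E}[\delta_y(Y_0)|{\cal F}_T]}$ (giving the $\rho\varphi_t$ and $\sqrt{1-\rho^2}\varphi_t$ terms), as recorded in (\ref{iknow1})--(\ref{iknow2}). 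Plugging these into (\ref{bsde_barzt1}) immediately produces the stated form (\ref{bsde_barzt1_donsk}) for $\pi_t^*$ and the ratio expression for $\kappa_t^*$ in (\ref{bsde_barzt1_donsk02}).

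The main obstacle is the closed-form evaluation of the conditional expectation that appears in $\kappa_t^*$. My plan is to change measure using the Dol\'eans-Dade exponential $\tilde\Pi_a^*(0,t)$: by Theorem \ref{girsanov2} the processes $W^1_{\mathbb Q},W^2_{\mathbb Q}$ are independent ${\cal F}_t$-Brownian motions under the equivalent measure $\mathbb Q$, and the Bayes rule converts the weighted conditional expectation into a plain one, leading to (\ref{markov_pi}). Then I would exploit the Markov property of the $\mathbb Q$-Brownian integral $\int_0^T\varphi_s\,\mathrm{d}\bar W_{\mathbb Q}(s)$, whose conditional law given ${\cal F}_t$ is Gaussian with mean $\int_0^t\varphi_s\,\mathrm{d}\bar W_{\mathbb Q}(s)$ and variance $\Vert\varphi\Vert^2_{[t,T]}$; the resulting two Gaussian integrals in (\ref{bsde_barzt1_donsk01}) can be combined and evaluated explicitly by completing the square, yielding the advertised closed form for $\kappa_t^*$. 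Finally, $\theta_1^*,\theta_2^*$ in (\ref{bsde_barzt1_donsktheta}) follow at once by inserting $\pi_t^*,\kappa_t^*$ and $\phi_1,\phi_2$ into the no-jump specialization of (\ref{halfequ_21})--(\ref{halfequ_22}). The delicate points to monitor are the $L^2$-integrability conditions required to apply the generalized Clark-Ocone formula and to justify the $\mathbb Q$-change of measure; both reduce to the boundedness of the deterministic coefficients together with the Gaussian tail of $Y_0$.
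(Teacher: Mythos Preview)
Your plan mirrors the paper's derivation almost step for step: Donsker $\delta$ functional for $Y_0$ (Lemma \ref{lemmaofdonsker}), the enlargement-of-filtration formulae for $\phi_1,\phi_2$ (Lemma \ref{lemmaofdonskersemi}), the factorization (\ref{redefinitionofPi}) of $\Pi^*(0,t)$, the disintegration of the anticipating BSDE (\ref{final_gex_1f}) into the family (\ref{final_gex_1_donsker2}) of classical $\{{\cal F}_t\}$-BSDEs, Malliavin differentiation of the explicit solution, the Girsanov change to $\mathbb Q$ followed by a Gaussian conditional-mean computation for $\kappa_t^*$, and finally reading off $\theta_1^*,\theta_2^*$ from Corollary \ref{mainth6}.

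The only slip is your starting reference: you invoke Corollary \ref{corex_mainth_idontknow_donsker}, but that result already assumes ${\cal H}_t={\cal F}_t$ (no insider information), so it cannot be ``specialized'' to the larger filtration (\ref{donsker_filtration}). The framework you actually describe --- the BSDE (\ref{final_gex_1f}) together with (\ref{bsde_barzt1}) and (\ref{halfequ_21})--(\ref{halfequ_22}) --- is the content of Corollary \ref{corofvalue}, which is the logarithmic-utility specialization with general insider filtration. With that label corrected, your argument is the paper's.
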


 Moreover, if $\varphi=1$, $u^*$ is given by
  \begin{equation}\label{cdonsu}
   \begin{aligned}
 \pi_t^*&= \frac{\iota_t}{2\sigma_t} +  \frac{\rho(\lambda_t-a_t+\rho b_t\iota_t)}{2(1-\rho^2)\sigma_tb_t} ,\\
 \kappa^*_t&= \frac{\lambda_t-a_t+\rho b_t\iota_t}{2(1-\rho^2)b_t^2}-  \frac{\bar W_{T_0}-\bar W_{t}-\frac{1}{2}\int_t^T    \frac{\lambda_s-a_s }{ b_s}\mathrm{d}s }{b_t\left(T_0-t+T_0-T\right)}   ,
 \end{aligned} 
 \end{equation}
 and $v^*$ is given by
   \begin{equation}\label{cdonsv}
   \begin{aligned}
&\theta_1^*(t)=-\frac{\iota_t}{2}+\rho \Bigg[\frac{\bar W_{T_0}-\bar W_t-\frac{1}{2}\int_t^T  \frac{\lambda_s-a_s }{ b_s} \mathrm{d}s }{ T_0-t+T_0-T} -\frac{\bar W_{T_0}-\bar W_t}{T_0-t}\Bigg]  ,
  \\&\theta_2^*(t)=\frac{\lambda_t-a_t+\rho b_t\iota_t}{2\sqrt{1-\rho^2}b_t}+\sqrt{1-\rho^2} \Bigg[ \frac{\bar W_{T_0}-\bar W_t-\frac{1}{2}\int_t^T    \frac{\lambda_s-a_s }{ b_s} \mathrm{d}s }{T_0-t+T_0-T}   - 
 \frac{\bar W_{T_0}-\bar W_t}{T_0-t}   \Bigg]   . 
 \end{aligned} 
 \end{equation}
 
By (\ref{c3determined_donsker}) we have 
 \begin{equation}  \label{compareX}
  \begin{aligned}
\mathbb{E}\big[\sqrt{\Pi^*(0,T)}|{\cal H}_0\big]= \left(\mathbb{E}\sqrt{\Pi^*_a(0,T)\frac{\mathbb{E}[\delta_y(Y_0)|{\cal F}_T]}{\mathbb{E}[\delta_y(Y_0)|{\cal F}_0]}}\right) \Bigg{|}_{y=Y_0}\end{aligned}
 \end{equation} 
 Substituting (\ref{compareX}) into (\ref{dontknowv}), we have by Girsanov theorem and tedious calculation that
 \begin{equation}  \label{value2}
  \begin{aligned}
  V=&\ln X_0-2\mathbb{E}\left[\left( \ln  \mathbb{E}\sqrt{\Pi_a^*(0,T)\frac{\mathbb{E}[\delta_y(Y_0)|{\cal F}_T]}{\mathbb{E}[\delta_y(Y_0)|{\cal F}_0]}}\right)\Bigg{|}_{y=Y_0}  \right]
  \\=& \ln X_0+ \int_0^T r_t \mathrm{d}t  +\frac{1}{4}\int_0^T\left(\iota_t^2+\frac{(\lambda_t-a_t+\rho b_t\iota_t)^2}{(1-\rho^2)b^2_t}    \right)\mathrm{d}t  -2\mathbb{E}\left[  \left(\ln \mathbb{E}_{\mathbb Q}\sqrt{\frac{\mathbb{E}[\delta_y(Y_0)|{\cal F}_T]}{\mathbb{E}[\delta_y(Y_0)|{\cal F}_0]}   } \right)\Bigg{|}_{y=Y_0}    \right]\\
  =& \ln X_0+ \int_0^T r_t \mathrm{d}t  +\frac{1}{4}\int_0^T\left(\iota_t^2+\frac{(\lambda_t-a_t+\rho b_t\iota_t)^2}{(1-\rho^2)b^2_t}    \right)\mathrm{d}t+\frac{1}{2}\ln \left( 1-\frac{T^2}{(2T_0-T)^2} \right)^{-1}\\
  &+\frac{T}{2(2T_0-T)}  +\frac{1}{4(2T_0-T)}\left( \int_0^T \frac{\lambda_t-a_t}{b_t}   \mathrm{d}t  \right)^2.
   \end{aligned}
 \end{equation}

Thus we obtain the following corollary.
 
     \begin{corollary} \label{ex_mainth_idontknow_donskerc}
Assume that $U(x)=\ln x$, $G^1(\mathrm{d}z)=G^2(\mathrm{d}z)=0$, $\mu(t,x)=\mu_0(t) $ for some c\`{a}gl\`{a}d function $\mu_0(t)$, $b\ge \epsilon>0$ for some positive constant $\epsilon$, and $g(s,v)=\frac{1}{2} (\theta_1^2+\theta_2^2)$. Assume further that $\{{\cal H}_t\}_{0\le t\le T}$ is given by (\ref{donsker_filtration}) with $\varphi=1$ and all parameter processes are deterministic functions. Suppose $(u^*,v^*)\in {\cal A}_1'\times{\cal A}_2'$ is optimal for Problem \ref{sdg} under the conditions of Theorem \ref{mainth4}. Then $(u^*,v^*)$ is given by (\ref{cdonsu}) and (\ref{cdonsv}), and the value $V$ is given by (\ref{value2}).
 \end{corollary}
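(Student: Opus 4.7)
The plan is to specialize the closed-form expressions in Theorem \ref{ex_mainth_idontknow_donsker} to the constant coefficient $\varphi \equiv 1$. Then $\Vert\varphi\Vert_{[s,t]}^2 = t - s$ for all $0 \le s \le t \le T_0$, and $Y_0 = \int_0^{T_0}\mathrm{d}\bar W_{s'} = \bar W_{T_0}$. Substituting these identities into (\ref{bsde_barzt1_donsk}) and (\ref{bsde_barzt1_donsk01}) immediately produces (\ref{cdonsu}), and the same substitution in (\ref{bsde_barzt1_donsktheta}) yields (\ref{cdonsv}). This portion of the corollary is pure bookkeeping.

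The interesting part is the value $V$. I would start from the general identity already used to produce (\ref{value2}), namely
\[
V = \ln X_0 - 2\,\mathbb{E}\!\left[\ln \mathcal{E}(y)\big|_{y=Y_0}\right],\qquad \mathcal{E}(y) := \mathbb{E}_{\mathbb{Q}}\sqrt{\mathbb{E}[\delta_y(Y_0)\mid \mathcal{F}_T]/\mathbb{E}[\delta_y(Y_0)\mid\mathcal{F}_0]}.
\]
Formula (\ref{defofg}) with $\varphi \equiv 1$ gives the Gaussian density $\mathbb{E}[\delta_y(Y_0)\mid\mathcal{F}_t] = (2\pi(T_0-t))^{-1/2}\exp\{-(y-\bar W_t)^2/(2(T_0-t))\}$, so the ratio inside the square root equals $(T_0/(T_0-T))^{1/2}\exp\{y^2/(2T_0) - (y-\bar W_T)^2/(2(T_0-T))\}$. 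Next I use the Girsanov change of measure $\mathrm{d}\mathbb{Q} = \tilde\Pi^*_a(0,T)\mathrm{d}\mathbb{P}$ introduced before (\ref{markov_pi}): since $W^1_{\mathbb{Q}}$ and $W^2_{\mathbb{Q}}$ are independent $\mathbb{Q}$-Brownian motions, $\bar W_{\mathbb{Q}}(t) = \rho W^1_{\mathbb{Q}}(t) + \sqrt{1-\rho^2}W^2_{\mathbb{Q}}(t)$ is also $\mathbb{Q}$-Brownian, and a direct computation of its drift gives $\bar W_t = \bar W_{\mathbb{Q}}(t) - \tfrac12\int_0^t (\lambda_s-a_s)/b_s\,\mathrm{d}s$, so in particular $\bar W_T = \bar W_{\mathbb{Q}}(T) - s_T$ with $\bar W_{\mathbb{Q}}(T) \sim \mathcal{N}(0,T)$ under $\mathbb{Q}$.

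Hence $\mathcal{E}(y)$ reduces to a one-dimensional Gaussian integral
\[
\mathcal{E}(y) = \Bigl(\tfrac{T_0}{T_0-T}\Bigr)^{1/4}\!\exp\!\Bigl\{\tfrac{y^2}{4T_0}\Bigr\}\int_{\mathbb{R}}\tfrac{1}{\sqrt{2\pi T}}\exp\!\Bigl\{-\tfrac{x^2}{2T} - \tfrac{(y+s_T-x)^2}{4(T_0-T)}\Bigr\}\mathrm{d}x,
\]
which I evaluate by completing the square (using $(2T_0-T)^2 - T^2 = 4T_0(T_0-T)$) to obtain
\[
\mathcal{E}(y) = \Bigl(\tfrac{T_0}{T_0-T}\Bigr)^{1/4}\sqrt{\tfrac{2(T_0-T)}{2T_0-T}}\exp\!\Bigl\{\tfrac{y^2}{4T_0} - \tfrac{(y+s_T)^2}{2(2T_0-T)}\Bigr\}.
\]
Taking $-2\ln$, substituting $y = Y_0 = \bar W_{T_0}$, and using $\mathbb{E}\bar W_{T_0}^2 = T_0$ and $\mathbb{E}(\bar W_{T_0} + s_T)^2 = T_0 + s_T^2$ (because $s_T$ is deterministic and $\bar W_{T_0}$ is centred), the constants regroup into $\tfrac12\ln(1 - T^2/(2T_0-T)^2)^{-1}$ and $T/(2(2T_0-T))$, while $s_T^2/(2T_0-T)$ supplies the last squared-integral term. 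Combined with the universal piece $\ln X_0 + \int_0^T r_t\mathrm{d}t + \tfrac14\int_0^T(\iota_t^2 + (\lambda_t-a_t+\rho b_t\iota_t)^2/((1-\rho^2)b_t^2))\mathrm{d}t$ already established in (\ref{value2}), this yields the stated expression for $V$. The main obstacle throughout is the careful bookkeeping of the Gaussian completion-of-the-square and of the resulting constants; all other steps are either substitution or a standard application of Girsanov.
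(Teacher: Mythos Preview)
Your argument is correct and follows exactly the route the paper takes: the paper obtains (\ref{cdonsu})--(\ref{cdonsv}) by substituting $\varphi\equiv1$ into Theorem~\ref{ex_mainth_idontknow_donsker}, and derives (\ref{value2}) from (\ref{dontknowv}) via (\ref{compareX}) ``by Girsanov theorem and tedious calculation,'' which is precisely the Gaussian completion-of-the-square you spell out. One small slip: your intermediate drift formula should read $\bar W_t = \bar W_{\mathbb Q}(t) + \tfrac12\int_0^t(\lambda_s-a_s)/b_s\,\mathrm{d}s$ (plus, not minus), but since $s_T$ carries the negative sign by definition your subsequent identity $\bar W_T = \bar W_{\mathbb Q}(T) - s_T$ and all downstream computations are correct.
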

 
  \begin{remark}
  By comparing Corollary \ref{ex_mainth_idontknow_donskerc} with Corollary \ref{corex_mainth_idontknow_donsker} we can see that, if the insurer captures insider information in the insurance market and $\bar W_{T_0}>\bar W_{t}+\frac{1}{2}\int_t^T\frac{\lambda_s-a_s}{b_s}\mathrm{d}s$ at time $t$, she should reduce the liability ratio by $\frac{\bar W_{T_0}-\bar W_{t}-\frac{1}{2}\int_t^T  \left(  \frac{\lambda_s-a_s }{ b_s}\right)\mathrm{d}s }{b_t\left(T_0-t+T_0-T\right)} $ to maximize her utility under model uncertainty. Given $\bar W_{T_0}-\bar W_t=x>\frac{1}{2}\int_t^T\frac{\lambda_s-a_s}{b_s}\mathrm{d}s$, we can also see that the closer the future time $T_0$ of the insider information is, the more she should reduce her liability ratio. However, the robust optimal investment strategy is not affected since she has no insider information about the risky asset. Moreover, her optimal expected utility under the worst-case probability is gained by $\Delta V=\frac{1}{2}\ln \left( 1-\frac{T^2}{(2T_0-T)^2} \right)^{-1}+\frac{T}{2(2T_0-T)}  +\frac{1}{4(2T_0-T)}\left( \int_0^T \frac{\lambda_t-a_t}{b_t}   \mathrm{d}t  \right)^2>0$, and $\Delta V$ is greater as $T_0$ is closer (i.e., the insurer has `better' insider information). On the other hand, by comparing Corollary \ref{ex_mainth_idontknow_donskerc} with Proposition \ref{co2rex_mainth_idontknow_donsker}, if $T_0^*$ is the solution of $V=\tilde V$, then the optimal expected utility of the insurer under the worst-case probability with insider information equals that under the original reference probability without insider information. We call $T_0^*$ the critical future time, which indicates that if the insurer who is ambiguity averse wants to exceed the optimal expected utility under the original reference probability, the cost is that she needs the insider information about the value of risk at time $T_0$ such that $T<T_0\le T_0^*$. In particular, if all parameter processes are constant, $\rho=0$, and $T=1$, then $T_0^*$ satisfies
 \begin{equation}\label{optimalfuture}
\iota^2+c^2-\frac{ c^2+2}{2T_0^*-1}=-2\ln\left(  1-\frac{1}{(2T_0^*-1)^2} \right),
 \end{equation}
 where $c=\frac{\lambda-a}{b}$. By mathematical analysis, equation (\ref{optimalfuture}) has a unique solution $T^*_0$. Moreover, the critical future time $T^*_0$ gets closer as $\iota$ increases. The reason is that the increase of $\mu$ improves the optimal expected utility of the insurer under the original reference probability more considerably. Thus she requires `better' extra insider information to improve her optimal expected utility under the worst-case probability such that $V=\tilde V$.
  \end{remark}
  
   \begin{remark}\label{re_comp}
 By similar procedure, we can also obtain the robust optimal strategy when the insurer has insider information about the future value of the risky asset. For instance, if ${\cal H}_t=\bigcap_{s>t}\left(  {\cal F}_s\vee W^1_{T_0} \right)$, the robust optimal strategy is given by
 \begin{equation}\begin{aligned}
 &\pi^*_t=\frac{\iota_t}{2\sigma_t}+ \frac{\rho(\lambda_t-a_t+\rho b_t\iota_t)}{2(1-\rho^2)\sigma_t b_t}+\frac{W^1_{T_0}-W^1_t+\frac{1}{2}\int_t^T\iota_s\mathrm{d}s}{\sigma_t\left( T_0-t+T_0-T  \right)},
 \\&\kappa^*_t=\frac{\lambda_t-a_t+\rho b_t\iota_t}{2(1-\rho^2)b^2_t},
 \end{aligned}\end{equation}
 which indicates that the insurer should increase the proportion of her total wealth invested in the risky asset by $\frac{W^1_{T_0}-W^1_t+\frac{1}{2}\int_t^T\iota_s\mathrm{d}s}{\sigma_t\left( T_0-t+T_0-T  \right)}$ to maximize her utility under model uncertainty if she knows the future value of the risky asset and $W^1_{T_0} > W^1_t-\frac{1}{2} \int_t^T \iota_s \mathrm{d}s$ at time $t$. Given $ W^1_{T_0}-  W^1_t=x>-\frac{1}{2}\int_t^T\iota_s\mathrm{d}s$, we can also see that the closer the future time $T_0$ is, the more she should increase her proportion with respect to the risky asset. However, the robust optimal insurance strategy is not affected since she has no information about the risk in the insurance market. Moreover, the value $V$ can be given by
  \begin{equation}  \label{vinsiderofr}
  \begin{aligned}
  V&= \ln X_0+ \int_0^T r_t \mathrm{d}t  +\frac{1}{4}\int_0^T\left(\iota_t^2+\frac{(\lambda_t-a_t+\rho b_t\iota_t)^2}{(1-\rho^2)b^2_t}    \right)\mathrm{d}t +\frac{1}{2}\ln \left( 1-\frac{T^2}{(2T_0-T)^2} \right)^{-1}+\frac{T}{2(2T_0-T)}\\
  &  +\frac{1}{4(2T_0-T)}\left( \int_0^T\iota_t   \mathrm{d}t  \right)^2.
   \end{aligned}
 \end{equation} 
 \end{remark}
 Thus the optimal expected utility under the worst-case probability is gained by $\Delta V=\frac{1}{2}\ln \left( 1-\frac{T^2}{(2T_0-T)^2} \right)^{-1}+\frac{T}{2(2T_0-T)}  +\frac{1}{4(2T_0-T)}\left( \int_0^T \iota_t   \mathrm{d}t  \right)^2>0$, and $\Delta V$ is greater as $T_0$ is closer (i.e., the insurer has `better' insider information). By comparing (\ref{vinsiderofr}) with (\ref{value2}) we find that, if $\iota_t=\frac{\lambda_t-a_t}{b_t}$, the insurer could derive the same optimal expected utility under the worst-case probability when she owns the insider information about the financial market or the insurance market. On the other hand, by Proposition \ref{co2rex_mainth_idontknow_donsker}, the critical future time $T_0^*$ can be defined similarly. In particular, if all parameter processes are constant, $\rho=0$, and $T=1$, then $T_0^*$ satisfies
 \begin{equation}\label{optimalfuture}
\iota^2+c^2-\frac{ \iota^2+2}{2T_0^*-1}=-2\ln\left(  1-\frac{1}{(2T_0^*-1)^2} \right),
 \end{equation}
 where $c=\frac{\lambda-a}{b}$. The critical future time $T^*_0$ gets closer as $c$ increases since the increase of $c$ improves the optimal expected utility of the insurer under the original reference probability more considerably. Thus she requires `better' extra insider information to improve her optimal expected utility under the worst-case probability such that $V=\tilde V$.

Next, we concentrate on the special situation without model uncertainty, i.e., Problem \ref{sdg2fb}. Then the terminal value condition of $X^{u^*}_t$ in (\ref{final_gex_1_donsker2}) is replaced by
\begin{equation}
X^{u^*}_T=\frac{X_0}{\Pi^*(0,T)}=\frac{X_0 \mathbb{E}[\delta_y(Y_0)|{\cal F}_T]}{ \mathbb{E}[\delta_y(Y_0)|{\cal F}_0]\Pi^*_a(0,T)}\bigg{|}_{y=Y_0}.
\end{equation}
By similar procedure, we obtain the following proposition.

     \begin{proposition} \label{pex_mainth_idontknow_donskerc}
Assume that $U(x)=\ln x$, $G^1(\mathrm{d}z)=G^2(\mathrm{d}z)=0$, $\mu(t,x)=\mu_0(t) $ for some c\`{a}gl\`{a}d function $\mu_0(t)$, $b\ge \epsilon>0$ for some positive constant $\epsilon$, and no model certainty is considered. Assume further that $\{{\cal H}_t\}_{0\le t\le T}$ is given by (\ref{donsker_filtration}) with $\varphi=1$ and all parameter processes are deterministic functions. Suppose $u^* \in {\cal A}_1' $ is optimal for Problem \ref{sdg2fb} under the conditions of Theorem \ref{mainth4}. Then $u^* $ and $\tilde V$ are given by  
\begin{equation} \label{valueno}
   \begin{aligned}
&\pi^*_t=\frac{\iota_t}{\sigma_t}+\frac{\rho(\lambda_t-a_t+\rho b_t\iota_t)}{(1-\rho^2)\sigma_t b_t}  ,\\
&\kappa^*_t  =\frac{\lambda_t-a_t+\rho b_t\iota_t}{(1-\rho^2)b^2_t} - \frac{\bar W_{T_0}-\bar W_t}{b_t(T_0-t)}  ,\\
&\tilde V=\ln X_0+\int_0^T r_t\mathrm{d}t+\frac{1}{2}\int_0^T \left(\iota_t^2 +\frac{(\lambda_t-a_t+\rho b_t\iota_t)^2}{(1-\rho^2)b^2_t}  \right)\mathrm{d}t +\frac{1}{2}\ln\frac{T_0}{T_0-T}. 
 \end{aligned} 
 \end{equation}
 \end{proposition}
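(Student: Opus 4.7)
The plan is to specialize the machinery of Theorem~\ref{ex_mainth_idontknow_donsker} and Corollary~\ref{ex_mainth_idontknow_donskerc} to the ambiguity-neutral setting, which collapses the stochastic differential game to the single-player control problem~\ref{sdg2fb}. Since $\varepsilon^{v^*}\equiv 1$ and there is no minimization over $v$, the Hamiltonian reduces to its $u$-dependent part, so only the first equation of (\ref{generalconti_v}) is active and the adjoint BSDE (\ref{idontknow}) holds with terminal condition $p_1^*(T)=U'(X_T^{u^*})$. With $U(x)=\ln x$, the inverse marginal is $I(y)=1/y$ and no fixed-point equation is needed: the analog of (\ref{dffunctionofu}) becomes $X_T^{u^*}=1/(c_1^*\Pi^*(0,T))$, and the $\mathcal{H}_0$-measurable constant $c_1^*$ is pinned down by $X_0=\mathbb{E}[\Pi^*(0,T)X_T^{u^*}\mid\mathcal{H}_0]=1/c_1^*$, giving $X_T^{u^*}=X_0/\Pi^*(0,T)$.

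Next I would lift the BSDE (\ref{final_gex_1f}) to the ambiguity-neutral version with terminal condition $X_T^{u^*}=X_0/\Pi^*(0,T)$, and invoke the Donsker $\delta$ functional decomposition (\ref{redefinitionofPi}) with $\varphi\equiv 1$ and $Y_0=\bar W_{T_0}$, so that $\mathbb{E}[\delta_y(Y_0)\mid\mathcal{F}_t]$ has the explicit Gaussian form (\ref{defofg}) with $\Vert\varphi\Vert_{[t,T_0]}^2=T_0-t$. This transforms the anticipating BSDE, as in (\ref{final_gex_1_donsker})--(\ref{final_gex_1_donsker2}), into a classical linear $\mathcal{F}_t$-BSDE with generator $\bar f_{\mathrm L}$ and terminal datum
\begin{equation*}
X_T^{u^*}(y)=\frac{X_0\,\mathbb{E}[\delta_y(Y_0)\mid\mathcal{F}_T]}{\mathbb{E}[\delta_y(Y_0)\mid\mathcal{F}_0]\,\Pi_a^*(0,T)}.
\end{equation*}
The explicit solution is $X_t^{u^*}(y)=\mathbb{E}[\Pi_a^*(t,T)X_T^{u^*}(y)\mid\mathcal{F}_t]$, and I would then apply the Clark--Ocone/Malliavin formula as in (\ref{iknow1})--(\ref{iknow2}), noting that the square root that produced the halving in Corollary~\ref{ex_mainth_idontknow_donskerc} is now absent, so the Malliavin derivatives of $(\mathbb{E}[\delta_y(Y_0)\mid\mathcal{F}_T])/\Pi_a^*(0,T)$ yield the full (un-halved) coefficients. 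Substituting into (\ref{bsde_barzt1}) and evaluating the conditional expectation under the Girsanov-changed measure exactly as for (\ref{bsde_barzt1_donsk01}), the terms involving $\frac{1}{2}\int_t^T\frac{\lambda_s-a_s}{b_s}\,\mathrm{d}s$ disappear (because there is no penalty-induced drift shift) and the denominator $T_0-t+T_0-T$ collapses to $T_0-t$, producing the stated $\pi^*_t$ and $\kappa^*_t$.

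For the value, the cleanest route is the direct identity $\tilde V=\mathbb{E}[\ln X_T^{u^*}]=\ln X_0-\mathbb{E}[\ln\Pi^*(0,T)]$. Using the definition (\ref{definitionofPi}), the $\mathcal{H}_t$-Brownian stochastic integrals have zero $\mathbb{P}$-expectation, so
\begin{equation*}
\tilde V=\ln X_0+\int_0^T r_t\,\mathrm{d}t+\tfrac{1}{2}\,\mathbb{E}\!\int_0^T\!\bigl(\tilde\phi_1(t)^2+\tilde\phi_2(t)^2\bigr)\mathrm{d}t.
\end{equation*}
Expanding the squares of $\tilde\phi_i=\iota$-part$+\phi_i$ (with opposite sign for $\phi_2$), all cross terms vanish in expectation because $\iota_t$, $\lambda_t,a_t,b_t$ are deterministic and $\mathbb{E}\phi_i(t)=0$ by Lemma~\ref{lemmaofdonskersemi} (or directly from (\ref{weknowphi1})). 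The remaining computation is $\mathbb{E}[\phi_1(t)^2+\phi_2(t)^2]=(\rho^2+(1-\rho^2))\,\mathbb{E}[(\bar W_{T_0}-\bar W_t)^2]/(T_0-t)^2=1/(T_0-t)$, whose integral over $[0,T]$ yields $\ln\frac{T_0}{T_0-T}$, giving (\ref{valueno}).

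The main obstacle is the Malliavin-derivative step: one must show that the dependence on $y$ in $\mathbb{E}[\delta_y(Y_0)\mid\mathcal{F}_T]$ and the dependence on $\omega$ through $\Pi_a^*(0,t)$ combine, after taking $D^i_t$ and evaluating at $y=Y_0$, to exactly the insider-correction term $-(\bar W_{T_0}-\bar W_t)/(b_t(T_0-t))$ in $\kappa^*_t$, without any residual drift shift from the penalty (which is what distinguishes this result from Corollary~\ref{ex_mainth_idontknow_donskerc}). Verifying the cancellation of the cross terms in $\mathbb{E}\tilde\phi_i^2$ rigorously, via either Girsanov or direct Gaussian computation, is the only step that is more than bookkeeping.
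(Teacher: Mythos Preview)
Your proposal is correct and follows essentially the same route the paper sketches: it states just before the proposition that the terminal condition in (\ref{final_gex_1_donsker2}) is replaced by $X_T^{u^*}=\frac{X_0\,\mathbb{E}[\delta_y(Y_0)\mid\mathcal{F}_T]}{\mathbb{E}[\delta_y(Y_0)\mid\mathcal{F}_0]\,\Pi_a^*(0,T)}\big|_{y=Y_0}$ and that the result follows ``by similar procedure,'' i.e.\ the Donsker-$\delta$/Malliavin computation you describe with the square root removed. Your direct evaluation of $\tilde V=\ln X_0-\mathbb{E}[\ln\Pi^*(0,T)]$ via $\mathbb{E}[\phi_1(t)^2+\phi_2(t)^2]=1/(T_0-t)$ is a mild shortcut compared to tracing through the analog of (\ref{value2}), but it is equivalent and arguably cleaner.
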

 
 \begin{remark}
 By comparing Corollary \ref{ex_mainth_idontknow_donskerc} with Proposition \ref{pex_mainth_idontknow_donskerc}, when the insurer owns the insider information about the future value of risk in the insurance market, the difference between her optimal expected utility under the worst-case probability and that under the original reference probability is given by 
 \begin{equation}   \begin{aligned}
 V-\tilde V=&\left[\frac{1}{2}\ln \left( 1-\frac{T^2}{(2T_0-T)^2} \right)^{-1}+ \frac{T}{2(2T_0-T)}-\frac{1}{2}\ln\frac{T_0}{T_0-T} \right]\\
 &+\left[  \frac{1}{4(2T_0-T)}\left( \int_0^T \frac{\lambda_t-a_t}{b_t}   \mathrm{d}t  \right)^2 -\frac{1}{4}\int_0^T\left(\iota_t^2+\frac{(\lambda_t-a_t+\rho b_t\iota_t)^2}{(1-\rho^2)b^2_t}    \right)\mathrm{d}t\right]\le 0.
  \end{aligned} \end{equation}
due to the H\"{o}lder inequality. On the other hand, suppose there is no model certainty, i.e., the insurer is not ambiguity averse or believes that her model is accurate. By comparing Proposition \ref{pex_mainth_idontknow_donskerc} with Proposition \ref{co2rex_mainth_idontknow_donsker} we see that, if the insurer captures insider information in the insurance market and $\bar W_{T_0}>\bar W_{t}$ at time $t$, she should reduce the liability ratio by $\frac{\bar W_{T_0}-\bar W_{t}  }{b_t\left(T_0-t \right)} $ to maximize her utility. Given $\bar W_{T_0}-\bar W_t=x>0$, we can also see that the closer the future time $T_0$ is, the more she should reduce her liability ratio. However, the optimal investment strategy is not affected. Moreover, her optimal expected utility is gained by $\Delta \tilde V=\frac{1}{2}\ln \frac{T_0}{T_0-T}>0$, and $\Delta \tilde V$ is greater as $T_0$ is closer (i.e., she has `better' insider information). 
 \end{remark}
 
  \begin{remark}\label{resimi}
 By similar procedure, we can also obtain the optimal strategy without model uncertainty when the insurer owns the insider information about the future value of the risky asset. For instance, if ${\cal H}_t=\bigcap_{s>t}\left({\cal F}_s\vee W^1_{T_0}\right)$, the optimal strategy is given by
  \begin{equation}\begin{aligned}
&\pi^*_t=\frac{\iota_t}{\sigma_t}+\frac{\rho(\lambda_t-a_t+\rho b_t\iota_t)}{(1-\rho^2)\sigma_t b_t}+\frac{W^1_{T_0}-W^1_t}{\sigma_t(T_0-t)}  ,\\
&\kappa^*_t  =\frac{\lambda_t-a_t+\rho b_t\iota_t}{(1-\rho^2)b^2_t} .
 \end{aligned}\end{equation}
 Moreover, the value $V$ can be given by
 \begin{equation}\label{vinsiderofr2}  \begin{aligned}
\tilde V=\ln X_0+\int_0^T r_t\mathrm{d}t+\frac{1}{2}\int_0^T \left(\iota_t^2 +\frac{(\lambda_t-a_t+\rho b_t\iota_t)^2}{(1-\rho^2)b^2_t}  \right)\mathrm{d}t +\frac{1}{2}\ln\frac{T_0}{T_0-T}, 
\end{aligned}  \end{equation}
which is consistent with the fact that the difference between the optimal expected utility of the insurer under the worst-case probability and that under the original reference probability is nonpositive when she owns the insider information ${\cal H}_t$ by Remark \ref{re_comp}. On the other hand, suppose there is no model uncertainty. By Proposition \ref{co2rex_mainth_idontknow_donsker}, the insurer should increase the proportion of her total wealth invested in the risky asset by $\frac{W^1_{T_0}-W^1_t}{\sigma_t(T_0-t)} $ if she knows the future value of the risky asset and $W^1_{T_0}>W^1_t$ at time $t$. Given $ W^1_{T_0}-  W^1_t=x>0$, we can also see that the closer the future time $T_0$ is, the more she should increase her proportion with respect to the risky asset. However, the optimal insurance strategy is not affected. Moreover, her optimal expected utility is gained by $\Delta \tilde V=\frac{1}{2}\ln \frac{T_0}{T_0-T}>0$, and $\Delta \tilde V$ is greater as $T_0$ is closer (i.e., she has `better' insider information). By comparing (\ref{vinsiderofr2}) with (\ref{valueno}) we find that, the insurer could derive the same optimal expected utility when she owns the insider information about the financial market or the insurance market. 
 \end{remark}

\subsection{With Poisson jumps}
\label{sec_exampleg_sub2}
Now we focus on the situation where Possion jumps might happen on the insurance market. More specifically, we suppose that $G^1(\mathrm{d}z)=0$, $G^2(\mathrm{d}z)=\bar\lambda\delta_1(\mathrm{d}z)$, $b=\theta_2=q_{12}=q_{22}=0$, $\gamma_2(t,z)=\gamma_2(t)$, and $q_{24}(t,z)=q_{24}(t)$, where $\bar\lambda>0$, and $\delta_1$ is the unit point mass at $1$. Assume that the mean rate of return $\mu(t,x)=\mu_0(t)$ for some ${\cal G}_t^1$-adapted c\'{a}gl\'{a}d process $\mu_0(t)$. Put $\iota_t=\frac{\mu_0(t)-r_t}{\sigma_t}$, $\tilde\phi_1(t)=\iota_t+\phi_1(t)$, and $\tilde\phi_4(t)=\frac{\lambda_t-a_t}{\int_{\mathbb{R}_0}\gamma_2(t)G_{\cal H}^2(t,\mathrm{d}z)}-\frac{\int_{\mathbb{R}_0}(G^2_{\cal H}(t,\mathrm{d}z)-G^2(\mathrm{d}z))}{\int_{\mathbb{R}_0}G_{\cal H}^2(t,\mathrm{d}z)}$.

Assume further the penalty function $g$ verifies
\begin{equation}
\int_0^T |\tilde\phi_1(t)|^2 \mathrm{d}t+\int_0^T\int_{\mathbb{R}_0}\left(|\ln(1+\tilde\phi_4(t))|^k+ |\tilde \phi_4(t)|^k\right)  G_{\mathcal H}^2(t,\mathrm{d}z)\mathrm{d}t<\infty,\quad k=1,2.
\end{equation}
 by the Girsanov theorem. Then we have
 \begin{equation}\label{jump_pen}\begin{aligned}
g(s,v)=\frac{1}{2}\theta_1^2+ \int_{\mathbb{R}_0}\left[ (1+\theta_4)\ln (1+\theta_4)-\theta_4  \right]G^2_{\cal H}(s,\mathrm{d}x).
  \end{aligned}\end{equation}   
  
  We make the following assumption before our procedure.

\begin{assumption}\label{extraa2}
Suppose the coefficients satisfy the following integrability
\begin{equation}
\int_0^T |\tilde\phi_1(t)|^2 \mathrm{d}t+\int_0^T\int_{\mathbb{R}_0} |\tilde \phi_4(t)|^2  G_{\cal H}^2(t,\mathrm{d}z)\mathrm{d}t<\infty.
\end{equation}
Moreover, $\tilde\phi_4(t)>-1$ is ${\cal H}_t$-predictable.
\end{assumption}

  By the Hamiltonian system (\ref{half_equ3_0})-(\ref{half_equ3}) in Theorem \ref{mainth4} and the similar procedure in Section \ref{sec_exampleg_sub1}, we have
  \begin{equation}\label{anequaboutc22J}\begin{aligned}
 & \ln\varepsilon^{v^*}_T+U(X^{u^*}_T)=c^*_2,\\
 & \varepsilon^{v^*}_t=\mathbb{E}\left[ \left(\mathbb{E}\left[  e^{-U(X^{u^*}_T)}\big |{\cal H}_0\right] e^{U(X^{u^*}_T)} \right)^{-1} \big{|}{\cal H}_t\right],\\
  & \varepsilon^{v^*}_T=\left(\mathbb{E}\left[ e^{-U(X^{u^*}_T)}\big| {\cal H}_0  \right] e^{U(X^{u^*}_T)} \right)^{-1},
 \end{aligned} \end{equation}
  where $c^*_2=p_2^*(0)$ is an ${\cal H}_0$-measurable random variable. We also have
   \begin{equation}\label{dffunctionofu2222J}\begin{aligned}
 X^{u^*}_T=  I\left( \frac{c_1^*\Pi_{J}^*(0,T)}{ \varepsilon^{v^*}_T} \right)=\tilde I\left( c_3^* \Pi_{J}^*(0,T) \right),
 \end{aligned} \end{equation}
where $c_1^*=p_1^*(0)$ and $c^*_3=c_1^*\mathbb{E}\left[e^{-U(X^{u^*}_T)}\big| {\cal H}_0 \right]$ are all ${\cal H}_0$-measurable random variables, and $\Pi_J^*(0,T)$ is defined by
  \begin{equation}\label{Pi_jump}\begin{aligned}
\Pi_J^*(0,t):= \exp \Bigg\{&-\int_0^t r_s\mathrm{d}s-\int_0^t\tilde\phi_1(s)\mathrm{d}W^1_{\cal H}(s)-\frac{1}{2}\int_0^t\tilde\phi_1(s)^2\mathrm{d}s 
+\int_0^t\int_{\mathbb{R}_0}\ln(1+\tilde\phi_4(s))\tilde N_{\cal H}^2(\mathrm{d}s,\mathrm{d}z) \\&+\int_0^t\int_{\mathbb{R}_0}\left(\ln(1+\tilde\phi_4(s))-\tilde\phi_4(s)\right) G_{\cal H}^2( s,\mathrm{d}z)\mathrm{d}s \Bigg\}.
 \end{aligned} \end{equation}

 Put $z^*_t=(z^*_1(t),z_4^*(t))=\left(\sigma_t\pi^*_tX^{u^*}_{t-} ,-\kappa^*_t\gamma_2(t)X^{u^*}_{t-} \right)$. Then we have
   \begin{equation}\label{u_jump}\begin{aligned}
   & \pi_t^*=\frac{z^*_1(t)}{\sigma_tX^{u^*}_{t-}},\\
   &  \kappa^*_t=-\frac{z_4^*(t)}{\gamma_2(t)X_{t-}^{u^*}}.
  \end{aligned} \end{equation}
 Then SDE (\ref{wealthsde2}) leads to the following linear BSDE
  \begin{eqnarray}\label{final_gex_1_pj}
  \left\{ \begin{aligned}&\mathrm{d}X^{u^*}_t=-f_{\text{LJ}}(t,X^{u^*}_t,z^*_t,\omega)\mathrm{d}t+z^*_1(t)\mathrm{d}W^1_{\cal H}(t)+\int_{\mathbb{R}_0}z_4^*(t)\tilde N_{\cal H}^2(\mathrm{d}t,\mathrm{d}z), \quad 0\le t\le T,
  \\& X^{u^*}_T= \tilde I\left( c_3^* \Pi_{J}^*(0,T) \right),\end{aligned} \right.   
 \end{eqnarray}
where the generator $f_{\text{LJ}}:[0,T]\times \mathbb{R}\times \mathbb{R}^2\times\Omega\rightarrow \mathbb{R}$  is given by
 \begin{equation*} 
\begin{aligned}
f_{\text{LJ}}(t,x,z,\omega)=-r_tx-\tilde\phi_1(t)z_1+\int_{\mathbb{R}_0}\tilde\phi_4(t)z_4G^2_{\cal H}(t,\mathrm{d}z).
\end{aligned} 
 \end{equation*}
 
 Moreover, when the following condition holds,
 \begin{equation}\label{integrationofJ}\begin{aligned}
 \mathbb{E}\Bigg[&  \left(  \int_0^T\Pi^*_J(0,t)^2\left(  z_1^*(t)-\tilde\phi_1(t) X^{u^*}_t \right)^2\mathrm{d}t \right)^{\frac{1}{2}}\\
&+  \left(  \int_0^T\int_{\mathbb{R}_0}\Pi^*_J(0,t)^2\left(  z_4^*(t)+\tilde\phi_4(t) X^{u^*}_t +z^*_4(t)\tilde \phi_4(t)\right)^2N^2(\mathrm{d}t,\mathrm{d}z ) \right)^{\frac{1}{2}}+(X_T^{u^*})^2\Bigg]<\infty,
 \end{aligned}\end{equation}
we have
\begin{equation}\label{XofJ}
X^{u^*}_t=\mathbb{E}\left[  \Pi^*_J(t,T)\tilde I(c_3^*\Pi^*_J(0,T)) | {\cal H}_t  \right],
\end{equation}
where $\Pi^*_J(t,T):=\Pi^*_J(0,T)/\Pi^*_J(0,t)$. The ${\cal H}_0$-random variable $c^*_3$ can be determined by
\begin{equation}\label{c3ofJ}
X_0=\mathbb{E}\left[  \Pi^*_J(0,T)\tilde I(c_3^*\Pi^*_J(0,T)) | {\cal H}_0 \right].
\end{equation}

 Substituting (\ref{dffunctionofu2222J}) into (\ref{anequaboutc22J}) we obtain
 \begin{equation}  \label{anequaboutc22_finalJ}
  \begin{aligned}
&  \varepsilon^{v^*}_t=\mathbb{E}\left[ \left(\mathbb{E}\left[   e^{ -U\left(\tilde I\left(  c_3^*\Pi^*_J(0,T)   \right) \right )   }\big |{\cal H}_0\right]   e^{ U\left(\tilde I\left(  c_3^*\Pi^*_J(0,T)   \right) \right )   }  \right)^{-1} \big{|}{\cal H}_t\right],\\
&  \varepsilon^{v^*}_T=\left(\mathbb{E}\left[   e^{ -U\left(\tilde I\left(  c_3^*\Pi^*_J(0,T)   \right) \right )   }\big |{\cal H}_0\right]   e^{ U\left(\tilde I\left(  c_3^*\Pi^*_J(0,T)   \right) \right )   }  \right)^{-1}.
   \end{aligned}
 \end{equation}

To sum up, we have the following result for the robust optimal strategy with jumps.
  
      \begin{theorem} \label{ex_mainth_idontknow_j}
Assume that $G^1(\mathrm{d}z)=0$, $G^2(\mathrm{d}z)=\bar\lambda\delta_1(\mathrm{d}z)$, $b=\theta_2=q_{12}=q_{22}=0$, $\gamma_2(t,z)=\gamma_2(t)$, $q_{24}(t,z)=q_{24}(t)$, $\mu(t,x)=\mu_0(t) $ for some ${\cal G}_t^1$-adapted c\'{a}gl\'{a}d process $\mu_0(t)$, and $g(s,v)$ is given by (\ref{jump_pen}). Suppose $(u^*,v^*)\in {\cal A}_1'\times{\cal A}_2'$ is optimal for Problem \ref{sdg} under the conditions of Theorem \ref{mainth4}, and Assumption \ref{extraa2} and the integrability condition (\ref{integrationofJ}) hold. Then $u^*$, $v^*$, $X^{u^*}_t$ and $\varepsilon^{v^*}_t$ are given by (\ref{u_jump}), (\ref{halfequ_1})-(\ref{halfequ_2}), (\ref{XofJ}), (\ref{anequaboutc22_finalJ}), respectively, where $\Pi^*_J$ is given by (\ref{Pi_jump}), $c^*_3$ is determined by (\ref{c3ofJ}), and $(X^{u^*},  z^*_1, z^*_4)$ solves the linear BSDE (\ref{final_gex_1_pj}).
 \end{theorem}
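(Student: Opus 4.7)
The plan is to mirror the architecture of the proof of Theorem \ref{ex_mainth_idontknow} in the continuous case, tracking the extra jump terms generated by $\tilde N^2$ and replacing the driver $\Pi^*(0,t)$ of (\ref{definitionofPi}) by its jump-extended version $\Pi^*_J(0,t)$ in (\ref{Pi_jump}). First I would apply the Hamiltonian equation $\nabla_v H^*(t)=0$ from Theorem \ref{mainth4}. Under $b=\theta_2=q_{12}=q_{22}=0$ and the penalty $g$ in (\ref{jump_pen}), the equations in $\theta_1$ and $\theta_4$ collapse to $\theta_1^*(t)+q_{21}^*(t)=0$ and $\ln(1+\theta_4^*(t))+q_{24}^*(t)=0$. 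Substituting these into BSDE (\ref{adjoint}) for $p_2^*$ and comparing the resulting dynamics with the Itô expansion of $\ln\varepsilon^{v^*}_t$ from (\ref{half_equ2}) (Theorem \ref{itoforito}), all stochastic and drift terms match, so $p_2^*(t)=p_2^*(0)-\ln\varepsilon^{v^*}_t$. The terminal condition $p_2^*(T)=U(X^{u^*}_T)$ then yields $\ln\varepsilon^{v^*}_T+U(X^{u^*}_T)=c_2^*$ with $c_2^*:=p_2^*(0)$ being ${\cal H}_0$-measurable, and $\mathbb{E}[\varepsilon^{v^*}_T\mid{\cal H}_0]=\varepsilon^{v^*}_0=1$ identifies $e^{c_2^*}=(\mathbb{E}[e^{-U(X^{u^*}_T)}\mid{\cal H}_0])^{-1}$, producing (\ref{anequaboutc22J}).

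Next I would exploit $\nabla_u H^*(t)=0$ together with the adjoint BSDE (\ref{adjoint0}). Because $G^1=0$ and $b=0$, once $\nabla_u H^*=0$ is solved for $q_{11}^*$ and the analogous insurance-side multiplier, all coefficients appearing in the resulting equation for $p_1^*$ become independent of $\pi^*$ and $\kappa^*$, and the equation takes the form of a linear Doléans-Dade SDE with driver given exactly by $-\mathrm{d}\ln\Pi^*_J(0,t)$. By Theorem \ref{novikov} its unique solution is $p_1^*(t)=c_1^*\Pi^*_J(0,t)$ with $c_1^*:=p_1^*(0)$; note that Assumption \ref{extraa2} ($\tilde\phi_4>-1$, ${\cal H}_t$-predictable) is exactly what makes $\Pi^*_J$ strictly positive. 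Inverting the terminal condition $p_1^*(T)=\varepsilon^{v^*}_T U'(X^{u^*}_T)$ gives $X^{u^*}_T=I\bigl(c_1^*\Pi^*_J(0,T)/\varepsilon^{v^*}_T\bigr)$, and substituting the expression of $\varepsilon^{v^*}_T$ from (\ref{anequaboutc22J}) and applying the fixed-point-function device of (\ref{dffunctionofu222})--(\ref{dffunctionofu2222}) yields $X^{u^*}_T=\tilde I(c_3^*\Pi^*_J(0,T))$ with $c_3^*:=c_1^*\mathbb{E}[e^{-U(X^{u^*}_T)}\mid{\cal H}_0]$.

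Once the terminal value is fixed, I would substitute the representation $z_1^*(t)=\sigma_t\pi^*_t X^{u^*}_{t-}$ and $z_4^*(t)=-\kappa^*_t\gamma_2(t)X^{u^*}_{t-}$ into the rewritten dynamics (\ref{wealthsde2}), which gives the linear BSDE (\ref{final_gex_1_pj}) with driver $f_{\mathrm{LJ}}$. Applying the Itô formula for Itô-Lévy processes to $\Pi^*_J(0,t)X^{u^*}_t$, the drift terms cancel by the very definition of $\Pi^*_J$, leaving only an Itô integral against $W^1_{\cal H}$ and a stochastic integral against $\tilde N^2_{\cal H}$. Under the integrability condition (\ref{integrationofJ}), the Burkholder-Davis-Gundy inequality forces $\Pi^*_J(0,t)X^{u^*}_t$ to be a true ${\cal H}_t$-martingale, and conditioning yields (\ref{XofJ}); the constraint $X^{u^*}_0=X_0$ then pins down $c_3^*$ implicitly through (\ref{c3ofJ}). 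Plugging $X^{u^*}_T=\tilde I(c_3^*\Pi^*_J(0,T))$ back into (\ref{anequaboutc22J}) gives the stated formulas (\ref{anequaboutc22_finalJ}) for $\varepsilon^{v^*}$, and $u^*$ is read off from (\ref{u_jump}), while the remaining component of $v^*$ is determined implicitly through the half-characterization (\ref{halfequ_1})--(\ref{halfequ_2}) of Theorem \ref{mainth3}.

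The main technical obstacle is the jump part: one must verify that the Doléans-Dade exponential $\Pi^*_J$ is a strictly positive martingale (ensured by Assumption \ref{extraa2} together with the Novikov-type criterion of Theorem \ref{novikov}), and that the Poisson stochastic integral appearing after applying Itô's formula to $\Pi^*_J(0,t)X^{u^*}_t$ is genuinely integrable; the latter follows from (\ref{integrationofJ}) once $N^2$ is expressed via its $\{{\cal H}_t\}$-compensator $\hat N^2_{\cal H}$ from Theorem \ref{mainth3}, which is exactly where the predictability hypothesis on $\tilde\phi_4$ and the second-moment bound on $\int|\tilde\phi_4|^2 G^2_{\cal H}(t,\mathrm{d}z)\mathrm{d}t$ are used.
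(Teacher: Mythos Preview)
Your proposal is correct and follows essentially the same route as the paper: the paper explicitly states that the result is obtained ``by the Hamiltonian system (\ref{half_equ3_0})--(\ref{half_equ3}) in Theorem \ref{mainth4} and the similar procedure in Section \ref{sec_exampleg_sub1}'', which is precisely the architecture you outline, including the identification $p_2^*(t)=p_2^*(0)-\ln\varepsilon^{v^*}_t$, the Dol\'eans-Dade representation $p_1^*(t)=c_1^*\Pi^*_J(0,t)$, the fixed-point reduction to $\tilde I$, and the martingale argument for $\Pi^*_J(0,t)X^{u^*}_t$ under (\ref{integrationofJ}).
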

 
  \begin{remark}
  Similar to Theorem \ref{ex_mainth_idontknow}, the existence and uniqueness of the linear BSDE (\ref{final_gex_1_pj}) hold under some mild conditions. We refer to \cite{Oksendal19,Eyraoud05,Li06,Lu13,Wang07} for more details.
  \end{remark}
  
  When the utility function is of the logarithmic form, i.e., $U(x)=\ln x $, the fixed point function is given by (\ref{fixedpln}), i.e.,
\begin{equation}
\tilde I(y)=\frac{1}{\sqrt{y}},\quad y>0.
\end{equation}

Combining (\ref{XofJ}) with (\ref{c3ofJ}), we have
\begin{equation} \begin{aligned}\label{linearbsde_bdg_2_iitj}
 X^{u^*}_t=\frac{X_0\mathbb{E} \left[   \sqrt{  \Pi^*_J(t,T)}  | {\cal H}_t\right]}{\mathbb{E} \left[  \sqrt{  \Pi^*_J(0,T)} | {\cal H}_0\right]\sqrt{  \Pi^*_J(0,t)}},
\end{aligned}\end{equation}
and
 \begin{equation}\label{linearbsde_bdg_2_iiTj}
 \begin{aligned}
 X^{\pi^*}_T=\frac{X_0 }{\mathbb{E} \big[  \sqrt{  \Pi^*_J(0,T)} | {\cal H}_0\big]\sqrt{  \Pi^*_J(0,T)}}.
\end{aligned}
\end{equation}
Thus the BSDE (\ref{final_gex_1_pj}) can be rewritten as
 \begin{eqnarray}\label{final_gex_1fj}
  \left\{ \begin{aligned}&\mathrm{d}X^{u^*}_t=-f_{\text{LJ}}(t,X^{u^*}_t,z^*_t,\omega)\mathrm{d}t+z^*_1(t)\mathrm{d}W^1_{\cal H}(t)+\int_{\mathbb{R}_0}z_4^*(t)\tilde N_{\cal H}^2(\mathrm{d}t,\mathrm{d}z), \quad 0\le t\le T,
  \\& X^{u^*}_T=\frac{X_0 }{\mathbb{E} \big[  \sqrt{  \Pi_J^*(0,T)} | {\cal H}_0\big]\sqrt{  \Pi_J^*(0,T)}}.\end{aligned} \right.   
 \end{eqnarray}
 where $f_{\text{LJ}} $  is given by
 \begin{equation*} 
\begin{aligned}
f_{\text{LJ}}(t,x,z,\omega)=-r_tx-\tilde\phi_1(t)z_1+\int_{\mathbb{R}_0}\tilde\phi_4(t)z_4G^2_{\cal H}(t,\mathrm{d}z).
\end{aligned} 
 \end{equation*}

We can also calculate the value of Problem \ref{sdg} as follows
\begin{equation}  \label{dontknowvj}
  \begin{aligned}
 V =\ln X_0-2\mathbb{E}\left[ \ln \mathbb{E}\big[ \sqrt{\Pi^*_J(0,T)}  |{\cal H}_0\big] \right].
   \end{aligned}
 \end{equation}
 
     \begin{corollary} \label{corofvaluej}
Assume that $U(x)=\ln x$, $G^1(\mathrm{d}z)=0$, $G^2(\mathrm{d}z)=\bar\lambda\delta_1(\mathrm{d}z)$, $b=\theta_2=q_{12}=q_{22}=0$, $\gamma_2(t,z)=\gamma_2(t)$, $q_{24}(t,z)=q_{24}(t)$, $\mu(t,x)=\mu_0(t) $ for some ${\cal G}_t^1$-adapted c\'{a}gl\'{a}d process $\mu_0(t)$, and $g(s,v)$ is given by (\ref{jump_pen}). Suppose $(u^*,v^*)\in {\cal A}_1'\times{\cal A}_2'$ is optimal for Problem \ref{sdg} under the conditions of Theorem \ref{mainth4}, and Assumption \ref{extraa2} and the integrability condition (\ref{linearbsde_bdg}) hold. Then $(u^*,v^*)$ and $V$ are given by (\ref{u_jump}), (\ref{halfequ_1})-(\ref{halfequ_2}) and (\ref{dontknowvj}), respectively, where $\Pi^*_J$ is given by (\ref{Pi_jump}), and $(X^{u^*},  z^*_1, z^*_2)$ solves the linear BSDE (\ref{final_gex_1fj}). 
 \end{corollary}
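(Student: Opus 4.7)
The plan is to specialize Theorem~\ref{ex_mainth_idontknow_j} to the logarithmic utility $U(x)=\ln x$ and then derive the explicit formula for the value $V$. The equations characterizing $(u^*,v^*)$ in (\ref{u_jump}) together with (\ref{halfequ_1})--(\ref{halfequ_2}) are inherited directly from Theorem~\ref{ex_mainth_idontknow_j}, so the real work is to (i) simplify the terminal condition of the BSDE using the fixed-point function for $\ln$, and (ii) obtain the closed-form expression (\ref{dontknowvj}) for $V$.

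First I would identify the fixed-point function. For $U(x)=\ln x$, $U'(x)=1/x$ so $I(x)=1/x$; solving $x=I(ye^{U(x)})=1/(yx)$ gives $\tilde I(y)=1/\sqrt{y}$. Substituting into (\ref{XofJ}) and (\ref{c3ofJ}) immediately yields (\ref{linearbsde_bdg_2_iitj}) and (\ref{linearbsde_bdg_2_iiTj}): the ${\cal H}_0$-random variable $c_3^*$ can be eliminated from the expression for $X^{u^*}_t$ via the ratio $X_0/\mathbb{E}[\sqrt{\Pi_J^*(0,T)}\mid{\cal H}_0]$, which in turn gives the explicit terminal condition of the BSDE~(\ref{final_gex_1fj}). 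The integrability hypothesis (\ref{linearbsde_bdg}) in the statement (treated as the jump analogue of the earlier small-insurer integrability) ensures that $\Pi_J^*(0,t)X^{u^*}_t$ is an ${\cal H}_t$-martingale by Burkholder--Davis--Gundy, justifying the representation.

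Next I would compute $V$. Observe that for $U(x)=\ln x$ the identity (\ref{anequaboutc22J}) gives $\ln\varepsilon^{v^*}_T+\ln X^{u^*}_T=c_2^*$, whence $\varepsilon^{v^*}_T X^{u^*}_T=e^{c_2^*}$ is ${\cal H}_0$-measurable and $e^{c_2^*}=(\mathbb{E}[(X^{u^*}_T)^{-1}\mid{\cal H}_0])^{-1}$. The analogue of the computation leading to (\ref{part1ofJ}) for the jump penalty (\ref{jump_pen}) shows that $\mathbb{E}[\int_0^T\varepsilon^{v^*}_s g(s,v^*_s)\,\mathrm{d}s]=\mathbb{E}[\varepsilon^{v^*}_T\ln\varepsilon^{v^*}_T]$; this is the step I expect to be the main (though mild) obstacle, since the jump part of $g$ must be re-expressed via the Girsanov change of measure for both the Brownian and Poisson components under ${\cal Q}^{v^*}$, and one has to check that all stochastic integrals appearing are genuine martingales under ${\cal Q}^{v^*}$ (this is where Assumption~\ref{extraa2} and the integrability in (\ref{integrationofJ})/(\ref{linearbsde_bdg}) are used). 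Once that identity is in hand, the chain of equalities
\begin{equation*}
V=\mathbb{E}[\varepsilon^{v^*}_T\ln(\varepsilon^{v^*}_T X^{u^*}_T)]=\mathbb{E}[\varepsilon^{v^*}_T c_2^*]=-\mathbb{E}[\ln\mathbb{E}[(X^{u^*}_T)^{-1}\mid{\cal H}_0]]
\end{equation*}
mirrors (\ref{dontknowv}). Substituting (\ref{linearbsde_bdg_2_iiTj}) into $\mathbb{E}[(X^{u^*}_T)^{-1}\mid{\cal H}_0]=X_0^{-1}\mathbb{E}[\sqrt{\Pi_J^*(0,T)}\mid{\cal H}_0]^2$ yields (\ref{dontknowvj}).

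Finally, the BSDE claim follows by plugging the terminal value (\ref{linearbsde_bdg_2_iiTj}) back into (\ref{final_gex_1_pj}), which is already stated to hold by Theorem~\ref{ex_mainth_idontknow_j}; the driver $f_{\text{LJ}}$ is unchanged because it does not depend on $U$. No further computation is required for the BSDE itself, as its structure is preserved by the specialization of the terminal condition.
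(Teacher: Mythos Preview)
Your proposal is correct and follows essentially the same approach as the paper. The paper does not give a separate proof of this corollary; the derivation is the short discussion preceding the statement (equations (\ref{linearbsde_bdg_2_iitj})--(\ref{dontknowvj})), which specializes Theorem~\ref{ex_mainth_idontknow_j} to $U(x)=\ln x$ via the fixed-point function $\tilde I(y)=1/\sqrt{y}$ and then computes $V$ exactly as in the continuous case (\ref{dontknowv}). Your identification of the Girsanov step for the jump penalty as the only nontrivial point is apt; the paper takes this for granted (it is implicit in the choice (\ref{jump_pen}) of $g$), and your outline of how to verify it is correct.
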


  When the insurer has no insider information, i.e., ${\cal H}_t={\cal F}_t$, we have $\tilde\phi_1(t)=\iota_t$, and $\tilde\phi_4(t)=\frac{\lambda_t-a_t}{\bar\lambda\gamma_2(t)}$. Assume further that all parameter processes are deterministic functions. By similar procedure in Section \ref{sec_exampleg_sub1}, we have the following corollary.

      \begin{corollary} \label{corex_mainth_idontknow_donskerj}
Assume that $U(x)=\ln x$,  $U(x)=\ln x$, $G^1(\mathrm{d}z)=0$, $G^2(\mathrm{d}z)=\bar\lambda\delta_1(\mathrm{d}z)$, $b=\theta_2=q_{12}=q_{22}=0$, $\gamma_2(t,z)=\gamma_2(t)$, $q_{24}(t,z)=q_{24}(t)$, $\mu(t,x)=\mu_0(t) $ for some c\'{a}gl\'{a}d function $\mu_0(t)$, and $g(s,v)$ is given by (\ref{jump_pen}). Assume further that ${\cal H}_t={\cal F}_t$ and all parameter processes are deterministic functions. Suppose $(u^*,v^*)\in {\cal A}_1'\times{\cal A}_2'$ is optimal for Problem \ref{sdg} under the conditions of Theorem \ref{mainth4}. Then $(u^*,v^*)$ is given by 
\begin{equation}\label{dknowj}\begin{aligned}
& \pi^*_t=\frac{\iota_t}{2\sigma_t},
\\& \kappa^*_t=\frac{\lambda_t-a_t}{(\lambda_t-a_t)\gamma_2(t)+\bar\lambda \gamma_2^2(t)+\sqrt{\bar\lambda(\lambda_t-a_t)\gamma^3_2(t)+\bar\lambda^2\gamma^4_2(t)}},
\\& \theta_1^*(t)=-\frac{\iota_t}{2},
\\& \theta_4^*(t)=\frac{\lambda_t-a_t}{\bar\lambda \gamma_2(t)}\left( 1-\kappa^*_t\gamma_2(t) \right)-\kappa^*_t\gamma_2(t).
\end{aligned}\end{equation}
\end{corollary}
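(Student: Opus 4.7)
The plan is to combine the explicit formulas for $X^{u^*}$ and $\varepsilon^{v^*}$ already supplied by Corollary~\ref{corofvaluej} (the log-utility specialization with jumps) with uniqueness of the canonical decomposition of ${\cal F}_t$-special semimartingales to match coefficients and extract $(\pi^*,\kappa^*,\theta^*_1,\theta^*_4)$ pointwise. First I would observe that, since ${\cal H}_t={\cal F}_t$, Theorem~\ref{mainth3} forces $\phi_1=\phi_2=0$ and $G^i_{\cal H}=G^i$, so $\tilde\phi_1(t)=\iota_t$ and $\tilde\phi_4(t)=(\lambda_t-a_t)/(\bar\lambda\gamma_2(t))$ are deterministic; consequently $\Pi^*_J(t,T)$ is independent of ${\cal F}_t$ and $\mathbb{E}\sqrt{\Pi^*_J(t,T)}$ reduces to a deterministic function of $t$. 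Using (\ref{linearbsde_bdg_2_iitj}) and (\ref{anequaboutc22_finalJ}) I would write
\[
\ln X^{u^*}_t = A_X(t) - \tfrac12\ln\Pi^*_J(0,t), \qquad \ln\varepsilon^{v^*}_t = A_\varepsilon(t) + \tfrac12\ln\Pi^*_J(0,t),
\]
with $A_X$, $A_\varepsilon$ deterministic and of bounded variation.

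The key step is to equate these representations with the It\^{o} expansions of the same processes obtained from their primitive SDEs. Applying Theorem~\ref{itofor} to (\ref{equofxt}) under $b=0$, $G^1=0$ will collapse the stochastic part of $\ln X^{u^*}_t$ to $\int_0^t\sigma_s\pi^*_s\,\mathrm{d}W^1_s + \int_0^t\!\int_{\mathbb{R}_0}\ln(1-\kappa^*_s\gamma_2(s))\tilde N^2(\mathrm{d}s,\mathrm{d}z)$, while (\ref{Pi_jump}) gives the stochastic part of $-\tfrac12\ln\Pi^*_J(0,t)$ as $\tfrac12\int_0^t\iota_s\,\mathrm{d}W^1_s - \tfrac12\int_0^t\!\int_{\mathbb{R}_0}\ln(1+\tilde\phi_4(s))\tilde N^2(\mathrm{d}s,\mathrm{d}z)$. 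Because $W^1$ and the jump martingale generate orthogonal components in the canonical decomposition of any ${\cal F}_t$-special semimartingale, the $W^1$- and $\tilde N^2$-integrands must coincide $\mathrm{d}t\otimes\mathrm{d}\mathbb{P}$-a.s., yielding
\[
\sigma_t\pi^*_t = \tfrac12\iota_t, \qquad \ln(1-\kappa^*_t\gamma_2(t)) = -\tfrac12\ln(1+\tilde\phi_4(t)).
\]
Performing the identical matching on $\ln\varepsilon^{v^*}_t$ via Theorem~\ref{novikov} and (\ref{half_equ2}) will give $\theta^*_1(t) = -\iota_t/2$ and $\ln(1+\theta^*_4(t)) = \tfrac12\ln(1+\tilde\phi_4(t))$.

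The first identity together with the third gives $\pi^*_t=\iota_t/(2\sigma_t)$ and $\theta^*_1(t)=-\iota_t/2$ at once. For $\kappa^*_t$, squaring the second produces $(1-\kappa^*_t\gamma_2(t))^2 = \bar\lambda\gamma_2(t)/(\lambda_t-a_t+\bar\lambda\gamma_2(t))$; the admissibility constraint $\kappa^*\gamma_2<1$ from Definition~\ref{admiss} selects the positive root, and rationalizing the denominator $1+\sqrt{\bar\lambda\gamma_2/(\lambda-a+\bar\lambda\gamma_2)}$ by its conjugate produces the stated closed form. The formula for $\theta^*_4(t)$ then follows from the half-characterization (\ref{halfequ_22}) of Corollary~\ref{mainth6}, which under the present hypotheses collapses to $(\lambda_t-a_t)(1-\kappa^*_t\gamma_2(t)) = \bar\lambda\gamma_2(t)[\theta^*_4(t)+\kappa^*_t\gamma_2(t)]$; this is readily seen to be consistent with $1+\theta^*_4 = \sqrt{(\lambda-a+\bar\lambda\gamma_2)/(\bar\lambda\gamma_2)}$ extracted from the matching. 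The hard part will be the coefficient-matching step itself: it rests on orthogonality of continuous and purely discontinuous martingale parts on $\{{\cal F}_t\}$, which holds because $W^1$ and $N^2$ are independent, so integrands in two representations of the same special semimartingale are uniquely determined up to $\mathrm{d}t\otimes\mathrm{d}\mathbb{P}$-null sets.
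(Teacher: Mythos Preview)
Your argument is correct and follows essentially the same route the paper indicates by ``similar procedure in Section~\ref{sec_exampleg_sub1}'': start from the explicit formula~(\ref{linearbsde_bdg_2_iitj}) for $X^{u^*}_t$ (and the corresponding one for $\varepsilon^{v^*}_t$), exploit independence of $\Pi^*_J(t,T)$ from ${\cal F}_t$ in the deterministic-coefficient setting, and read off the integrands. The only difference is tactical: the paper extracts $(z^*_1,z^*_4)$ from $X^{u^*}_t$ via Malliavin derivatives (as in the computations leading to~(\ref{conti_noinsider}) and~(\ref{iknow1j})--(\ref{iknow2j})) and then substitutes into~(\ref{u_jump}), whereas you pass to $\ln X^{u^*}_t$ and $\ln\varepsilon^{v^*}_t$ and invoke uniqueness of the canonical decomposition to match the $W^1$- and $\tilde N^2$-integrands directly. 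In the present deterministic-parameter, no-insider case these two computations are equivalent, and your version has the mild advantage of avoiding Malliavin calculus altogether; the algebraic reduction of $\kappa^*_t$ and the derivation of $\theta^*_4$ from~(\ref{halfequ_22}) then proceed identically.
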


 \begin{remark}
Different from the continuous case in Section \ref{sec_exampleg_sub1}, the value $V$ of Problem \ref{sdg} is hard to be calculated analytically by using the Girsanov theorem directly when jumps are considered.
 \end{remark} 
 
 When there is no model uncertainty, we can also obtain the following proposition.
     \begin{proposition}  \label{co2rex_mainth_idontknow_donskerj}
Assume that $U(x)=\ln x$,  $U(x)=\ln x$, $G^1(\mathrm{d}z)=0$, $G^2(\mathrm{d}z)=\bar\lambda\delta_1(\mathrm{d}z)$, $b=\theta_2=q_{12}=q_{22}=0$, $\gamma_2(t,z)=\gamma_2(t)$, $q_{24}(t,z)=q_{24}(t)$, $\mu(t,x)=\mu_0(t) $ for some c\'{a}gl\'{a}d function $\mu_0(t)$, $g(s,v)$ is given by (\ref{jump_pen}), and no model uncertainty is considered. Assume further that ${\cal H}_t={\cal F}_t$ and all parameter processes are deterministic functions. Suppose $u^*\in {\cal A}_1'$ is optimal for Problem \ref{sdg2fb} under the conditions of Theorem \ref{mainth4} (with ${\cal A}_2'=\{(0,0)\}$). Then $u^* $ is given by
\begin{equation}\label{dknowj2}\begin{aligned}
& \pi^*_t=\frac{\iota_t}{\sigma_t},
\\& \kappa^*_t=\frac{\lambda_t-a_t}{(\lambda_t-a_t)\gamma_2(t)+\bar\lambda\gamma_2^2(t)}.
\end{aligned}\end{equation}
 \end{proposition}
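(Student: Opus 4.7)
The plan is to mimic the derivation of Corollary \ref{corex_mainth_idontknow_donskerj}, but in the degenerate case where model uncertainty is absent, i.e., $v^* = (\theta_1^*, \theta_2^*, \theta_3^*, \theta_4^*) \equiv 0$, $\varepsilon^{v^*} \equiv 1$, and ${\cal Q}^{v^*} = \mathbb{P}$. Since the only optimization is over $u = (\pi,\kappa)$ and no adversary enters, the characterization of $u^*$ reduces to the ``upper half'' equations of Corollary \ref{mainth6}, which remain valid because the derivation of Theorem \ref{mainth1} only used the first-order condition in $u$. Concretely, under $U(x) = \ln x$, with ${\cal A}_2' = \{(0,0)\}$ and $v^* \equiv 0$, the maximum principle with respect to $\pi$ and $\kappa$ yields equations (\ref{halfequ_21}) and (\ref{halfequ_22}), and these are what I will specialize.

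First, I would insert the assumptions of the proposition into equation (\ref{halfequ_21}). Because ${\cal H}_t = {\cal F}_t$, Theorem \ref{mainth3} gives $\phi_1(t) \equiv 0$ and $G^1_{\cal H} \equiv G^1 = 0$; because $\theta_1^*\equiv 0$, $b\equiv 0$, $\mu(t,x) = \mu_0(t)$ (so $\partial_x\mu \equiv 0$), and $G^1 = 0$ (so every jump term on the risky-asset side disappears), equation (\ref{halfequ_21}) collapses to the pointwise relation
\begin{equation*}
0 = \mu_0(t) - r_t - \sigma_t^2 \pi^*_t,
\end{equation*}
which immediately yields $\pi^*_t = (\mu_0(t)-r_t)/\sigma_t^2 = \iota_t/\sigma_t$.

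Next, I would do the analogous reduction of (\ref{halfequ_22}). Every term containing $b_t$, $\phi_i$, $\theta_i^*$ or the difference $G^2_{\cal H} - G^2$ vanishes, and the jump integrals collapse through $G^2(\mathrm{d}z) = \bar\lambda\,\delta_1(\mathrm{d}z)$ with $\gamma_2(t,z)=\gamma_2(t)$. The surviving equation is
\begin{equation*}
0 = \lambda_t - a_t - \frac{\bar\lambda\,\kappa^*_t\gamma_2^2(t)}{1-\kappa^*_t\gamma_2(t)}.
\end{equation*}
Clearing the denominator and solving the linear equation in $\kappa^*_t$ gives exactly $\kappa^*_t = (\lambda_t-a_t)/[(\lambda_t-a_t)\gamma_2(t) + \bar\lambda\gamma_2^2(t)]$. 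The admissibility condition $\kappa^*_t\gamma_2(t)<1$ is automatic because $\lambda_t>a_t>0$ and $\gamma_2(t) > 0$.

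There is essentially no analytic obstacle here: everything is routine algebra once the right form of the half characterization is identified. The only subtle point worth spelling out is why equations (\ref{halfequ_21})--(\ref{halfequ_22}) are still the correct necessary conditions when ${\cal A}_2'$ is trivialized. This follows because the proof of Theorem \ref{mainth1} only used variations in $u$ around $u^*$ while holding $v^*$ fixed, so setting $v^* \equiv 0$ a priori does not affect the argument; the set ${\cal A}_1'$ is unchanged, and Assumptions \ref{assump1}--\ref{assump2} hold trivially in the deterministic-coefficient, no-insider setting. Once this is noted, the rest is the specialization sketched above, and the two formulae in (\ref{dknowj2}) follow.
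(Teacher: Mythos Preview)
Your proposal is correct and follows essentially the same route the paper implicitly uses: specialize the half-characterization equations (\ref{halfequ_21})--(\ref{halfequ_22}) of Corollary \ref{mainth6} with $v^*\equiv 0$, $\phi_i\equiv 0$, $G^i_{\cal H}=G^i$, and the remaining assumptions, then solve the resulting algebraic equations for $\pi^*_t$ and $\kappa^*_t$. Your remark that the derivation of Theorem \ref{mainth1} (and hence Corollary \ref{mainth6}) only uses variations in $u$, so it remains valid when ${\cal A}_2'=\{(0,0)\}$, is exactly the point that justifies applying those equations to Problem \ref{sdg2fb}.
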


\subsubsection{A particular case}
\label{subsubconti2}
Next, we give a particular case. Assume that $U(x)=\ln x $, the insider information filtration is given by 
\begin{equation}\label{donsker_filtration_j}
{\cal H}_t=\bigcap_{s>t}({\cal F}_s\vee Y_0):=\bigcap_{s>t}\left({\cal F}_s\vee \int_0^{T_0}\varphi({s'})\mathrm{d}W^1_{s'}\right),\quad 0\le t\le T,
\end{equation}
for some $T_0>T$, and all the parameter processes are assumed to be deterministic functions.  Here, $\varphi_t $ is some deterministic function satisfying $\Vert \varphi\Vert^2_{[s,t]}:=\int_s^{t}\varphi^2(s')    \mathrm{d}s'<\infty$ for all $0\le s\le t\le T_0$, and $\Vert \varphi\Vert^2_{[T,T_0]}>0$. 

By the Donsker $\delta$ functional $\delta_y(Y_0)$ and similar procedure in Section \ref{subsubconti1}, we have
\begin{equation}  
\begin{aligned}
&\mathbb{E}[\delta_y(Y_0) |{\cal F}_t]=\frac{1}{\sqrt{2\pi\Vert \varphi\Vert_{[t,T_0]}^2}}\exp\Bigg\{-\frac{(y-\int_0^t\varphi_s\mathrm{d} W^1_s)^{ 2}}{2\Vert\varphi \Vert_{[t,T_0]}^2}  \Bigg\},\quad 0\le t\le T,
 \end{aligned}\end{equation}
 and
 \begin{equation} \label{weknowphi1_j}
\begin{aligned}
&\phi_1(t)=\phi_1(t,y)|_{y=Y_0}=\frac{y-\int_0^t\varphi_s\mathrm{d} W^1_s}{\Vert \varphi\Vert^2_{[t,T_0]}}\varphi_t\Bigg{|}_{y=Y_0},
\\&  G^2_{\cal H}(t,\mathrm{d}z)=G^2(\mathrm{d}z)=\bar\lambda\delta_1(\mathrm{d}z).
  \end{aligned}\end{equation}

Substituting (\ref{weknowphi1_j}) into (\ref{Pi_jump}) and using the It\^{o} formula we can rewrite the expression of $\Pi_J^*(0,t)$ as follows
    \begin{equation} \label{redefinitionofPi_j}
  \begin{aligned}
\Pi^*_J(0,t)&=\Pi^*_J(0,t,y)|_{y=Y_0}\\
&= \exp\left\{-\int_0^t\phi_1(s,Y_0)\mathrm{d}W^1_s +\frac{1}{2}\int_0^t \phi_1^2(s,Y_0) \mathrm{d}y   \right\}\Pi^*_{J,a}(0,t) \\
&=\mathbb{E}[\delta_y(Y_0)|{\cal F}_0]\frac{\Pi^*_a(0,t)}{\mathbb{E}[\delta_y(Y_0)|{\cal F}_t]}\big|_{y=Y_0},
  \end{aligned}
 \end{equation}
where
    \begin{equation}  \label{pi_adef_j}
  \begin{aligned}
\Pi^*_{J,a}(0,t):= \exp\Bigg\{&  -\int_{0}^{t} r_s  \mathrm{d}s
  -\int_{0}^{t} \iota_s  \mathrm{d}W^1_s
   -\frac{1}{2}\int_{0}^{t}    \iota_s ^2 \mathrm{d}s+\int_0^t\int_{\mathbb{R}_0}\ln(1+\frac{\lambda_s-a_s}{\bar\lambda \gamma_2(s)}) \tilde N^2(\mathrm{d}s,\mathrm{d}z)\\&+\int_0^t\int_{\mathbb{R}_0} \left( \ln(1+\frac{\lambda_s-a_s}{\bar\lambda \gamma_2(s)})- \frac{\lambda_s-a_s}{\bar\lambda \gamma_2(s)} \right)G^2(\mathrm{d}z)\mathrm{d}s  \Bigg\}
  \end{aligned}
 \end{equation}
 is an ${\cal F}_t$-adapted process.
  
By similar procedure in Section \ref{subsubconti1}, the BSDE (\ref{final_gex_1fj}) is equivalent to the following classical linear BSDE with respect to the filtration $\{{\cal F}_t\}_{0\le t\le T}$
 \begin{eqnarray}\label{final_ex_1dons_j}
  \left\{ \begin{aligned}&\mathrm{d}X^{u^*}_t(y)=-\bar f_{\text{LJ}}(t, X^{u^*}_t(y),z^*_t(y))\mathrm{d}t+ z^*_1(t,y)\mathrm{d}W^1_t+\int_{\mathbb{R}_0}  z_4^*(t,y) \tilde N^2(\mathrm{d}t,\mathrm{d}z) , \quad 0\le t\le T,
  \\&X^{u^*}_T(y)=\tilde c^*_3(y)  \sqrt{\frac{ \mathbb{E}[\delta_y(Y_0)|{\cal F}_T]}{    \Pi^*_{J,a}(0,T)}},\end{aligned} \right.   
 \end{eqnarray}
where the generator $\bar f_{\text{LJ}}:[0,T] \times\mathbb{R}\times \mathbb{R}^2 \rightarrow \mathbb{R}$  is given by
  \begin{equation*} 
\begin{aligned}
&\bar f_{\text{LJ}}(t,x,z )= -r_tx-\iota_tz_1+\int_{\mathbb{R}_0} \frac{\lambda_t-a_t}{\bar\lambda\gamma_2(t)}z_4G^2(\mathrm{d}z),
\end{aligned} 
 \end{equation*}
 and $\tilde c^*_3(y):=\frac{X_0}{\mathbb{E}\big[\sqrt{\Pi^*_J(0,T)} | {\cal H}_0\big](y) \sqrt{\mathbb{E}[\delta_y(Y_0)|{\cal F}_0]}  }$.

By \cite[Theorem 4.8]{Oksendal19}, the unique strong solution of (\ref{final_ex_1dons_j}) is given by
  \begin{equation}\label{linearbsde_bdg_2_donskerpj}
 \begin{aligned}
 X^{u^*}_t(y)= \mathbb{E} \left[   \Pi_{J,a}^*(t,T) \tilde c^*_3(y)\sqrt{\frac{ \mathbb{E}[\delta_y(Y_0)|{\cal F}_T]}{     \Pi^*_{J,a}(0,T)}} \bigg | {\cal F}_t\right],
\end{aligned}
\end{equation}
where $\Pi_{J,a}^*[t,T]:=\Pi_{J,a}^*[0,T]/\Pi_{J,a}^*[0,t]$. By (\ref{linearbsde_bdg_2_donsker}) and the initial value condition $X^{u^*}_0(y)=X_0$, the Borel measurable function $\tilde c^*_3(y)$ is given by
 \begin{equation} \label{c3determined_donskerj}
 \begin{aligned}
\tilde c^*_3(y) =\frac{X_0}{\mathbb{E}     \sqrt{  \Pi_{J,a}^*(0,T) \mathbb{E}[\delta_y(Y_0)|{\cal F}_T]} }=\frac{X_0}{\mathbb{E}\big[\sqrt{\Pi^*_J(0,T)} | {\cal H}_0\big](y) \sqrt{\mathbb{E}[\delta_y(Y_0)|{\cal F}_0]}  }.
\end{aligned}
\end{equation}
Substituting (\ref{c3determined_donskerj}) into (\ref{linearbsde_bdg_2_donskerpj}) we obtain
   \begin{equation}\label{linearbsde_bdg_2_donsker_2j}
 \begin{aligned}
 X^{u^*}_t(y)= \frac{X_0\mathbb{E}\left[ \sqrt{\Pi^*_{J,a}(t,T)\mathbb{E}[\delta_y(Y_0)|{\cal F}_T]  } |{\cal F}_t   \right]}{  \mathbb{E}\sqrt{\Pi^*_{J,a}(0,T)\mathbb{E}[\delta_y(Y_0)|{\cal F}_T]}\sqrt{\Pi^*_{J,a}(0,t)}}.
\end{aligned}
\end{equation}
By \cite[Proposition 3.5.1]{Delong13} and Malliavin calculus, we have
   \begin{equation} \label{iknow1j}
 \begin{aligned}
z_1^*(t,y)&=\frac{1}{2}\frac{X_0\mathbb{E}\left[ \sqrt{ \Pi^*_{J,a}(t,T)\mathbb{E}[\delta_y(Y_0)|{\cal F}_T]   } \left(y-\int_0^T\varphi_s\mathrm{d}W^1_s\right)   \big|{\cal F}_t   \right]}{ \Vert \varphi\Vert^2_{[T,T_0]} \mathbb{E}\sqrt{\Pi^*_{J,a}(0,T)\mathbb{E}[\delta_y(Y_0)|{\cal F}_T]}\sqrt{\Pi^*_{J,a}(0,t)}}\varphi_t +\frac{1}{2} \frac{X_0\mathbb{E}\left[ \sqrt{\Pi^*_{J,a}(t,T)\mathbb{E}[\delta_y(Y_0)|{\cal F}_T]  } |{\cal F}_t   \right]}{  \mathbb{E}\sqrt{\Pi^*_{J,a}(0,T)\mathbb{E}[\delta_y(Y_0)|{\cal F}_T]}\sqrt{\Pi^*_{J,a}(0,t)}}\iota_t,
\end{aligned}
\end{equation}
and 
   \begin{equation} \label{iknow2j}
 \begin{aligned}
z_4^*(t,y)&= -X^{u^*}_t(y)\frac{\lambda_t-a_t}{(\lambda_t-a_t) +\bar\lambda \gamma_2(t)+\sqrt{\bar\lambda(\lambda_t-a_t)\gamma_2(t)+\bar\lambda^2\gamma^2_2(t)}}.
\end{aligned}
\end{equation}
Substituting (\ref{iknow1j}) and (\ref{iknow2j}) into (\ref{u_jump}) we have
  \begin{equation} \label{iknowuj}
 \begin{aligned}
\pi_t^*&=\frac{1}{2}\frac{ \mathbb{E}\left[ \sqrt{ \Pi^*_{J,a}(t,T)\mathbb{E}[\delta_y(Y_0)|{\cal F}_T]   } \left(y-\int_0^T\varphi_s\mathrm{d}W^1_s\right)   \big|{\cal F}_t   \right]}{\sigma_t  \Vert \varphi\Vert^2_{[T,T_0]} \mathbb{E}\left[ \sqrt{\Pi^*_{J,a}(t,T)\mathbb{E}[\delta_y(Y_0)|{\cal F}_T]  } |{\cal F}_t   \right]}\varphi_t\Bigg{|}_{y=Y_0}
+\frac{\iota_t}{2\sigma_t},
\\ \kappa^*_t&=\frac{\lambda_t-a_t}{(\lambda_t-a_t)\gamma_2(t)+\bar\lambda \gamma_2^2(t)+\sqrt{\bar\lambda(\lambda_t-a_t)\gamma^3_2(t)+\bar\lambda^2\gamma^4_2(t)}}.
\end{aligned}
\end{equation}

By Corollary \ref{mainth6}, we have
 \begin{equation}\label{bsde_barzt1_donskthetaj}
   \begin{aligned}
 &\theta_1^*(t)=-\frac{\iota_t}{2}+\frac{1}{2}\frac{ \mathbb{E}\left[ \sqrt{ \Pi^*_{J,a}(t,T)\mathbb{E}[\delta_y(Y_0)|{\cal F}_T]   } \left(y-\int_0^T\varphi_s\mathrm{d}W^1_s\right)   \big|{\cal F}_t   \right]}{ \Vert \varphi\Vert^2_{[T,T_0]}  \mathbb{E}\left[ \sqrt{\Pi^*_{J,a}(t,T)\mathbb{E}[\delta_y(Y_0)|{\cal F}_T]  } |{\cal F}_t   \right]}\varphi_t\Bigg{|}_{y=Y_0} -\frac{y-\int_0^t\varphi_s\mathrm{d}W^1_s}{\Vert\varphi \Vert_{[t,T_0]}^2}\varphi_t\Bigg{|}_{y=Y_0},
  \\&\theta_4^*(t)= \frac{\lambda_t-a_t}{\bar\lambda \gamma_2(t)}\left( 1-\kappa^*_t\gamma_2(t) \right)-\kappa^*_t\gamma_2(t). 
  \end{aligned} 
 \end{equation}

To sum up, we give the following theorem.

   \begin{theorem} \label{ex_mainth_idontknow_donskerj}
Assume that $U(x)=\ln x$,  $U(x)=\ln x$, $G^1(\mathrm{d}z)=0$, $G^2(\mathrm{d}z)=\bar\lambda\delta_1(\mathrm{d}z)$, $b=\theta_2=q_{12}=q_{22}=0$, $\gamma_2(t,z)=\gamma_2(t)$, $q_{24}(t,z)=q_{24}(t)$, $\mu(t,x)=\mu_0(t) $ for some c\'{a}gl\'{a}d function $\mu_0(t)$, and $g(s,v)$ is given by (\ref{jump_pen}). Assume further that $\{{\cal H}_t \}_{0\le t\le T}$ is given by (\ref{donsker_filtration_j}) and all parameter processes are deterministic functions. Suppose $(u^*,v^*)\in {\cal A}_1'\times{\cal A}_2'$ is optimal for Problem \ref{sdg} under the conditions of Theorem \ref{mainth4}. Then $(u^*,v^*)$ is given by (\ref{iknowuj}) and (\ref{bsde_barzt1_donskthetaj}), where $\Pi_{J,a}^*$ is given by (\ref{pi_adef_j}).
 \end{theorem}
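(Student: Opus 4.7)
The plan is to mimic the continuous-case argument of Theorem~\ref{ex_mainth_idontknow_donsker} (and its analogue in Section~\ref{subsubconti1}) while tracking the Poisson-jump term carefully, and to treat the anticipating BSDE~(\ref{final_gex_1fj}) by the Donsker $\delta$ functional technique so as to reduce it to the classical adapted BSDE~(\ref{final_ex_1dons_j}). First, from the hypotheses I would invoke Corollary~\ref{corofvaluej}, which already packages the logarithmic-utility case into a linear BSDE with terminal condition expressed through $\Pi^*_J(0,T)$. The remaining work is then (i) to express $\Pi^*_J$ in the present insider-information setup, (ii) to solve the reduced BSDE, and (iii) to read off $u^*$ and $v^*$ via (\ref{u_jump}) and Corollary~\ref{mainth6}.

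Second, I would use Lemma~\ref{lemmaofdonsker} together with the L\'evy theorem applied to the Gaussian random variable $Y_0=\int_0^{T_0}\varphi(s')\mathrm{d}W^1_{s'}$ to obtain the Donsker $\delta$ functional and its ${\cal F}_t$-conditional expectation, and then Lemma~\ref{lemmaofdonskersemi} to identify the information drift $\phi_1(t)$ as stated in (\ref{weknowphi1_j}). Since the insider information here is driven by $W^1$ only, the measure $G^2_{\mathcal H}$ coincides with $G^2$, so the jump part of $\Pi^*_J$ stays adapted to $\{{\cal F}_t\}$ and one obtains the factorisation (\ref{redefinitionofPi_j}) with $\Pi^*_{J,a}$ as in (\ref{pi_adef_j}). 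This is the key computational step enabling the Donsker reduction.

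Third, applying Definition~\ref{definitionofdonsker} to both sides of the anticipating BSDE~(\ref{final_gex_1fj}) and using that $\int_{\mathbb R}X^{u^*}_t(y)\delta_y(Y_0)\mathrm{d}y = X^{u^*}_t$ together with the ${\cal F}_t$-adaptedness of $X^{u^*}_t(y)$, $z^*_1(t,y)$, $z^*_4(t,y)$ for each fixed $y$, the forward integrals collapse to It\^o integrals and the BSDE decouples into the family~(\ref{final_ex_1dons_j}) indexed by $y$. I would then invoke \cite[Theorem~4.8]{Oksendal19} to obtain the strong solution formula~(\ref{linearbsde_bdg_2_donskerpj}), pin down $\tilde c^*_3(y)$ through the initial condition as in~(\ref{c3determined_donskerj}), and finally apply \cite[Proposition~3.5.1]{Delong13} (Malliavin-derivative representation of the control process) to compute $z^*_1(t,y)$ and $z^*_4(t,y)$. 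Evaluating at $y=Y_0$ and substituting into (\ref{u_jump}) yields the claimed formulae for $\pi^*$ and $\kappa^*$ in (\ref{iknowuj}); the $v^*$-formulae in (\ref{bsde_barzt1_donskthetaj}) then follow by plugging $\pi^*$, $\kappa^*$ and $\phi_1$ from (\ref{weknowphi1_j}) into equations (\ref{halfequ_21}) and (\ref{halfequ_22}) of Corollary~\ref{mainth6}.

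The main obstacle is the Malliavin-derivative computation giving $z^*_4(t,y)$: unlike the Brownian part $z^*_1$, the jump control is not obtained by differentiating through the conditional expectation with respect to $W^1$, and one must instead exploit the deterministic structure of $G^2_{\mathcal H}=G^2$, the finite-activity assumption $G^2=\bar\lambda\delta_1$, and the closed-form fixed-point function $\tilde I(y)=1/\sqrt{y}$ to get the algebraic identity~(\ref{iknow2j}); this in turn rests on the fact that $\Pi^*_J$ has a multiplicative pure-jump factor independent of $\varphi$, so that the jump size of $X^{u^*}$ can be solved explicitly from the jump of its terminal value. All other steps are parallel to the no-jump analysis of Theorem~\ref{ex_mainth_idontknow_donsker}.
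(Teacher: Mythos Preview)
Your proposal is correct and follows essentially the same route as the paper: start from Corollary~\ref{corofvaluej}, compute $\phi_1$ and $G^2_{\cal H}=G^2$ via Lemmas~\ref{lemmaofdonsker}--\ref{lemmaofdonskersemi}, factorise $\Pi^*_J$ through the ${\cal F}_t$-adapted process $\Pi^*_{J,a}$, reduce the anticipating BSDE to the classical family~(\ref{final_ex_1dons_j}) by the Donsker $\delta$ device (the paper simply writes ``by similar procedure in Section~\ref{subsubconti1}''), solve it using \cite[Theorem~4.8]{Oksendal19}, and extract $z^*_1,z^*_4$ via \cite[Proposition~3.5.1]{Delong13} before substituting into (\ref{u_jump}) and Corollary~\ref{mainth6}. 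Your identification of the jump control $z^*_4$ as the delicate step is apt; the paper obtains (\ref{iknow2j}) precisely through the multiplicative jump factor $(1+\tilde\phi_4(t))^{-1/2}$ of $X^{u^*}_T\propto \Pi^*_{J,a}(0,T)^{-1/2}$, which after simplification with $\tilde\phi_4=(\lambda_t-a_t)/(\bar\lambda\gamma_2(t))$ yields the stated algebraic form.
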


\begin{remark}
When the insider information is about the future value of the risk in the insurance market, i.e., ${\mathcal H}_t=\bigcap_{s>t}\left({\mathcal F}_s\wedge \eta^2_{T_0} \right)$, we have by \cite{Draouil16} that
\begin{equation*}\begin{aligned}
    &\mathbb{E}[\delta_y(\eta^2_{T_0})|{\mathcal{F}_t}]=\frac{1}{2\pi}\int_{\mathbb{R}}\exp\left\{ ix\eta^2_t+\bar\lambda(T_0-t)\left(e^{ix}-1-ix \right)-ixy  \right\}\mathrm{d}x,\\
    &\phi_1(t)=0 ,\\
    & G^2_{\mathcal{H}}(t,\mathrm{d}z)=G^2(\mathrm{d}z)+\frac{1}{T_0-t}\int_t^{T_0} \tilde N^2(\mathrm{d}r,\mathrm{d}z)  .
\end{aligned}\end{equation*} 
By similar procedure, we obtain the robust optimal strategy as follows
  \begin{equation}  
 \begin{aligned}
\pi_t^*=& \frac{\iota_t}{2\sigma_t},
\\ \kappa^*_t=& \frac{\lambda_t-a_t}{(\lambda_t-a_t)\gamma_2(t)+\bar\lambda \gamma_2^2(t)+\sqrt{\bar\lambda(\lambda_t-a_t)\gamma^3_2(t)+\bar\lambda^2\gamma^4_2(t)}}\\
&-\frac{\sqrt{\bar\lambda}\mathbb{E}\left[ \sqrt{\Pi^*_{J,a}(t,T)} Q(y)|{\mathcal{F}}_t \right] }{\sqrt{(\lambda_t-a_t)\gamma_2(t)+\bar \lambda\gamma_2^2(t)}  
\mathbb{E}\left[ \sqrt{\Pi^*_{J,a}(t,T)\mathbb{E}[ \delta_y(\eta^2_{T_0})  |{\mathcal{F}}_T]} |{\mathcal{F}}_t \right]}\Bigg{|}_{y=\eta^2_{T_0}},
\end{aligned}
\end{equation}
where $\Pi^*_{J,a}$ is given by (\ref{pi_adef_j}), and $Q(y)$ is given by
 \begin{equation*}  
 \begin{aligned}
Q(y) =&\sqrt{ \frac{1}{2\pi}\int_{\mathbb{R}}\exp\left\{ ix\eta^2_T+\bar\lambda(T_0-T)\left(e^{ix}-1-ix \right)-ixy  \right\}e^{ix}\mathrm{d}x  }\\
 &-
 \sqrt{ \frac{1}{2\pi}\int_{\mathbb{R}}\exp\left\{ ix\eta^2_T+\bar\lambda(T_0-T)\left(e^{ix}-1-ix \right)-ixy  \right\}\mathrm{d}x  }
 \end{aligned}
\end{equation*}
\end{remark}

Next, we concentrate on the special situation without model uncertainty, i.e., Problem \ref{sdg2fb}. Then the terminal value condition of $X^{u^*}_t$ in (\ref{final_gex_1_donsker2}) is replaced by
\begin{equation}
X^{u^*}_T=\frac{X_0}{\Pi_J^*(0,T)}=\frac{X_0 \mathbb{E}[\delta_y(Y_0)|{\cal F}_T]}{ \mathbb{E}[\delta_y(Y_0)|{\cal F}_0]\Pi^*_{J,a}(0,T)}\bigg{|}_{y=Y_0}.
\end{equation}
By similar procedure, we obtain the following proposition.

   \begin{proposition} \label{pex_mainth_idontknow_donskercj}
Assume that $U(x)=\ln x$,  $U(x)=\ln x$, $G^1(\mathrm{d}z)=0$, $G^2(\mathrm{d}z)=\bar\lambda\delta_1(\mathrm{d}z)$, $b=\theta_2=q_{12}=q_{22}=0$, $\gamma_2(t,z)=\gamma_2(t)$, $q_{24}(t,z)=q_{24}(t)$, $\mu(t,x)=\mu_0(t) $ for some c\'{a}gl\'{a}d function $\mu_0(t)$, $g(s,v)$ is given by (\ref{jump_pen}), and no model certainty is considered. Assume further that $\{{\cal H}_t \}_{0\le t\le T}$ is given by (\ref{donsker_filtration_j}) and all parameter processes are deterministic functions. Suppose $u^* \in {\cal A}_1' $ is optimal for Problem \ref{sdg2fb} under the conditions of Theorem \ref{mainth4}. Then $u^* $ is given by  
\begin{equation} \label{valuenoj}
   \begin{aligned}
&\pi^*_t=\frac{\iota_t}{\sigma_t}+\frac{Y_0-\int_0^t\varphi_s\mathrm{d}W^1_s}{\sigma_t\Vert\varphi\Vert^2_{[t,T_0]}}\varphi_t  ,\\
&\kappa^*_t  = \frac{\lambda_t-a_t}{(\lambda_t-a_t)\gamma_2(t)+\bar\lambda \gamma_2^2(t)}  . 
 \end{aligned} 
 \end{equation}
 \end{proposition}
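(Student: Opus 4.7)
The plan is to follow the template of Proposition \ref{pex_mainth_idontknow_donskerc}, specialized to the jump setting of Section \ref{sec_exampleg_sub2} and with model uncertainty switched off. With ${\cal A}_2' = \{(0,0,0,0)\}$, we have $\varepsilon^{v^*}_t \equiv 1$, so the adjoint identity $p_1^*(T) = U'(X_T^{u^*}) = 1/X_T^{u^*}$ together with the martingale condition $\mathbb{E}[\Pi_J^*(0,T) X_T^{u^*} \mid {\cal H}_0] = X_0$ force $X_T^{u^*} = X_0/\Pi_J^*(0,T)$. The insider filtration (\ref{donsker_filtration_j}) enlarges only by the Gaussian $Y_0 = \int_0^{T_0} \varphi(s') \mathrm{d}W^1_{s'}$, which is a functional of $W^1$ alone, so Lemma \ref{lemmaofdonskersemi} yields $\phi_1(t) = (Y_0 - \int_0^t \varphi_s \mathrm{d}W^1_s)\varphi_t/\Vert\varphi\Vert^2_{[t,T_0]}$ while the Poisson random measure $\tilde N^2$ is unaffected, so in particular $G_{\cal H}^2 = G^2 = \bar\lambda \delta_1$.

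I would then factor $\Pi_J^*(0,T)$ via the Donsker $\delta$ functional as in (\ref{redefinitionofPi_j}), parametrize by $y$, and obtain the terminal data $X_T^{u^*}(y) = X_0 \mathbb{E}[\delta_y(Y_0)|{\cal F}_T]/\{\mathbb{E}[\delta_y(Y_0)|{\cal F}_0]\, \Pi_{J,a}^*(0,T)\}$. By Definition \ref{definitionofdonsker}, the anticipating BSDE (\ref{final_gex_1_pj}) becomes, for each fixed $y$, the classical ${\cal F}_t$-BSDE (\ref{final_ex_1dons_j}) but with the square root removed from the terminal condition (since $\varepsilon^{v^*}\equiv 1$). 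The linearity of $\bar f_{\text{LJ}}$ with deterministic coefficients together with \cite[Theorem 4.8]{Oksendal19} then produces the unique strong solution $X_t^{u^*}(y) = X_0 \mathbb{E}[\delta_y(Y_0)|{\cal F}_t]/\{\mathbb{E}[\delta_y(Y_0)|{\cal F}_0]\,\Pi_{J,a}^*(0,t)\}$, which at $y = Y_0$ reduces consistently to $X_0/\Pi_J^*(0,t)$ (so the initial condition $X_0$ is automatic).

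To read off the strategy, I would compute $z_1^*(t,y) = D_t^1 X_t^{u^*}(y)$ by the quotient rule, using $D_t^1 \mathbb{E}[\delta_y(Y_0)|{\cal F}_t] = \mathbb{E}[\delta_y(Y_0)|{\cal F}_t](y - \int_0^t \varphi_s \mathrm{d}W^1_s)\varphi_t/\Vert\varphi\Vert^2_{[t,T_0]}$ and $D_t^1 \Pi_{J,a}^*(0,t) = -\iota_t \Pi_{J,a}^*(0,t)$, which yields $z_1^*(t,y) = X_t^{u^*}(y)[\iota_t + \phi_1(t,y)]$; evaluation at $y = Y_0$ and (\ref{u_jump}) deliver the stated formula for $\pi_t^*$. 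For $\kappa^*$, since the insider information does not enter the jump direction ($G_{\cal H}^2 = G^2$), the first-order condition from the Hamiltonian system (\ref{half_equ3_0}) on $\kappa$ --- or equivalently the pointwise maximization of the jump contribution $(\lambda_t-a_t)\kappa + \bar\lambda[\ln(1-\kappa\gamma_2(t))+\kappa\gamma_2(t)]$ to the drift of $\ln X^u$ --- coincides with the no-insider case treated in Proposition \ref{co2rex_mainth_idontknow_donskerj}, giving $\kappa_t^* = (\lambda_t - a_t)/\{(\lambda_t - a_t)\gamma_2(t) + \bar\lambda \gamma_2^2(t)\}$.

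The main obstacle will be justifying the uniform-in-$y$ manipulations of the Donsker $\delta$ functional: one must verify that the parametric solution $X_t^{u^*}(y)$ has enough integrability in $y$ for the evaluation $X_t^{u^*}(y)|_{y = Y_0}$ to be well-defined in $({\cal S})^{-1}$, and that the Malliavin/Hida exchanges used to identify $z_1^*$ through the Clark-Ocone-type representation are valid --- exactly the subtleties addressed in the proof of Proposition \ref{pex_mainth_idontknow_donskerc} and ultimately controlled by the boundedness of the deterministic coefficients $r_t$, $\iota_t$, $\lambda_t-a_t$ and $\gamma_2(t)$ on $[0,T]$ together with the positivity of $\Vert\varphi\Vert^2_{[T,T_0]}$.
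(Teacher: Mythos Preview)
Your proposal is correct and follows essentially the same route as the paper: the paper simply states that the terminal condition becomes $X^{u^*}_T = X_0/\Pi_J^*(0,T) = X_0\,\mathbb{E}[\delta_y(Y_0)|{\cal F}_T]/\{\mathbb{E}[\delta_y(Y_0)|{\cal F}_0]\,\Pi_{J,a}^*(0,T)\}|_{y=Y_0}$ and then invokes ``similar procedure'' (i.e., the Donsker $\delta$ parametrization, the classical linear ${\cal F}_t$-BSDE, and the Malliavin identification of $z_1^*$ together with the pointwise first-order condition for $\kappa^*$), which is exactly what you carry out. Your observation that $G_{\cal H}^2 = G^2$ because $Y_0$ depends only on $W^1$ is the key structural point that makes $\kappa^*$ coincide with the no-insider formula of Proposition~\ref{co2rex_mainth_idontknow_donskerj}.
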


\begin{remark}
When the insider information is about the future value of the risk in the insurance market, i.e., ${\mathcal H}_t=\bigcap_{s>t}\left({\mathcal F}_s\wedge  \eta^2_{T_0} \right)$, the optimal strategy can be calculated by
\begin{equation}  
   \begin{aligned}
&\pi^*_t=\frac{\iota_t}{\sigma_t}  ,\\
&\kappa^*_t  = \frac{\lambda_t-a_t}{(\lambda_t-a_t)\gamma_2(t)+\bar\lambda \gamma_2^2(t)} - \frac{ \eta^2_{T_0}-\eta^2_{t}}{(T_0-t)\left[(\lambda_t-a_t)+\bar\lambda \gamma_2(t)\right]} . 
 \end{aligned} 
 \end{equation}
 \end{remark}

\section{The large insurer case: combined method}
\label{sec_example1}
Now we consider the special case when the utility function of the insurer is characterized by the logarithmic utility, i.e., $U(x)=\ln(x)$, and the insurer is `large', which means that the mean rate of return $\mu$ on the risky asset could be influenced by her investment strategy $\pi$. 
 
In this situation, the degree of the influence on $\mu$ could not change the robust optimal insurance strategy $\kappa^*$ if the correlation coefficient $\rho=0$ intuitively. In fact, this can be verified by the equation (\ref{reduceth6_2_2}) below when the insurer is not ambiguity averse. Thus we will always suppose $b\neq 0$ when the insurer is `large'. For simplicity, we only consider the continuous case in this section, that is, $G^1(\mathrm{d}z)=G^2(\mathrm{d}z)=0$.  
 
  Just as in Section \ref{sec_exampleg_sub1}, we assume that the mean rate of return $\mu(t,x)=\mu_0(t)+\varrho_tx$ for some ${\cal G}^1_t$-adapted c\`{a}gl\`{a}d processes $\mu_0(t)$ and $\varrho_t$ with $0\le \varrho_t<\frac{1}{2}\sigma_t^2$, and $b\ge \epsilon>0$ for some positive constant $\epsilon$. Put $\iota_t=\frac{\mu_0(t)-r_t}{\sigma_t}$, $\tilde\sigma_t=\sigma_t-\frac{2\varrho_t}{\sigma_t}$, $\tilde \phi_1(t)=\iota_t+\phi_1(t)$, and $\tilde \phi_2(t)=\frac{\lambda_t-a_t+\rho b_t\iota_t}{\sqrt{1-\rho^2}b_t}-\phi_2(t)$. Assume further the penalty function $g$ is given by $g(s,v)=g(v)=\frac{1}{2}(\theta_1^2+\theta_2^2)$. Then we have
 \begin{equation}\label{utilityofe}
 \mathbb{E}\left[  \int_0^T\varepsilon_s^{v^*}  g(v^*_s)\mathrm{d}s \right]
= \mathbb{E}\left[\varepsilon^{v^*}_T  \ln \varepsilon^{v^*}_T  \right].
 \end{equation}
 
In this continuous setting, the equations (\ref{halfequ_21}) and (\ref{halfequ_22}) in Corollary \ref{mainth6} can be reduced to
 \begin{equation}\label{reduceth6} 
 \begin{aligned}
 &\theta_1^*(t)=\tilde \sigma_t\pi^*_t-\rho b_t\kappa^*_t-\tilde\phi_1(t) ,
  \\&\theta_2^*(t)=-\sqrt{1-\rho^2}b_t\kappa^*_t+\rho\frac{\sigma_t-\tilde\sigma_t}{\sqrt{1-\rho^2}}\pi^*_t+\tilde\phi_2(t). 
  \end{aligned}
 \end{equation}
 
By a similar procedure in Section \ref{sec_exampleg_sub1} with respect to the Hamiltonian system (\ref{half_equ3}), we have (see (\ref{equeg1_42fb}))
 \begin{equation}  \label{equeg1_42} 
  \begin{aligned}
\ln\big( \varepsilon^{v^*}_TX_T^{u^*}\big)=c^*_2,
 \end{aligned}
 \end{equation} 
where $c^*_2=p^*_2(0)$ is an ${\cal H}_0$-measurable random variable.

The It\^{o} formula for It\^{o} integrals combined with the expressions of $\varepsilon_t^{v^*}$ and $X^{u^*}_t$ (see Theorem \ref{novikov}) yields the following SDE:
     \begin{equation}\label{bsde_lt} 
 \begin{aligned}
\mathrm{d}\ln\big(\varepsilon^{v^*}_tX^{u^*}_t\big)=&\Bigg[r_t+(\mu_0(t)-r_t)\pi_t^*+(\lambda_t-a_t)\kappa_t ^*+( \sigma_t\pi_t^*-\rho b_t\kappa_t^*)  \phi_1(t)-\sqrt{1-\rho^2}b_t\kappa_t^* \phi_2(t)  \Bigg]\mathrm{d}t \\
&-\frac{1}{2}\Bigg[  (\sigma_t\pi^*_t)^2 -2\rho\sigma_tb_t\pi^*_t\kappa_t^*+(b_t\kappa^*_t)^2+\theta^*_1(t)^2+ \theta^*_2(t)^2-\sigma_t(\sigma_t-\tilde\sigma_t)(   \pi^*_t)^2 \Bigg]\mathrm{d}t\\
&+\left(\sigma_t\pi^*_t-\rho b_t\kappa^*_t+\theta_1^*(t)\right)\mathrm{d}W^1_{\cal H}(t)-\left(\sqrt{1-\rho^2}b_t\kappa^*_t-\theta_2^*(t)\right)\mathrm{d}W^2_{\cal H}(t).
\end{aligned} 
 \end{equation}
Put $L^*_t=\ln\big(\varepsilon^{v^*}_tX^{u^*}_t\big)$ and $ z^*_t=( z_1^*(t), z^*_2(t))\\=\left(\sigma_t\pi^*_t-\rho b_t\kappa^*_t+\theta_1^*(t), -\sqrt{1-\rho^2}b_t\kappa^*_t+\theta_2^*(t) \right)$. Suppose that $\sigma_t+\tilde\sigma_t-2\rho^2\sigma_t\neq0$. Then by (\ref{reduceth6}) we have
   \begin{equation}\label{bsde_zt1}  
   \begin{aligned}
 &\pi_t^*=\frac{1-\rho^2}{\sigma_t+\tilde\sigma_t-2\rho^2\sigma_t}\left( z_1^*(t)+\tilde\phi_1(t)\right)-\frac{\rho\sqrt{1-\rho^2}}{\sigma_t+\tilde\sigma_t-2\rho^2\sigma_t}\left( z^*_2(t)-\tilde\phi_2(t)\right) ,
  \\&\kappa_t^*=-\frac{\sqrt{1-\rho^2}(\sigma_t+\tilde\sigma_t)}{2(\sigma_t+\tilde\sigma_t-2\rho^2\sigma_t)b_t}\left( z^*_2(t)-\tilde\phi_2(t)\right)+\frac{\rho(\sigma_t-\tilde\sigma_t)}{2(\sigma_t+\tilde\sigma_t-2\rho^2\sigma_t)b_t}\left( z^*_1(t)+\tilde\phi_1(t)\right),   
\end{aligned} 
 \end{equation}
and
   \begin{equation}\label{bsde_zt2} 
\begin{aligned}
     &\theta^*_1(t)=\frac{2\tilde\sigma_t-\rho^2\sigma_t-\rho^2\tilde\sigma_t}{2(\sigma_t+\tilde\sigma_t-2\rho^2\sigma_t)} z_1^*(t) -\frac{2\sigma_t-3\rho^2\sigma_t+\rho^2\tilde\sigma_t}{2(\sigma_t+\tilde\sigma_t-2\rho^2\sigma_t)}\tilde\phi_1(t) +\frac{\rho\sqrt{1-\rho^2}(\sigma_t-\tilde\sigma_t)}{2(\sigma_t+\tilde\sigma_t-2\rho^2\sigma_t)}\left( z_2^*(t)-  \tilde\phi_2(t) \right) ,\\
     \\
  &\theta_2^*(t)=\frac{\sigma_t+\tilde\sigma_t-3\rho^2\sigma_t+\rho^2\tilde\sigma_t}{2(\sigma_t+\tilde\sigma_t-2\rho^2\sigma_t)} z^*_2(t)+\frac{(1-\rho^2)(\sigma_t+\tilde\sigma_t)}{2(\sigma_t+\tilde\sigma_t-2\rho^2\sigma_t)}\tilde\phi_2(t)+\frac{\rho\sqrt{1-\rho^2}(\sigma_t-\tilde\sigma_t)}{2(\sigma_t+\tilde\sigma_t-2\rho^2\sigma_t)}\left( z^*_1(t)+\tilde\phi_1(t)\right). 
\end{aligned} 
 \end{equation}
The SDE (\ref{bsde_lt}) combined with (\ref{bsde_zt1}), (\ref{bsde_zt2}) and (\ref{equeg1_42}) leads to the following quadratic BSDE  
 \begin{eqnarray}\label{final_ex_1}
  \left\{ \begin{aligned}&\mathrm{d}L^*_t=-f_{\text{Q}}(t, z^*_t,\omega)\mathrm{d}t+ z^*_t\mathrm{d}W_{\cal H}(t), \quad 0\le t\le T,
  \\&L^*_T=c^*_2,\end{aligned} \right.   
 \end{eqnarray}
where the generator $f_{\text{Q}}:[0,T]\times \mathbb{R}^2\times\Omega\rightarrow \mathbb{R}$  is given by
  \begin{equation*} 
\begin{aligned}
f_{\text{Q}}(t,z,\omega)=&  \frac{z_1^2+z_2^2}{4}  -\frac{\tilde\phi_1(t)}{2}z_1 + \frac{\tilde\phi_2(t)}{2}z_2     -r_t
-  \frac{\tilde\phi_1(t)^2+\tilde\phi_2(t)^2}{4}  \\
  &    -\frac{ \sigma_t-\tilde\sigma_t}{4(1-\rho^2)(\sigma_t+\tilde\sigma_t-2\rho^2\sigma_t)} 
  \left[  (1-\rho^2)\left(z_1 +\tilde\phi_1(t)\right)- \rho\sqrt{1-\rho^2} \left(z_2-\tilde\phi_2(t)\right)    \right]^2 .
\end{aligned} 
 \end{equation*}
 
 By (\ref{utilityofe}) and (\ref{equeg1_42}), the value $V$ can be calculated by
\begin{equation}\label{quavalue}
\begin{aligned}
V=\mathbb{E}\left[\varepsilon^{v^*}_T\ln\left(\varepsilon^{v^*}_T X^{u^*}_T  \right)  \right]=\mathbb{E} c^*_2=\mathbb{E}L^*_T.
\end{aligned}\end{equation}

If the filtration $\{{\cal H}_t\}_{0\le t\le T}$ is the augmentation of the natural filtration of $W^1_{\cal H}(t)$ and $W^2_{\cal H}(t)$, then $c^*_2$ is a constant. Suppose that $c^*_2$, $\tilde\phi_1$, $\tilde\phi_2$, $r$, $\sigma$, $\tilde\sigma$ and $1/|\sigma_t+\tilde\sigma_t-2\rho^2\sigma_t|$ are bounded. Then, by \cite[Theorems 4.1]{Fujii18}, the quadratic BSDE (\ref{final_ex_1}) has a unique strong solution $(L^*, z^*_1, z^*_2)$, i.e., $L^*_t$ is a bounded continuous ${\cal H}_t$-adapted process, $ z^*_i(t)$ is an ${\cal H}_t$-progressively measurable process with $\mathbb{E}\int_0^T | z^*_i(t)|^2\mathrm{d}t<\infty$ and $\int_0^t z^*_i(s)\mathrm{d}W^i_{\cal H}(s)$ is an ${\cal H}_t$-${\cal BMO}$-martingale (see Definition \ref{h1andbmo}), $i=1,2$, and $(L^*, z^*_1, z^*_2)$ satisfies the BSDE (\ref{final_ex_1}). 

\begin{remark}
Under mild conditions on the Malliavin derivative, we can obtain the formulae for $ z^*_1(t) $ and $z^*_2(t))$ as follows (see \cite[Corollary 5.1]{Fujii18})
 \begin{eqnarray}\label{final_ex_bmoofz}
  \begin{aligned}&z^*_1(t)=D^1_t L_t^*,
  \\&z^*_2(t)= D^2_t L_t^*.
  \end{aligned}     
 \end{eqnarray}
\end{remark}

To sum up, we give the following theorem.

   \begin{theorem} \label{ex_mainth1}
Assume that $U(x)=\ln x$, $G^1(\mathrm{d}z)=G^2(\mathrm{d}z)=0$, $\mu(t,x)=\mu_0(t)+\varrho_tx$ for some ${\cal G}^1_t$-adapted c\`{a}gl\`{a}d processes $\mu_0(t)$ and $\varrho_t$ with $0\le \varrho_t< \frac{1}{2}\sigma_t^2$, $b\ge \epsilon>0$ for some positive constant $\epsilon$, and $g(s,v)=\frac{1}{2} (\theta_1^2+\theta_2^2)$. Suppose $(u^*,v^*)\in {\cal A}_1'\times{\cal A}_2'$ is optimal for Problem \ref{sdg} under the conditions of Theorem \ref{mainth4} and $\sigma_t+\tilde\sigma_t-2\rho^2\sigma_t\neq 0$. Then $(u^*,v^*)$ and $V$ are given by (\ref{bsde_zt1}), (\ref{bsde_zt2}) and (\ref{quavalue}), where $(L^*, z^*_1, z^*_2)$ solves the quadratic BSDE (\ref{final_ex_1}), and the ${\cal H}_0$-measurable random variable $c^*_2$ can be determined by the initial value condition $L^*_0=\ln X_0$\footnote{In fact, integrating (\ref{final_ex_1}) from $t$ to $T$ yields $L^*_T-L^*_t=-\int_t^Tf_{\text{Q}}(s, z^*_s,\omega)\mathrm{d}s+\int_t^T z^*_s\mathrm{d}W_{\cal H}(s)$. Taking conditional expectation and assuming the It\^{o} integrals are $L^2$-martingales, we get $L^*_t=\mathbb{E}\left[  \int_t^Tf_{\text{Q}}(s, z^*_s,\omega)\mathrm{d}s+L^*_T \big{|}{\cal H}_t\right]$. Taking $t=0$ and using the initial value condition we have $c^*_2=\ln X_0-\mathbb{E}\left[  \int_0^Tf_{\text{Q}}(s, z^*_s,\omega)\mathrm{d}s \big{|}{\cal H}_0\right]$.}. Furthermore, if $\{{\cal H}_t\}_{0\le t\le T}$ is the augmentation of the natural filtration of $W^1_{\cal H}(t)$ and $W^2_{\cal H}(t)$, and $c^*_2$, $\tilde\phi_1$, $\tilde\phi_2$, $r$, $\sigma$, $\tilde\sigma$ and $1/|\sigma_t+\tilde\sigma_t-2\rho^2\sigma_t|$ are bounded, then the quadratic BSDE (\ref{final_ex_1}) has a unique strong solution, where $c^*_2$ can be determined by traversing all constants such that the condition $L^*_0=\ln X_0$ holds.
 \end{theorem}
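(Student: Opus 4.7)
The plan is to exploit the combined method advertised in the paper: use equations (\ref{halfequ_21})--(\ref{halfequ_22}) from Corollary \ref{mainth6} to obtain $\theta_1^*,\theta_2^*$ as linear functions of $\pi^*,\kappa^*$, then use the Hamiltonian system of Theorem \ref{mainth4} to obtain an identity relating the terminal values of $\varepsilon^{v^*}$ and $X^{u^*}$, and finally apply the It\^o formula to $L_t^*:=\ln(\varepsilon_t^{v^*}X_t^{u^*})$ to reduce Problem \ref{sdg} to a quadratic BSDE. First, substituting $U(x)=\ln x$ into equations (\ref{halfequ_21})--(\ref{halfequ_22}) (with $G^i=0$ and $\mu(t,x)=\mu_0(t)+\varrho_t x$) and simplifying with the definitions of $\iota_t,\tilde\sigma_t,\tilde\phi_i(t)$ yields (\ref{reduceth6}) after some arithmetic. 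Next, under $U(x)=\ln x$ the identity $F_{u^*,v^*}=\varepsilon^{v^*}$ holds so $Q_{u^*,v^*}={\cal Q}^{v^*}$, and the standard argument in Section \ref{sec_exampleg_sub1} (equations (\ref{equeg1_39})--(\ref{equeg1_42fb}) applied with the Hamiltonian system (\ref{half_equ3})) produces $\ln(\varepsilon_T^{v^*}X_T^{u^*})=c_2^*$ with $c_2^*=p_2^*(0)$.

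The main bookkeeping step is to rewrite everything in the new state variables. I would apply the It\^o formula (Theorem \ref{itoforito}) to $L_t^*$ using the SDE (\ref{wealthsde2}) for $X^{u^*}$ and (\ref{half_equ2}) for $\varepsilon^{v^*}$, obtaining (\ref{bsde_lt}). Defining
\begin{equation*}
z_1^*(t)=\sigma_t\pi^*_t-\rho b_t\kappa^*_t+\theta_1^*(t),\qquad z_2^*(t)=-\sqrt{1-\rho^2}\,b_t\kappa^*_t+\theta_2^*(t),
\end{equation*}
turns $L_t^*$ into a semimartingale whose martingale part is $z^*_t\,\mathrm{d}W_{\cal H}(t)$. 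The algebraic task is then to solve the $4\times4$ linear system comprising the two definitions above together with (\ref{reduceth6}) for the four unknowns $\pi^*,\kappa^*,\theta_1^*,\theta_2^*$ in terms of $z^*_1,z^*_2,\tilde\phi_1,\tilde\phi_2$; the invertibility hypothesis $\sigma_t+\tilde\sigma_t-2\rho^2\sigma_t\neq0$ is precisely the nonvanishing of the determinant of this system, and Cramer's rule produces the formulas (\ref{bsde_zt1})--(\ref{bsde_zt2}). Substituting these back into the $\mathrm{d}t$-drift in (\ref{bsde_lt}) gives, after simplification, the generator $f_{\mathrm{Q}}$ in (\ref{final_ex_1}); using (\ref{equeg1_42}) as terminal condition completes the derivation of the BSDE.

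The hardest step is the existence-uniqueness claim, which requires invoking quadratic BSDE theory. When $\{{\cal H}_t\}$ is the augmentation of the filtration generated by $W_{\cal H}^1,W_{\cal H}^2$, the ${\cal H}_0$-random variable $c_2^*$ reduces to a constant and the martingale representation applies. Under the boundedness hypotheses on $c_2^*,\tilde\phi_1,\tilde\phi_2,r,\sigma,\tilde\sigma$ and $1/|\sigma_t+\tilde\sigma_t-2\rho^2\sigma_t|$, the generator $f_{\mathrm{Q}}$ has at most quadratic growth in $z$ with bounded coefficients, so Theorem~4.1 of \cite{Fujii18} yields a unique bounded strong solution $(L^*,z_1^*,z_2^*)$ with the stochastic integrals being ${\cal H}_t$-${\cal BMO}$-martingales; this in turn justifies all the integrability conditions implicit in Assumptions \ref{assump1}--\ref{assump4} and the applicability of Theorem \ref{mainth4}.

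Finally, the value formula follows from the already-established identity (\ref{equeg1_42}): since $\mathbb{E}[\varepsilon_T^{v^*} g(v^*_s)\,\mathrm{d}s]$ equals $\mathbb{E}[\varepsilon_T^{v^*}\ln\varepsilon_T^{v^*}]$ by (\ref{utilityofe}), one gets
\begin{equation*}
V=\mathbb{E}\left[\varepsilon_T^{v^*}\bigl(\ln\varepsilon_T^{v^*}+\ln X_T^{u^*}\bigr)\right]=\mathbb{E}\left[\varepsilon_T^{v^*}c_2^*\right]=\mathbb{E}[c_2^*]=\mathbb{E}L_T^*,
\end{equation*}
which is (\ref{quavalue}) (the penultimate equality uses $\mathbb{E}[\varepsilon_T^{v^*}|{\cal H}_0]=1$ by the martingale property). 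The constant $c_2^*$ is pinned down by matching $L_0^*=\ln(\varepsilon_0^{v^*}X_0)=\ln X_0$, giving a one-parameter shooting problem for $c_2^*$ in the BSDE, which can be solved numerically by traversing constants as stated.
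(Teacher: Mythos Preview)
Your proposal is correct and follows essentially the same route as the paper: reduce (\ref{halfequ_21})--(\ref{halfequ_22}) to (\ref{reduceth6}), invoke the adjoint-equation argument of Section~\ref{sec_exampleg_sub1} to get (\ref{equeg1_42}), apply It\^o to $L_t^*=\ln(\varepsilon_t^{v^*}X_t^{u^*})$, invert the linear system to obtain (\ref{bsde_zt1})--(\ref{bsde_zt2}), and cite \cite{Fujii18} for the quadratic BSDE. One small caveat: your remark that the ${\cal BMO}$ property ``justifies all the integrability conditions implicit in Assumptions~\ref{assump1}--\ref{assump4}'' is not something the paper establishes---those assumptions are simply hypothesized---so that sentence should be dropped.
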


\begin{remark}
If the filtration $\{{\cal H}_t\}_{0\le t\le T}$ in Theorem \ref{ex_mainth1} is not  the augmentation of the natural filtration of $W^1_{\cal H}(t)$ and $W^2_{\cal H}(t)$, or the coefficients of the generator $f_{\text{Q}}$ is not necessarily bounded, we refer to \cite{Draouil15,Eyraoud05,Li06,Lu13,Wang07} for further results. Meanwhile, the ${\cal H}_0$-measurable random variable $c^*_2$ can be determined by traversing all ${\cal H}_0$-measurable random variable such that the condition $L^*_0=\ln X_0$ holds. Moreover, if ${\cal H}_0$ is generated by a random variable $Y$ and all $\mathbb{P}$-negligible sets, then $c^*_2=c^*_2(Y)$ with some Borel measurable function $c^*_2(y)$. Thus, $c^*_2$ can be determined by traversing all Borel measurable functions $c^*_2(y)$ such that the initial value condition $L^*_0=\ln X_0$ holds. 
\end{remark}

In particular, we assume that the insider information is related to the future value of risk in the insurance market and is of the initial  enlargement type, i.e.,
\begin{equation}\label{filtp}
{\cal H}_t=\bigcap_{s>t}({\cal F}_s\vee Y_0):=\bigcap_{s>t}\left( {\cal F}_s\vee \int_0^{T_0}\varphi(s')\mathrm{d}\bar W_{s'}  \right),\quad 0\le t\le T,
\end{equation}
for some $T_0>T$, and all parameter processes are deterministic functions. Here, $\varphi_t $ is some deterministic function satisfying $\Vert \varphi\Vert^2_{[s,t]}:=\int_s^{t}\varphi^2(s')    \mathrm{d}s'<\infty$ for all $0\le s\le t\le T_0$, and $\Vert \varphi\Vert^2_{[T,T_0]}>0$. 

By the Donsker $\delta$ functional $\delta_y(Y_0)$ and similar procedure in Section \ref{subsubconti1}, we have
\begin{equation} \label{dons_phi12}
\begin{aligned}
&\phi_1(t)=\phi_1(t,y)|_{y=Y_0}=\frac{y-\int_0^t\varphi_s\mathrm{d}\bar W_s}{\Vert \varphi\Vert^2_{[t,T_0]}}\rho \varphi_t\Bigg{|}_{y=Y_0},
\\&  \phi_2(t)=\phi_2(t,y)|_{y=Y_0}=\frac{y-\int_0^t\varphi_s\mathrm{d}\bar W_s}{\Vert \varphi\Vert^2_{[t,T_0]}}\sqrt{1-\rho^2}\varphi_t \Bigg{|}_{y=Y_0}.
  \end{aligned}\end{equation}
Thus the BSDE (\ref{final_ex_1}) is equivalent to the following classical quadratic BSDE with respect to the filtration $\{{\cal F}_t\}_{0\le t\le T}$
 \begin{eqnarray}\label{final_ex_1dons}
  \left\{ \begin{aligned}&\mathrm{d}L^*_t(y)=-\bar f_{\text{Q}}(t, z^*_t(y),y,\omega)\mathrm{d}t+ z^*_t(y)\mathrm{d}W_t, \quad 0\le t\le T,
  \\&L^*_T(y)=c^*_2(y),\end{aligned} \right.   
 \end{eqnarray}
where the generator $\bar f_{\text{Q}}:[0,T]\times \mathbb{R}^2\times\mathbb{R}\times\Omega\rightarrow \mathbb{R}$  is given by
  \begin{equation*} 
\begin{aligned}
&\bar f_{\text{Q}}(t,z,y,\omega)=  \frac{z_1^2+z_2^2}{4}  -\frac{\iota_t-\phi_1(t,y)}{2}z_1 +  \frac{c_t+\phi_2(t,y)}{2}  z_2     -r_t
-  \frac{(\iota_t+\phi_1(t,y))^2+(c_t-\phi_2(t,y))^2}{4}  \\
  &    -\frac{(\sigma_t+\tilde\sigma_t-2\rho^2\sigma_t)(\sigma_t-\tilde\sigma_t)}{4(1-\rho^2)} 
  \Bigg[   \frac{1-\rho^2}{\sigma_t+\tilde\sigma_t-2\rho^2\sigma_t}\left(z_1 +\iota_t+\phi_1(t,y)\right)-\frac{\rho\sqrt{1-\rho^2}}{\sigma_t+\tilde\sigma_t-2\rho^2\sigma_t}\left(z_2-c_t+\phi_2(t,y)\right)    \Bigg]^2, 
\end{aligned} 
 \end{equation*}
and $c_t=\frac{\lambda_t-a_t+\rho b_t\iota_t}{\sqrt{1-\rho^2}b_t}$.

Moreover, the value can be calculated by
\begin{equation}\label{quavalue2}
V=\mathbb{E}(L^*_T(y)|_{y=Y_0}).
\end{equation}

Thus we have the following result.

   \begin{theorem} 
Assume that $U(x)=\ln x$, $G^1(\mathrm{d}z)=G^2(\mathrm{d}z)=0$, $\mu(t,x)=\mu_0(t)+\varrho_tx$ for some c\`{a}gl\`{a}d functions $\mu_0(t)$ and $\varrho_t$ with $0\le \varrho_t< \frac{1}{2}\sigma_t^2$, $b\ge \epsilon>0$ for some positive constant $\epsilon$, and $g(s,v)=\frac{1}{2} (\theta_1^2+\theta_2^2)$. Assume further that $\{{\cal H}_t\}_{0\le t\le T}$ is given by (\ref{filtp}) and all parameter processes are deterministic functions. Suppose $(u^*,v^*)\in {\cal A}_1'\times{\cal A}_2'$ is optimal for Problem \ref{sdg} under the conditions of Theorem \ref{mainth4} and $\sigma_t+\tilde\sigma_t-2\rho^2\sigma_t\neq 0$. Then $(u^*,v^*)$ and $V$ are given by (\ref{bsde_zt1}), (\ref{bsde_zt2}) and (\ref{quavalue2}), where $(L^*(y), z^*_1(y), z^*_2(y))$ solves the classical quadratic BSDE (\ref{final_ex_1dons}), $\phi_1(t,y)$ and $\phi_2(t,y)$ are given by (\ref{dons_phi12}), and the ${\cal H}_0$-measurable random variable $c^*_2$ can be determined by traversing all Borel measurable functions $c^*_2(y)$ such that $L^*_0=\ln X_0$.
 \end{theorem}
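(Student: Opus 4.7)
The plan is to specialize Theorem \ref{ex_mainth1} to the initial-enlargement filtration (\ref{filtp}). By Theorem \ref{ex_mainth1}, the optimal pair $(u^*,v^*)$ is already expressed through $z^*_t$ via (\ref{bsde_zt1})--(\ref{bsde_zt2}) and the $\mathcal{H}_t$-quadratic BSDE (\ref{final_ex_1}); hence only two things remain to be checked: (i) the explicit form (\ref{dons_phi12}) of the semimartingale drifts $\phi_1,\phi_2$ under the filtration (\ref{filtp}), and (ii) the reduction of the $\mathcal{H}_t$-BSDE (\ref{final_ex_1}) to the parametrized family of classical $\mathcal{F}_t$-BSDEs (\ref{final_ex_1dons}) via the Donsker $\delta$ technique.

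For (i), since $Y_0=\int_0^{T_0}\varphi_s\,\mathrm{d}\bar W_s$ is centered Gaussian with variance $\Vert\varphi\Vert^2_{[0,T_0]}>0$, Lemma \ref{lemmaofdonsker} supplies its Donsker $\delta$ functional. A routine Gaussian-density calculation (identical in spirit to (\ref{defofg}) in Section \ref{subsubconti1}) gives
\[
\mathbb{E}[\delta_y(Y_0)|\mathcal{F}_t]=\frac{1}{\sqrt{2\pi\Vert\varphi\Vert^2_{[t,T_0]}}}\exp\!\left\{-\frac{(y-\int_0^t\varphi_s\,\mathrm{d}\bar W_s)^2}{2\Vert\varphi\Vert^2_{[t,T_0]}}\right\}.
\]
Applying Lemma \ref{lemmaofdonskersemi} and the decomposition $\bar W=\rho W^1+\sqrt{1-\rho^2}W^2$, one computes the Hida--Malliavin derivatives $D^i_t\mathbb{E}[\delta_y(Y_0)|\mathcal{F}_t]$ and divides by $\mathbb{E}[\delta_y(Y_0)|\mathcal{F}_t]$ to obtain (\ref{dons_phi12}); this is the same calculation already carried out in Section \ref{subsubconti1}, reused verbatim.

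For (ii), since $\mathcal{H}_0$ is the $\mathbb{P}$-augmentation of $\sigma(Y_0)$, every $\mathcal{H}_t$-adapted process admits the representation $x_t=x(t,Y_0,\omega)$ with $x(t,y,\cdot)$ being $\mathcal{F}_t$-adapted for each $y$. Write $L^*_t=L^*_t(Y_0)$, $z^*_i(t)=z^*_i(t,Y_0)$, $c^*_2=c^*_2(Y_0)$. Substituting (\ref{dons_phi12}) into $f_{\mathrm{Q}}$ produces the generator $\bar f_{\mathrm{Q}}(t,z,y,\omega)$; applying Definition \ref{definitionofdonsker} and Proposition \ref{fgen1} (which lets us replace forward integrals against $W^i_\mathcal{H}$ by Itô integrals against $W^i$ plus the $\phi_i(t,y)$-drift absorbed into the generator) rewrites (\ref{final_ex_1}) as
\[
\int_\mathbb{R}\!\Bigl[L^*_t(y)-c^*_2(y)+\!\int_t^T\!\bar f_{\mathrm{Q}}(s,z^*_s(y),y,\omega)\,\mathrm{d}s-\!\int_t^T\!z^*_s(y)\,\mathrm{d}W_s\Bigr]\delta_y(Y_0)\,\mathrm{d}y=0.
\]
This reduction parallels the passage from (\ref{final_gex_1f}) to (\ref{final_gex_1_donsker2}) in Section \ref{subsubconti1}, and shows that (\ref{final_ex_1}) holds if and only if, for each $y$, the triple $(L^*(y),z^*_1(y),z^*_2(y))$ satisfies the classical $\mathcal{F}_t$-quadratic BSDE (\ref{final_ex_1dons}). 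Formula (\ref{quavalue2}) is then immediate from (\ref{quavalue}) since $L^*_T=c^*_2(Y_0)$. The $\mathcal{H}_0$-measurable random variable $c^*_2$ corresponds to a Borel function $c^*_2(y)$, pinned down by imposing the initial condition $L^*_0(y)=\ln X_0$ for almost every $y$ in the support of $Y_0$.

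The main obstacle is the rigorous justification of step (ii): one must verify that the $\delta_y(Y_0)\,\mathrm{d}y$ integration commutes with the BSDE dynamics and that the resulting parametrized family (\ref{final_ex_1dons}) is well-posed for (almost) every $y$. The first point rests on Definition \ref{definitionofdonsker} together with the $\mathcal{F}_t$-adaptedness of $z^*_i(\cdot,y)$, which converts the forward integral into an Itô integral via Proposition \ref{fgen1}; the second follows from the existence and uniqueness result for quadratic BSDEs (\cite{Fujii18}) cited in Theorem \ref{ex_mainth1}, now applied pointwise in $y$ under the boundedness assumptions that the coefficients $\iota_t$, $c_t$, $\sigma_t$, $\tilde\sigma_t$ and $1/|\sigma_t+\tilde\sigma_t-2\rho^2\sigma_t|$ inherit from being deterministic functions of $t\in[0,T]$. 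Everything else is a direct specialization of Theorem \ref{ex_mainth1}.
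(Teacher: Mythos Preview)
Your proposal is correct and follows essentially the same approach as the paper: the paper's argument for this theorem is the discussion immediately preceding it, which (just as you do) invokes Lemmas \ref{lemmaofdonsker}--\ref{lemmaofdonskersemi} to obtain (\ref{dons_phi12}) and then reduces the $\mathcal{H}_t$-quadratic BSDE (\ref{final_ex_1}) to the parametrized $\mathcal{F}_t$-BSDE (\ref{final_ex_1dons}) via the Donsker $\delta$ technique, explicitly pointing back to the ``similar procedure in Section \ref{subsubconti1}'' that you also cite. One small caveat: in your well-posedness remark, deterministic parameter functions on $[0,T]$ need not be bounded, and $\bar f_{\mathrm Q}$ still carries the path-dependent terms $\phi_i(t,y)$ through $\int_0^t\varphi_s\,\mathrm{d}\bar W_s$, so the appeal to \cite{Fujii18} requires the same additional boundedness hypotheses as in Theorem \ref{ex_mainth1} rather than following automatically---but the paper does not claim existence/uniqueness in this theorem either, so this does not affect the match.
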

 
 \begin{remark}
We can also give a classical quadratic BSDE like (\ref{final_ex_1dons}) to characterize the optimal pair $(u^*,v^*)$ when the insurer has insider information about the future value of the risky asset, i.e., ${\cal H}_t=\bigcap_{s>t}\left({\cal F}_s\vee \int_0^{T_0} \varphi(s')\mathrm{d}W^1_{s'}\right)$. In this case, we have $\phi_1(t)=\phi_1(t,y)|_{y=Y_0}=\frac{y-\int_0^t\varphi_s\mathrm{d} W^1_s}{\Vert \varphi\Vert^2_{[t,T_0]}}  \varphi_t\Big{|}_{y=Y_0}$ and $\phi_2(t)=\phi_2(t,y)|_{y=Y_0}=0$.
 \end{remark}

Since the quadratic BSDE (\ref{final_ex_1}) or (\ref{final_ex_1dons}) has no closed-form solution and can only be solved by numerical methods, we concentrate on the special situation without model uncertainty, that is, ${\cal A}_2'=\{(0,0)\}$. Then Problem \ref{sdg} degenerates to the following anticipating stochastic control problem.

\begin{problem}\label{sdg2}
 Select $u^*\in {\cal A}_1' $ such that
\begin{equation} \label{valueofsdg2}
\tilde V:=\tilde J(u^*)= \sup_{u\in {\cal A}_1'}   \tilde J(u),
\end{equation}  
where $\tilde J(u):=\mathbb{E}\left[  \ln X_T^u \right]$. We call $\tilde V$ the value (or the optimal expected utility) of Problem \ref{sdg2}.
\end{problem}
 
 Since ${\cal A}_2'=\{(0,0)\}$, which implies that $\theta^*_1=\theta^*_2=0$ in (\ref{reduceth6}). Thus, the optimal strategy $u^*\in{\cal A}'_1$ is given by
 \begin{equation}\label{reduceth6_2} 
 \begin{aligned}
 &\pi^*_t=\frac{(1-\rho^2)\tilde \phi_1(t)+\rho\sqrt{1-\rho^2}\tilde\phi_2(t)}{ (\tilde\sigma_t/  \sigma_t-\rho^2)\sigma_t},
  \\&\kappa_t^*=\frac{\sqrt{1-\rho^2} \tilde\phi_2(t)+\rho(\sigma_t/ \tilde\sigma_t-1)\tilde\phi_1(t)}{(1-\rho^2\sigma_t/ \tilde\sigma_t)b_t},
  \end{aligned}
 \end{equation}
 v.
 
Substitute (\ref{reduceth6_2}) into (\ref{valueofsdg2}). Then by (\ref{halfequ_21})-(\ref{halfequ_22}) and tedious calculation we have
  \begin{equation}\label{value3} 
  \begin{aligned}
   \tilde V=&\ln X_0+\mathbb{E}\int_0^Tr_t\mathrm{d}t+\frac{1}{2}\mathbb{E}\int_0^T\frac{1}{1-\rho^2\sigma_t/ \tilde\sigma_t}\Big[ \left(  \big(1-2\rho^2\big)\sigma_t/ \tilde\sigma_t+ \rho^2\right) \tilde\phi_1(t)^2\\
  & +2\rho\sqrt{1-\rho^2}( \sigma_t/ \tilde\sigma_t-1) \left[\tilde \phi_1(t)\tilde\phi_2(t)\right] + (1-\rho^2)  \tilde\phi_2(t)^2 \Big]\mathrm{d}t.
    \end{aligned}
 \end{equation}
 
 Assume further that the insider filtration $\{{\cal H}_t\}_{0\le t\le T}$ is given by
  \begin{equation} \label{filtrationex_conti}
 \begin{aligned}
 {\cal F}_t\subset {\cal H}_t\subset   \bigcap_{s>t}\left({\cal F}_s\vee  W^1_{T_0}\vee \bar W_{T_0} \right)=:\bar{\cal H}_t, \quad 0\le t\le T,  
   \end{aligned}
 \end{equation}
 for some $T_0>T$.
Then the enlargement of filtration technique can be applied to give the concrete expression of $\tilde \phi_i$ in (\ref{reduceth6_2}), $i=1,2$. We give the following lemma to characterize the decomposition of the ${\cal H}_t$-semimartingale $W^i$ in Theorem \ref{mainth3}, $i=1,2$. The proof can be found in \cite[page 327]{DiNunno09}.

\begin{lemma}
[Enlargement of filtration] 
\label{lemofenlarge1}
The process $W^i_t$, $t\in[0,T]$, is a semimartingale with respect to the filtration $\{{\cal H}_t\}_{0\le t\le T}$ given by (\ref{filtrationex_conti}), $i=1,2$. Its semimartingale decomposition is 
  \begin{equation}  
 \begin{aligned}
W ^i_t=W_{\cal H}^i(t)+\int_0^t\frac{\mathbb{E}\big[W^i_{T_0}|  {\cal H}_s\big]-W^i_s}{T_0-s}\mathrm{d}s,\quad 0\le t\le T,  
   \end{aligned}
 \end{equation}
\end{lemma}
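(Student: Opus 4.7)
The plan is to reduce the claim to a Gaussian-bridge computation performed in the maximal enlargement $\bar{\cal H}_t := \bigcap_{s>t}({\cal F}_s \vee \sigma(W^1_{T_0}, \bar W_{T_0}))$ and then transfer the result down to the intermediate filtration $\{{\cal H}_t\}$ via the tower property. First I observe that conditioning on $(W^1_{T_0}, \bar W_{T_0})$ is equivalent to conditioning on $(W^1_{T_0}, W^2_{T_0})$, since $\bar W = \rho W^1 + \sqrt{1-\rho^2}\, W^2$ with $|\rho| < 1$. For $s \le t \le T_0$, the triple $(W^i_t - W^i_s,\, W^1_{T_0} - W^1_s,\, W^2_{T_0} - W^2_s)$ is Gaussian and independent of ${\cal F}_s$, with $\mathrm{Cov}(W^i_t - W^i_s,\, W^j_{T_0} - W^j_s) = \delta_{ij}(t-s)$ and $\mathrm{Var}(W^j_{T_0} - W^j_s) = T_0 - s$. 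A standard Gaussian regression then yields
\begin{equation*}
\mathbb{E}[W^i_t \mid \bar{\cal H}_s] = W^i_s + \frac{t-s}{T_0-s}(W^i_{T_0} - W^i_s), \qquad s \le t \le T.
\end{equation*}

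Applying the tower property with ${\cal H}_s \subset \bar{\cal H}_s$ together with the ${\cal H}_s$-measurability of $W^i_s$, I obtain the key interpolation formula
\begin{equation*}
\mathbb{E}[W^i_t \mid {\cal H}_s] = W^i_s + \frac{t-s}{T_0-s}\bigl(\mathbb{E}[W^i_{T_0} \mid {\cal H}_s] - W^i_s\bigr).
\end{equation*}
Next I set $M^i_t := W^i_t - \int_0^t \frac{\mathbb{E}[W^i_{T_0} \mid {\cal H}_u] - W^i_u}{T_0 - u}\mathrm{d}u$ and verify that $M^i$ is an ${\cal H}_t$-martingale. For $s \le t \le T$, Fubini and the tower property give
\begin{equation*}
\mathbb{E}\biggl[\int_s^t \frac{\mathbb{E}[W^i_{T_0} \mid {\cal H}_u] - W^i_u}{T_0 - u}\mathrm{d}u \,\bigg|\, {\cal H}_s\biggr] = \int_s^t \frac{\mathbb{E}[W^i_{T_0} \mid {\cal H}_s] - \mathbb{E}[W^i_u \mid {\cal H}_s]}{T_0 - u}\mathrm{d}u.
\end{equation*}
Substituting the interpolation formula for $\mathbb{E}[W^i_u \mid {\cal H}_s]$ and invoking the elementary identity $\int_s^t \frac{T_0 - u}{(T_0 - s)(T_0 - u)}\mathrm{d}u = \frac{t-s}{T_0 - s}$, the right-hand side reduces to $\frac{t-s}{T_0-s}\bigl(\mathbb{E}[W^i_{T_0} \mid {\cal H}_s] - W^i_s\bigr)$, which exactly cancels the martingale increment predicted by the interpolation formula. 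Hence $\mathbb{E}[M^i_t - M^i_s \mid {\cal H}_s] = 0$.

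Finally, integrability of the drift is clear: since $T < T_0$, the factor $(T_0 - s)^{-1}$ is bounded on $[0,T]$, while $\mathbb{E}[W^i_{T_0} \mid {\cal H}_s] - W^i_s$ is $L^2$-bounded. Because $M^i$ is continuous and $\langle M^i\rangle_t = t$ (the absolutely continuous drift contributes nothing to the quadratic variation), Lévy's characterization identifies $M^i$ with an ${\cal H}_t$-Brownian motion $W^i_{\cal H}$. The main obstacle I anticipate is bookkeeping with the right-continuous augmentation $\bar{\cal H}_s = \bigcap_{r > s}({\cal F}_r \vee \sigma(W^1_{T_0}, \bar W_{T_0}))$ and checking that the equivalence of the two initial enlargements, by $(W^1_{T_0}, \bar W_{T_0})$ versus $(W^1_{T_0}, W^2_{T_0})$, survives this augmentation together with $\mathbb{P}$-negligible sets; the Gaussian regression and the algebraic bridge identity are otherwise standard.
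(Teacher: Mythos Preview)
Your proof is correct and follows the standard Gaussian-bridge argument for initial enlargements. The paper itself does not give a proof but simply cites \cite[page 327]{DiNunno09}; the argument there is essentially the one you have written out (compute the conditional expectation in the maximal enlargement via Gaussian regression, pull it down to the intermediate filtration by the tower property, verify the compensated process is a martingale, and identify it as a Brownian motion by L\'evy's characterization).
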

where $W_{\cal H}^i(t)$ is an ${\cal H}_t$-Brownian motion, $i=1,2$.

Combined with Lemma \ref{lemofenlarge1}, the ${\cal H}_t$-progressively measurable process $\phi^i_t$ in Theorem \ref{mainth3} is of the form
   \begin{equation}  
 \begin{aligned}
 \phi^i_t=\frac{\mathbb{E}\big[W^i_{T_0}|{\cal H}_t\big]-W^i_t}{T_0-t},\quad 0\le t\le T,
   \end{aligned}
 \end{equation}
 $i=1,2$. Thus, we can give a concrete characterization of the optimal strategy $u^*$ as follows
  \begin{equation}\label{reduceth6_2_2} 
 \begin{aligned}
 &\pi^*_t=\frac{(1-\rho^2)  \left( \iota_t+\frac{\mathbb{E}\big[W^1_{T_0}|{\cal H}_t\big]-W^1_t}{T_0-t}\right)+\rho\sqrt{1-\rho^2}\left( \frac{\lambda_t-a_t+\rho b_t\iota_t}{\sqrt{1-\rho^2}b_t}  -\frac{\mathbb{E}\big[W^2_{T_0}|{\cal H}_t\big]-W^2_t}{T_0-t}    \right)}{ (\tilde\sigma_t/  \sigma_t-\rho^2)\sigma_t},
  \\&\kappa_t^*=\frac{\sqrt{1-\rho^2} \left( \frac{\lambda_t-a_t+\rho b_t\iota_t}{\sqrt{1-\rho^2}b_t}  -\frac{\mathbb{E}\big[W^2_{T_0}|{\cal H}_t\big]-W^2_t}{T_0-t}  \right)+\rho(\sigma_t/ \tilde\sigma_t-1)  \left( \iota_t+\frac{\mathbb{E}\big[W^1_{T_0}|{\cal H}_t\big]-W^1_t}{T_0-t}\right)}{(1-\rho^2\sigma_t/ \tilde\sigma_t)b_t}. 
  \end{aligned}
 \end{equation}
 
 To sum up, we give the following theorem for this special case.
 
    \begin{theorem} \label{ex_mainth1_special}
Assume that $U(x)=\ln x$, $G^1(\mathrm{d}z)=G^2(\mathrm{d}z)=0$, $\mu(t,x)=\mu_0(t)+\varrho_tx$ for some ${\cal G}^1_t$-adapted c\`{a}gl\`{a}d processes $\mu_0(t)$ and $\varrho_t$ with $0\le \varrho_t< \frac{1}{2}\sigma_t^2$, $b\ge \epsilon>0$ for some positive constant $\epsilon$, and no model uncertainty is considered. Suppose $u^* \in {\cal A}_1' $ is optimal for Problem \ref{sdg2} under the conditions of Theorem \ref{mainth2} and $\tilde\sigma_t\neq \rho^2\sigma_t$. Then $u^* $ is given by (\ref{reduceth6_2}), and $\tilde V$ is given by (\ref{value3} ). Furthermore, if the filtration $\{{\cal H}_t\}_{0\le t\le T}$ is of the form (\ref{filtrationex_conti}), then $u^*$ is given by (\ref{reduceth6_2_2}).
 \end{theorem}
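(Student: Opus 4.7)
The plan is to reduce Problem~\ref{sdg2} to the first-half characterization already established and then exploit the simple structure that arises when no model uncertainty is present. Since ${\cal A}_2'=\{(0,0)\}$ is forced by the absence of model uncertainty, the candidate pair is $(u^*,v^*)$ with $v^*=(0,0,0,0)$, so that $\varepsilon_t^{v^*}\equiv 1$ and ${\cal Q}^{v^*}=\mathbb P$. Consequently $F_{u^*,v^*}\equiv 1$ and the predictable covariations $\langle (F_{u^*,v^*})^{-1}, m_i^{u^*}\rangle^{Q_{u^*,v^*}}$ in Theorem~\ref{mainth3} vanish identically, so the conditions of Theorem~\ref{mainth2} place no extra constraint beyond the decompositions $W^i_t=W^i_{\cal H}(t)+\int_0^t\phi_i(s)\,\mathrm ds$ of Theorem~\ref{mainth3}.

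Next I would invoke Corollary~\ref{mainth6} with $U(x)=\ln x$ and $\theta^*_1=\theta^*_2=\theta^*_3=\theta^*_4=0$. In the pure continuous setting ($G^1=G^2=0$) the Hamilton-type equations (\ref{halfequ_21})-(\ref{halfequ_22}) collapse to exactly the linear system (\ref{reduceth6}) with the $\theta^*$-terms dropped, namely
\begin{equation*}
\tilde\sigma_t\pi^*_t-\rho b_t\kappa^*_t=\tilde\phi_1(t),\qquad
\rho\,\frac{\sigma_t-\tilde\sigma_t}{\sqrt{1-\rho^2}}\,\pi^*_t-\sqrt{1-\rho^2}\,b_t\kappa^*_t=-\tilde\phi_2(t).
\end{equation*}
A direct computation shows that the determinant of this $2\times 2$ system equals $-b_t(\tilde\sigma_t-\rho^2\sigma_t)/\sqrt{1-\rho^2}$, which is nonzero precisely under the assumption $\tilde\sigma_t\neq\rho^2\sigma_t$, and Cramer's rule delivers formula (\ref{reduceth6_2}) for $(\pi^*,\kappa^*)$.

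To obtain the value $\tilde V$, I would substitute (\ref{reduceth6_2}) into the explicit expression (\ref{equofxt}) for $\ln X_T^{u^*}$, replace every forward integral $\int_0^T(\cdot)\,\mathrm d^{-}W^i$ by $\int_0^T(\cdot)\,\mathrm dW^i_{\cal H}+\int_0^T(\cdot)\phi_i(s)\,\mathrm ds$ via Theorem~\ref{mainth3} and Proposition~\ref{fgen1}, and then take expectation. Under the integrability assumptions built into ${\cal A}_1'$, the resulting $({\cal H}_t)$-Itô stochastic integrals are martingales and vanish in expectation, leaving only a deterministic-looking drift integral. Using the relations $\tilde\phi_1=\iota+\phi_1$, $\tilde\phi_2=(\lambda-a+\rho b\iota)/(\sqrt{1-\rho^2}b)-\phi_2$, $\tilde\sigma=\sigma-2\varrho/\sigma$, and the two linear constraints above to eliminate $\mu_0(t)-r_t$, $(\lambda_t-a_t)$ and the cross-product $\sigma_sb_s\pi^*_s\kappa^*_s$, the quadratic expression in $(\pi^*,\kappa^*)$ simplifies to the quadratic form in $(\tilde\phi_1,\tilde\phi_2)$ displayed in (\ref{value3}). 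Finally, when the filtration is of the form (\ref{filtrationex_conti}), Lemma~\ref{lemofenlarge1} identifies $\phi_i(t)=(\mathbb E[W^i_{T_0}\mid {\cal H}_t]-W^i_t)/(T_0-t)$; plugging this into (\ref{reduceth6_2}) yields (\ref{reduceth6_2_2}).

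The main obstacle will be the bookkeeping in the value-function computation: showing that, after the cancellations induced by the two first-order conditions, the large drift $\frac12(\sigma\pi^*-\rho b\kappa^*)^2+\frac12(1-\rho^2)b^2(\kappa^*)^2-\rho\sigma b\pi^*\kappa^*$ combines with the cross-terms $(\sigma\pi^*-\rho b\kappa^*)\phi_1-\sqrt{1-\rho^2}b\kappa^*\phi_2$ and the linear-in-$(\pi^*,\kappa^*)$ terms coming from $(\mu_0-r)\pi^*+(\lambda-a)\kappa^*$ into the exact quadratic form in $(\tilde\phi_1,\tilde\phi_2)$ reported in (\ref{value3}). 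This is a tedious but mechanical algebraic reduction, the key observation being that, with the constraints above, $(\mu_0(t)-r_t)\pi^*_t+(\lambda_t-a_t)\kappa^*_t$ can be rewritten as a quadratic form in $(\tilde\phi_1,\tilde\phi_2)$ plus terms that cancel against the Itô correction $\tfrac12\sigma^2(\pi^*)^2-\rho\sigma b\pi^*\kappa^*+\tfrac12 b^2(\kappa^*)^2$, modulo a harmless $\sigma(\sigma-\tilde\sigma)(\pi^*)^2/2$ shift absorbed by the $\varrho_t$-term.
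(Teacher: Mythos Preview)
Your proposal is correct and follows essentially the same route as the paper: set $\theta^*_1=\theta^*_2=0$ in the reduced equations (\ref{reduceth6}) coming from Corollary~\ref{mainth6}, solve the resulting $2\times 2$ linear system (whose nondegeneracy is precisely $\tilde\sigma_t\neq\rho^2\sigma_t$) to obtain (\ref{reduceth6_2}), substitute back into $\mathbb E[\ln X^{u^*}_T]$ and simplify to reach (\ref{value3}), and finally apply Lemma~\ref{lemofenlarge1} for the filtration (\ref{filtrationex_conti}). Your added remark that $F_{u^*,v^*}\equiv 1$ makes the predictable covariation terms in Theorem~\ref{mainth3} trivially vanish is a helpful clarification that the paper leaves implicit.
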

 
\begin{remark}
Theorem \ref{ex_mainth1_special} covers the result of the continuous case in \cite{Peng16}. However, in Theorem \ref{ex_mainth1_special}, the coefficients of the model (\ref{fin-m})-(\ref{ins-m}) in Section \ref{Sec:model} are all anticipating processes influenced by uncertain economic environment (i.e., the classical SDEs in \cite{Peng16} are replaced by the anticipating SDEs (\ref{fin-m})-(\ref{ins-m})), the mean rate of return $\mu$ on the risky asset is influenced by the strategy $\pi$ of the large insurer, and the filtration $\{{\cal H}_t\}_{0\le t\le T}$ is more general than $\{\bar{\cal H}_t\}_{0\le t\le T}$ introduced in \cite{Peng16}. These make Theorem \ref{ex_mainth1_special} more general than \cite{Peng16}.
\end{remark}

Suppose all parameter processes are deterministic functions. When ${\cal H}_t={\cal F}_t$ (i.e., the insurer has no insider information) and ${\cal H}_t=\bigcap_{s>t}\left( {\cal F}_s\vee \bar W_{T_0}\right)$ (i.e., the insurer owns the insider information $\bar W_{T_0}$ about the future risk in the insurance market), we have the following results.

   \begin{corollary}  \label{endcor1}
Assume that $U(x)=\ln x$, $G^1(\mathrm{d}z)=G^2(\mathrm{d}z)=0$, $\mu(t,x)=\mu_0(t)+\varrho_tx$ for some deterministic c\`{a}gl\`{a}d functions $\mu_0(t)$ and $\varrho_t$ with $0\le \varrho_t<\frac{1}{2}\sigma_t^2$, $b\ge \epsilon>0$ for some positive constant $\epsilon$, and no model uncertainty is considered. Assume further that ${\cal H}_t=  {\cal F}_t$ and all parameter processes are deterministic functions. Suppose $u^* \in {\cal A}_1' $ is optimal for Problem \ref{sdg2} under the conditions of Theorem \ref{mainth2} and $\tilde\sigma_t\neq \rho^2\sigma_t$. Then $u^*$ and $\tilde V$ are given by 
  \begin{equation} 
 \begin{aligned}
 &\pi^*_t=    \frac{(1-\rho^2)\iota_t}{ (\tilde\sigma_t/  \sigma_t-\rho^2)\sigma_t}+\frac{\rho(\lambda_t-a_t+\rho b_t\iota_t)}{ (\tilde\sigma_t/  \sigma_t-\rho^2)\sigma_tb_t},
  \\&\kappa_t^*=\frac{\lambda_t-a_t+\rho b_t\iota_t}{(1-\rho^2\sigma_t/ \tilde\sigma_t)b_t^2}+\frac{\rho(\sigma_t/ \tilde\sigma_t-1)\iota_t}{(1-\rho^2\sigma_t/ \tilde\sigma_t)b_t},
  \\&\tilde V=\ln X_0+\int_0^Tr_t\mathrm{d}t+ \frac{1}{2}\int_0^T\frac{1}{1-\rho^2\sigma_t/ \tilde\sigma_t}\Big[ \left(  \big(1-2\rho^2\big)\sigma_t/ \tilde\sigma_t+ \rho^2 \right)\iota_t^2\\
 &\quad\quad +2\rho( \sigma_t/ \tilde\sigma_t-1)\iota_t\frac{\lambda_t-a_t+\rho b_t\iota_t}{ b_t} +  \frac{(\lambda_t-a_t+\rho b_t\iota_t)^2}{ b_t^2}\Big]\mathrm{d}t.
  \end{aligned}
 \end{equation}
 \end{corollary}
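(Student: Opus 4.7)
The plan is to derive Corollary \ref{endcor1} as a direct specialization of Theorem \ref{ex_mainth1_special} to the case ${\cal H}_t={\cal F}_t$ with deterministic coefficients. First I would verify that the hypotheses of Theorem \ref{ex_mainth1_special} are inherited: the structural assumptions on $U$, $G^i$, $\mu$, $b$ and the absence of model uncertainty carry over verbatim, and $\tilde\sigma_t\neq\rho^2\sigma_t$ is assumed as in the corollary, so the optimal strategy formula (\ref{reduceth6_2}) and value formula (\ref{value3}) are available.

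The key observation is that when ${\cal H}_t={\cal F}_t$, the semimartingale decomposition (\ref{semi_decomp}) of $W^i$ with respect to $\{{\cal H}_t\}$ is trivial, since $W^i$ is already an ${\cal F}_t$-Brownian motion. Hence $W^i_{\cal H}(t)=W^i_t$ and the drift correction vanishes, giving $\phi_1(t)=\phi_2(t)=0$ for $i=1,2$. Substituting this into the definitions given just above Assumption \ref{extraa1} yields $\tilde\phi_1(t)=\iota_t$ and $\tilde\phi_2(t)=(\lambda_t-a_t+\rho b_t\iota_t)/(\sqrt{1-\rho^2}\,b_t)$.

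Next I would substitute these reduced forms of $\tilde\phi_1,\tilde\phi_2$ into formula (\ref{reduceth6_2}) from Theorem \ref{ex_mainth1_special}. For $\pi^*_t$ the numerator becomes $(1-\rho^2)\iota_t+\rho(\lambda_t-a_t+\rho b_t\iota_t)/b_t$, which, after splitting the fraction, produces precisely the two summands claimed in the corollary. For $\kappa^*_t$ the numerator becomes $(\lambda_t-a_t+\rho b_t\iota_t)/b_t+\rho(\sigma_t/\tilde\sigma_t-1)\iota_t$, and dividing by $(1-\rho^2\sigma_t/\tilde\sigma_t)b_t$ gives the two summands in the stated formula. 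The value $\tilde V$ is then obtained by substituting the same expressions for $\tilde\phi_1,\tilde\phi_2$ into (\ref{value3}); the cross term yields $2\rho(\sigma_t/\tilde\sigma_t-1)\iota_t(\lambda_t-a_t+\rho b_t\iota_t)/b_t$ once the $\sqrt{1-\rho^2}$ factors cancel, and the $\tilde\phi_2^2$ term produces $(\lambda_t-a_t+\rho b_t\iota_t)^2/b_t^2$ after the $1-\rho^2$ in the numerator cancels the $1-\rho^2$ in the denominator of $\tilde\phi_2^2$. Finally, since all parameters are deterministic, the expectation in (\ref{value3}) reduces to an ordinary integral, matching the expression displayed in the corollary.

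No real obstacle is anticipated: the result is essentially a bookkeeping specialization of Theorem \ref{ex_mainth1_special}, and the only substantive step is identifying $\phi_i\equiv 0$ when no enlargement of filtration takes place. The computation of $\tilde V$ requires care in tracking the algebraic cancellations of the $\sqrt{1-\rho^2}$ factors between $\tilde\phi_2$ and the coefficients in (\ref{value3}), but is otherwise routine.
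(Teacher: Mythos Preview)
Your proposal is correct and matches the paper's approach exactly: the corollary is obtained as a direct specialization of Theorem \ref{ex_mainth1_special} by setting $\phi_1=\phi_2=0$ (since ${\cal H}_t={\cal F}_t$ gives a trivial enlargement), substituting the resulting $\tilde\phi_1=\iota_t$ and $\tilde\phi_2=(\lambda_t-a_t+\rho b_t\iota_t)/(\sqrt{1-\rho^2}\,b_t)$ into (\ref{reduceth6_2}) and (\ref{value3}), and dropping the expectation because the parameters are deterministic. The paper itself does not write out a separate proof for this corollary, treating it as an immediate consequence of the preceding theorem, which is precisely what you outline.
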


Note that if $\tilde\sigma=\sigma$ (i.e., the insurer is `small'), the optimal strategy $u^*$ and the value $\tilde V$ in Corollary \ref{endcor1} are consistent with those in Proposition \ref{co2rex_mainth_idontknow_donsker}.

 \begin{remark}
When there is no insider information and $\rho=0$ (i.e., there is little uncertainty in the insurance marker, see \cite{Zou14}), we can see from Corollary \ref{endcor1} that, the insurer should invest more in the risky asset if her investment has more influence on the mean rate of return. The reason is that she introduces a higher appreciate rate in the risky asset price when investing. However, her optimal insurance strategy is not affected. Moreover, the optimal expected utility increases with her influence on the mean rate of return.
 \end{remark}
 
  \begin{corollary}  \label{endcor2}
Assume that $U(x)=\ln x$, $G^1(\mathrm{d}z)=G^2(\mathrm{d}z)=0$, $\mu(t,x)=\mu_0(t)+\varrho_tx$ for some deterministic c\`{a}gl\`{a}d functions $\mu_0(t)$ and $\varrho_t$ with $0\le \varrho_t< \frac{1}{2}\sigma_t^2$, $b\ge \epsilon>0$ for some positive constant $\epsilon$, and no model uncertainty is considered. Assume further that ${\cal H}_t=\bigcap_{s>t}\left( {\cal F}_s\vee \bar W_{T_0}\right)$ and all parameter processes are deterministic functions. Suppose $u^* \in {\cal A}_1' $ is optimal for Problem \ref{sdg2} under the conditions of Theorem \ref{mainth2} and $\tilde\sigma_t\neq \rho^2\sigma_t$. Then $u^*$ and $\tilde V$ are given by 
  \begin{equation} \label{endoptimal}
 \begin{aligned}
 &\pi^*_t=\frac{(1-\rho^2)\iota_t}{ (\tilde\sigma_t/  \sigma_t-\rho^2)\sigma_t}+\frac{\rho(\lambda_t-a_t+\rho b_t\iota_t)}{ (\tilde\sigma_t/  \sigma_t-\rho^2)\sigma_tb_t},
  \\&\kappa_t^*=\frac{\lambda_t-a_t+\rho b_t\iota_t}{(1-\rho^2\sigma_t/ \tilde\sigma_t)b_t^2}+\frac{\rho(\sigma_t/ \tilde\sigma_t-1)\iota_t}{(1-\rho^2\sigma_t/ \tilde\sigma_t)b_t}-\frac{ \bar W_{T_0}-\bar W_{t} }{ b_t(T_0-t )} ,
  \\&\tilde V=\ln X_0+\int_0^Tr_t\mathrm{d}t+\frac{1}{2}\int_0^T\frac{1}{1-\rho^2\sigma_t/ \tilde\sigma_t}\Bigg[ \left(  \big(1-2\rho^2\big)\sigma_t/ \tilde\sigma_t+ \rho^2\right)\left(\iota_t^2+\frac{\rho^2}{T_0-t}  \right)
  \\&\quad\quad+2\rho ( \sigma_t/ \tilde\sigma_t-1)\left(\iota_t\frac{\lambda_t-a_t+\rho b_t\iota_t}{ b_t}-\frac{\rho(1-\rho^2)}{T_0-t}  \right) +  \left(\frac{(\lambda_t-a_t+\rho b_t\iota_t)^2}{ b^2_t} +\frac{(1-\rho^2)^2}{T_0-t} \right) \Bigg]\mathrm{d}t.
  \end{aligned}
 \end{equation}
 \end{corollary}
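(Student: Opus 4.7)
The plan is to derive the formulas in Corollary~\ref{endcor2} as a direct specialization of Theorem~\ref{ex_mainth1_special}. Since the insider filtration $\mathcal H_t=\bigcap_{s>t}(\mathcal F_s\vee\bar W_{T_0})$ satisfies $\mathcal F_t\subset\mathcal H_t\subset\bar{\mathcal H}_t$, formula~(\ref{reduceth6_2_2}) applies, so the whole task reduces to computing the two information drifts $\phi_i(t)=(\mathbb E[W^i_{T_0}\mid\mathcal H_t]-W^i_t)/(T_0-t)$ for $i=1,2$.

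First I would compute the conditional expectations. By the independence of $W^1,W^2$ from $\mathcal F_t$ on the time interval $(t,T_0]$, the relevant Gaussian vector is $(W^i_{T_0}-W^i_t,\bar W_{T_0}-\bar W_t)$ with $\bar W_{T_0}-\bar W_t=\rho(W^1_{T_0}-W^1_t)+\sqrt{1-\rho^2}(W^2_{T_0}-W^2_t)$. The covariance is $\rho(T_0-t)$ for $i=1$ and $\sqrt{1-\rho^2}(T_0-t)$ for $i=2$, while $\mathrm{Var}(\bar W_{T_0}-\bar W_t)=T_0-t$. The Gaussian regression formula then gives
\begin{equation*}
\mathbb E[W^1_{T_0}\mid\mathcal H_t]=W^1_t+\rho(\bar W_{T_0}-\bar W_t),\qquad \mathbb E[W^2_{T_0}\mid\mathcal H_t]=W^2_t+\sqrt{1-\rho^2}(\bar W_{T_0}-\bar W_t),
\end{equation*}
hence $\phi_1(t)=\rho(\bar W_{T_0}-\bar W_t)/(T_0-t)$ and $\phi_2(t)=\sqrt{1-\rho^2}(\bar W_{T_0}-\bar W_t)/(T_0-t)$. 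Substituting into the definitions of $\tilde\phi_1$ and $\tilde\phi_2$ yields
\begin{equation*}
\tilde\phi_1(t)=\iota_t+\tfrac{\rho(\bar W_{T_0}-\bar W_t)}{T_0-t},\qquad \tilde\phi_2(t)=\tfrac{\lambda_t-a_t+\rho b_t\iota_t}{\sqrt{1-\rho^2}\,b_t}-\tfrac{\sqrt{1-\rho^2}(\bar W_{T_0}-\bar W_t)}{T_0-t}.
\end{equation*}

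Next I would plug these into~(\ref{reduceth6_2}). The $\pi^*$ computation is clean because $(1-\rho^2)\tilde\phi_1+\rho\sqrt{1-\rho^2}\,\tilde\phi_2$ produces an exact cancellation of the $(\bar W_{T_0}-\bar W_t)$ contribution (the coefficients $\rho(1-\rho^2)$ and $-\rho(1-\rho^2)$ cancel), leaving the same expression as in Corollary~\ref{endcor1}. For $\kappa^*$, the numerator $\sqrt{1-\rho^2}\tilde\phi_2+\rho(\sigma_t/\tilde\sigma_t-1)\tilde\phi_1$ yields a residual $(\bar W_{T_0}-\bar W_t)$ coefficient equal to $-(1-\rho^2)+\rho^2(\sigma_t/\tilde\sigma_t-1)=\rho^2\sigma_t/\tilde\sigma_t-1=-(1-\rho^2\sigma_t/\tilde\sigma_t)$; once divided by $(1-\rho^2\sigma_t/\tilde\sigma_t)b_t$ this gives exactly the insider correction $-(\bar W_{T_0}-\bar W_t)/(b_t(T_0-t))$ advertised in~(\ref{endoptimal}).

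Finally, for $\tilde V$ I would substitute $\tilde\phi_1,\tilde\phi_2$ into~(\ref{value3}) and take expectations. Since $\bar W_{T_0}-\bar W_t$ is independent of $\mathcal F_t$ (a fortiori of the deterministic coefficients), has mean zero, and variance $T_0-t$, the cross terms linear in $\bar W_{T_0}-\bar W_t$ vanish and the quadratic ones produce
\begin{equation*}
\mathbb E[\tilde\phi_1(t)^2]=\iota_t^2+\tfrac{\rho^2}{T_0-t},\quad \mathbb E[\tilde\phi_2(t)^2]=\tfrac{(\lambda_t-a_t+\rho b_t\iota_t)^2}{(1-\rho^2)b_t^2}+\tfrac{1-\rho^2}{T_0-t},\quad \mathbb E[\tilde\phi_1(t)\tilde\phi_2(t)]=\tfrac{\iota_t(\lambda_t-a_t+\rho b_t\iota_t)}{\sqrt{1-\rho^2}\,b_t}-\tfrac{\rho\sqrt{1-\rho^2}}{T_0-t}.
\end{equation*}
Collecting coefficients in~(\ref{value3}) produces exactly the bracketed expression in the claimed value function, with the non-insider piece reproducing Corollary~\ref{endcor1} and the remainder (the $1/(T_0-t)$ terms) accounting for the insider gain. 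No real obstacle arises; the main care required is in the bookkeeping of $\rho$-dependent coefficients in the $\pi^*$ cancellation and the $\kappa^*$ correction, and in noting that $\bar W_{T_0}-\bar W_t$ is independent of $\mathcal F_t$ so that the Gaussian moments apply unconditionally under $\mathbb E$.
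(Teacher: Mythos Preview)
Your proposal is correct and follows essentially the same approach as the paper: specialize Theorem~\ref{ex_mainth1_special} to the filtration ${\cal H}_t=\bigcap_{s>t}({\cal F}_s\vee\bar W_{T_0})$, compute the information drifts $\phi_i$ via Lemma~\ref{lemofenlarge1} (your Gaussian regression is exactly this), and substitute into~(\ref{reduceth6_2}) and~(\ref{value3}). The paper does not spell out the proof of this corollary, but your computations of the $\pi^*$-cancellation, the $\kappa^*$-correction, and the moments of $\tilde\phi_1,\tilde\phi_2$ in $\tilde V$ are precisely the bookkeeping that is implicitly required.
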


Note that if $\tilde\sigma=\sigma$ (i.e., the insurer is `small'), the optimal strategy $u^*$ and the value $\tilde V$ in Corollary \ref{endcor2} are consistent with those in Proposition \ref{pex_mainth_idontknow_donskerc}.
 
 \begin{remark}
 When the insurer has insider information about the insurance market and $\rho=0$, we can see from Corollary \ref{endcor2} that, the insurer should invest more in the risky asset if her investment has more influence on the mean rate of return. However, her optimal insurance strategy is not affected. Moreover, the optimal expected utility increases with her influence on the mean rate of return. On the other hand, by comparing Corollary \ref{endcor2} with Corollary \ref{endcor1}, the insurer should reduce the liability ratio by $\frac{\bar W_{T_0}-\bar W_t}{b_t(T_0-t)}$ if she captures insider information in the insurance market and $\bar W_{T_0}>\bar W_{t}$. Given $ \bar W_{T_0}- \bar W_t=x>0$, we can also see that the closer the future time $T_0$ is, the more she should reduce her liability ratio. Moreover, her optimal expected utility (compared with the case she has no insider information) is gained by $\Delta \tilde V=\frac{1}{2}\ln \frac{T_0}{T_0-T}>0$ if $\rho=0$, and $\Delta\tilde V$ becomes greater as $T_0$ gets closer (i.e., the insurer has `better' insider information). 
 \end{remark}
 
 \begin{remark}
  By similar procedure, we can also obtain the optimal strategy when the insurer has insider information about the future value of the risky asset. For instance, if ${\cal H}_t=\bigcap_{s>t}\left( {\cal F}_s\vee   W^1_{T_0}\right)$, the optimal strategy is given by 
    \begin{equation} 
 \begin{aligned}
 &\pi^*_t=\frac{(1-\rho^2)\iota_t}{ (\tilde\sigma_t/  \sigma_t-\rho^2)\sigma_t}+\frac{\rho(\lambda_t-a_t+\rho b_t\iota_t)}{ (\tilde\sigma_t/  \sigma_t-\rho^2)\sigma_tb_t}+\frac{(1-\rho^2) }{ (\tilde\sigma_t/  \sigma_t-\rho^2) }\frac{W^1_{T_0}-W^1_t}{\sigma_t(T_0-t)},
  \\&\kappa_t^*=\frac{\lambda_t-a_t+\rho b_t\iota_t}{(1-\rho^2\sigma_t/ \tilde\sigma_t)b_t^2}+\frac{\rho(\sigma_t/ \tilde\sigma_t-1)\iota_t}{(1-\rho^2\sigma_t/ \tilde\sigma_t)b_t} +\frac{\rho(\sigma_t/ \tilde\sigma_t-1) }{1-\rho^2\sigma_t/ \tilde\sigma_t}  \frac{W^1_{T_0}-W^1_t}{b_t(T_0-t)} .
    \end{aligned}
 \end{equation}
 Then the insurer should invest more in the risky asset if her investment has more influence on the mean rate
of return whereas her optimal insurance strategy is not affected given that $\rho=0$. Moreover, the value $\tilde V$ can be given by
     \begin{equation} \label{value5}
 \begin{aligned}
  \\ \tilde V= \ln X_0+\int_0^Tr_t\mathrm{d}t+\frac{1}{2}\int_0^T\frac{1}{1-\rho^2\sigma_t/ \tilde\sigma_t}\Bigg[& \left(  \big(1-2\rho^2\big)\sigma_t/ \tilde\sigma_t+ \rho^2\right)\left(\iota_t^2+\frac{1}{T_0-t}  \right)
  \\&+2\rho ( \sigma_t/ \tilde\sigma_t-1)\iota_t\frac{\lambda_t-a_t+\rho b_t\iota_t}{ b_t}   + \frac{(\lambda_t-a_t+\rho b_t\iota_t)^2}{ b^2_t}  \Bigg]\mathrm{d}t.
  \end{aligned}
 \end{equation}
The optimal expected utility increases with her influence on the mean rate of return. On the other hand, given that $\rho=0$, the insurer should increase the proportion of her total wealth invested in the risky asset by $\frac{W^1_{T_0}-W^1_t}{\tilde\sigma_t(T_0-t)}$ if she knows the future value of the risky asset and $W^1_{T_0}>W^1_t$. Given $ W^1_{T_0}- W^1_t=x>0$, we can also see that the closer the future time $T_0$ is, the more she should increase her proportion with respect to the risky asset. Moreover, her optimal expected utility (compared with the case she has no insider information) is gained by $\Delta \tilde V=\frac{1}{2}\int_0^T\frac{\sigma_t}{\tilde\sigma_t}\frac{1}{T_0-t}>0$ if $\rho=0$, and $\Delta\tilde V$ becomes greater as $T_0$ gets closer (i.e., the insurer has `better' insider information). By comparing (\ref{value5}) with (\ref{endoptimal}) we find that, $\rho=0$, the insider information about the financial market (compare with that about the insurance market) will provide the insurer more optimal expected utility if she can really influence the mean rate of return on the risky asset. 
  \end{remark}

\section{Conclusion}
\label{sec:conclusion}

In this paper, we investigate the optimal investment and risk control problem for an insurer under both model uncertainty and insider information. The insurer's risk process and the financial risky asset process are assumed to be correlated jump diffusion processes with very general L\'{e}vy forms. The insider information is about the financial risky asset and the insurance polices, and is of the most general form rather than the initial enlargement type. We establish the corresponding anticipating stochastic differential game problem and give the characterization of robust optimal strategy by combining the theory of forward integrals with the stochastic maximum principle. The two typical situations when the insurer is `small' and `large' are discussed and the corresponding optimal strategies are presented. Our results generalize some results in the literature. We also discuss the impacts of the model uncertainty, insider information and the `large' insurer on the optimal strategy.

For further work, the quadratic BSDE corresponding to the robust optimal strategy for a large insurer need to be studied. Moreover, the some constraints on the strategy can be imposed, which is also a subject of ongoing research.

\section*{Acknowledgments}
The authors are very grateful to the editors and the anonymous referee for their helpful suggestions and comments. The work is funded by the National Natural Science Foundation of China (No. 72071119).
Thanks to Xiaoqun Wang for his help and advice.

\bibliographystyle{IEEEtran}
\bibliography{malliavin_op2}

\begin{appendices}
\section{Basic theories of It\^{o} integrals with jumps}\label{appendix_A}
\setcounter{equation}{0} 
\setcounter{theorem}{0}

 \renewcommand\theequation{A.\arabic{equation}}
\renewcommand\thetheorem{A.\arabic{theorem}}
\renewcommand\theremark{A.\arabic{remark}}
 
 In this section, we give basic definitions and theories of It\^{o} integrals in discontinuous cases, i.e., It\^{o} integrals with jumps. For It\^{o} integrals in continuous cases, we refer to \cite{Karatzas91}.
  
We begin with a filtered probability space $(\Omega, {\cal F},\{{\cal F}_t\}_{t\in\mathbb{R}_+},\mathbb{P})$, where $\{{\cal F}_t\}$ satisfies the usual condition, and we identify two stochastic process if they are indistinguishable (see \cite{Karatzas91}).

Denote by ${\cal M}_{2 }$, ${\cal M}_{2,\text{loc}}$ and ${\cal M}_{\text{loc}}$ the spaces of all ${\cal F}_t$-square-integrable martingales\footnote{We say a martingale $M$ is square-integrable if $\mathbb{E}M^2_t<\infty$ for all $t\ge 0$, which is different from definitions in some literature (see \cite{He92}).}, ${\cal F}_t$-locally square-integrable martingales and ${\cal F}_t$-local martingales with zero initial values, respectively (see \cite{Karatzas91} for definitions). By the decomposition theorem for local martingales (see \cite{He92}), we can define ${\cal M}^c_{\text{loc}}$ and ${\cal M}^d_{\text{loc}}$ the subspaces of continuous local martingales and purely discontinuous local martingales, respectively (similar statements hold for ${\cal M}_{2 }$ and ${\cal M}_{2,\text{loc}}$). Given $M, N\in {\cal M}_{\text{loc}}$, we can define the covariation process $[M,N]_t$ of $M$ and $N$ (see \cite{He92}), which is an ${\cal F}_t$-adapted bounded variation process. If $[M,N]_t$ is locally interable \footnote{We say an ${\cal F}_t$-adapted increasing process $A_t$ is locally integrable, if there exists an ${\cal F}_t$-stopping times $\{T_n\}_{n=1}^\infty$ such that $T_n\uparrow \infty$ and $A_{Tn}$ is integrable for all $n\in\mathbb{N_+}$. If an ${\cal F}_t$-adapted bounded variation process $V_t$ can be represented by the difference of two ${\cal F}_t$-adapted locally integrable increasing processes, we say $V_t$ is locally integrable. In this case we can define the ${\cal F}_t$-compensator (or the ${\cal F}_t$-dual predictable projection) $\hat V_t$ of $V_t$ (see \cite{He92}), which is an ${\cal F}_t$-adapted predictable bounded variation process.}, denote by $\langle M,N\rangle_t$ the ${\cal F}_t$-compensator of $[M,N]_t$, which is called the ${\cal F}_t$-predictable covariation process of $M$ and $N$. Note that if $M$ and $N$ both belong to ${\cal M}_{2,\text{loc}}$, the ${\cal F}_t$-predictable covariation process $\langle M,N\rangle_t$ exists. If $M=N$, we write $[M]_t=[M,M]_t$ (or $\langle M\rangle_t=\langle M,M\rangle_t$) for simplicity, which is called the quadratic variation process (or ${\cal F}_t$-predictable quadratic variation process) of $M$.

For fixed $M\in {\cal M}_{\text{loc}}$, we say a process $\varphi$ is It\^{o} integrable with respect to $M$ if $\varphi \in {\mathfrak{L}}^*_{\text{loc}}([M])$, i.e., $\varphi$ is ${\cal F}_t$-predictable and $\sqrt{\int_0^t\varphi_s^2\mathrm{d}[M]_s}$ is locally integrable. We can define the It\^{o} integral $I_t^M(\varphi)=\int_0^t\varphi_s\mathrm{d}M_s$ of $\varphi$ with respect to $M$ (see \cite{He92}). Then the It\^{o} integral $I^M_t$ is a linear operator from ${\mathfrak{L}}^*_{\text{loc}}([M])$ to $ {\cal M}_{\text{loc}}$. If $M\in  {\cal M}_{2,\text{loc}}$, $I^M_t$ maps from ${\mathfrak{L}}^*_{\text{loc}}(\langle M\rangle)$ to $ {\cal M}_{2,\text{loc}}$, where ${\mathfrak{L}}^*_{\text{loc}}(\langle M\rangle)$ is the space of all ${\cal F}_t$-predictable processes $\varphi$ such that $\int_0^t\varphi^2_s\mathrm{d}\langle M\rangle_s$ is locally integrable. If $M\in  {\cal M}_{2 }$, $I^M_t$ maps from ${\mathfrak{L}}^* (\langle M\rangle)$ to $ {\cal M}_{2 }$, where ${\mathfrak{L}}^* (\langle M\rangle)$ is the space of all ${\cal F}_t$-predictable processes $\varphi$ such that $\mathbb{E}\int_0^t\varphi^2_s\mathrm{d}\langle M\rangle_s<\infty$, $\forall t\ge 0$.

\begin{remark}\label{integral_ito_rm1}
Fix $M\in {\cal M}_{\text{loc}}$. Then $\varphi$ is an ${\cal F}_t$-predictable process such that $\int_0^t\varphi^2_s\mathrm{d}[ M]_s$ is locally integrable if and only if $\varphi\in  {\mathfrak{L}}^*_{\text{loc}}([M])$ and if $I^M(f)\in  {\cal M}_{2,\text{loc}}$; $\varphi$ is an ${\cal F}_t$-predictable process such that $\mathbb{E}\int_0^t\varphi^2_s\mathrm{d}[ M]_s<\infty$ for all $t\ge 0$ if and only if $\varphi\in  {\mathfrak{L}}^*_{\text{loc}}([M])$ and if $I^M(f)\in  {\cal M}_{2}$.
\end{remark}

We say an ${\cal F}_t$-adapted process $Z$ is an ${\cal F}_t$-semimartingale if $Z$ has a decomposition $Z_t=Z_0+M_t+V_t$ for some $M\in {\cal M}_{\text{loc}}$ and ${\cal F}_t$-adapted bounded variation process $V$. If $V$ is ${\cal F}_t$-predictable, we say $Z$ is an ${\cal F}_t$-special semimartingale, in which case the above decomposition is unique such that $V$ is ${\cal F}_t$-predictable, and we call it the canonical decomposition of $Z$. We say a process $\varphi$ is It\^{o} integrable with respect to a semimartingale $Z$ if $Z$ has a decomposition $Z_t=Z_0+M_t+V_t$ such that $\varphi$ is It\^{o} integrable with respect to $M$ and pathwise Lebesgue-Stieltjes integrable with respect to $V$, which is well-defined. The definition of the covariation and the ${\cal F}_t$-predictable covariation can also be extended to the case of semimartingales as well. We refer to \cite{He92,Jacod03} for more It\^{o} theories of semimartingales.

In particular, we can obtain finer results when we consider the two special types of ${\cal F}_t$-martingales, namely, the ${\cal H}^1$-martingale and the ${\cal BMO}$-martingale. The properties of them also play a crucial role in the theory of the quadratic BSDE (see \cite{Fujii18}). Here, we only give the definitions. We refer to \cite{He92,Fujii18} for more details.

\begin{definition}\label{h1andbmo}
Denote by ${\cal H}^1$ the space of all local martingales $M$ such that $\Vert M\Vert_{{\cal H}^1}:=  \mathbb{E}\sqrt{[M]_\infty} <\infty$. Denote by ${\cal BMO}$ the space of all $L^2$-bounded martingales such $M$ that $\Vert M\Vert_{\cal BMO}:=  \sup_{T\in {\cal T}} \sqrt{\frac{\mathbb{E}(M_\infty-M_{T-}1_{\{T>0\}})^2}{\mathbb{P}(T<\infty)}}<\infty$, where ${\cal T}$ is the set of all ${\cal F}_t$-stopping times. Each element in ${\cal H}^1$ (resp. ${\cal BMO}$) is called an ${\cal H}^1$-martingale (resp. ${\cal BMO}$-martingale). Moreover, ${\cal BMO}$ is the dual space of the Banach space ${\cal H}^1$ (see \cite{He92}). 

\end{definition}

Now we turn to the It\^{o} integral with respect to a compensated random measure. Before that, we give the definition of the integer-valued random measure.

\begin{definition}\label{integer_rm}
We say a family $\mu=(\mu(\omega;\mathrm{d}t,\mathrm{d}z):\omega\in \Omega)$ is a ($\sigma$-finite) random measure on $\mathbb{R}_+\times\mathbb{R}^d$ ($d\in\mathbb{N}_+$) if $\mu(\omega,\cdot)$ is a ($\sigma$-finite) measure on $\mathbb{R}_+\times\mathbb{R}^d$ for fixed $\omega\in\Omega$ and $\mu(\cdot,A)$ is a random variable on $(\Omega, {\cal F},\mathbb{P})$ for fixed $A\in {\cal B}(\mathbb{R}_+)\times{\cal B}(\mathbb{R}^d)$. Moreover, we say a $\sigma$-finite random measure $\mu$ is an ${\cal F}_t$-integer-valued random measure on $\mathbb{R}_+\times\mathbb{R}^d$ if it satisfies the following extra conditions:
 \begin{itemize}
  \item[(i)] $\mu(\omega,\cdot)$ has values in $\mathbb{N}\cup \{\infty\}$ for any $\omega\in\Omega$;
    \item[(ii)] $\mu(\omega,\{0\}\times {\mathbb R}^d)=0$ for any $\omega\in\Omega$; 
    \item[(iii)]  $\mu(\omega,\{t\}\times \mathbb{R}^d)\le 1$ for any $\omega\in\Omega$, $t\ge 0$;
    \item[(iv)] $\mu $ is ${\cal F}_t$-optional in the sense that $\mu(\cdot,[0,t]\times B)$ is an ${\cal F}_t$-optional process for any $B\in{\cal B}(\mathbb{R}^d)$;
      \item[(v)] $\mu $ is ${\cal F}_t$-predictably $\sigma$-finite in the sense that there exists some strictly positive ${\cal F}_t$-predictable random field $V$ on $\Omega\times{\mathbb R}_+\times\mathbb{R}^d$ such that $\int_0^\infty\int_{\mathbb{R}^d}V(s,z)\mu(\mathrm{d}s,\mathrm{d}z)$ is integrable.
\end{itemize}
Denote by $\hat \mu(\mathrm{d}t,\mathrm{d}z)$ the ${\cal F}_t$-compensator (or the ${\cal F}_t$-dual predictable projection) of an ${\cal F}_t$-integer-valued random measure $\mu(\mathrm{d}t,\mathrm{d}z)$ (see \cite{He92} for the definition), which is a $\sigma$-finite random measure on $\mathbb{R}_+\times\mathbb{R}^d$. Define $\tilde \mu(\mathrm{d}t,\mathrm{d}z):=\mu(\mathrm{d}t,\mathrm{d}z)-\hat\mu(\mathrm{d}t,\mathrm{d}z)$, which is called the compensated random measure of $\mu(\mathrm{d}t,\mathrm{d}z)$.
\end{definition}

\begin{remark}\label{r_integer_rm}
Let $Z$ be an ${\cal F}_t$-adapted c\`{a}dl\`{a}g ${\mathbb R}^d$-valued process. Then 
    \begin{equation*} 
\begin{aligned}
\mu^Z(\mathrm{d}t,\mathrm{d}z):=\sum_{s>0}\delta_{(s,\Delta Z_s)}(\mathrm{d}t,\mathrm{d}z)1_{\{\Delta Z_s\neq 0\}}(s)
\end{aligned}
\end{equation*}
defines an ${\cal F}_t$-integer-valued random measure on $\mathbb{R}_+\times\mathbb{R}^d$, where $\delta_a$ denotes the Dirac measure at point $a$. We call $\mu^Z$ the jump measure of $Z $ and $\hat \mu^Z$ the L\'e{vy} system of $Z$.
\end{remark}

In the rest of this section, we consider an ${\cal F}_t$-integer-valued random measure $\mu$, and suppose the ${\cal F}_t$-compensator $\hat \mu$ of $\mu$ is absolutely continuous in time, i.e., $\hat\mu(\mathrm{d}t,\mathrm{d}z)= K(t,\mathrm{d}z)\mathrm{d}t$ for some random transition measure $K(t,\mathrm{d}z)$, the definition of which is similar to Definition \ref{integer_rm} (see \cite[page 37]{Cinlar11} for the definition of transition measures). 

Define the following three linear spaces:
  \begin{itemize}
  \item[(i)] $\mathfrak{L}^*_{\text{loc}}(\mu)$ is the space of all ${\cal F}_t$-predictable random fields $\zeta(t,z)$ such that $\sqrt{\int_0^t\int_{\mathbb{R}^d}\zeta^2(s,z)\mu(\mathrm{d}s,\mathrm{d}z)}$ is locally integrable;
    \item[(ii)] $\mathfrak{L}^*_{\text{loc}}(\hat\mu)$ is the space of all ${\cal F}_t$-predictable random fields $\zeta(t,z)$ such that $\int_0^t\int_{\mathbb{R}^d}\zeta^2(s,z)K(s,\mathrm{d}z)\mathrm{d}s<\infty$, $\forall t\ge 0$;
        \item[(iii)] $\mathfrak{L}^*(\hat\mu)$ is the space of all ${\cal F}_t$-predictable random fields $\zeta(t,z)$ such that $\mathbb{E}\int_0^t\int_{\mathbb{R}^d}\zeta^2(s,z)K(s,\mathrm{d}z)\mathrm{d}s<\infty$, $\forall t\ge 0$.
\end{itemize}
We have $\mathfrak{L}^*(\hat\mu)\subset \mathfrak{L}^*_{\text{loc}}(\hat\mu)\subset \mathfrak{L}^*_{\text{loc}}(\mu)$. We say a random field $\zeta$ is It\^{o} integrable with respect to $\tilde \mu$ if $\zeta\in \mathfrak{L}^*_{\text{loc}}(\mu)$. In this case we can define the It\^{o} integral $I^{\tilde \mu}_t(\zeta):=\int_0^t\int_{\mathbb{R}^d}\zeta(s,z)\tilde N(\mathrm{d}s,\mathrm{d}z)$ (see \cite{He92}), which belongs to $ {\cal{M}}^d_{\text{loc}}$. If $\mu$ is induced by some ${\cal F}_t$-adapted c\`{a}dl\`{a}g process $Z$, then we have $\Delta I^{\tilde \mu}_s(\zeta)=\zeta(s,\Delta Z_s)1_{\{\Delta Z_s\neq 0\}}$。 Moreover, $I^{\tilde \mu}_t$ maps from $\mathfrak{L}^*_{\text{loc}}(\hat\mu)$ to ${\cal M}^d_{2,\text{loc}}$, and from $\mathfrak{L}^*(\hat\mu)$ to ${\cal M}^d_{2 }$. We refer to \cite[Propositions 3.37 and 3.39]{Eberlein19}, \cite[Proposition II.1.14]{Jacod03} and \cite[Theorem 11.23]{He92} for more properties of the stochastic integral with respect to a compensated random measure in our settings.

\begin{remark}
If an ${\cal F}_t$-predictable random field $\zeta(t,z)$, $t\in\mathbb{R}_+$, satisfies that $\int_0^t\int_{\mathbb{R}^d}|\zeta(s,z)|K(t,\mathrm{d}z)\mathrm{d}s<\infty$, then $\zeta\in \mathfrak{L}^*_{\text{loc}}(\mu)$ and $\int_0^t\int_{\mathbb{R}^d}\zeta(s,z)\tilde N(\mathrm{d}s,\mathrm{d}z)=\int_0^t\int_{\mathbb{R}^d}\zeta(s,z)  N(\mathrm{d}s,\mathrm{d}z)-\int_0^t\int_{\mathbb{R}^d}\zeta(s,z)K(t,\mathrm{d}z)\mathrm{d}s $.
\end{remark}
 
 The well-known It\^{o} formula for general semimartingale or the L\'{e}vy-It\^{o} process can be found in many literature (see \cite{He92}). Here we do some extension in the one-dimensional case to fit our settings in Section \ref{sec_example1} (similar statements hold for the multi-dimensional case).
 
 \begin{theorem}
\label{itoforito}
Suppose $\mu$ is induced by a one-dimensional c\`{a}dl\`{a}g process $Z$. We say the following ${\cal F}_t$-adapted process $Y$ is an It\^{o} process 
  \begin{equation*} 
\begin{aligned}
Y_t:=Y_0+\int_0^t\alpha_s\mathrm{d}s+\int_0^t\varphi_s\mathrm{d}W_s+\int_0^t\int_{\mathbb{R}_0}\zeta(s,z)\tilde \mu(\mathrm{d}s,\mathrm{d}z),
\end{aligned}
\end{equation*}
where $W$ is an ${\cal F}_t$-Brownian motion, $\alpha$ and $\varphi$ are ${\cal F}_t$-progressively measurable processes such that $\int_0^T(|\alpha_t|+\varphi_t^2)\mathrm{d}t<\infty$, and $\zeta$ is an ${\cal F}_t$-predictable random field such that $\int_0^T\int_{\mathbb{R}_0}\zeta^2(t,z)K(t,\mathrm{d}z)\mathrm{d}t<\infty$. Then for any function $f\in C^2(\mathbb{R})$, $f(Y_t)$ is also an It\^{o} process and we have
   \begin{equation*} 
\begin{aligned}
f(Y_t)=&f(Y_0)+\int_0^tf'(Y_s)\alpha_s\mathrm{d}s+ \int_0^tf'(Y_s)\varphi_s\mathrm{d}W_s+\frac{1}{2}\int_0^tf''(Y_s)\varphi^2_s\mathrm{d}s\\
&+ \int_0^t\int_{\mathbb{R}_0}\left[f(Y_{s-}+\zeta(s,z))-f(Y_{s-})\right]\tilde \mu(\mathrm{d}s,\mathrm{d}z)\\
&+ \int_0^t\int_{\mathbb{R}_0}\left[f(Y_{s-}+\zeta(s,z))-f(Y_{s-})-f'(Y_{s-})\zeta(s,z)\right]K(s,\mathrm{d}z)\mathrm{d}s.
\end{aligned}
\end{equation*}
\end{theorem}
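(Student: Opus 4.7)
The plan is to reduce Theorem~\ref{itoforito} to the classical It\^{o} formula for semimartingales with jumps (see, e.g., \cite[Theorem~II.32]{Protter05} or the corresponding result in \cite{He92,Jacod03}) by identifying the canonical decomposition of $Y$ and its jump structure. First I would verify that $Y$ is an $\mathcal{F}_t$-special semimartingale: the drift $\int_0^\cdot \alpha_s\,\mathrm{d}s$ is an $\mathcal{F}_t$-predictable bounded variation process, $M^c_t:=\int_0^t \varphi_s\,\mathrm{d}W_s\in \mathcal{M}^c_{2,\mathrm{loc}}$ with $\langle M^c\rangle_t=\int_0^t\varphi_s^2\,\mathrm{d}s$, and $M^d_t:=\int_0^t\!\int_{\mathbb{R}_0}\zeta(s,z)\tilde\mu(\mathrm{d}s,\mathrm{d}z)\in \mathcal{M}^d_{2,\mathrm{loc}}$ by the hypothesis $\zeta\in\mathfrak{L}^*_{\mathrm{loc}}(\hat\mu)$ reviewed just above the theorem. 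In particular the jumps of $Y$ coincide with those of $M^d$, and by the property $\Delta M^d_s=\zeta(s,\Delta Z_s)\mathbf{1}_{\{\Delta Z_s\neq 0\}}$ recalled after the definition of $I^{\tilde\mu}$, we have $\Delta Y_s=\zeta(s,\Delta Z_s)\mathbf{1}_{\{\Delta Z_s\neq 0\}}$.

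Next, I would apply the general It\^{o} formula for semimartingales to $f(Y_t)$, which gives
\begin{equation*}
\begin{aligned}
f(Y_t)=f(Y_0)&+\int_0^t f'(Y_{s-})\alpha_s\,\mathrm{d}s+\int_0^t f'(Y_{s-})\varphi_s\,\mathrm{d}W_s+\int_0^t\!\!\int_{\mathbb{R}_0}f'(Y_{s-})\zeta(s,z)\tilde\mu(\mathrm{d}s,\mathrm{d}z)\\
&+\tfrac{1}{2}\int_0^t f''(Y_{s-})\varphi_s^2\,\mathrm{d}s+\sum_{0<s\le t}\bigl[f(Y_s)-f(Y_{s-})-f'(Y_{s-})\Delta Y_s\bigr].
\end{aligned}
\end{equation*}
Since $\{s:\Delta Y_s\neq 0\}\subset\{s:\Delta Z_s\neq 0\}$ is at most countable, the drift and the Brownian integrand may be taken at $Y_s$ rather than $Y_{s-}$ without changing the integrals, and the continuous quadratic variation uses only $\varphi$.

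The core bookkeeping is to recast the jump sum in terms of the random measure $\mu=\mu^Z$ of Remark~\ref{r_integer_rm}. Using $\Delta Y_s=\zeta(s,\Delta Z_s)\mathbf{1}_{\{\Delta Z_s\neq 0\}}$, the last sum equals
\begin{equation*}
\int_0^t\!\!\int_{\mathbb{R}_0}\bigl[f(Y_{s-}+\zeta(s,z))-f(Y_{s-})-f'(Y_{s-})\zeta(s,z)\bigr]\mu(\mathrm{d}s,\mathrm{d}z),
\end{equation*}
which I would split into a compensated $\tilde\mu$-integral plus a compensator $K(s,\mathrm{d}z)\,\mathrm{d}s$-integral. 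Combining the compensated piece with $\int_0^t\!\int_{\mathbb{R}_0}f'(Y_{s-})\zeta(s,z)\tilde\mu(\mathrm{d}s,\mathrm{d}z)$ produces exactly $\int_0^t\!\int_{\mathbb{R}_0}[f(Y_{s-}+\zeta(s,z))-f(Y_{s-})]\tilde\mu(\mathrm{d}s,\mathrm{d}z)$, while the remaining $K(s,\mathrm{d}z)\,\mathrm{d}s$-term yields the last line of~(\ref{equitos}). Rearranging these identities reproduces the claimed formula.

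The main obstacle will be the integrability bookkeeping needed to justify this splitting, namely to show that both $f(Y_{s-}+\zeta(s,z))-f(Y_{s-})-f'(Y_{s-})\zeta(s,z)$ and $f(Y_{s-}+\zeta(s,z))-f(Y_{s-})$ belong to $\mathfrak{L}^*_{\mathrm{loc}}(\mu)$, so that the decomposition of the $\mu$-integral into compensated plus compensator pieces is legitimate. A standard localization via the stopping times $T_n:=\inf\{t:|Y_t|\ge n\}\wedge\inf\{t:\int_0^t\int_{\mathbb{R}_0}\zeta^2 K(s,\mathrm{d}z)\,\mathrm{d}s\ge n\}\wedge n$ reduces to the bounded setting, where a second-order Taylor expansion gives $|f(Y_{s-}+\zeta)-f(Y_{s-})-f'(Y_{s-})\zeta|\le\tfrac12\sup_{|u|\le|\zeta|}|f''(Y_{s-}+u)|\zeta^2$ and a mean-value estimate gives $|f(Y_{s-}+\zeta)-f(Y_{s-})|\le\sup_{|u|\le|\zeta|}|f'(Y_{s-}+u)||\zeta|$, both of which are absorbed by the two hypotheses $\int_0^T|\alpha_t|\,\mathrm{d}t+\int_0^T\varphi_t^2\,\mathrm{d}t<\infty$ and $\int_0^T\!\int_{\mathbb{R}_0}\zeta^2(t,z)K(t,\mathrm{d}z)\,\mathrm{d}t<\infty$. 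Passing to the limit $n\to\infty$ in the localized identity then yields the formula stated in the weak sense, completing the proof.
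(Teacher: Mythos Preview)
Your proposal is correct and follows essentially the same route as the paper: reduce to the general semimartingale It\^{o} formula, identify the jump sum $\sum_{s\le t}[f(Y_s)-f(Y_{s-})-f'(Y_{s-})\Delta Y_s]$ with the $\mu$-integral via $\Delta Y_s=\zeta(s,\Delta Z_s)\mathbf{1}_{\{\Delta Z_s\neq0\}}$, and then split into compensated and compensator parts.

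The only difference is in the integrability bookkeeping. The paper disposes of it in one line by assuming without loss of generality that $f$ is $C^2$ with compact support, so that $f,f',f''$ are globally bounded and the Taylor estimates are immediate. You instead localize via stopping times on $|Y|$ and on $\int_0^\cdot\!\int\zeta^2K\,\mathrm{d}z\,\mathrm{d}s$. Note that your stopping times bound $Y_{s-}$ but not $Y_{s-}+\zeta(s,z)$, so the factors $\sup_{|u|\le|\zeta|}|f''(Y_{s-}+u)|$ and $\sup_{|u|\le|\zeta|}|f'(Y_{s-}+u)|$ in your Taylor bounds are not automatically finite constants; the compact-support reduction sidesteps this wrinkle cleanly, and is itself justified by exactly the kind of localization you describe (after stopping, one replaces $f$ by a compactly supported $C^2$ function agreeing with $f$ on a neighborhood of the range of the stopped process).
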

\begin{proof}
We may assume $f$ is a $C^2$ function with compact support without loss of generality. Then we have $\sum_{s\le t}[f(Y_s)-f(Y_{s-})-f'(Y_{s-})\Delta Y_{s}]=\sum_{s\le t}[f(Y_{s-}+\zeta(s,\Delta Z_s))-f(Y_{s-})-f'(Y_{s-})\zeta(s,\Delta Z_s)]1_{\{\Delta Z_s\neq0\}}=\int_0^t\int_{\mathbb{R}_0}[f(Y_{s-}+\zeta(s,z))-f(Y_{s-})-f'(Y_{s-})\zeta(s,z)] \mu(\mathrm{d}s,\mathrm{d}z)$.  Then the formula is an immediate consequence from the It\^{o} formula for general semimartingales (see \cite{He92}) and the Taylor expansion.
  
  \end{proof}

The following theorems are important results in the It\^{o} theory. The proofs can be found in \cite{Protter05,Protter06,He92,Jacod03,Eberlein19}.

\begin{theorem}
\label{novikov}
Suppose $Z$ is an ${\cal F}_t$-semimartingale. Let 
\begin{equation*}
{\cal E}(Z)_t:=\exp\left \{Z_t-Z_0-\frac{1}{2}\langle Z^c\rangle_t\right\}\Pi_{0<s\le t}(1+\Delta Z_s)e^{-\Delta Z_s}
\end{equation*}
 be the Dol\'{e}ans-Dade exponential of $Z$. Then ${\cal E}(Z)$ is the unique semimartingale satisfying the SDE $\mathrm{d}{\cal E}(Z)_t={\cal E}(Z)_{t-}\mathrm{d}Z_t$ with initial value $1$. Moreover, if $Z\in {\cal M}_{2,\text{loc}}$, $\Delta Z_t>-1$, and $\mathbb{E}\left[  e^{\frac{1}{2}\langle Z^c\rangle_t+\langle X^d\rangle_t} \right]<\infty$, then ${\cal E}(Z)$ is an ${\cal F}_t$-martingale.
\end{theorem}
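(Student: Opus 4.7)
My plan is to split the proof into three parts: the stochastic differential equation for $\mathcal{E}(Z)$, uniqueness of its solution, and the martingale property under the stated moment condition.

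For the SDE, I will set $L_t := Z_t - Z_0 - \tfrac12 \langle Z^c\rangle_t$ and $P_t := \prod_{0<s\le t}(1+\Delta Z_s)e^{-\Delta Z_s}$, so that $\mathcal{E}(Z)_t = e^{L_t}P_t$. The infinite product converges absolutely because $\sum_{s\le t}(\Delta Z_s)^2 \le [Z]_t < \infty$ for a semimartingale and $(1+x)e^{-x}-1 = O(x^2)$ near zero, so $P$ is an $\mathcal{F}_t$-adapted purely-jump process of finite variation with $\Delta P_s = P_{s-}[(1+\Delta Z_s)e^{-\Delta Z_s}-1]$. Applying the It\^o formula (Theorem \ref{itoforito}) to $e^{L_t}$, the $-\tfrac12\langle Z^c\rangle$ drift in $L$ exactly cancels the It\^o correction $\tfrac12 e^{L_-}\mathrm{d}\langle Z^c\rangle$, and integration by parts $\mathrm{d}(e^L P) = e^{L_-}\mathrm{d}P + P_{-}\mathrm{d}(e^L) + \mathrm{d}[e^L,P]$ yields, after combining the jump contributions so that $\Delta \mathcal{E}(Z)_s = \mathcal{E}(Z)_{s-}\Delta Z_s$, the identity $\mathrm{d}\mathcal{E}(Z)_t = \mathcal{E}(Z)_{t-}\mathrm{d}Z_t$.

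For uniqueness, I will use that $\Delta Z > -1$ forces $\mathcal{E}(Z) > 0$ strictly, so $\mathcal{E}(Z)^{-1}$ is well defined; applying It\^o's formula to $1/x$ gives the explicit SDE for $\mathcal{E}(Z)^{-1}$. If $Y$ is any other semimartingale solution with $Y_0=1$, I will set $V_t := Y_t \,\mathcal{E}(Z)_t^{-1}$ and check via integration by parts that $\mathrm{d}V = 0$, yielding $V \equiv 1$ and hence $Y = \mathcal{E}(Z)$.

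For the martingale property, note that positivity of $\mathcal{E}(Z)$ makes it a non-negative local martingale, hence a supermartingale with $\mathbb{E}[\mathcal{E}(Z)_t] \le 1$; it is a true martingale iff equality holds for every $t$. Choose a localizing sequence $\{\tau_n\}$ so that $\mathcal{E}(Z)^{\tau_n}$ is a genuine martingale with $\mathbb{E}[\mathcal{E}(Z)_{t\wedge\tau_n}] = 1$, and argue uniform integrability of this family to pass to the limit. The plan here is to bound the second moment via the identity $\mathcal{E}(Z)^2 = \mathcal{E}(2Z^c)\,e^{\langle Z^c\rangle}\cdot \prod_{s\le \cdot}(1+\Delta Z_s)^2 e^{-2\Delta Z_s}$, combining the Cauchy–Schwarz estimate on the continuous piece with the elementary bound $(1+x)^2 e^{-2x} \le e^{x^2}$ for $x>-1$ (valid after a short computation). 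Compensating the jump sum $\sum_{s\le t}(\Delta Z_s)^2$ by $\langle Z^d\rangle_t$ and applying Jensen to the compensated martingale term, one obtains $\mathbb{E}[\mathcal{E}(Z)_{t\wedge\tau_n}^2] \le C\,\mathbb{E}[e^{\langle Z^c\rangle_t + 2\langle Z^d\rangle_t}]$, which is finite by hypothesis. Uniform $L^2$ boundedness gives UI, and taking $n\to\infty$ yields $\mathbb{E}[\mathcal{E}(Z)_t]=1$.

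The main obstacle is the last step: controlling the jump contributions to $\mathcal{E}(Z)^2$ by $\langle Z^d\rangle$. The continuous Novikov argument is standard, and the SDE and uniqueness reduce to routine It\^o calculus, but the sharp bookkeeping that turns the pathwise product $\prod(1+\Delta Z_s)^2 e^{-2\Delta Z_s}$ into an expectation controlled by $e^{2\langle Z^d\rangle_t}$ requires care with the predictable compensator (in particular, passing from pathwise $\sum(\Delta Z_s)^2$ to $\langle Z^d\rangle_t$ loses no integrability only after a Doob-type estimate on the associated compensated martingale). I expect the remainder of the proof to be a direct application of the results cited from \cite{Protter05,Protter06,He92,Jacod03,Eberlein19}.
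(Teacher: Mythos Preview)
The paper does not give its own proof of this appendix result; it simply cites \cite{Protter05,Protter06,He92,Jacod03,Eberlein19}. So there is no in-house argument to compare against, and your proposal must stand on its own.

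Your treatment of the SDE $\mathrm{d}\mathcal{E}(Z)=\mathcal{E}(Z)_{-}\mathrm{d}Z$ is the standard one and is fine. For uniqueness, note that the first assertion of the theorem does \emph{not} assume $\Delta Z>-1$; your argument via $\mathcal{E}(Z)^{-1}$ only works on $\{\mathcal{E}(Z)\neq 0\}$. In general one must also handle the (absorbing) time $\tau=\inf\{t:\Delta Z_t=-1\}$ separately, or argue directly that any solution $Y$ of $\mathrm{d}Y=Y_{-}\mathrm{d}Z$, $Y_0=0$, vanishes identically (e.g.\ via a Gronwall-type estimate on $[Y]$). This is a repairable gap.

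The martingale step has two genuine problems. First, the displayed identity for $\mathcal{E}(Z)^2$ is wrong: squaring the Dol\'eans exponential gives
\[
\mathcal{E}(Z)_t^2=\mathcal{E}(2Z^c)_t\,e^{\langle Z^c\rangle_t}\,e^{2Z^d_t}\prod_{s\le t}(1+\Delta Z_s)^2e^{-2\Delta Z_s},
\]
and the factor $e^{2Z^d_t}$ cannot be absorbed into the jump product unless $Z^d$ has finite variation (so that $Z^d_t=\sum_{s\le t}\Delta Z_s$), which is not assumed. Second, even granting your inequality $(1+x)^2e^{-2x}\le e^{x^2}$, the passage from the pathwise quantity $[Z^d]_t=\sum(\Delta Z_s)^2$ to its compensator $\langle Z^d\rangle_t$ ``by Jensen'' goes the wrong way: writing $[Z^d]=\langle Z^d\rangle+M$ with $M$ a local martingale, Jensen gives $\mathbb{E}[e^{M_t}]\ge 1$, not an upper bound, so you cannot conclude $\mathbb{E}[e^{[Z^d]_t}]\le C\,\mathbb{E}[e^{c\langle Z^d\rangle_t}]$ this way. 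The actual proofs in the cited references (L\'epingle--M\'emin, Protter--Shimbo) avoid squaring altogether: they work with $\mathcal{E}(Z)^a$ for $a<1$, use the multiplicative decomposition $\mathcal{E}(Z)=\mathcal{E}(Z^c)\mathcal{E}(Z^d)$, and control the jump factor via an exponential supermartingale built from $\langle Z^d\rangle$ rather than $[Z^d]$. Your outline would need to be reworked along those lines.
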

 
 \begin{theorem}
\label{girsanov}
Suppose ${\mathbb{Q}}$ is another probability measure which is equivalent to $\mathbb{P}$. Let $Z_t:=\mathbb{E}[\frac{\mathrm{d}\mathbb{Q}}{\mathrm{d}\mathbb{P}}|{\cal F}_t]$, which can be proved to be an $({\cal F}_t,\mathbb{P})$-martingale. Assume further that $M\in {\cal M}_{\text{loc}}(\mathbb{P})$ and $[Z,M]$ is locally integrable for $\mathbb{P}$. Then $L_t:=M_t-\int_0^t\frac{1}{Z_{s-}}\mathrm{d}\langle Z,M\rangle_s^{\mathbb{P}}\in {\cal M}_{\text{loc}}(\mathbb{Q})$. 
\end{theorem}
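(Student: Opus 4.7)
}
The plan is to reduce the $\mathbb{Q}$-local martingale property of $L$ to a $\mathbb{P}$-local martingale property of $ZL$ via the Bayes rule, and then verify the latter by an integration-by-parts computation that turns the drift in the definition of $L$ into a cancellation against a bounded variation term arising from the covariation. First I would record the standard Bayes-type lemma: an adapted c\`adl\`ag process $L$ is a $\mathbb{Q}$-local martingale if and only if $ZL$ is a $\mathbb{P}$-local martingale, which follows from $\mathbb{E}_{\mathbb{Q}}[L_t\,|\,\mathcal{F}_s] = Z_s^{-1}\mathbb{E}_{\mathbb{P}}[Z_tL_t\,|\,\mathcal{F}_s]$ combined with a suitable localization (using stopping times $\tau_n\uparrow \infty$ under which $M^{\tau_n}$ is a uniformly integrable $\mathbb{P}$-martingale, $Z^{\tau_n}_-$ is bounded, and the locally integrable process $[Z,M]^{\tau_n}$ is integrable). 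Here the equivalence of $\mathbb{Q}$ and $\mathbb{P}$ and the strict positivity of $Z$ (as a positive martingale with $Z_0=1$) are used to ensure $1/Z_-$ is well-defined and predictable.

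Next I would apply the integration-by-parts formula for $\mathbb{P}$-semimartingales to $Z_tL_t$, giving
\begin{equation*}
Z_tL_t \;=\; Z_0L_0 \;+\; \int_0^t Z_{s-}\,dL_s \;+\; \int_0^t L_{s-}\,dZ_s \;+\; [Z,L]_t.
\end{equation*}
Substituting $dL_s = dM_s - Z_{s-}^{-1}\,d\langle Z,M\rangle_s^{\mathbb{P}}$ collapses the first integral to $\int_0^t Z_{s-}\,dM_s - \langle Z,M\rangle_t^{\mathbb{P}}$. The first piece is a $\mathbb{P}$-local martingale because $Z_-$ is predictable and locally bounded and $M\in\mathcal{M}_{\text{loc}}(\mathbb{P})$; likewise $\int_0^t L_{s-}\,dZ_s$ is a $\mathbb{P}$-local martingale since $Z\in\mathcal{M}_{\text{loc}}(\mathbb{P})$ and $L_-$ is locally bounded after stopping.

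The decisive step is to show that $[Z,L]_t - \langle Z,M\rangle_t^{\mathbb{P}}$ is a $\mathbb{P}$-local martingale. Writing $A_t := \int_0^t Z_{s-}^{-1}\,d\langle Z,M\rangle_s^{\mathbb{P}}$, which is predictable and of locally integrable variation, one has $[Z,L]=[Z,M]-[Z,A]$. By the very definition of the predictable covariation as the dual predictable projection, $[Z,M]-\langle Z,M\rangle^{\mathbb{P}}$ is a $\mathbb{P}$-local martingale; and $[Z,A]=\sum_{s\le\cdot}\Delta Z_s\Delta A_s$ together with the predictability of $A$ (so $\Delta A$ is predictable) allows one to absorb any remaining jump terms into a further compensator, yielding a $\mathbb{P}$-local martingale. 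Summing the pieces gives that $Z_tL_t - Z_0L_0$ is a $\mathbb{P}$-local martingale, and the Bayes reduction then delivers $L\in\mathcal{M}_{\text{loc}}(\mathbb{Q})$.

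The main obstacle will be the handling of the jump term $[Z,A]$ when $\langle Z,M\rangle^{\mathbb{P}}$ is not continuous: in that case $A$ jumps and one cannot claim $[Z,A]\equiv 0$ outright, so one must use the predictability of $\Delta A$ to argue that $\sum\Delta Z_s\Delta A_s$ is itself the compensated sum of jumps, or equivalently reroute through the Girsanov--Meyer form of the theorem using $[Z,M]$ in place of $\langle Z,M\rangle^{\mathbb{P}}$ and then add back $\int Z_-^{-1}\,d([Z,M]-\langle Z,M\rangle^{\mathbb{P}})$, which is a $\mathbb{P}$-local martingale transform and remains a $\mathbb{Q}$-local martingale after another application of the Bayes reduction. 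The local integrability hypothesis on $[Z,M]$ is precisely what makes this rerouting legitimate.
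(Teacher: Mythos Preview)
Your approach is correct and is essentially the standard route to the predictable form of the Girsanov--Meyer theorem: reduce to showing $ZL$ is a $\mathbb{P}$-local martingale via Bayes, expand by integration by parts, and cancel the drift against the compensator of $[Z,M]$. The one place where your write-up is slightly loose is the treatment of $[Z,A]$; the clean statement you are implicitly using is Yoeurp's lemma (if $A$ is predictable of locally integrable variation and $Z$ is a local martingale, then $[Z,A]$ is a local martingale), which follows from $\mathbb{E}[\Delta Z_T\,\Delta A_T\mid\mathcal{F}_{T-}]=\Delta A_T\,\mathbb{E}[\Delta Z_T\mid\mathcal{F}_{T-}]=0$ at predictable times $T$. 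Your alternative ``rerouting'' suggestion, by contrast, is not self-contained: passing from the $[Z,M]$-form to the $\langle Z,M\rangle$-form by adding $\int Z_-^{-1}\,d([Z,M]-\langle Z,M\rangle^{\mathbb P})$ produces a $\mathbb{P}$-local martingale, but you then need it to be a $\mathbb{Q}$-local martingale, and invoking ``another Bayes reduction'' just restarts the same calculation. Stick with the Yoeurp argument.

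As for comparison with the paper: the paper does not give a proof of this theorem at all. It is stated in the appendix as background, with the line ``The proofs can be found in \cite{Protter05,Protter06,He92,Jacod03,Eberlein19}.'' Your argument is exactly the one carried out in those references (e.g., He--Wang--Yan, Theorem 12.13, or Protter, Chapter III, Theorem 39), so there is no discrepancy to discuss.
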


   \begin{theorem}
\label{girsanov2}
 Fix $T>0$. Let $Y_t:=\int_0^t\varphi_s\mathrm{d}W_s+\int_0^t\int_{\mathbb{R}_0}\zeta(s,z)\tilde\mu(\mathrm{d}s,\mathrm{d}z)$, $t\in [0,T]$,where $W$ is an ${\cal F}_t$-Brownian motion, $(\varphi,\zeta)\in \mathfrak{L}^*_{\text{loc}}(\langle W\rangle)\times \mathfrak{L}^*_{\text{loc}}(\hat\mu)$, and $\zeta>-1$. If the Dol\'{e}ans-Dade exponential ${\cal E}(Y)_t$, $t\in[0,T]$, is an ${\cal F}_t$-martingale, then ${\mathbb Q}_T:=\int {\cal E}(Y)_T\mathrm{d}\mathbb{P}$ is a probability measure equivalent to $\mathbb{P}$, $W_{\mathbb{Q}_T}(t):=W_t-\int_0^t\varphi_s\mathrm{d}s$, $t\in[0,T]$, is the Brownian motion under $\mathbb{Q}_T$, and $\tilde \mu_{\mathbb{Q}_T}(\mathrm{d}t,\mathrm{d}z):=\tilde \mu(\mathrm{d}t,\mathrm{d}z) -\zeta(t,z)K(t,\mathrm{d}z)\mathrm{d}t$, $t\in [0,T]$, $z\in{\mathbb{R}}_0$, is the compensated random measure of $\mu(\mathrm{d}t,\mathrm{d}z)$ under $\mathbb{P}$. Moreover, $\varphi'\in\mathfrak{L}^*_{\text{loc}}(\langle W\rangle;\mathbb{P})\Leftrightarrow \varphi'\in\mathfrak{L}^*_{\text{loc}}(\langle W_{\mathbb{Q}_T}\rangle;\mathbb{Q}_T)$, and $\zeta'\in{\mathfrak L}^*_{\text{loc}}(\hat\mu;\mathbb{P})\Rightarrow \zeta'\in\mathfrak{L}^*_{\text{loc}}(\mu;\mathbb{Q}_T)$.
\end{theorem}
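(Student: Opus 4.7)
The plan is to reduce this Girsanov-type theorem for a Brownian noise plus jumps to the general measure-change result Theorem~\ref{girsanov} by identifying the density martingale explicitly as $Z_t=\mathcal{E}(Y)_t$. First I would dispose of the claim that $\mathbb{Q}_T$ is a probability measure equivalent to $\mathbb{P}$: by the hypothesis $\zeta>-1$, each jump $\Delta Y_s=\zeta(s,\Delta Z_s)\mathbf{1}_{\{\Delta Z_s\neq 0\}}$ exceeds $-1$, so the product factors $(1+\Delta Y_s)$ in $\mathcal{E}(Y)$ are strictly positive, giving $\mathcal{E}(Y)_T>0$ $\mathbb{P}$-a.s., and the martingale hypothesis gives $\mathbb{E}[\mathcal{E}(Y)_T]=\mathcal{E}(Y)_0=1$. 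Consequently $\mathbb{Q}_T\sim\mathbb{P}$ and the density process is $Z_t=\mathbb{E}[\mathcal{E}(Y)_T\mid\mathcal{F}_t]=\mathcal{E}(Y)_t$, satisfying the SDE $\mathrm{d}\mathcal{E}(Y)_t=\mathcal{E}(Y)_{t-}\mathrm{d}Y_t$ by Theorem~\ref{novikov}.

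Next I would treat the Brownian part. The continuous local martingale part of $Z$ is $Z^c_t=\int_0^t Z_{s-}\varphi_s\mathrm{d}W_s$, and since $W\in\mathcal{M}^c_{2,\mathrm{loc}}$ the predictable covariation is $\langle Z,W\rangle^{\mathbb{P}}_t=\int_0^t Z_{s-}\varphi_s\mathrm{d}s$, which is automatically locally integrable. Theorem~\ref{girsanov} therefore asserts that
\[
W_{\mathbb{Q}_T}(t):=W_t-\int_0^t\frac{1}{Z_{s-}}\,\mathrm{d}\langle Z,W\rangle^{\mathbb{P}}_s=W_t-\int_0^t\varphi_s\mathrm{d}s
\]
is a $\mathbb{Q}_T$-local martingale. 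Since $W_{\mathbb{Q}_T}$ is continuous with $[W_{\mathbb{Q}_T}]_t=[W]_t=t$ pathwise, Lévy's characterization (which holds for any probability measure) then upgrades it to a $({\cal F}_t,\mathbb{Q}_T)$-Brownian motion.

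For the jump part, I would identify the compensator of $\mu$ under $\mathbb{Q}_T$ by testing against arbitrary bounded predictable random fields $\zeta'$. The purely discontinuous local martingale part of $Z$ is $Z^d_t=\int_0^t\!\int_{\mathbb{R}_0}Z_{s-}\zeta(s,z)\tilde\mu(\mathrm{d}s,\mathrm{d}z)$, while $M'_t:=\int_0^t\!\int_{\mathbb{R}_0}\zeta'(s,z)\tilde\mu(\mathrm{d}s,\mathrm{d}z)\in\mathcal{M}^d_{2,\mathrm{loc}}(\mathbb{P})$. The formula for the predictable covariation of two stochastic integrals against the same compensated random measure (e.g.\ \cite[Thm.~11.23]{He92}) gives $\langle Z,M'\rangle^{\mathbb{P}}_t=\int_0^t\!\int_{\mathbb{R}_0}Z_{s-}\zeta(s,z)\zeta'(s,z)K(s,\mathrm{d}z)\mathrm{d}s$. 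Applying Theorem~\ref{girsanov} again shows that
\[
M'_t-\int_0^t\!\int_{\mathbb{R}_0}\zeta'(s,z)\zeta(s,z)K(s,\mathrm{d}z)\mathrm{d}s=\int_0^t\!\int_{\mathbb{R}_0}\zeta'(s,z)\tilde\mu_{\mathbb{Q}_T}(\mathrm{d}s,\mathrm{d}z)
\]
is a $\mathbb{Q}_T$-local martingale for every bounded predictable $\zeta'$; by the defining property of the dual predictable projection, this forces $\hat\mu_{\mathbb{Q}_T}(\mathrm{d}t,\mathrm{d}z)=(1+\zeta(t,z))K(t,\mathrm{d}z)\mathrm{d}t$, which is the second claim.

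The last sentence on integrability follows from the fact that the path-by-path predictable quadratic variation and the measure $\mu$ itself are both invariant under the change of measure: $\langle W\rangle^{\mathbb{P}}_t=t=\langle W_{\mathbb{Q}_T}\rangle^{\mathbb{Q}_T}_t$, and $\mu(\omega;\cdot)$ is the same random measure under either $\mathbb{P}$ or $\mathbb{Q}_T$. The class $\mathfrak{L}^*_{\mathrm{loc}}(\langle W\rangle)$ is then defined by the same pathwise integrability, and equivalence of $\mathbb{P}$ and $\mathbb{Q}_T$ preserves local integrability (via a common sequence of reducing stopping times up to a null set), giving the equivalence $\varphi'\in\mathfrak{L}^*_{\mathrm{loc}}(\langle W\rangle;\mathbb{P})\Leftrightarrow\varphi'\in\mathfrak{L}^*_{\mathrm{loc}}(\langle W_{\mathbb{Q}_T}\rangle;\mathbb{Q}_T)$. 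For the jump class, $\zeta'\in\mathfrak{L}^*_{\mathrm{loc}}(\hat\mu;\mathbb{P})$ means local integrability of $\int_0^t\!\int_{\mathbb{R}_0}\zeta'^2 K(s,\mathrm{d}z)\mathrm{d}s$, which is a $\mathbb{P}$-a.s.\ (hence $\mathbb{Q}_T$-a.s.) finiteness condition on the paths, and this is in turn sufficient for $\zeta'\in\mathfrak{L}^*_{\mathrm{loc}}(\mu;\mathbb{Q}_T)$ by Remark~\ref{integral_ito_rm1} applied under $\mathbb{Q}_T$. The main obstacle I anticipate is the careful verification that the hypotheses of Theorem~\ref{girsanov} (in particular the local integrability of $[Z,M']$) hold along a common localizing sequence, so that the identification of $\hat\mu_{\mathbb{Q}_T}$ is valid for an integrand class rich enough to characterize the compensator uniquely.
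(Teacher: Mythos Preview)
The paper does not supply its own proof of this theorem; it is stated in Appendix~\ref{appendix_A} as a standard result with the proofs deferred to the references \cite{Protter05,Protter06,He92,Jacod03,Eberlein19}. Your sketch follows precisely the route those textbooks take: identify the density process as $Z=\mathcal{E}(Y)$, apply the abstract Girsanov transformation (Theorem~\ref{girsanov}) first with $M=W$ and then with $M=M'$ ranging over enough stochastic integrals against $\tilde\mu$ to pin down the $\mathbb{Q}_T$-compensator, and finish the continuous part with L\'evy's characterization. The computations of $\langle Z,W\rangle^{\mathbb{P}}$ and $\langle Z,M'\rangle^{\mathbb{P}}$ are correct, and the self-identified ``obstacle'' of a common localizing sequence is handled by the fact that $Y\in\mathcal{M}_{2,\text{loc}}$ (since $(\varphi,\zeta)\in\mathfrak{L}^*_{\text{loc}}(\langle W\rangle)\times\mathfrak{L}^*_{\text{loc}}(\hat\mu)$), so $\langle Z,M'\rangle^{\mathbb{P}}$ exists after a routine localization of $Z_{-}$.

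One small imprecision: in the last paragraph you invoke Remark~\ref{integral_ito_rm1}, but that remark concerns integrals against a single local martingale $M$, not against a random measure. The cleaner justification of $\zeta'\in\mathfrak{L}^*_{\text{loc}}(\hat\mu;\mathbb{P})\Rightarrow\zeta'\in\mathfrak{L}^*_{\text{loc}}(\mu;\mathbb{Q}_T)$ is to use the inclusion $\mathfrak{L}^*_{\text{loc}}(\hat\mu)\subset\mathfrak{L}^*_{\text{loc}}(\mu)$ already recorded in the appendix, together with the observation that the defining condition of $\mathfrak{L}^*_{\text{loc}}(\hat\mu)$ is a pathwise finiteness statement and hence invariant under passage to an equivalent measure; local integrability of the increasing process $\sqrt{\int_0^{\cdot}\!\int\zeta'^2\mu}$ then transfers from $\mathbb{P}$ to $\mathbb{Q}_T$ via the predictable stopping times $\tau_n=\inf\{t:\int_0^t\!\int\zeta'^2K\,\mathrm{d}s\ge n\}$.
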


  \end{appendices}

\end{CJK}
\end{document}